\newcounter{lil1}
\newenvironment{steps}
{\begin{list} { \bf Step (\Alph{lil1})}
		{ \usecounter{lil1}
			\setlength{\leftmargin}{0.0cm}
			\setlength{\topsep}{0.2cm}
			\setlength{\itemsep}{0.0cm}
			\setlength{\parsep}{0.1cm}
			\setlength{\itemindent}{0.8cm}
			\setlength{\parskip}{0.0cm}}}
	{\end{list}}
\newcounter{lil33}
\definecolor{myred}{RGB}{255,0,0}
\crefname{equation}{(equation)}{(equations)}
\Crefname{equation}{(Equation)}{(Equations)}
\theoremstyle{plain}
\newtheorem{thm}{Theorem}[section]
\newtheorem{lem}{Lemma}[section]
\newtheorem{prop}[thm]{Proposition}
\theoremstyle{definition}
\newtheorem{defi}{Definition}[section]
\newtheorem{rem}{Remark}
\DeclareMathOperator*{\esssup}{ess\,sup}
\renewcommand{\leq}{\leqslant}
\renewcommand{\geq}{\geqslant}
\newcommand{\grad}{\ensuremath{\nabla}}
\Crefname{section}{Section}{Sections}
\Crefname{subsection}{Subsection}{Subsections}
\numberwithin{equation}{section}
\newmdenv[linewidth=2pt,
linecolor=black,
innertopmargin=0.2cm,
innerbottommargin=0.5cm,
innerleftmargin=0.7cm,
innerrightmargin=0.7cm,
tikzsetting={fill=blue!30!white}] {squarebox}
\title{\textbf{Large deviation principle for a stochastic nonlinear damped Schr\"odinger equation}}
\author{
	Sandip Roy\footnote{Department of Mathematics, Indian Institute of Technology Indore, Indore 453552, India\\ \textsuperscript{\, \quad\tiny{+}}Department of Mathematics, Indian Institute of Technology Roorkee, Roorkee 247667, India}\textsuperscript{\,\ ,}\footnote{Corresponding author.\newline
		E-mail addresses: phd2301141004@iiti.ac.in (S. Roy), 
		debopriya@iiti.ac.in (D. Mukherjee), 
		maniltmohan@ma.iitr.ac.in (M. T. Mohan)} , 
	Debopriya Mukherjee\footnotemark[1] ,
	Manil T. Mohan\textsuperscript{\tiny{+}} \
}
\begin{document}
	\maketitle
	\begin{abstract}
		The present paper focuses on the stochastic nonlinear Schr\"odinger equation with polynomial nonlinearity, and a zero-order (no derivatives involved) linear damping. Here, the random forcing term appears as a mix of a nonlinear noise in the It\^o sense and a linear multiplicative noise in the Stratonovich sense. We prove the Laplace principle for the family of solutions to the stochastic system in a suitable Polish space, using the weak convergence framework of Budhiraja and Dupuis. This analysis is non-trivial, since it requires uniform estimates for the solutions of the associated controlled stochastic equation  in the underlying solution space in order to verify the weak convergence criterion. The Wentzell-Freidlin type large deviation principle is proved using Varadhan's lemma and Bryc’s converse to Varadhan's lemma. The local well-posedness of the skeleton  equation (deterministic controlled system) is established by employing the Banach fixed point theorem, and the global well-posedness is established via Yosida approximation. We show that the conservation law holds in the absence of the linear damping and It\^o noise. The well-posedness of the stochastic controlled equation is also nontrivial in this case. We use a truncation method, a stopping time argument, and the Yosida technique to get the global well-posedness of the stochastic controlled equation. 
	\end{abstract} 
	\textbf{Keywords:} Nonlinear Schr\"odinger equation, Stratonovich noise, Large deviation principle, Laplace principle, Yosida approximation, Strichartz estimates. 
	
	\noindent \textbf{MSC 2020:} 35Q55, 35R60, 60H15, 60F10. 
	\section{Introduction} 
	This article is concerned with the study of the large deviation principle (LDP) for solutions of the stochastic nonlinear Schrödinger equation (SNLSE) given by
	\begin{equation}	\label{perturbed}
		\begin{cases*}
			du(t)=-\big[i Au(t) +i \mathcal{N}(u(t))+ \beta u(t)\big]dt - i \sqrt{\varepsilon} B(u(t)) \circ d\mathcal{W}_1(t)- i \sqrt{\varepsilon} G(u(t)) d\mathcal{W}_2(t) , \ t>0,\\
			u(0)= u_0,
		\end{cases*}	
	\end{equation}
	where $u(x,t,\cdot)$ is the unknown complex-valued function of space and time. Here, $A$ is a non-negative self-adjoint operator, $\beta$ is a damping constant (non-negative), $B$ is a linear bounded operator, $\mathcal{N}$ and $G$ are nonlinearities, and $\varepsilon>0$ characterizes the intensity of noise dispersion. Also, $\mathcal{W}_1$ and $\mathcal{W}_2$ are two independent Wiener processes defined on a filtered probability space \((\Omega, \mathcal{F}, \{\mathcal{F}_t \}_{t \geq 0}, \mathbb{P})\), where $\mathcal{W}_1$ is considered in the (multiplicative) Stratonovich form and $\mathcal{W}_2$ in the (multiplicative) It\^o sense. 
	The nonlinearity $\mathcal{N}(u) = \lambda |u|^{\alpha - 1} u, \, \alpha \in (1,\infty)$ is termed \emph{defocusing} when $\lambda > 0$ and \emph{focusing} when $\lambda < 0$. Our results apply to both situations. Since the required estimates do not depend on the sign of $\lambda$, we set $\lambda = 1$ for simplicity in the calculations. All mathematical arguments developed in this manuscript remain valid for both the focusing and defocusing cases.
	

	The Schr\"odinger equation is the fundamental equation of quantum mechanics, while its nonlinear counterpart governs the dynamics of waves influenced by both dispersive and nonlinear effects. The nonlinear Schr\"odinger equation (NLSE) serves as a versatile mathematical model with a broad range of physical applications. It is widely used to describe important phenomena in quantum mechanics, fluid dynamics, plasma physics, and molecular biology \cite{agrawal2000nonlinear,chen1984introduction,shankar2012principles,sulem2007nonlinear}. For more specific applications, such as hydrodynamics, superconductivity, Bose-Einstein condensation, nonlinear optics, and the Gross-Pitaevskii equation, we refer the reader to \cite{PhysRevE.75.066607,PhysRevA.60.1507,karjanto2019nonlinear}.

	The propagation of nonlinear dispersive waves in inhomogeneous or random media can be more accurately described by introducing suitable noise terms into the governing equation. The choice of noise depends on the underlying physical situation, representing, for instance, temporal or spatial fluctuations of system parameters. Different types of noise give rise to distinct solution behaviors. Additive Gaussian noise typically models external random forcing and is independent of the solution, whereas multiplicative noise depends explicitly on the solution itself. The interpretation of multiplicative noise varies with the chosen framework: in the Stratonovich formulation it resembles classical calculus, while in the It\^o setting additional correction terms appear in the dynamics; see \cite{goodair2025stratonovich}. One may also consider jump-type noise driven by L\'evy processes, which accounts for sudden or impulsive perturbations \cite{MR2356959}.

	\emph{The conservative case:} When the system is described using the Stratonovich differential and damping is absent, the wave function normalization is preserved. More precisely, applying the Itô formula shows that the mass is conserved along each path. In other words, the $L^2$ norm of the solution remains constant: $\|u(t)\|_{L^2}^2 = \|u_0\|_{L^2}^2$ for all $t \geq 0$. Therefore, if the initial state is normalized ($\|u_0\|_{L^2}^2 = 1$), the quantum system evolves on the unit ball of $L^2$, remains on the unit sphere, and satisfies the probability conservation (in the sense that the total probability of finding the particle is always equal to $1$). Applications of this property can be found in \cite{bang1994temperature,MR1382938}. The physical modeling of nonlinear dispersive wave transmission through nonlinear or stochastic media was first proposed in \cite{rasmussen1995influence}.

	\emph{The nonconservative case:} In this scenario, the wave function is no longer normalized. Specifically, the norm of the wave function is not preserved along individual paths; however, the mean-square norm can remain constant. Mathematically, the map $t \mapsto \|u(t)\|_{L^2}^2$ is no longer time-independent but instead forms a continuous martingale. This ensures conservation of the mean-square norm, $\mathbb{E}[\|u(t)\|^2_{L^2}]$, for $t \in [0, T]$, allowing one to define the “physical” probability measure; see \cite{zhang2014stochastic}. This property plays a key role in quantum trajectory theory and in the stochastic unraveling of open quantum systems \cite{barchielli2009quantum}.

	For the SNLSE with multiplicative noise interpreted in the Stratonovich sense, mass conservation is preserved, as can be shown using It\^o's formula. Moreover, in the case of linear multiplicative noise, both mass and energy are conserved \cite{MR3980316}, reflecting behavior similar to the corresponding deterministic equation. In the present study, however, the inclusion of a damping term and It\^o noise leads to the loss of both mass and energy conservation \cite{MR4678996}. Notably, when $\beta = 0$ and $G \equiv 0$, the conservation law is recovered.

	The linear component governs the dispersive behavior of the system, allowing the use of Strichartz and local smoothing estimates. These dispersive properties provide the framework to apply the Banach fixed-point theorem in suitable function spaces and to control the nonlinearity for $\alpha$ within an appropriate range. Incorporating dissipative terms further facilitates the analysis, helping to establish the well-posedness of the system. Therefore, in our setting, the damping term plays a crucial role.

	\subsection{Literature review}
	\subsubsection{SNLSE}
	Over the years, the SNLSE with various types of noise,additive, multiplicative, or jump noise, has been extensively studied. Considerable work has focused on the existence and uniqueness of solutions on both bounded and unbounded domains \cite{MR3705702, MR3215081, MR3474409}.
	The long-term behavior of solutions, including stability and other qualitative properties, has also been extensively investigated. The presence of randomness, through different types of noise, introduces various challenges and leads to diverse behaviors. To address these challenges, several analytical tools have been developed, such as stochastic Strichartz estimates and the construction of invariant measures \cite{MR3232027, MR3859442, MR4249116, MR4678996, MR2225449}.
	In \cite{MR3859442}, the author investigates the SNLSE with nonlinear Stratonovich noise, establishing local well-posedness for both subcritical and critical nonlinearities and proving global existence in the subcritical case under an additional condition on the noise. The existence of a unique global mild solution for the SNLSE driven by pure jump noise is established in \cite{MR4558968}. In \cite{MR1706888}, the authors consider a conservative SNLSE with multiplicative noise and prove global well-posedness in $L^2(\mathbb{R}^d)$ for subcritical nonlinearities. Later, in \cite{MR1954077}, they extend these results to establish local and global well-posedness in the energy space $H^1(\mathbb{R}^d)$ for SNLSE with either additive or multiplicative noise.
	The study in \cite{MR3232027} focuses on the existence and uniqueness of a local maximal solution to the SNLSE with multiplicative noise on a compact $d$-dimensional Riemannian manifold. In \cite{MR4547405}, the authors investigate the SNLSE with strong damping, proving the existence of a unique invariant measure for sufficiently large damping coefficients. The work \cite{MR4249116} addresses global well-posedness for an SNLSE in Marcus form in $L^2(\mathbb{R}^d)$, introducing a new version of the stochastic Strichartz estimate for convolutions. Finally, \cite{MR4678996} studies a defocusing SNLSE with linear damping and a combination of linear Stratonovich and nonlinear Itô noise, constructing a martingale solution via a modified Faedo-Galerkin method, proving pathwise uniqueness, and thereby establishing the existence of an invariant measure.

	\subsubsection{LDP} 
	The theory of large deviations is a classical area of probability theory with broad applications in physics and engineering. Large deviation principles (LDP) study the exponential decay of probabilities, with respect to some relevant parameter, of rare events, events that are highly unlikely but can have catastrophic consequences if they occur. The theory characterizes the asymptotic behavior of the laws of solutions as the noise intensity tends to zero, using a rate function. Large deviation methods are particularly useful for analyzing randomly perturbed dynamical systems, for example, in estimating the first exit time from a neighborhood of an asymptotically stable equilibrium, determining the exit location, or describing transitions between distinct equilibria. Such questions naturally arise in fields like statistical mechanics, quantum mechanics, and chemical reaction modeling. This area gained substantial attention following the foundational work of Stroock \cite{MR755154} and Varadhan \cite{MR758258}. Budhiraja and Dupuis later introduced the weak convergence approach in \cite{MR1785237}, which has become a powerful tool, often simplifying LDP proofs compared to classical techniques. For more details, see \cite{MR2571413, MR1431744}. In recent decades, several researchers have applied the weak convergence approach to establish the LDP for classes of infinite-dimensional stochastic differential equations \cite{MR1135925, hong2021freidlin, kumar2023well, MR4877479, MR2575313, MR4258867}.

	The study of large deviation principles (LDP) for the Navier-Stokes equations (NSE) under various noise structures is well established. In \cite{MR2269220}, a Wentzell-Freidlin type LDP is investigated for the 2D NSE with multiplicative noise on both bounded and unbounded domains. Related contributions include \cite{MR2982734, MR1377394, MR2330725, MR4760311}. The LDP for 2D stochastic NSE driven by L\'evy noise has been studied in \cite{MR4546792, MR2541279, MR3378470}.
	In \cite{MR2519356}, the authors consider a Boussinesq model for B\'enard convection, a coupled system of stochastic NSE and a stochastic transport equation for temperature with additive noise, and study the LDP using the weak convergence approach. The work \cite{MR2609596} addresses a class of abstract stochastic nonlinear models encompassing many 2D hydrodynamical systems, including the 2D NSE, 2D magnetic Bénard problem, 2D magneto-hydrodynamic (MHD) models, and certain shell models of turbulence. For this abstract class in unbounded domains, the LDP is established (avoiding compact embeddings) under slightly stronger time regularity assumptions on the diffusion coefficient.
	The article \cite{MR3062446} focuses on the LDP for the 2D stochastic quasi-geostrophic equation on a periodic domain. LDP results for stochastic reaction-diffusion equations can be found in \cite{MR2044675, MR4805051}, while \cite{MR2570011, MR3080986, MR2525514} study the LDP for stochastic shell models of turbulence with small multiplicative noise. For additional references, see the books \cite{MR2295103, MR3236753, MR1465436}.

	\subsubsection{LDP for SNLSE:} 
	In the context of the SNLSE, relatively little work has been done on large deviation principles. One of the earliest contributions is \cite{MR2148961}, where the author studies the SNLSE with additive Gaussian noise and establishes the LDP using classical methods. This line of work was later extended to the uniform LDP with multiplicative Gaussian noise \cite{MR2178501} and the LDP with fractional additive noise \cite{MR2318412}. In \cite{MR4256725}, the authors investigate the LDP for the SNLSE with multiplicative noise in one dimensional setting with $H^1$ regularity on the initial data. However, due to the lack of compactness and the cancellation of the diffusion term when applying Itô’s lemma, they were unable to prove the LDP in the full solution space. Instead, they established it in the space $C([0, T]; H)$ using the weak convergence approach. The work \cite{zhu2023large} represents the first study of the LDP for the SNLSE with nonlinear noise. Considering the SNLSE with multiplicative pure jump noise in the Marcus canonical form with a nonlinear coefficient, they establish a Freidlin-Wentzell type LDP. The particular structure of this noise preserves a conservation law, which allows them to prove the LDP in the entire solution space.

	\subsection{Novelties and difficulties}
	The main challenge in our problem arises from the noise structure, which combines both linear Stratonovich and nonlinear It\^o components. A second difficulty stems from the power-type nonlinearity and the lack of a regularizing effect from the Schr\"dinger operator. To overcome these issues, we employ the Yosida approximation operator, along with Strichartz estimates and local smoothing estimates, which provide the necessary control over the nonlinearity and compensate for the limited regularization of the Schr\"odinger operator.

	Another challenge in our analysis is the absence of compact embeddings as a Gelfand triple. In contrast, the methods used to establish LDP in \cite{MR4140079, MR4657197, MR4806981} rely heavily on such compactness conditions. Consequently, these approaches cannot be applied to prove LDP in the full solution space in our case. Similarly, in \cite{MR2570011, MR2982734, MR4256725}, the LDP was established only in the space $C([0, T]; H)$, rather than in the full solution space $C([0, T]; H) \cap L^p(0, T; H^1)$, due to the lack of compact embeddings. To overcome this difficulty, we adopt the mild solution approach introduced by Zhu et al. in \cite{zhu2023large}. In \cite{MR4256725}, the authors establish the LDP for the SNLSE with multiplicative noise in the one-dimensional case, assuming $H^1$ regularity for the initial data. In contrast, our result is valid in arbitrary spatial dimensions $d$ for any $\alpha \in (1, \tfrac{4}{d}+1)$. This restriction on $\alpha$ arises from the well-posedness of the skeleton equation and the stochastic controlled equation. Moreover, we work with less regular initial data, requiring only $u_0 \in H$. Nevertheless, establishing the LDP within this framework remains a significant challenge.

	The final challenge in proving the LDP is the need to verify the weak convergence criterion, which requires uniform boundedness of the solution to the controlled stochastic equation in the relevant solution spaces. Establishing the existence of a global solution to the controlled stochastic equation is itself nontrivial. Unlike \cite{zhu2023large}, in our setting the conservation law does not hold, which complicates establishing uniform bounds in $L^p(0, T; L^r(\mathbb{R}^d))$. To address this, we employ a truncation function, stopping time arguments, and the Yosida regularization operator to obtain uniform bounds for the solution in $L^p(0, T; L^r(\mathbb{R}^d))$, following ideas in \cite{MR4249116, MR1706888}. However, in those works, the conservation law plays a central role in obtaining the results, whereas in our case, the presence of damping and Itô noise prevents such conservation. A more careful analysis is therefore required. Notably, we prove the conservation law in the special case $\beta = 0, G \equiv 0$. Our current work is inspired by \cite{zhu2023large} and builds on the weak convergence approach to LDP developed by Budhiraja and Dupuis \cite{MR1785237}.

	\subsection{Organization of the paper}
	The paper is organized as follows. In \Cref{sectionformulation}, we present the abstract formulation of the mathematical model in detail. Subsection \ref{sub2space} introduces the notation for the relevant functional spaces, while Subsection \ref{Ass} states the necessary assumptions. In Subsection \ref{completeito}, we provide a step-by-step derivation of the Stratonovich correction term for \eqref{perturbed}. \Cref{preliminaries} offers a concise overview of definitions and basic results related to large deviation principles (LDP). This section concludes with a general criterion for the LDP (\Cref{generalcriterialemma}), followed by the statement of the main result of this article (\Cref{mainresultth}).

	\Cref{sectionskeleton} focuses on establishing the existence of a unique global mild solution to the skeleton equation. We begin by defining an integral operator on a suitably chosen Banach space and show that it is a strict contraction. By the Banach fixed-point theorem, this operator admits a unique fixed point, which corresponds to the unique local mild solution of the skeleton equation on $[0, T_0]$ for some $T_0 > 0$. Next, we employ the Yosida approximation technique along with Strichartz estimates to extend this result and obtain a global mild solution on $[0, T]$ for any $T > 0$. Additionally, \Cref{sub4conservation} provides a brief review of the \emph{conservative case}, particularly when $\beta = 0$ and $G \equiv 0$.

	In \Cref{sectionsce}, we establish the existence of a unique global mild solution for a stochastic controlled equation. Due to the system’s nonlinearity, we first consider a truncated version and employ suitable stopping time arguments. Solvability is then obtained by applying a stochastic adaptation of the Banach fixed-point theorem in a carefully chosen Banach space. This approach allows us to prove the local existence of a mild solution to the truncated equation on $[0, T_0]$ for some $T_0 > 0$ (\Cref{loctrun}), which is then extended to a unique global-in-time mild solution via mathematical induction (\Cref{truncatedglobalexistence}). Next, using stopping time arguments, we establish the local existence of a solution for the original equation (\Cref{locoriginal}), and finally, the global existence is obtained through Yosida approximation (\Cref{globoriginal}).

	Finally, in \Cref{sectionldp}, we establish the Freidlin-Wentzell type LDP for the family of solutions in a suitable Polish space, this constitutes our main result, using the weak convergence approach. This method relies on the equivalence between the LDP and the Laplace principle in Polish spaces, which follows from Varadhan’s Lemma (\Cref{lemvaradhan}) and Bryc’s converse (\Cref{lembryc}). For further details, we refer the reader to \cite{MR1785237}.
	
	In \Cref{sectionappendix}, we collect several technical results that are used throughout the manuscript to support the proofs of our main results.
	
	\subsection{Open questions}
	\begin{itemize}
		\item[(A)]
		The mathematical structure of the Schr\"odinger equation is closely related to diffusion equations and can be reformulated by considering Gaussian probability measures over particle trajectories in the space of all possible paths of a quantum particle \cite{MR2077514}. Within this framework, replacing the Gaussian measure with non-Gaussian distributions, such as L\'evy $\alpha$-stable laws, leads to generalized Schrödinger equations, including time- or space-fractional formulations. This approach naturally accommodates the heavy-tailed jump statistics characteristic of L\'evy processes. \emph{A natural question then arises: is it possible to study the LDP for such systems?}

		\item[(B)] {\it Is it possible to establish a uniform LDP for such systems under the general noise considered in the present work?}
		\item[(C)] {\it Our future goal is to extend the analysis of this paper to the one-dimensional rough SNLSE, where stochastic integration is interpreted in the sense of a controlled rough path.}
	\end{itemize}
	
	\section{Formulation of mathematical framework and main result}\label{sectionformulation}
	In this section, we define some basic functional spaces and their properties. Also, we take the assumptions on the operators and formulate the mathematical framework for our problem. We recall some basic definitions and preliminary results. Then we present a general criteria and state the main result of our work.
	\subsection{Functional spaces and basic definitions}\label{sub2space}
	We denote the space of equivalence classes of $\mathbb{C}$-valued $p$-Lebesgue integrable functions by $L^p(\mathbb{R}^d)$, for $p \in [1,\infty)$. For $p=2$, we use the notation $L^2(\mathbb{R}^d)=H$. The space $L^p(0,T;L^r(\mathbb{R}^d))$ consists of all strongly measurable functions $f : [0, T] \to L^r(\mathbb{R}^d)$ with the norm defined by,
	\begin{align*}
		\|f\|_{L^p(0,T;L^r(\mathbb{R}^d))} := \Big(\int_{0}^{T}\|f(t)\|^p_{L^r(\mathbb{R}^d)}dt\Big)^{\frac{1}{p}}, \quad \text{for}\,\, 1\leq p< \infty,
	\end{align*}
	and 
	\begin{align*}
		\|f\|_{L^\infty(0,T;L^r(\mathbb{R}^d))} :=\displaystyle \esssup_{0\leq t\leq T}  \|u(t)\|_{L^r(\mathbb{R}^d)}.
	\end{align*}
	By $C([0, T];H)$, we represent the space of all continuous functions $f : [0, T] \to H$ endowed with the uniform convergence topology. The space $L^p(\Omega;X)$ is the collection of all measurable functions $f : \Omega \to X$ with the norm defined by (see \cite[5.9.2]{MR2597943}), 
	\begin{align*}
		\|f\|_{L^p(\Omega;X)}:= \Big(\int_{\Omega}\|f(\omega)\|^p_{X}d\mathbb{P}\Big)^{\frac{1}{p}}, \quad \text{for}\,\, 1\leq p< \infty.
	\end{align*}
	Let $U$ and $V$ be two separable Hilbert spaces and the space of all bounded linear operators on $U$ is denoted by $\mathscr{L}(U)$. The space of all Hilbert-Schmidt operators from $U$ to $V$ is represented by $\mathcal{L}_2(U,V)$ equipped with the norm,
	\begin{align*}
		\|S\|_{\mathcal{L}_2}= \Big(\sum_{k=1}^{\infty} |Se_k|^2\Big)^{\frac{1}{2}},\quad \text{ for } S \in \mathcal{L}_2(U,V),
	\end{align*}
	where $\{e_k\}_{k\in \mathbb{N}}$ is a complete orthonormal basis of the Hilbert space $U$.
	\subsection{Assumptions}\label{Ass}
	\begin{itemize}
		\item[\textbf{(a)}] \textbf{Assumptions on the operator $A$:} We assume the operator A is $-\Delta$ (as an operator defined on $H$), negative Laplacian operator.\label{AssA}
		\item[\textbf{(b)}] \textbf{Assumptions on the nonlinear term $\mathcal{N}$:} Assume that \( \alpha \in (1, \infty) \) and consider the polynomial nonlinearity $$\mathcal{N}(u) = \lambda|u|^{\alpha - 1} u.$$ It is called defocusing if the coefficient $\lambda>0$ and focusing if $\lambda<0$. Since the estimates are independent of the sign of $\lambda$, without loss of generality we set $\lambda = 1$ and all the mathematical arguments remain valid. Therefore our result is true for both the focusing and defocusing cases and from now on, we work with $$\mathcal{N}(u) =|u|^{\alpha - 1} u.$$\label{AssF}	
		\item[\textbf{(c)}] \textbf{Assumptions on the noise and its coefficients:}\label{AssN}
		\begin{itemize}\item 	[(i)] We consider a filtered probability space \((\Omega, \mathcal{F}, \mathbb{F}, \mathbb{P})\) satisfying the usual conditions, where \(\mathbb{F} = \{ \mathcal{F}_t \}_{t \geq 0}\). Let \(Y_1\) and \(Y_2\) be two separable real Hilbert spaces, with orthonormal bases \(\{e^1_m\}_{m \in \mathbb{N}}\) and \(\{e^2_m\}_{m \in \mathbb{N}}\), respectively. Moreover, $\mathcal{W}_1$ and $ \mathcal{W}_2$ are two independent canonical cylindrical $\mathbb{\mathcal{N}}$-Weiner processes $Y_1$ and $ Y_2$ valued, respectively. \\
			\item [(ii)] Let \(B : H \to \mathcal{L}_2 (Y_1, H)\) be a linear operator and consider a self-adjoint operator   \(B_m \in \mathscr{L}(H)\) such that  \(B_m u = B(u) e^1_m\), for every \(u \in H\) and \(m \in \mathbb{N}\). We assume (to make sense of the Stratonovich correction terms),
			\begin{equation}\label{BmH}
				\sum_{m=1}^{\infty} \|B_m\|^2_{\mathscr{L}(H)} < \infty. 
			\end{equation}
			\item 	[(iii)] We consider the nonlinear coefficient \(G : H \to \mathcal{L}_2 (Y_2, H)\) as the function $G$ has the form
			\begin{align}\label{perticularg}
				G(u)=\tilde{G}(\|u\|_{H}^2)u,
			\end{align}
			where $\tilde{G}:[0, \infty) \to \mathbb{R}$ is a measurable real-valued function. \\
			\noindent We also cosider, $G$ is Lipschitz continuous, that is, there exists a constant \(L_G > 0\) such that:
			\begin{equation}\label{LipG}
				\|G(u_1) - G(u_2)\|_{\mathcal{L}_2 (Y_2,H)} \leq L_G \|u_1 - u_2\|_H, \quad \text{for all } u_1, u_2 \in H.
			\end{equation}
			Lipschitz continuity implies the linear growth condition for $G$, i.e., there exist positive constants $C_1, C_2$ such that 
			\begin{equation}\label{LgG}
				\|G(u)\|_{\mathcal{L}_2 (Y_2, H)} \leq C_1 +C_2 \|u\|_H, \quad \text{for all } u \in H.
			\end{equation}
		Moreover, we assume the following weak continuity assumption of the diffusion coefficient $G$: for every $m \in \mathbb{N}$ the map
				\begin{align*}
					H \ni \varphi \mapsto G(\varphi)e^2_m \in H
				\end{align*}
				extends uniquely to a continuous map from $H^{-1}$ to $H^{-1}$, i.e.
				\begin{align}\label{wcofg}
					H^{-1} \ni \varphi \mapsto G(\varphi)e^2_m \in H^{-1}\ \quad\text{is continuous}.
			\end{align}
			\begin{rem}
			The class of functions (known as saturation nonlinearities) satisfying \eqref{perticularg},  \eqref{LipG}, \eqref{LgG} and \eqref{wcofg} simultaneously remains quite extensive, encompassing a wide range of nonlinear (as well as linear) mappings. For example, in the representation \eqref{perticularg}, one may choose the function $\tilde{G}$ as 
			\begin{align*}
				\tilde{G}(y)=\frac{y}{1+ay}\quad \text{for } a>0,
			\end{align*}
			which characterizes photorefractive media \cite{PhysRevLett.15.1005}. Another example is
			\begin{align*}
				\tilde{G}(y)=1 - \frac{1}{(1+y)^{1/2}},
			\end{align*}
			 which describes narrow-gap semiconductors \cite[equation 4]{PhysRevB.61.10201}. One can also choose 
			 \begin{align*}
			 	\tilde{G}(y) =e^{-y}\quad \text{or} \quad \tilde{G}(y) = \frac{y(ay+2)}{(1+ay)^2}
			 	\quad \text{or} \quad
			 	\tilde{G}(y) = \frac{\log(1+ay)}{1+\log(1+ay)},
			 	\quad y \ge 0, \ a > 0.
			 \end{align*}
			 The first Hermite function $H(y)=\frac{\sqrt{2}\,y}{\pi^{1/4}} e^{-\frac{|y|^2}{2}}$, which has very important application in quantum mechanics, is associated to  $\tilde{G}(y) =e^{-y}$, with some modification.
		\end{rem}

			\vspace{2cm}
			By the above assumption, we can represent the Wiener processes as 
			\begin{align}\label{BasisW}
				\mathcal{W}_1(t) = \sum_{m=1}^{\infty} e^1_m \beta^1_m(t) \,\, \text{and}\,\, 	\mathcal{W}_2(t) = \sum_{m=1}^{\infty} e^2_m \beta^2_m(t), \quad t\geq0.
			\end{align}
			where \( \{\beta^1_m\}_{m\in\mathbb{N}} \) and \( \{\beta^2_m\}_{m\in\mathbb{N}} \) are sequences of independent standard real valued Wiener processes.
		\end{itemize}
	\end{itemize}
	
	\subsection{Complete It\^ {o} form of the equation}\label{completeito}
	We can transform the Stratonovich noise into It\^ {o} form as,
	\begin{align*}
		- i\sqrt{\varepsilon}  B(u(t)) \circ d\mathcal{W}_1(t) &= - i \sqrt{\varepsilon}  B(u(t)) d\mathcal{W}_1(t) +
		\frac{1}{2} \sum_{m=1}^{\infty} -i\sqrt{\varepsilon}  B'[u]\big(-i\sqrt{\varepsilon}  B(u(t))e^1_m\big)e^1_m dt\\
		&=- i\sqrt{\varepsilon}  B(u(t)) d\mathcal{W}_1(t) -
		\frac{\varepsilon}{2} \sum_{m=1}^{\infty}B\big(B(u(t))e^1_m\big)e^1_m dt\\
		&=- i\sqrt{\varepsilon}  B(u(t)) d\mathcal{W}_1(t) -
		\frac{\varepsilon}{2} \sum_{m=1}^{\infty}B_m^2 u(t)dt,
	\end{align*}
	where $B'[u](h)$ denotes the Fr\'echet derivative of $B$ in the direction of $h$. For more information, we  refer to see \cite{goodair2025stratonovich}.\\
	Hence, equation \eqref{perturbed} can be rewritten in the complete It\^{o} form,
	\begin{equation}
		du(t)=-\big[i Au(t) +i \mathcal{N}(u(t))+ \beta u(t) + \varepsilon b(u(t))\big]dt - i\sqrt{\varepsilon}  B(u(t)) d\mathcal{W}_1(t)- i\sqrt{\varepsilon}  G(u(t)) d\mathcal{W}_2(t) , \quad t\ge0 
	\end{equation}
	where the  Stratonovich correction term is, \[b(u(t))= 	\frac{1}{2} \sum_{m=1}^{\infty}B_m^2 u(t).\]
	\subsection{Preliminaries:}\label{preliminaries}
	In this subsection, we recall some basic definitions and preliminary results. The following definitions are taken from \cite{MR2435853}. Let $\mathcal{E}$ be a Polish space (complete separable metric space) with the Borel $\sigma$-field $\mathcal{B}(\mathcal{E})$. 
	\begin{defi}[Rate function]\label{ratefunc}
		A function $I : \mathcal{E} \to [0,\infty]$ is called a \emph{rate function} if $I$ is lower semicontinuous. A rate function $I$ is a \emph{good rate function} if for arbitrary $M \in [0,\infty)$, the level set $K_M := \big\{x \in \mathcal{E} : I(x) \leq M\big\}$ is compact in $\mathcal{E}$.
	\end{defi}
	\begin{defi}[Large deviation principle]\label{ldp}
		A family $\{X^\varepsilon \}_{\varepsilon > 0}$ of $\mathcal{E}$-valued random elements is said to satisfy the \emph{large deviation principle }(LDP) on $\mathcal{E}$ with a good rate function $I : \mathcal{E} \to [0,\infty]$ if:
		\begin{enumerate}
			\item For each closed set $F$ of $ \mathcal{E}$,
			\begin{align*}
				\limsup_{\varepsilon \to 0} \varepsilon \log {\mathbb{P}}( X^\varepsilon \in F) \leq - \inf_{x \in F} I(x),
			\end{align*}
			\item For each open set $G$ of $ \mathcal{E}$,
			\begin{align*}
				\liminf_{\varepsilon \to 0} \varepsilon \log {\mathbb{P}}(X^\varepsilon \in G) \geq - \inf_{x \in G} I(x).
			\end{align*}	
		\end{enumerate}
	\end{defi}
	\begin{defi}[Laplace principle]\label{laplaceprinciple}
		Let $I$ be a rate function on $\mathcal{E}$. A family $\{X^\varepsilon \}_{\varepsilon > 0}$ of $\mathcal{E}$-valued random elements is said to satisfy the {\emph{Laplace principle}} on $\mathcal{E}$ with rate function $I$ if for each real-valued, bounded and continuous function $h$ defined on $\mathcal{E}$, 
		\begin{align}
			\lim_{\varepsilon \to 0} \varepsilon \log \mathbb{E} \Big\{ \exp \Big[ - \frac{1}{\varepsilon} h(X^\varepsilon) \Big]\Big\}
			= - \inf_{x \in \mathcal{E}}\big \{ h(x) + I(x)\big \}.
		\end{align}
	\end{defi}
	\begin{lem}[Varadhan’s Lemma \cite{MR758258}] Let $\mathcal{E}$ be a Polish space and $\big\{X^\varepsilon\big\}_{\varepsilon > 0}$ be a family of $\mathcal{E}$-valued random elements satisfying LDP with rate function $I$. Then $\big\{X^\varepsilon\big\}_{\varepsilon > 0}$ satisfies the Laplace principle on $\mathcal{E}$ with the same rate function $I$.
	\end{lem}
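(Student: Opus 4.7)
The plan is to establish the Laplace principle by proving the matching upper and lower bounds separately for
$\Lambda^\varepsilon := \varepsilon \log \mathbb{E}\bigl[\exp(-h(X^\varepsilon)/\varepsilon)\bigr]$
against $-\inf_{x \in \mathcal{E}}\{h(x) + I(x)\}$. Since $h$ is bounded and continuous, I will exploit its continuity for the lower bound (via small open neighborhoods) and its boundedness for the upper bound (via a finite partition of its range into level sets, to which the closed-set LDP bound can be applied).

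For the lower bound, I would fix an arbitrary $x_0 \in \mathcal{E}$ with $I(x_0) < \infty$ (if no such point exists, the infimum is $+\infty$ and there is nothing to prove) and, given $\delta > 0$, use continuity of $h$ to pick an open neighborhood $G$ of $x_0$ on which $h(y) \leq h(x_0) + \delta$. Then
\begin{align*}
\mathbb{E}\bigl[e^{-h(X^\varepsilon)/\varepsilon}\bigr]
\geq e^{-(h(x_0)+\delta)/\varepsilon}\,\mathbb{P}(X^\varepsilon \in G),
\end{align*}
so taking $\varepsilon \log$ and $\liminf_{\varepsilon \to 0}$, the LDP lower bound from \Cref{ldp} applied to the open set $G$ yields $\liminf_{\varepsilon \to 0} \Lambda^\varepsilon \geq -h(x_0) - \delta - I(x_0)$. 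Sending $\delta \to 0$ and then taking the infimum over $x_0$ gives the required lower bound.

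For the upper bound, let $M = \|h\|_\infty$ and, for fixed $\delta > 0$, define the finitely many level sets $A_k := \{x \in \mathcal{E} : k\delta \leq h(x) < (k+1)\delta\}$ for $k$ ranging over an index set of cardinality at most $N_\delta \leq \lceil 2M/\delta\rceil + 1$. Each closure $\bar A_k$ is contained in $\{h \leq (k+1)\delta\}$ by continuity, and
\begin{align*}
\mathbb{E}\bigl[e^{-h(X^\varepsilon)/\varepsilon}\bigr]
\leq \sum_k e^{-k\delta/\varepsilon}\,\mathbb{P}(X^\varepsilon \in \bar A_k).
\end{align*}
Using the elementary inequality $\varepsilon\log\sum_{k=1}^{N_\delta} a_k \leq \varepsilon\log N_\delta + \max_k \varepsilon\log a_k$, I pass to $\limsup_{\varepsilon \to 0}$; the first term vanishes, and on each closed set $\bar A_k$ the LDP upper bound gives
\begin{align*}
\limsup_{\varepsilon \to 0}\Lambda^\varepsilon
\leq \max_k\bigl[-k\delta - \inf_{x \in \bar A_k} I(x)\bigr]
\leq -\inf_{x \in \mathcal{E}}\{h(x) + I(x)\} + \delta,
\end{align*}
where the last inequality uses $h(x) \leq (k+1)\delta$ on $\bar A_k$. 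Letting $\delta \to 0$ completes the argument.

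The main obstacle I anticipate is purely technical: keeping the discretization parameter $\delta$ and the small-noise parameter $\varepsilon$ cleanly decoupled in the upper bound, in particular verifying that $\varepsilon \log N_\delta \to 0$ uniformly before $\delta$ is sent to zero, and that the shift $h(x) \leq (k+1)\delta$ on $\bar A_k$ translates into the $+\delta$ correction absorbed in the final limit. No goodness of $I$ or compactness is required here—only lower semicontinuity and the two inequalities in \Cref{ldp}—so the argument applies in the generality stated.
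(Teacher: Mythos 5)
Your proposal is correct and is the standard proof of Varadhan's lemma in the bounded-continuous setting (see, e.g., Dembo and Zeitouni, \emph{Large Deviations Techniques and Applications}, Lemma 4.3.6 and Theorem 4.3.1). The paper itself does not supply a proof of this statement — it records it as a cited fact from Varadhan's original work — so there is no internal argument to compare against; your write-up fills that gap correctly. Both halves are sound: the lower bound correctly localizes to an open neighborhood of an arbitrary point of finite rate, invokes the open-set LDP bound, and then optimizes over $\delta$ and $x_0$ (with the $I(x_0)=\infty$ case trivial); the upper bound correctly uses boundedness of $h$ to build finitely many level strips $A_k$, observes that $\overline{A_k}\subset\{h\leq(k+1)\delta\}$ by continuity, applies the closed-set LDP bound on each strip, and absorbs the $\varepsilon\log N_\delta$ term and the $\delta$-slack in the two successive limits. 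One minor point worth making explicit when you pass the $\limsup$ through the maximum over $k$: for a finite index set, $\limsup_{\varepsilon\to0}\max_k a_k^\varepsilon \leq \max_k \limsup_{\varepsilon\to0} a_k^\varepsilon$ (extract a subsequence along which the max is achieved by a single $k^*$), which is what licenses the step $\limsup\Lambda^\varepsilon \leq \max_k\bigl[-k\delta-\inf_{\overline{A_k}}I\bigr]$. Your remark that only lower semicontinuity and the two LDP inequalities are needed — not goodness of $I$ — is accurate and is exactly the generality required for the paper's subsequent use via \Cref{generalcriterialemma}.
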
\label{lemvaradhan}
	\begin{lem}[Bryc’s Lemma \cite{MR1431744}] The Laplace principle implies the LDP with the same rate function. More precisely, if $\big\{X^\varepsilon\big\}_{\varepsilon > 0}$ satisfies the Laplace principle with a rate function $I$ on a Polish space $\mathcal{E}$ and the limit
		\begin{align*}
			\lim_{\varepsilon \to 0} \varepsilon \log \mathbb{E} \Big\{ \exp[-\frac{1}{\varepsilon} h(X^\varepsilon)] \Big\} = -\inf_{x \in \mathcal{E}} \big\{ h(x) + I(x) \big\}
		\end{align*}
		is valid for all bounded continuous functions $h$, then $\big\{X^\varepsilon\big\}_{\varepsilon > 0}$ satisfies the LDP on $\mathcal{E}$ with rate function $I$.
	\end{lem}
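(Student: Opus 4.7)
The plan is to deduce the two halves of the LDP (Definition \ref{ldp}) from the Laplace principle by testing $\mathbb{E}[\exp(-h(X^\varepsilon)/\varepsilon)]$ against bounded continuous functions $h$ tailored to the closed or open set at hand. Note first that taking $h\equiv 0$ in the Laplace principle forces $\inf_{\mathcal{E}} I = 0$, which I use silently throughout.

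\textbf{Upper bound on a closed set $F$.} Let $I_F := \inf_{x\in F} I(x)$; I may assume $I_F \in (0,\infty)$, the boundary cases being trivial or handled by exhaustion. Fix $M\in(0, I_F)$. Since $I$ is a \emph{good} rate function, the sublevel set $K_M := \{I\leq M\}$ is compact in $\mathcal{E}$, and since $I>M$ on $F$ it is disjoint from the closed set $F$; hence there exists $\delta>0$ with $d(K_M,F)\geq 2\delta$. I would then test the Laplace principle against the bounded continuous function
\[
   h_M(y) := -M\,\max\bigl\{0,\,1-d(y,F)/\delta\bigr\},\qquad y\in\mathcal{E},
\]
which equals $-M$ on $F$, vanishes on $K_M$ (as $K_M$ lies at distance at least $2\delta>\delta$ from $F$), and satisfies $-M\leq h_M\leq 0$ everywhere. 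The pointwise inequality $\exp(-h_M(X^\varepsilon)/\varepsilon)\geq e^{M/\varepsilon}\mathbf{1}_F(X^\varepsilon)$ gives
\[
   \varepsilon\log\mathbb{P}(X^\varepsilon\in F) \leq -M + \varepsilon\log\mathbb{E}\bigl[\exp(-h_M(X^\varepsilon)/\varepsilon)\bigr].
\]
By the Laplace principle, the right-hand side converges to $-M-\inf_{\mathcal{E}}\{h_M+I\}$. On $K_M$ one has $h_M+I = I\geq 0$ with $\inf_{K_M} I=0$, while on $K_M^{c}$ one has $I>M$ and $h_M\geq -M$, so $h_M+I>0$; therefore $\inf\{h_M+I\}=0$, giving $\limsup_{\varepsilon\to 0}\varepsilon\log\mathbb{P}(X^\varepsilon\in F)\leq -M$. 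Letting $M\uparrow I_F$ closes the upper bound.

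\textbf{Lower bound on an open set $G$.} Fix $x_0\in G$ with $I(x_0)<\infty$ (otherwise the assertion is vacuous). Choose $\delta>0$ with the open ball $B(x_0,2\delta)\subset G$ and any $M>I(x_0)$, and introduce the bounded continuous function
\[
   \widetilde h(y) := M\,\min\bigl\{1,\,d(y,B(x_0,\delta))/\delta\bigr\},\qquad y\in\mathcal{E},
\]
which vanishes on $B(x_0,\delta)$, equals $M$ outside $B(x_0,2\delta)$, and takes values in $[0,M]$. Splitting according to whether $X^\varepsilon$ lies in $B(x_0,2\delta)$ or not yields
\[
   \mathbb{E}\bigl[\exp(-\widetilde h(X^\varepsilon)/\varepsilon)\bigr] \leq \mathbb{P}(X^\varepsilon\in G) + e^{-M/\varepsilon}.
\]
The Laplace principle combined with the trivial bound $\inf\{\widetilde h+I\}\leq \widetilde h(x_0)+I(x_0)=I(x_0)<M$ shows that $\mathbb{E}[\exp(-\widetilde h(X^\varepsilon)/\varepsilon)]$ decays no faster than $e^{-(I(x_0)+\eta)/\varepsilon}$ for any $\eta>0$, which dominates $e^{-M/\varepsilon}$ once $\eta$ is chosen with $I(x_0)+\eta<M$. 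Rearranging and taking $\varepsilon\log$ then gives $\liminf_{\varepsilon\to 0}\varepsilon\log\mathbb{P}(X^\varepsilon\in G)\geq -I(x_0)$. Taking the infimum over $x_0\in G$ produces $\liminf\geq -\inf_G I$.

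\textbf{Main obstacle.} The delicate step is the upper bound: the construction of $h_M$ depends critically on separating $F$ from $K_M$, which is precisely where goodness of $I$ (compactness of the sublevel sets) is indispensable. Without this separation one cannot simultaneously force $h_M=-M$ on $F$ and $h_M=0$ at the minimizers of $I$, which is what pins $\inf\{h_M+I\}=0$ and extracts the factor $-M$. The lower bound, by contrast, only uses continuity of the metric and the point-evaluation estimate $\inf\{\widetilde h+I\}\leq I(x_0)$, and is therefore much softer.
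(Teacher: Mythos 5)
The paper states this lemma only with a citation to \cite{MR1431744} and gives no proof, so there is no in-paper argument to compare against. Your self-contained proof is correct and is the standard one (cf.\ Dupuis and Ellis, \emph{A Weak Convergence Approach to the Theory of Large Deviations}, Theorem 1.2.3): the tent functions $h_M$ and $\widetilde h$ are the canonical test functions, the pointwise bounds $e^{-h_M(X^\varepsilon)/\varepsilon}\ge e^{M/\varepsilon}\mathbf{1}_F(X^\varepsilon)$ and $e^{-\widetilde h(X^\varepsilon)/\varepsilon}\le \mathbf{1}_G(X^\varepsilon)+e^{-M/\varepsilon}$ convert the Laplace limit into probability estimates, and the strict separation $d(K_M,F)\ge 2\delta$ is exactly what pins $\inf_{\mathcal{E}}\{h_M+I\}=0$. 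One remark on hypotheses, which you yourself correctly flag: the upper-bound half genuinely requires $I$ to be a \emph{good} rate function, since compactness of $K_M$ is what guarantees $d(K_M,F)>0$ for the disjoint closed set $F$ (two disjoint closed sets in a general Polish space can be at distance zero). The lemma as printed says only ``a rate function $I$'', which by \Cref{ratefunc} means merely lower semicontinuous; this is a small imprecision in the paper's statement rather than a gap in your argument, because both the cited reference and the rate function produced by \Cref{generalcriterialemma} are good, so goodness is available everywhere this lemma is invoked.
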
\label{lembryc}
	\subsection{General criteria}\label{generalcriteria}
	In this subsection, we introduce sufficient conditions for a sequence of Wiener functionals to satisfy the
	Laplace principle.
	Let  \( \mathbb{S} \) be the collection of control functions defined as,
	\begin{align*}
		\mathbb{S} := \Big\{ \rho: \rho \text{ is } Y_1 \times Y_2\text{\,-valued \( \mathbb{F} \)-predictable process and } 	\|\rho\|_{L^\infty(\Omega;L^2(0, T; Y_1 \times Y_2) )} < \infty\Big\}
	\end{align*}
	
	where,
	\begin{equation*}
		\|\rho\|_{L^\infty(\Omega;L^2(0, T; Y_1 \times Y_2) )}^2 = \esssup_{\omega \in \Omega} \Big\{ \int_0^T \| \rho(t, \omega) \|^2_{Y_1 \times Y_2} \, dt \Big\}
		=  \esssup_{\omega \in \Omega} \Big\{ \int_0^T \Big( \| \rho_1(t, \omega) \|^2_{Y_1} + \| \rho_2(t, \omega) \|^2_{Y_2} \Big) \, dt \Big\}. 
	\end{equation*}
	
	For $r \in \mathbb{N}$ define, 
	\begin{align*}
		D^1_r:= \Big\{ \rho_1 \in L^2(0, T; Y_1) : \int_0^T \| \rho_1(t) \|^2_{Y_1} \, dt \leq r \Big\}, 
	\end{align*}
	\begin{align*}
		D_r^2 :=\Big\{ \rho_2 \in L^2(0, T;Y_2) : \int_0^T \| \rho_2(t) \|^2_{Y_2} \, dt \leq r\Big\},
	\end{align*}
	and denote
	\begin{align*}
		\mathbb{D}_r := D_r^1 \times D_r^2 \quad \text{and} \quad \mathbb{D} := \bigcup_{r \geq 1} \mathbb{D}_r.
	\end{align*}
	On \(\mathbb{D}_r\), we consider the topology induced by the weak topology on the Hilbert space \(L^2(0, T; Y_1 \times Y_2)\). Note that this topology is metrizable.
	Define, \begin{align*}
		\mathbb{S}_r := \Big\{ \rho=(\rho_1, \rho_2) \in \mathbb{S} : \rho(\omega) \in \mathbb{D}_r,\quad \text{for a.e.}\,\,\,\omega \in \Omega\Big\}.
	\end{align*}
	Note that, \begin{align*}
		\mathbb{S} = \bigcup_{r \geq 1} \mathbb{S}_r.
	\end{align*}
	Here, we state a general criterion for the LDP (established by Budhiraja and Dupuis in \cite{MR1785237}), modified according to the context of our interest (one can see  \cite[Lemma 2.1]{MR4806981} also):
	\begin{lem} \label{generalcriterialemma}
		Assume that $\mathcal{G}^\varepsilon : C\big([0,T]; Y_1 \times Y_2\big) \to \mathcal{E}$ is a family of measurable mappings. If $X^\varepsilon = \mathcal{G}^\varepsilon(W)$ and there exists a measurable map $\mathcal{G}^0 : C\big([0,T]; Y_1 \times Y_2\big) \to \mathcal{E}$ such that the following two conditions hold:
		\begin{itemize}
			\item[(a)] Let $\{\rho^\varepsilon\}_{\varepsilon > 0} \subset \mathbb{S}_M$, $M < \infty$. If $\rho^\varepsilon$ converges to $\rho$ in distribution as $\mathbb{D}_M$-valued random elements, then
			$$\mathcal{G}^\varepsilon\bigg( W + \frac{1}{\sqrt{\varepsilon}} \int_0^\cdot \rho_s^\varepsilon \, ds \bigg)
			\to \mathcal{G}^0 \bigg( \int_0^\cdot \rho_s \, ds \bigg)$$
			in distribution as $\varepsilon \to 0$.
			\item[(b)] For any $M < \infty$, the set
			$$K_M := \bigg\{ \mathcal{G}^0 \left( \int_0^\cdot \rho_s \, ds \right) : \rho \in \mathbb{D}_M \bigg\}$$
			is a compact subset in $\mathcal{E}$.
		\end{itemize}
		Then the family $\big\{X^\varepsilon\big\}_{\varepsilon > 0}$ satisfies the Laplace principle (hence LDP) on $\mathcal{E}$ with the following good rate function
		$$	I(f) = \inf_{\substack{\rho \in L^2(0,T; Y_1 \times Y_2):\\ f = \mathcal{G}^0(\int_0^\cdot \rho_s ds)}} \bigg\{ \frac{1}{2} \int_0^T \|\rho_s\|_{Y_1 \times Y_2}^2 \, ds \bigg\}.$$
	\end{lem}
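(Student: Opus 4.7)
The proof follows the Budhiraja--Dupuis paradigm and rests on the variational (Bou\'e--Dupuis) representation: for every bounded measurable $h:\mathcal{E}\to\mathbb{R}$ and every $\varepsilon>0$,
\begin{equation*}
-\varepsilon\log\mathbb{E}\Bigl[\exp\Bigl(-\tfrac{1}{\varepsilon}h(X^\varepsilon)\Bigr)\Bigr]
=\inf_{\rho\in\mathbb{S}}\mathbb{E}\Bigl[\tfrac{1}{2}\!\int_0^T\!\|\rho(s)\|_{Y_1\times Y_2}^2\,ds+h\Bigl(\mathcal{G}^\varepsilon\Bigl(W+\tfrac{1}{\sqrt{\varepsilon}}\!\int_0^\cdot\!\rho_s\,ds\Bigr)\Bigr)\Bigr].
\end{equation*}
In view of Varadhan's Lemma (Lemma \ref{lemvaradhan}) and Bryc's Lemma (Lemma \ref{lembryc}), it suffices to verify the Laplace principle with rate function $I$. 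I will prove separately the matching upper and lower bounds for $-\varepsilon\log\mathbb{E}[\exp(-h(X^\varepsilon)/\varepsilon)]$ using the variational formula, and then confirm that $I$ is a good rate function by invoking hypothesis (b).

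\textbf{Upper Laplace bound.} To establish
\begin{equation*}
\limsup_{\varepsilon\to 0}\bigl(-\varepsilon\log\mathbb{E}[e^{-h(X^\varepsilon)/\varepsilon}]\bigr)\leq\inf_{x\in\mathcal{E}}\{h(x)+I(x)\},
\end{equation*}
I would fix $\eta>0$ and pick $\tilde\rho\in L^2(0,T;Y_1\times Y_2)$ together with $\tilde x=\mathcal{G}^0\bigl(\int_0^\cdot\tilde\rho_s\,ds\bigr)$ that are $\eta$-optimal for the infimum on the right. Treating $\tilde\rho$ as a deterministic (hence $\mathbb{F}$-predictable) element of $\mathbb{S}_M$ with $M=\|\tilde\rho\|_{L^2}^2$, the variational formula provides the upper estimate
\begin{equation*}
-\varepsilon\log\mathbb{E}[e^{-h(X^\varepsilon)/\varepsilon}]\leq\tfrac{1}{2}\|\tilde\rho\|^2_{L^2(0,T;Y_1\times Y_2)}+\mathbb{E}\bigl[h\bigl(\mathcal{G}^\varepsilon\bigl(W+\tfrac{1}{\sqrt{\varepsilon}}\!\int_0^\cdot\!\tilde\rho_s\,ds\bigr)\bigr)\bigr].
\end{equation*}
Applying (a) with $\rho^\varepsilon\equiv\tilde\rho$ gives $\mathcal{G}^\varepsilon(W+\tfrac{1}{\sqrt{\varepsilon}}\int_0^\cdot\tilde\rho\,ds)\Rightarrow\tilde x$, and the bounded continuity of $h$ together with the Portmanteau theorem yields convergence of the expectation, so the lim sup is bounded by $h(\tilde x)+I(\tilde x)+\eta\leq\inf_x\{h(x)+I(x)\}+2\eta$. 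Letting $\eta\to 0$ finishes this half.

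\textbf{Lower Laplace bound.} This is the subtler direction. I would select, for each $\varepsilon>0$, a control $\rho^\varepsilon\in\mathbb{S}$ that is $\varepsilon$-optimal in the variational representation. The boundedness of $h$ (by some $\|h\|_\infty$) forces the near-optimal controls to satisfy $\mathbb{E}\bigl[\tfrac{1}{2}\|\rho^\varepsilon\|_{L^2}^2\bigr]\leq 2\|h\|_\infty+\varepsilon$; by a standard stopping-time truncation argument one may in fact assume $\rho^\varepsilon\in\mathbb{S}_M$ for $M=4\|h\|_\infty+1$ at the cost of an arbitrarily small additive error. Since $\mathbb{D}_M$ is weakly compact (and metrizable) in $L^2(0,T;Y_1\times Y_2)$, the family $\{\rho^\varepsilon\}$ is tight and, along a subsequence, converges in distribution to some $\rho\in\mathbb{D}_M$. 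Invoking hypothesis (a) along this subsequence yields the joint distributional convergence
\begin{equation*}
\Bigl(\rho^\varepsilon,\,\mathcal{G}^\varepsilon\Bigl(W+\tfrac{1}{\sqrt{\varepsilon}}\!\int_0^\cdot\!\rho^\varepsilon_s\,ds\Bigr)\Bigr)\Longrightarrow\Bigl(\rho,\,\mathcal{G}^0\Bigl(\!\int_0^\cdot\!\rho_s\,ds\Bigr)\Bigr).
\end{equation*}
Combining Fatou's lemma for the quadratic cost (using lower semicontinuity of $\rho\mapsto\|\rho\|_{L^2}^2$ under weak convergence) with bounded convergence for the $h$-term, and recalling the definition of $I$, produces
\begin{equation*}
\liminf_{\varepsilon\to 0}\bigl(-\varepsilon\log\mathbb{E}[e^{-h(X^\varepsilon)/\varepsilon}]\bigr)\geq\mathbb{E}\bigl[h\bigl(\mathcal{G}^0(\textstyle\int_0^\cdot\rho_s\,ds)\bigr)+I\bigl(\mathcal{G}^0(\int_0^\cdot\rho_s\,ds)\bigr)\bigr]\geq\inf_{x\in\mathcal{E}}\{h(x)+I(x)\}.
\end{equation*}

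\textbf{Goodness of $I$ and main difficulty.} For any $M\geq 0$, the sublevel set $\{f:I(f)\leq M\}$ is contained in $K_M$ (up to a negligible adjustment $M\to M+\delta$), hence precompact by (b); lower semicontinuity of $I$ follows from the compactness in (b) combined with weak lower semicontinuity of the $L^2$-cost, so $I$ is a good rate function. I anticipate that the genuine difficulty lies in the lower bound, specifically in converting weak-in-distribution convergence of the random controls $\rho^\varepsilon$ on $\mathbb{D}_M$ into a bona fide passage to the limit inside (a); depending on whether $\mathcal{G}^\varepsilon$ is applied pathwise, one may need to invoke the Skorokhod representation theorem to work on a common probability space, and to verify that the cost functional is jointly lower semicontinuous along the realised convergence. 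Once this is in place, the two bounds together give the Laplace principle on $\mathcal{E}$, and hence, by Bryc's Lemma, the full LDP with rate function $I$.
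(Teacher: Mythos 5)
The paper does not prove \Cref{generalcriterialemma}: it is stated as a known result, citing Budhiraja--Dupuis \cite{MR1785237} (and \cite[Lemma 2.1]{MR4806981}), and then used as a black box in \Cref{sectionldp}. So there is no ``paper's proof'' to compare against. Your sketch reconstructs the standard Budhiraja--Dupuis argument (Bou\'e--Dupuis variational representation, deterministic substitution for the upper Laplace bound, tightness plus Skorokhod and Fatou for the lower Laplace bound, $K_M$-compactness for goodness of $I$), and the outline is correct in its essentials. Two small points worth tightening if you were to expand this into a full proof: (i) in the ``goodness of $I$'' step, condition (b) alone does not give lower semicontinuity of $I$ --- you also need continuity of $\rho\mapsto \mathcal{G}^0(\int_0^\cdot\rho_s\,ds)$ on $\mathbb{D}_M$, which is the content of condition (a) applied with deterministic controls and $\varepsilon\to 0$; and (ii) the truncation of the near-optimal random controls to $\mathbb{S}_M$ is indeed standard, but it is a genuine lemma (one typically argues that the event $\{\tfrac12\int_0^T\|\rho^\varepsilon_s\|^2\,ds>M\}$ has small probability and that discarding the control there changes the cost by an arbitrarily small amount), not merely a remark.
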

	\subsection{Main result}\label{mainresult}
	The main result of this work is stated below and is proven in the subsequent sections. 
	\begin{thm}\label{mainresultth}
		For \(1< \alpha< \frac{4}{d}+1, r = \alpha+1\), the family of solutions $\{u^\varepsilon \}_{\varepsilon>0}$ to \eqref{perturbed} satisfies the LDP on the Polish space $\mathcal{E}:=C([0, T]; H) \cap L^p(0, T; L^r(\mathbb{R}^d))$ with the good rate function $I$ given by
		\begin{align*}
			I(f):= \inf_{\rho \in L^2(0,T; Y_1\times Y_2)} \Big\{\frac{1}{2}\int_{0}^{T}\|\rho(s)\|_{Y_1 \times Y_2}^2 ds : u^\rho=f\Big\},
		\end{align*}
		for $f \in C([0, T]; H) \cap L^p(0, T; L^r(\mathbb{R}^d))$,	where $u^\rho$ is the unique global mild solution (in the sense of \Cref{mildskl}) of the skeleton equation \eqref{Skeleton}.
	\end{thm}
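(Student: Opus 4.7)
The plan is to apply the Budhiraja--Dupuis general criterion, \Cref{generalcriterialemma}, in the Polish space $\mathcal{E} = C([0,T];H)\cap L^p(0,T;L^r(\mathbb{R}^d))$. The measurable map $\mathcal{G}^\varepsilon$ is furnished by the Yamada--Watanabe representation of the unique global mild solution $u^\varepsilon$ of \eqref{perturbed} constructed in \Cref{sectionsce}, while $\mathcal{G}^0$ sends a control $\rho\in L^2(0,T;Y_1\times Y_2)$ to the unique global mild solution $u^\rho$ of the skeleton equation, whose well-posedness was established in \Cref{sectionskeleton}. With these identifications, proving the LDP reduces to verifying the two hypotheses (a) and (b) of \Cref{generalcriterialemma}.

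For the compactness condition (b), the strategy is to show that the skeleton solution map $\rho\mapsto u^\rho$ is continuous from $\mathbb{D}_M$, endowed with the (metrizable) weak topology of $L^2(0,T;Y_1\times Y_2)$ under which $\mathbb{D}_M$ is compact, into $\mathcal{E}$. Given a sequence $\rho^n\rightharpoonup\rho$ in $\mathbb{D}_M$, I would first extract $\varepsilon$-uniform bounds for $u^{\rho^n}$ in $C([0,T];H)\cap L^p(0,T;L^r(\mathbb{R}^d))$ via the energy identity for the skeleton equation, Strichartz and local smoothing estimates, combined with the Yosida regularization already used in \Cref{sectionskeleton}; these bounds depend only on $M$ because $\|\rho^n\|_{L^2(0,T;Y_1\times Y_2)}\leq\sqrt{M}$. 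I would then pass to the limit in the mild formulation, using strong convergence of $u^{\rho^n}$ on bounded space-time cylinders (needed to handle the nonlinearity $\mathcal{N}$) together with the key observation that the linear map $v\mapsto\int_0^\cdot e^{-i(t-s)A} B(v(s))\rho^n_1(s)\,ds$ is weakly continuous in $\rho^n_1$ by the Hilbert--Schmidt property \eqref{BmH}. Uniqueness in the skeleton equation then identifies the limit as $u^\rho$.

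For the weak-convergence condition (a), I would exploit Girsanov's theorem to identify $\tilde{u}^\varepsilon:=\mathcal{G}^\varepsilon\bigl(W+\tfrac{1}{\sqrt{\varepsilon}}\int_0^\cdot\rho^\varepsilon_s\,ds\bigr)$ as the unique global mild solution of the controlled stochastic equation
\[
d\tilde{u}^\varepsilon=-\bigl[iA\tilde{u}^\varepsilon+i\mathcal{N}(\tilde{u}^\varepsilon)+\beta\tilde{u}^\varepsilon+\varepsilon b(\tilde{u}^\varepsilon)\bigr]dt-iB(\tilde{u}^\varepsilon)\rho^\varepsilon_1\,dt-iG(\tilde{u}^\varepsilon)\rho^\varepsilon_2\,dt-i\sqrt{\varepsilon}B(\tilde{u}^\varepsilon)\,d\mathcal{W}_1-i\sqrt{\varepsilon}G(\tilde{u}^\varepsilon)\,d\mathcal{W}_2,
\]
whose global well-posedness in $\mathcal{E}$ is precisely what was achieved in \Cref{sectionsce}. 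I would then prove $\tilde{u}^\varepsilon\to u^\rho$ in distribution on $\mathcal{E}$ by the triangle decomposition $\tilde{u}^\varepsilon-u^\rho=(\tilde{u}^\varepsilon-u^{\rho^\varepsilon})+(u^{\rho^\varepsilon}-u^\rho)$. The second summand vanishes in law by the continuity of the skeleton map established for condition (b). The first is treated by applying It\^o's formula to $\|\tilde{u}^\varepsilon-u^{\rho^\varepsilon}\|_H^2$; the Stratonovich correction $\varepsilon b$ and the It\^o diffusion yield only $O(\varepsilon)$ contributions after taking expectation and using the Lipschitz bound \eqref{LipG} and \eqref{BmH}, while the cross terms involving $B$ and $G$ are absorbed by a Gronwall argument provided one has uniform $L^p(\Omega;\mathcal{E})$ bounds for $\tilde{u}^\varepsilon$ and $u^{\rho^\varepsilon}$. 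Convergence in the $L^p(0,T;L^r(\mathbb{R}^d))$ component of the norm is then recovered through Strichartz estimates applied to the difference of mild formulations.

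The main obstacle, as flagged in the \emph{Novelties and difficulties} subsection, is that the loss of the conservation law (due to damping and It\^o noise) combined with the absence of compact Sobolev embeddings on $\mathbb{R}^d$ prevents any direct Gelfand-triple or Aubin--Lions argument in the full space $\mathcal{E}$. The crucial analytical input is therefore the $\varepsilon$-uniform control of $\tilde{u}^\varepsilon$ in the Strichartz norm $L^p(0,T;L^r(\mathbb{R}^d))$, which is what allows us to upgrade convergence from $C([0,T];H)$ to $\mathcal{E}$ and to pass to the limit in the power-type nonlinearity $\mathcal{N}(\tilde{u}^\varepsilon)=|\tilde{u}^\varepsilon|^{\alpha-1}\tilde{u}^\varepsilon$. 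These uniform Strichartz bounds will be obtained by reprising, at the controlled level, the truncation-plus-stopping-time-plus-Yosida scheme already developed in \Cref{sectionsce}, and this is where the bulk of the technical work will lie. Once both conditions (a) and (b) are verified, \Cref{generalcriterialemma} delivers the Laplace principle for $\{u^\varepsilon\}$ with rate function $I$, and the equivalence asserted in \Cref{lemvaradhan} and \Cref{lembryc} yields the LDP stated in \Cref{mainresultth}.
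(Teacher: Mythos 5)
Your high-level framework is correct: identify $\mathcal{G}^\varepsilon$ and $\mathcal{G}^0$ via Yamada--Watanabe and the skeleton map, then verify hypotheses (a) and (b) of \Cref{generalcriterialemma}, finally invoking the equivalence of Laplace principle and LDP on Polish spaces. The decomposition you propose for condition (a), namely splitting $\tilde u^\varepsilon - u^\rho$ through the intermediate skeleton solution $u^{\rho^\varepsilon}$ with random control, is a slightly different bookkeeping from what the paper does (the paper directly compares the controlled stochastic solution $\Psi^\varepsilon(\rho^\varepsilon)$ with $u^\rho$ in a single mild-formulation estimate, localised in $L^p L^r$ by the stopping time $\tau^\varepsilon_{M_\eta}$ and closed by Gr\"onwall); both are workable, though the paper's version has the advantage that it only requires the two well-posedness theorems already proved and one Markov/Chebyshev bound rather than a separate estimate on the skeleton-vs-skeleton difference with random controls.

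The genuine gap is in your treatment of condition (b). You propose to ``pass to the limit in the mild formulation, using strong convergence of $u^{\rho^n}$ on bounded space-time cylinders (needed to handle the nonlinearity $\mathcal{N}$)'', which is a compactness-and-identification scheme. But no compactness mechanism is available here: the initial data are only in $H = L^2(\mathbb{R}^d)$, the solutions live merely in $C([0,T];H)\cap L^p(0,T;L^r(\mathbb{R}^d))$, the spatial domain is $\mathbb{R}^d$, and no $H^1$-bound or Gelfand triple is at hand -- this is precisely the obstruction the paper flags in \emph{Novelties and difficulties} and the very reason it rejects the Aubin--Lions route. Extracting a strongly convergent subsequence of $u^{\rho^n}$, even locally in $(t,x)$, is therefore not justified, and without it you cannot pass the power nonlinearity $\mathcal{N}(u) = |u|^{\alpha-1}u$ to the limit. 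The paper's proof of \Cref{compactness} sidesteps this entirely: it never extracts a convergent subsequence of the solutions. Instead it subtracts the two mild formulations for $u^n = \Psi^0(\rho^n)$ and $u^0 = \Psi^0(\rho^0)$, controls the nonlinearity term by the local-Lipschitz estimate \eqref{lipF} in Strichartz norms (which requires only boundedness of the $L^pL^r$ norms, guaranteed by \eqref{nindependentbound}), chooses a short time interval to make that term a contraction, adds and subtracts in the $B$- and $G$-terms so that the remaining difference is driven by $\rho^n - \rho^0$, and closes with Gr\"onwall. This gives $\|u^n - u^0\|_{\mathbb{L}^{\infty,p}_{2,r}(T)} \to 0$ \emph{directly}, without any compactness of the solutions, and is what you should use in place of the compactness extraction.
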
 
	Before moving to the proof of the main result, we will prove the well-posedness of the skeleton equation and stochastic controlled equation.
	\section{Skeleton equation}\label{sectionskeleton}
	In this section, we will prove the global existence of a unique mild solution to the skeleton equation \eqref{Skeleton}. To do that, first, we prove the local existence of a mild solution by defining an integral operator and using the Banach fixed point theorem. Then, by using the Yosida approximation and the Strichartz estimates we prove the existence of a global solution. Finally, we establish the uniqueness of the global solution.
	\subsection{Well-posedness of skeleton equation}\label{wellposeskl}
	For \( \rho = (\rho_1,{\rho_2}) \in \mathbb{D}\), we consider the following  deterministic controlled equation (skeleton equation),
	\begin{align}
		\left\{
		\begin{aligned}
			du^\rho(t)&=-\big[i Au^\rho(t) +i \mathcal{N}(u^\rho(t))+ \beta u^\rho(t)\big]dt - i B(u^\rho(t))\rho_1(t)dt\\
			& \quad- i  G(u^\rho(t))\rho_2(t)dt , \quad t>0,	\\
			u^\rho(0)&=u_0.
		\end{aligned}
		\right. \label{Skeleton}
	\end{align}
	Our first aim is to prove the existence and uniqueness of a global mild solution of the skeleton equation (\ref{Skeleton}). Let us define the mild solution.
	\begin{defi}[Mild solution]\label{mildskl}
		Given any $u_0 \in H$, a \emph{ mild solution} to the skeleton equation (\ref{Skeleton}) is a function $u^\rho = \big\{u^\rho(t): t\in [0,T]\big\} \in C([0, T]; H) \cap L^p(0, T; L^r(\mathbb{R}^d))$ satisfying 
		\begin{align}
			u^\rho(t)&=S(t)u_0	- \int_{0}^{t} S(t-s) \big[i \mathcal{N}(u^\rho(s))+ \beta u^\rho(s)\big]ds - i\int_{0}^{t} S(t-s)\big[B(u^\rho(s))\rho_1(s)+G(u^\rho(s))\rho_2(s) \big]ds. \label{mildformsk}
		\end{align}
	\end{defi}
	\begin{defi}[Admissible pair]\label{Admissible}
		A pair \((p, r) \in [2, \infty]\times [2, \infty] \) is said to be \emph{admissible} if
		\begin{align*}
			\frac{2}{p} = \frac{d}{2} - \frac{d}{r}, \quad (p, r, d) \neq (2, \infty, 2).
		\end{align*}
	\end{defi}
	\noindent With this definition, in different dimensions, we have 
	\begin{align*}
		\left\{
		\begin{aligned}
			&2\leq r\leq\infty, \quad &&\text{if} \quad d=1,\\
			& 2\leq r<\infty, \quad &&\text{if} \quad d=2,	\\
			&2\leq r\leq \frac{2d}{d-2},\quad &&\text{if} \quad d \geq3.
		\end{aligned}
		\right.
	\end{align*}
	Note that, $(\infty,2)$ is always admissible. Now, we state the Strichartz estimates \cite[Theorem 2.3.3]{MR2002047}, which will be used several times in our proof:
	\noindent\begin{prop}[Strichartz's estimates] \label{Strichartz}
		Let \((p, r)\) and \((\gamma, \sigma)\) be admissible pairs and $\gamma'$, $\sigma'$ are conjugate of $\gamma, \sigma$ i.e., $\frac{1}{\gamma}+\frac{1}{\gamma'}=1$ and $\frac{1}{\sigma}+\frac{1}{\sigma'}=1$.
		\begin{enumerate}
			\item[(i)] For any \( \phi \in H \), the function \( t \mapsto S_t \phi \) belongs to \( L^p(\mathbb{R}; L^r(\mathbb{R}^d)) \cap L^\infty(\mathbb{R}; H) \) and there exists a constant \( C \) such that
			\begin{equation*}
				\|S_\cdot \phi\|_{L^p(\mathbb{R}; L^r(\mathbb{R}^d))} \leq C \|\phi\|_{H}. 
			\end{equation*}
			
			\item[(ii)] Let \( I \) be an interval of \( \mathbb{R} \) and \( 0 \in {\overline{I}} \). Then for every \( g \in L^{\gamma'}(I; L^{\sigma'}(\mathbb{R}^d)) \), the function \( t \mapsto \Phi_g(t) = \int_0^t S_{t-s} g(s) \, ds \) belongs to \( L^p(I; L^r(\mathbb{R}^d)) \cap L^\infty({\overline{I}}; H) \) and there exists a constant \( C \) independent of \( I \) such that
			\begin{equation}\label{Strichartzinf}
				\|\Phi_g\|_{L^\infty({\overline{I}}; H)} \leq C \|g\|_{L^{\gamma'}(I; L^{\sigma'}(\mathbb{R}^d))}, 
			\end{equation}
			\begin{equation}\label{Strichartzp}
				\|\Phi_g\|_{L^p(I; L^r(\mathbb{R}^d))} \leq C \|g\|_{L^{\gamma'}(I; L^{\sigma'}(\mathbb{R}^d))}.
			\end{equation}
		\end{enumerate}
	\end{prop}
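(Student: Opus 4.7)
My plan is the classical $TT^*$ duality argument combined with the Hardy-Littlewood-Sobolev inequality, as in \cite[Ch.~2]{MR2002047}, which reduces the proof to three standard ingredients. \textbf{First}, I would establish the pointwise-in-time dispersive decay of the free Schr\"odinger group: from the explicit Fourier kernel of $e^{it\Delta}$ one reads off the $L^1\to L^\infty$ bound $\|S_t\phi\|_{L^\infty}\leq C|t|^{-d/2}\|\phi\|_{L^1}$, which combined with the $L^2$-unitarity $\|S_t\phi\|_{L^2}=\|\phi\|_{L^2}$ yields by Riesz-Thorin interpolation
\begin{equation*}
\|S_t\phi\|_{L^r(\mathbb{R}^d)}\leq C|t|^{-d(1/2-1/r)}\|\phi\|_{L^{r'}(\mathbb{R}^d)}\qquad\text{for every}\ 2\leq r\leq\infty,\ t\neq 0.
\end{equation*}

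\textbf{Second}, for the homogeneous estimate (i), I would set $T\phi(t)=S_t\phi$ and invoke the $TT^*$ principle: since $S_t$ is unitary on $H$ with $S_t^*=S_{-t}$, one has $T^*g=\int_{\mathbb{R}}S_{-s}g(s)\,ds$, and boundedness of $T:H\to L^p(\mathbb{R};L^r)$ is equivalent to boundedness of
\begin{equation*}
TT^*g(t)=\int_{\mathbb{R}}S_{t-s}g(s)\,ds\ :\ L^{p'}(\mathbb{R};L^{r'})\longrightarrow L^p(\mathbb{R};L^r).
\end{equation*}
Inserting the dispersive bound from Step 1 and using the admissibility identity $d(1/2-1/r)=2/p$, I obtain the pointwise-in-$t$ inequality $\|TT^*g(t)\|_{L^r}\leq C\int_{\mathbb{R}}|t-s|^{-2/p}\|g(s)\|_{L^{r'}}\,ds$; the Hardy-Littlewood-Sobolev inequality on $\mathbb{R}$ (applicable since $2/p\in(0,1)$ when $p>2$) then supplies the $L^p_t$ bound and completes~(i).

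\textbf{Third}, for (ii), the $L^\infty(\overline{I};H)$ bound \eqref{Strichartzinf} drops out of (i) by duality: for any $\phi\in H$,
\begin{equation*}
|\langle\Phi_g(t),\phi\rangle_H|=\Big|\int_0^t\langle g(s),S_{s-t}\phi\rangle\,ds\Big|\leq \|g\|_{L^{\gamma'}(I;L^{\sigma'})}\,\|S_\cdot\phi\|_{L^\gamma(\mathbb{R};L^\sigma)}\leq C\|g\|_{L^{\gamma'}(I;L^{\sigma'})}\|\phi\|_H,
\end{equation*}
after which one takes the supremum over unit $\phi$ and over $t\in\overline{I}$. For the space-time bound \eqref{Strichartzp} I would first derive the untruncated inequality $\bigl\|\int_{\mathbb{R}}S_{t-s}g(s)\,ds\bigr\|_{L^p(L^r)}\leq C\|g\|_{L^{\gamma'}(L^{\sigma'})}$ by the same $TT^*$/HLS scheme as in Step 2 but with two possibly different admissible pairs, and then apply the Christ-Kiselev lemma to replace the full integral by the causal truncation $\int_0^t$, which is legitimate whenever $\gamma'<p$.

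The main obstacle is the endpoint regime $p=2$, $r=2d/(d-2)$ for $d\geq 3$, where HLS degenerates (the exponent $2/p=1$ falls outside $(0,1)$) and Christ-Kiselev is unavailable; this case demands the Keel-Tao bilinear interpolation argument on Lorentz spaces. However, in all applications of this proposition within \Cref{sectionskeleton}, \Cref{sectionsce}, and \Cref{sectionldp}, the admissible pairs dictated by the range $\alpha\in(1,\tfrac{4}{d}+1)$ are strictly non-endpoint, so Steps 1-3 above suffice and the endpoint subtleties need not be addressed in the present work.
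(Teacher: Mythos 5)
Your sketch is the classical $TT^*$/dispersive-decay/Hardy--Littlewood--Sobolev argument with the Christ--Kiselev lemma for the retarded estimate, which is precisely the proof in the reference \cite[Theorem~2.3.3]{MR2002047} to which the paper defers without reproducing it. You are also correct that the Keel--Tao endpoint $(p,r)=(2,\tfrac{2d}{d-2})$ plays no role here: with $r=\alpha+1$ and $\alpha\in(1,\tfrac{4}{d}+1)$ all pairs used in \Cref{sectionskeleton}--\Cref{sectionldp} are non-endpoint, so the non-endpoint argument suffices.
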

	\begin{proof}
		For a detailed proof, we refer to \cite[Theorem 2.3.3]{MR2002047}.
	\end{proof}
	Given an admissible pair \((r, p)\) and \(s, t\) such that \(0 \leq s < t\), denote the space
	\begin{align*}
		\mathbb{L}^{\infty, p}_{2, r}(s,t) := L^\infty(s,t; H) \cap L^p(s, t; L^r(\mathbb{R}^d)),
	\end{align*}
	with the norm,
	\begin{align}\label{normassum}
		\|u\|_{\mathbb{L}^{\infty, p}_{2, r}(s,t)}= \|u\|_{L^\infty(s,t; H)}+\|u\|_{ L^p(s, t; L^r(\mathbb{R}^d))}.
	\end{align}
	For simplicity, we denote \(\mathbb{L}^{\infty, p}_{2, r}(0,T)\) as \(\mathbb{L}^{\infty, p}_{2, r}(T)\). The next theorem provides the existence of a global mild solution. Throughout the paper, $C$ denotes a generic positive constant, possibly varying from line to line. Furthermore, we write $C_*$, when the dependence on `$*$' is essential.
	\begin{thm}\label{wellskl}
		Let \(1< \alpha< \frac{4}{d}+1, r = \alpha+1\) and \((p, r)\) be an admissible pair. For
		any \(\rho \in \mathbb{D}\), there exists a unique global mild solution \(u^\rho = \big\{u^\rho(t): t\in [0,T]\big\} \in C([0, T]; H) \cap L^p(0, T; L^r(\mathbb{R}^d))\) to the skeleton equation \eqref{Skeleton}.	
	\end{thm}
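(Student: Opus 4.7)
The plan is to follow the two-stage strategy outlined in Section 1.3: first establish local well-posedness via the Banach contraction principle in the Strichartz space $\mathbb{L}^{\infty,p}_{2,r}(T_0)$, then extend to a global mild solution using Yosida approximation together with an a priori $H$-norm bound, and finally conclude uniqueness.

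For the local step, I would define the integral operator
\begin{equation*}
\Psi(u)(t) = S(t)u_0 - \int_0^t S(t-s)\bigl[i\mathcal{N}(u(s))+\beta u(s)\bigr]ds - i\int_0^t S(t-s)\bigl[B(u(s))\rho_1(s)+G(u(s))\rho_2(s)\bigr]ds,
\end{equation*}
and show that for $T_0>0$ small enough and $R$ comparable to $\|u_0\|_H$, the map $\Psi$ sends the closed ball $B_R\subset\mathbb{L}^{\infty,p}_{2,r}(T_0)$ into itself and is a strict contraction there. The essential tool is \Cref{Strichartz}: the free evolution is bounded by $C\|u_0\|_H$; the nonlinearity $\mathcal{N}(u)=|u|^{\alpha-1}u$ is handled through the dual Strichartz estimate with $(\gamma,\sigma)=(p,r)$, noting that $r=\alpha+1$ gives $\alpha r'=r$, so H\"older in space yields $\|\mathcal{N}(u)\|_{L^{r'}}=\|u\|_{L^r}^{\alpha}$, and a second H\"older in time gives $\|\mathcal{N}(u)\|_{L^{p'}(0,T_0;L^{r'})}\leq T_0^{\theta}\|u\|_{L^p(0,T_0;L^r)}^{\alpha}$ for some $\theta>0$; this positive power of $T_0$ is exactly the subcritical condition $\alpha p'<p$ encoded in the admissibility of $(p,\alpha+1)$. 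For the damping and control-noise terms I would use the endpoint admissible pair $(\infty,2)$ through the $L^1(0,T_0;H)$-bound, invoking \eqref{BmH} and the linear growth \eqref{LgG} together with $\rho\in\mathbb{D}_M$ and the Cauchy–Schwarz inequality in time. Combining these estimates and the standard bound $|\mathcal{N}(u_1)-\mathcal{N}(u_2)|\leq C(|u_1|^{\alpha-1}+|u_2|^{\alpha-1})|u_1-u_2|$ gives both the self-mapping and the contraction properties provided $T_0$ is small.

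For the global step, I observe that the local time $T_0$ depends only on $\|u_0\|_H$ and on $\|\rho\|_{L^2}\le\sqrt{M}$, so an a priori $H$-bound on $[0,T]$ suffices. The key structural fact is that, at the formal level, both $iAu$ and $i\mathcal{N}(u)$ are skew in the $H$-inner product ($A$ self-adjoint, $\mathcal{N}(u)\parallel u$), so they do not contribute to $\tfrac{d}{dt}\|u\|_H^2$. Since a mere mild solution is not regular enough to justify this calculation, I would introduce the Yosida approximation $A_n=nA(nI+A)^{-1}$ and consider the corresponding regularized skeleton equation, whose solution $u_n^{\rho}$ lives in $D(A)$ and satisfies the energy identity
\begin{equation*}
\tfrac{d}{dt}\|u_n^{\rho}(t)\|_H^2 = -2\beta\|u_n^{\rho}(t)\|_H^2 + 2\,\mathrm{Im}\langle B(u_n^{\rho})\rho_1,u_n^{\rho}\rangle_H + 2\,\mathrm{Im}\langle G(u_n^{\rho})\rho_2,u_n^{\rho}\rangle_H.
\end{equation*}
Together with \eqref{BmH}, \eqref{LgG} and $\|\rho\|_{L^2(0,T;Y_1\times Y_2)}^2\le M$, Gronwall yields $\|u_n^{\rho}(t)\|_H\le C(T,M,\|u_0\|_H)$ uniformly in $n$. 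Reinserting this bound into the Strichartz estimates used in the local step produces a uniform $\mathbb{L}^{\infty,p}_{2,r}(T)$-bound for $u_n^{\rho}$, which allows passing to the limit $n\to\infty$ in the mild formulation and identifying the limit as a global mild solution on $[0,T]$. Uniqueness follows by rerunning the contraction estimate on the difference of two solutions on successive short intervals covering $[0,T]$.

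The main technical obstacle is the passage to the limit in the Yosida scheme while preserving the nonlinear term $|u|^{\alpha-1}u$: one needs strong convergence of $u_n^{\rho}$ in $L^p(0,T;L^r(\mathbb{R}^d))$ along a subsequence so that $\mathcal{N}(u_n^{\rho})\to\mathcal{N}(u^{\rho})$ in $L^{p'}(0,T;L^{r'}(\mathbb{R}^d))$, and simultaneously one must verify that the energy identity above survives the regularization without picking up an uncontrolled commutator between the resolvent $(I+n^{-1}A)^{-1}$ and $\mathcal{N}$, $B$, $G$. Selecting the precise way to regularize (and to handle the difference $\mathcal{N}(u_n^{\rho})-\mathcal{N}(u^{\rho})$ via equi-integrability or a refined Strichartz splitting) is where the heavy work will lie.
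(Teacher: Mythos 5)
Your overall strategy matches the paper's: Banach fixed point in $\mathbb{L}^{\infty,p}_{2,r}(T_0)$ via Strichartz estimates for the local step, a regularization to justify the energy identity, iteration/gluing for the global extension, and a contraction argument for uniqueness. The local-existence estimates you sketch (dual Strichartz with $(\gamma,\sigma)=(p,r)$ and $\alpha r'=r$, endpoint pair $(\infty,2)$ for the damping and control terms, the pointwise Lipschitz bound on $\mathcal{N}$) are precisely those the paper carries out in Step (A).

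The genuine gap lies in the regularization you propose for the global step. You want to replace $A$ by the Yosida approximation $A_n=nA(nI+A)^{-1}$. But $A_n$ is a bounded operator, so the group $e^{-itA_n}$ is not dispersive: the Strichartz estimates of \Cref{Strichartz} are unavailable for it, and hence the contraction argument of your local step cannot even be run for the regularized equation, nor can you ``reinsert'' the uniform $H$-bound into a Strichartz estimate to obtain a uniform $\mathbb{L}^{\infty,p}_{2,r}(T)$-bound for $u_n^\rho$. Worse, without Strichartz you cannot solve the $n$-th equation in $C([0,T];H)$ alone either, because $\mathcal{N}(u)=|u|^{\alpha-1}u$ is not locally Lipschitz as a map $H\to H$ when $\alpha>1$ in dimension $d\ge 2$; the nonlinearity lives in $L^{r'}$, not $L^2$, which is exactly why Strichartz is needed in the first place. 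The paper sidesteps both problems by \emph{not} touching $A$ or the semigroup $S(t)=e^{-itA}$: it instead wraps only the nonlinear terms and the initial datum in the resolvent $J_\mu=\mu(\mu I-\Delta)^{-1}$, setting $\mathcal{N}_\mu(u)=J_\mu\mathcal{N}(J_\mu u)$, $G_\mu(u)=J_\mu G(J_\mu u)$, $u_\mu^\rho(0)=J_\mu u_0$. This keeps the mild formulation of the $\mu$-equation written in terms of the \emph{same} dispersive group $S(t)$, so the contraction is available uniformly in $\mu$, the uniform $\mathbb{L}^{\infty,p}_{2,r}(T)$-bound \eqref{boundforumuglobal} follows, and the $\mu\to\infty$ passage in the nonlinearity can be closed by a Gr\"onwall argument on small intervals \eqref{mulip}--\eqref{muconv}. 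In short, the ``selecting the precise way to regularize'' issue you flag at the end is not merely a technicality but the crux, and the Yosida-of-$A$ choice fails.

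Two smaller remarks. First, in your energy identity the $B$-term $\operatorname{Im}\langle B(u)\rho_1,u\rangle_H$ actually vanishes identically, because each $B_m$ is self-adjoint so $\langle u, B_m u\rangle$ is real; the paper uses this to show conservation of $\|u\|_H$ when $\beta=0$, $G\equiv0$. Carrying the term through Gr\"onwall still gives a bound, but you lose the structural observation and the conservation statement. Second, the pair $(\rho_1,\rho_2)$ enters the local contraction through the absolutely continuous quantities $\int_s^t\|\rho_i\|\,ds$, so the local time in the paper is made small via \Cref{abscts} rather than depending only on $\|\rho\|_{L^2}\le\sqrt{M}$ as your proposal asserts; the iteration step needs this uniformity, and the paper is careful to set it up.
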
 	
	\begin{proof}	
		In this proof, we will show the existence of a unique global mild solution (in the sense of \Cref{mildskl}) to the skeleton equation \eqref{Skeleton}. The proof is split into three main steps. First, we define an integral operator on an appropriate space and use the Banach fixed point theorem to obtain the local existence of a solution. In the second step, we have shown the uniqueness of the solution (global in time) if any two solutions satisfying \eqref{Skeleton} exist with the same initial data, then they are identical. Finally, we have achieved the global existence of a solution by using Yosida approximation and the Strichartz estimates.
		\begin{steps}
			\item {\textbf{(Local existence of the solution):}}\label{stepaskl} 	We consider the set
			\begin{equation}\label{localset}
				V_{M,\tilde{T}} :=
				\Big\{
				u \in \mathbb{L}^{\infty, p}_{2, r}(\tilde{T}) : \|u\|_{L^\infty(0,\tilde{T}; H)} + \|u\|_{L^p(0, \tilde{T}; L^r(\mathbb{R}^d))}
				\leq M
				\Big\}, \quad \tilde{T} \in (0,T] \text{ and } M>0,
			\end{equation}
		where $M=5C\|u_0\|_{H}$ with $C$ chosen appropriately later.
			Define the mapping $\mathfrak{T}: V_{M,\tilde{T}} \to  V_{M,\tilde{T}}$ by,
			\begin{align}\label{Iosk}
				&\mathfrak{T}(u^\rho, u_0)(t)\nonumber\\
				&:=S(t)u_0	- \int_{0}^{t} S(t-s) \big[i \mathcal{N}(u^\rho(s))+ \beta u^\rho(s)\big]ds - i\int_{0}^{t} S(t-s)\big[B(u^\rho(s))\rho_1(s)+ G(u^\rho(s))\rho_2(s) \big]ds \nonumber\\
				&= S(t)u_0 + I_1(u^\rho)(t)+  I_2(u^\rho)(t),\quad t\in [0,T],
			\end{align}
			where 
			$$I_1(u^\rho)(t):= - \int_{0}^{t} S(t-s) \big[i \mathcal{N}(u^\rho(s))+ \beta u^\rho(s)\big]ds,$$
			and  
			$$	I_2(u^\rho)(t):=-i\int_{0}^{t} S(t-s)\big[B(u^\rho(s))\rho_1(s)+ G(u^\rho(s))\rho_2(s)\big]ds.$$
			First we show that, the operator $\mathfrak{T}$ is well-defined.
			
			\vskip 0.1cm 
			\noindent 	\textbf{Estimates for $I_1$:} 
			Using the Strichartz estimates \eqref{Strichartzinf}, \eqref{Strichartzp} with $(\gamma, \sigma)=(p,r)$ and H\"older's inequality with $s'={\frac{pr'}{p'r}}$ and $t'= \frac{p - 1}{p - r}$, and considering the relation $r'\alpha=r$, we conclude,
			\begin{align}\label{boundF}
				\begin{aligned}
					&\bigg\| 	- \int_{0}^{.} S(.-s) i \mathcal{N}(u^\rho(s))ds \bigg\|_{\mathbb{L}^{\infty, p}_{2, r}(\tilde{T})} \leq C\|  |u^\rho|^{\alpha-1}u^\rho\|_{L^{p'}(0, \tilde{T}; L^{r'}(\mathbb{R}^d))}\\
					&= C \Big( \int_0^{\tilde{T}} \Big( \int_{\mathbb{R}^d} |u^\rho(s)|^{\alpha r'} \, dx \Big)^{p'/r'} \, ds \Big)^{1/p'}
					= C \Big( \int_0^{\tilde{T}} \Big( \int_{\mathbb{R}^d} |u^\rho(s)|^r \, dx \Big)^{p'/r'} \, ds \Big)^{1/p'} \\
					&\leq C\tilde{T}^{ \frac{1}{t'}} \Big( \int_0^{\tilde{T}} \Big( \int_{\mathbb{R}^d} |u^\rho(s)|^r \, dx \Big)^{\frac{p's'}{r'}} \, ds \Big)^{\frac{1}{s'p'}} 
					\leq C\tilde{T}^{ \frac{p - r}{p - 1}} \Big( \int_0^{\tilde{T}} \Big( \int_{\mathbb{R}^d} |u^\rho(s)|^r \, dx \Big)^{p/r} \, ds \Big)^{\frac{r}{pr'}} 
					\\
					& =C\tilde{T}^{ \frac{p - r}{p - 1}} \|u^\rho\|_{L^p(0, \tilde{T}; L^r(\mathbb{R}^d))}^{\alpha},
				\end{aligned}
			\end{align}
			where $p'$ and $r'$ are the conjugate of $p$ and $r$, respectively. It can be easily checked that $$	\frac{1}{s'}+\frac{1}{t'}=\frac{p'r}{pr'}+\frac{p-r}{p-1}=\frac{pr(r-1)}{(p-1)pr}+\frac{p-r}{p-1}=1.$$
			Again, we use the Strichartz estimates \eqref{Strichartzinf}, \eqref{Strichartzp} with $(\gamma, \sigma)=(\infty,2)$ to get,
			\begin{align}\label{boundbeta}
				&\bigg\| - \int_{0}^{.} S(.-s)\beta u^\rho(s)ds\bigg\|_{\mathbb{L}^{\infty, p}_{2, r}(\tilde{T})}\nonumber\\
				&=	\bigg\|  \int_{0}^{.} S(.-s)\beta u^\rho(s)ds\bigg\|_{L^\infty(0,\tilde{T}; H)} + 	\bigg\|  \int_{0}^{.} S(.-s)\beta u^\rho(s)ds\bigg\|_{L^p(0, \tilde{T}; L^r(\mathbb{R}^d))}\nonumber\\
				&\leq \beta \sup_{t \in [0, \tilde{T}]}  \int_{0}^{t}\| S(t-s) u^\rho(s)\|_{H}ds+ C \beta\|u^\rho\|_{L^1(0, \tilde{T}; H)}\nonumber\\
				&\leq \beta\int_0^{\tilde{T}} \|  u^\rho(s)\|_{H}ds+ C \beta\int_0^{\tilde{T}} \|  u^\rho(s)\|_{H} ds\nonumber\\
				&\leq (1+C)\beta \tilde{T} \|  u^\rho\|_{L^\infty(0,\tilde{T}; H)}\leq (1+C)\beta \tilde{T} \|  u^\rho\|_{\mathbb{L}^{\infty, p}_{2, r}(\tilde{T})}.
			\end{align}  
			\vskip 0.1cm 
			\noindent 
			\textbf{Estimates for $I_2$ defined in \eqref{Iosk}\,:} Using the Strichartz estimates \eqref{Strichartzinf}, the Cauchy-Schwartz inequality, Young's inequality and linear growth property (\ref{LgG}) of $G$,  we have the following estimate for $I_2$ as
			\begin{align}\label{boundI2a}
				&\sup_{t \in [0, \tilde{T}]} \|I_2(u^\rho)(t)\|_{H}\nonumber\\
				&=\sup_{t \in [0, \tilde{T}]} \bigg\|\int_{0}^{t} S(t-s)\Big[B(u^\rho(s))\rho_1(s)+ G(u^\rho(s))\rho_2(s)  \Big]ds\bigg\|_{H}\nonumber\\
				&\leq \int_{0}^{\tilde{T}}\Big\| S(t-s)\Big[B(u^\rho(s))\rho_1(s)+ G(u^\rho(s))\rho_2(s)\Big ]\Big\|_{H}ds\nonumber\\
				&\leq C \int_{0}^{\tilde{T}} \Big[\|B(u^\rho(s))\|_{\mathcal{L}_2 (Y_1,H)}\|\rho_1(s)\|_{Y_1}+\|G(u^\rho(s))\|_{\mathcal{L}_2 (Y_2,H)}\|\rho_2(s)\|_{Y_2}\Big]ds\nonumber\\
				&\leq C \int_{0}^{\tilde{T}}\Big[\|B\|_{\mathscr{L}(H,\mathcal{L}_2 (Y_1,H))}\|u^\rho(s)\|_H \|\rho_1(s)\|_{Y_1}+ (C_1+C_2\|u^\rho(s)\|_H )\|\rho_2(s)\|_{Y_2}\Big]ds\nonumber\\
				&\leq C \|u^\rho\|_{\mathbb{L}^{\infty, p}_{2, r}(\tilde{T})} \int_{0}^{\tilde{T}} \big\{\|B\|_{\mathscr{L}(H,\mathcal{L}_2 (Y_1,H))} \|\rho_1(s)\|_{Y_1}+ C_2 \|\rho_2(s)\|_{Y_2} \big\}ds+ C C_1 \int_{0}^{\tilde{T}}\|\rho_2(s)\|_{Y_2}ds.
			\end{align}
			Again, employing the Strichartz estimates \eqref{Strichartzp} with $(\gamma, \sigma)=(\infty,2)$, linear growth property (\ref{LgG}) of $G$, we estimate $I_2$ in $L^p(0, \tilde{T}; L^r(\mathbb{R}^d))$ as
			\begin{align}\label{boundI2b}
				&\|I_2(u^\rho)(\cdot)\|_{L^p(0,\tilde{T};L^r(\mathbb{R}^d))}\nonumber\\
				&= \bigg\|-i\int_{0}^{.} S(.-s)\big[B(u^\rho(s))\rho_1(s)+ G(u^\rho(s))\rho_2(s)  \big]ds\bigg\|_{L^p(0,\tilde{T}L^r(\mathbb{R}^d))}\nonumber\\
				&\leq  C \Big\| \Big[Bu^\rho\rho_1+ G(u^\rho)\rho_2 \Big]\Big\|_{L^1(0,\tilde{T};H)} \nonumber\\
				&\leq  C\int_{0}^{\tilde{T}}\Big\| \Big[B(u^\rho(s))\rho_1(s)+ G(u^\rho(s))\rho_2(s)  \Big]\Big\|_{H}ds\nonumber\\
				&\leq  C \int_{0}^{\tilde{T}}\Big[\|B(u^\rho(s))\|_{\mathcal{L}_2 (Y_1,H)}\|\rho_1(s)\|_{Y_1}+\|G(u^\rho(s))\|_{\mathcal{L}_2 (Y_2,H)}\|\rho_2(s)\|_{Y_2}\Big]ds\nonumber\\
				&\leq  C \int_{0}^{\tilde{T}}\Big[\|B\|_{\mathscr{L}(H,\mathcal{L}_2 (Y_1,H))}\|u^\rho(s)\|_H \|\rho_1(s)\|_{Y_1}+ (C_1+C_2\|u^\rho(s)\|_H )\|\rho_2(s)\|_{Y_2}\Big]ds\nonumber\\
				&\leq  C \|u^\rho\|_{\mathbb{L}^{\infty, p}_{2, r}(\tilde{T})} \int_{0}^{\tilde{T}} \big\{\|B\|_{\mathscr{L}(H,\mathcal{L}_2 (Y_1,H))} \|\rho_1(s)\|_{Y_1}+ C_2 \|\rho_2(s)\|_{Y_2} \big\}ds+ C C_1 \int_{0}^{\tilde{T}}\|\rho_2(s)\|_{Y_2}ds.
			\end{align}
			Combining all the estimates \eqref{boundF}-\eqref{boundI2b}, we deduce
			\begin{align}\label{localbound}
				&\|	\mathfrak{T}(u^\rho, u_0)(\cdot)\|_{\mathbb{L}^{\infty, p}_{2, r}(\tilde{T})}\nonumber\\
				&\leq  C\|u_0\|_{H} + (1+C)\beta \tilde{T} \|  u^\rho\|_{\mathbb{L}^{\infty, p}_{2, r}(\tilde{T})}+ C\tilde{T}^{ \frac{p - r}{p - 1}} \|u^\rho\|_{L^p(0, \tilde{T}; L^r(\mathbb{R}^d))}^{\alpha}+ 2C C_1 \int_{0}^{\tilde{T}}\|\rho_2(s)\|_{Y_2}ds\nonumber\\
				&\quad+2C \|u^\rho\|_{\mathbb{L}^{\infty, p}_{2, r}(\tilde{T})} \int_{0}^{\tilde{T}} \big\{\|B\|_{\mathscr{L}(H,\mathcal{L}_2 (Y_1,H))} \|\rho_1(s)\|_{Y_1}+ C_2 \|\rho_2(s)\|_{Y_2}\big\}ds. 
			\end{align}
			Next, we will show that the integral operator is a strict contraction to use the Banach fixed point theorem.
			From the assumptions (\Cref{AssF}) on the nonlinear term, we have,	
			\begin{align*}
				&\mathcal{N}(u)= |u|^{\alpha-1} u , \quad \mathcal{N}'(u)(h)=|u|^{\alpha-1}h+(\alpha-1)u|u|^{\alpha-3} \operatorname{Re}(u\bar{h}).
			\end{align*}
			Employing the fundamental theorem of calculas and Lagrange's mean value theorem, we infer
			\begin{align*}
				&\big|\mathcal{N}(u) - \mathcal{N}(v)\big| =\Big|\int_{0}^{1}\frac{d}{d\theta}\mathcal{N}(\theta u+(1-\theta)v)d\theta\Big|=\Big|\int_{0}^{1} \mathcal{N}'(\theta u+(1-\theta)v) (u-v) d\theta\Big|\\
				&\leq \int_{0}^{1} \big|\mathcal{N}'(\theta u+(1-\theta)v) (u-v)\big| d\theta \leq\alpha \int_{0}^{1} \big|\theta u+(1-\theta)v\big|^{\alpha-1} \big|u-v\big| d\theta\\
				&\leq\alpha \int_{0}^{1} (|u|+|v|)^{\alpha-1} |u-v| d\theta \leq\alpha (|u|+|v|)^{\alpha-1} |u-v|.
			\end{align*}
			This implies,
			\begin{align}
				&\big\|\mathcal{N}(u) - \mathcal{N}(v)\big\|_{L^{r'}} \leq \Big(\int_{\mathbb{R}^d}\big|\mathcal{N}(u) - \mathcal{N}(v)\big|^{r'} \Big)^{\frac{1}{r'}}\leq \Big(\int_{\mathbb{R}^d}(\alpha (|u|+|v|)^{\alpha-1} |u-v|)^{r'} \Big)^{\frac{1}{r'}}\nonumber\\
				&\leq \alpha \Big(\int_{\mathbb{R}^d}(|u|+|v|)^{(\alpha-1)r'} |u-v|^{r'} \Big)^{\frac{1}{r'}}\leq \alpha \| (|u|+|v|)^{\alpha-1} \|_{L^{\frac{r}{r-2}}} \| u - v \|_{L^r} \nonumber\\
				&= \alpha \|(|u|+|v|)^{\alpha-1} \|_{L^{\frac{r}{\alpha-1}}} \| u - v \|_{L^r}= \alpha \| (|u|+|v|) \|_{L^r}^{\alpha-1} \| u - v \|_{L^r} \nonumber\\
				&\leq \alpha \Big( \|u\|_{L^r} + \|v\|_{L^r} \Big)^{\alpha-1} \| u - v \|_{L^r}. \label{lipF}
			\end{align} 
			Using the estimate \eqref{lipF} together with the Strichartz estimates \eqref{Strichartzinf}, \eqref{Strichartzp} and H\"older's inequality (taking $Q=\frac{p-1}{p-r}$) such that $\frac{1}{Q}+\frac{\alpha-1}{(p-1)}+ \frac{1}{(p-1)}=1 $, we deduce
			\begin{align}\label{liptypeF}
				&\bigg\| 	- \int_{0}^{.} S(.-s) i( \mathcal{N}(u_1^\rho(s))- \mathcal{N}(u_2^\rho(s)))ds \bigg\|_{\mathbb{L}^{\infty, p}_{2, r}(\tilde{T})}\nonumber\\
				&\leq C \| \mathcal{N}(u_1) - \mathcal{N}(u_2) \|_{L^{p'}(0, \tilde{T}; L^{r'}(\mathbb{R}^d))}\nonumber \\
				&\leq C \Big( \int_0^{\tilde{T}} \| \mathcal{N}(u^\rho_1(s)) - \mathcal{N}(u^\rho_2(s)) \|^{p'}_{L^{r'}(\mathbb{R}^d)} ds \Big)^{\frac{1}{p'}} \nonumber\\
				&\leq C \Big[ \int_0^{\tilde{T}}\Big\{ \Big( \|u^\rho_1(s)\|_{L^r} + \|u^\rho_2(s)\|_{L^r} \Big)^{\alpha-1} \| u^\rho_1(s) - u^\rho_2(s) \|_{L^r}\Big\}^{p'} ds \Big]^{\frac{1}{p'}} \nonumber\\
				&\leq C \Big[ \int_0^{\tilde{T}} \Big( \|u^\rho_1(s)\|_{L^r} + \|u^\rho_2(s)\|_{L^r} \Big)^{(\alpha-1)p'} \| u^\rho_1(s) - u^\rho_2(s) \|_{L^r}^{p'} ds \Big]^{\frac{1}{p'}} \nonumber\\
				&\leq C \Big[	 \int_0^{\tilde{T}} \Big( \|u^\rho_1(s)\|_{L^r} + \|u^\rho_2(s)\|_{L^r} \Big)^{(\alpha-1)p'\frac{p-1}{\alpha-1}}	\Big]^{\frac{\alpha-1}{(p-1)p'}}	\Big[	 \int_0^{\tilde{T}} \Big(  \| u^\rho_1(s) - u^\rho_2(s) \|_{L^r} \Big)^{p'(p-1)}	\Big]^{\frac{1}{(p-1)p'}}	(\tilde{T})^{\frac{1}{Qp'}}\nonumber\\
				&\leq 	C	\Big[	 \int_0^{\tilde{T}} \Big( \|u^\rho_1(s)\|_{L^r} + \|u^\rho_1(s)\|_{L^r} \Big)^p	\Big]^{\frac{\alpha-1}{p}}  \| u^\rho_1 - u^\rho_2 \|_{L^{p}(0, \tilde{T}; L^{r}(\mathbb{R}^d))}(\tilde{T})^{\frac{1}{Qp'}}\nonumber\\
				&\leq C	\Big(\|u^\rho_1\|_{L^{p}(0, \tilde{T}; L^{r}(\mathbb{R}^d))}+ \|u^\rho_2\|_{L^{p}(0, \tilde{T}; L^{r}(\mathbb{R}^d))}\Big)^{\alpha-1} \| u^\rho_1 - u^\rho_2 \|_{L^{p}(0, \tilde{T}; L^{r}(\mathbb{R}^d))}(\tilde{T})^{\frac{1}{Qp'}}\nonumber\\
				& \leq C (\tilde{T})^{\frac{1}{Qp'}} (2M)^{\alpha -1} \| u^\rho_1 - u^\rho_2 \|_{L^{p}(0, \tilde{T}; L^{r}(\mathbb{R}^d))}\nonumber\\
				& \leq C (\tilde{T})^{\frac{p-r}{p}} (2M)^{\alpha -1} \| u^\rho_1 - u^\rho_2 \|_{\mathbb{L}^{\infty, p}_{2, r}(\tilde{T})}.
			\end{align} 
			Again, we use the Strichartz estimates \eqref{Strichartzinf}, \eqref{Strichartzp} to conclude,
			\begin{align}\label{liptypebeta}
				\bigg\| - \int_{0}^{.} S(.-s)\beta( u_1^\rho(s)- u_2^\rho(s))ds\bigg\|_{\mathbb{L}^{\infty, p}_{2, r}(\tilde{T})}
				&\leq (1+C)\beta \tilde{T} \|  u_1^\rho- u_2^\rho\|_{\mathbb{L}^{\infty, p}_{2, r}(\tilde{T})}.
			\end{align}     
			Combining (\ref{liptypeF}) and (\ref{liptypebeta}), we get
			\begin{align}\label{diffI1}
				&\| I_1(u_1^\rho)-I_1(u_2^\rho) \|_{\mathbb{L}^{\infty, p}_{2, r}(\tilde{T})} \nonumber\\
				&\leq  	\bigg\| - \int_{0}^{.} S(.-s)\beta\big( u_1^\rho(s)- u_2^\rho(s)\big)ds\bigg\|_{\mathbb{L}^{\infty, p}_{2, r}(\tilde{T})} +\bigg\| 	- \int_{0}^{.} S(.-s) i\big( \mathcal{N}(u_1^\rho(s))- \mathcal{N}(u_2^\rho(s))\big)ds \bigg\|_{\mathbb{L}^{\infty, p}_{2, r}(\tilde{T})}\nonumber\\
				&\leq  \big\{(1+C)\beta \tilde{T} + C (\tilde{T})^{\frac{p-r}{p}} (2M)^{\alpha -1}\big\}\big\| u^\rho_1 - u^\rho_2 \big\|_{\mathbb{L}^{\infty, p}_{2, r}(\tilde{T})}.
			\end{align}
			Let us now estimate for the terms involving $I_2$: 
			Using the linearity of $B$, Lipschitz continuity \eqref{LipG} of $G$, we estimate
			\begin{align}\label{diffI2a}
				&\sup_{t \in [0, \tilde{T}]} \| I_2(u^\rho_1)(t) - I_2(u^\rho_2)(t) \|_{H}\nonumber\\ 
				&= \sup_{t \in [0, \tilde{T}]} \left\|-i \int_0^{t} S(t - s) \Big[ \big\{B(u^\rho_1(s)) - B(u^\rho_2(s))\big\}\rho_1(s) +\big\{G(u^\rho_1(s)) - G(u^\rho_2(s))\big\}\rho_2(s)  \Big] \, ds \right\|_{H} \nonumber\\
				&\leq C\int_0^{\tilde{T}} \left\| B(u_1(s)) - B(u_2(s)) \right\|_{\mathcal{L}_2 (Y_1, H)} \| \rho_1(s) \|_{Y_1} \, ds + C\int_0^{\tilde{T}} \left\| G(u_1(s)) - G(u_2(s)) \right\|_{\mathcal{L}_2 (Y_2, H)} \| \rho_2(s) \|_{Y_2} \, ds \nonumber\\
				&\leq C \int_0^{\tilde{T}}\|B\|_{\mathscr{L}(H,\mathcal{L}_2 (Y_1,H))}\|u_1(s)) -u_2(s)\|_H \|\rho_1(s)\|_{Y_1}ds +  C\int_0^{\tilde{T}}L_G \|u_1(s)) -u_2(s)\|_H\|\rho_2(s)\|_{Y_2} ds \nonumber\\
				&\leq C \| u_1 - u_2 \|_{\mathbb{L}^{\infty, p}_{2, r}(\tilde{T})}\int_0^{\tilde{T}} \Big[\|B\|_{\mathscr{L}(H,\mathcal{L}_2 (Y_1,H))}\|\rho_1(s)\|_{Y_1}+L_G \|\rho_2(s)\|_{Y_2}\Big]ds.
			\end{align}
			%
			In a similar way, using the linearity of $B$, Lipschitz continuity \eqref{LipG} of $G$ and the Strichartz estimates \eqref{Strichartzinf}, \eqref{Strichartzp}, we infer
			\begin{align}\label{diffI2b}
				&\| I_2(u^\rho_1)(\cdot) - I_2(u^\rho_2)(\cdot) \|_{L^p(0,\tilde{T};L^r(\mathbb{R}^d))}\nonumber\\
				&= \bigg\|-i\int_{0}^{.} S(.-s)\Big[B\big(u^\rho_1(s)-u^\rho_2(s)\big)\rho_1(s)+ \big\{G(u_1^\rho(s))-G(u^\rho_2(s))\big\}\rho_2(s)  \Big]ds\bigg\|_{L^p(0,\tilde{T}L^r(\mathbb{R}^d))}\nonumber\\
				&\leq  C \Big\| \Big[B\big(u^\rho_1-u^\rho_2\big)\rho_1+\big\{G(u_1^\rho)-G(u^\rho_2)\big\}\rho_2  \Big]\Big\|_{L^1(0,\tilde{T};H)} \nonumber\\
				&\leq  C\int_{0}^{\tilde{T}}\Big\| \Big[B(u^\rho_1(s)-u^\rho_2(s))\rho_1(s)+\big\{G(u_1^\rho(s))-G(u^\rho_2(s))\big\}\rho_2(s)  \Big]\Big\|_{H}ds\nonumber\\
				&\leq  C \int_{0}^{\tilde{T}}\Big[\|B(u^\rho_1(s)-u^\rho_2(s))\|_{\mathcal{L}_2 (Y_1,H)}\|\rho_1(s)\|_{Y_1}+\|G(u_1^\rho(s))-G(u^\rho_2(s))\|_{\mathcal{L}_2 (Y_2,H)}\|\rho_2(s)\|_{Y_2}\Big]ds\nonumber\\
				&\leq  C \int_{0}^{\tilde{T}}\Big[\|B\|_{\mathscr{L}(H,\mathcal{L}_2 (Y_1,H))}\|u_1^\rho(s)-u^\rho_2(s)\|_H \|\rho_1(s)\|_{Y_1}+ L_G\|u_1^\rho(s)-u^\rho_2(s)\|_H \|\rho_2(s)\|_{Y_2}\Big]ds\nonumber\\
				&\leq  C \|u_1^\rho-u^\rho_2\|_{\mathbb{L}^{\infty, p}_{2, r}(\tilde{T})}\int_{0}^{\tilde{T}}\Big[\|B\|_{\mathscr{L}(H,\mathcal{L}_2 (Y_1,H))} \|\rho_1(s)\|_{Y_1}+ L_G \|\rho_2(s)\|_{Y_2}\Big]ds.
			\end{align}
			Combining all the estimates \eqref{diffI1}-\eqref{diffI2b}, we deduce 
			\begin{align}\label{lipIo}
				&\|	\mathfrak{T}(u_1^\rho, u_0)-	\mathfrak{T}(u_2^\rho, u_0)\|_{\mathbb{L}^{\infty, p}_{2, r}(\tilde{T})}\nonumber\\
				&\leq   \big\{(1+C)\beta \tilde{T} + C (\tilde{T})^{\frac{p-r}{p}} (2M)^{\alpha -1}\big\}\|u_1^\rho-u^\rho_2\|_{\mathbb{L}^{\infty, p}_{2, r}(\tilde{T})} \nonumber\\
				& \quad+ 2C \|u_1^\rho-u^\rho_2\|_{\mathbb{L}^{\infty, p}_{2, r}(\tilde{T})}\int_{0}^{\tilde{T}}\Big[\|B\|_{\mathscr{L}(H,\mathcal{L}_2 (Y_1,H))} \|\rho_1(s)\|_{Y_1}+ L_G \|\rho_2(s)\|_{Y_2}\Big]ds.
			\end{align}
			We choose, $M= 5C \|u_0\|_{H}$, where $C$ is the constant appeared in \eqref{localbound} and \eqref{lipIo}. Since, 
			\begin{align*}
				\int_{0}^{\tilde{T}} \Big\{\|B\|_{\mathscr{L}(H,\mathcal{L}_2 (Y_1,H))} \|\rho_1(s)\|_{Y_1}+ C_2 \|\rho_2(s)\|_{Y_2} \Big\}ds < \infty,
			\end{align*}
			for $\epsilon_1=\frac{1}{10C}$, there exists $\delta_1>0$ (by absolute continuity of Lebesgue integration, \Cref{abscts}) such that, 
			\begin{equation}
				\sup_{t-s \leq \delta_1}\int_{s}^{t} \Big\{\|B\|_{\mathscr{L}(H,\mathcal{L}_2 (Y_1,H))} \|\rho_1(s)\|_{Y_1}+ C_2 \|\rho_2(s)\|_{Y_2} \Big\}ds < \epsilon_1.
			\end{equation}
			Similarly, for $\epsilon_2=\frac{M}{10C C_1}$, there exists $\delta_2>0$ (by absolute continuity of Lebesgue integration, \Cref{abscts}) such that 
			\begin{equation}
				\sup_{t-s \leq \delta_2}\int_{s}^{t} \|\rho_2(s)\|_{Y_2}ds < \epsilon_2.
			\end{equation}
			We choose $\tilde{T}=T_0^1$, where
			\begin{align}\label{T01}
				T_0^1=\min \Big\{\frac{1}{5 \beta(1+C)}, \frac{1}{(5C)^{\frac{p-1}{p-r}}M^{\frac{(\alpha-1)(p-1)}{p-r}}}, \delta_1, \delta_2, T\Big\}.
			\end{align}
			This implies,
			\begin{align}\label{T1}
				&T_0^1 \leq  \frac{1}{(5C)^{\frac{p-1}{p-r}}M^{\frac{(\alpha-1)(p-1)}{p-r}}},\nonumber\\
				&C(T_0^1)^{ \frac{p - r}{p - 1}} M^{(\alpha-1)}\leq \frac{1}{5},\nonumber\\
				&C(T_0^1)^{ \frac{p - r}{p - 1}} M^{\alpha}\leq  \frac{M}{5},
			\end{align}
			and
			\begin{align}\label{T11}
				2C\int_{0}^{T_0^1} \Big\{\|B\|_{\mathscr{L}(H,\mathcal{L}_2 (Y_1,H))} \|\rho_1(s)\|_{Y_1}+ C_2 \|\rho_2(s)\|_{Y_2} \Big\}ds<\frac{1}{5}.
			\end{align}
			Therefore, from \eqref{localbound} using \eqref{T1} and \eqref{T11}, for all $u \in 	V_{M,T_0^1}$,
			\begin{align}\label{T1m}
				\|	\mathfrak{T}(u^\rho, u_0)(\cdot)\|_{\mathbb{L}^{\infty, p}_{2, r}(T_0^1)}
				&\leq  M.
			\end{align}
			Since,
			\begin{align*}
				\int_{0}^{\tilde{T}}\Big[\|B\|_{\mathscr{L}(H,\mathcal{L}_2 (Y_1,H))} \|\rho_1(s)\|_{Y_1}+ L_G \|\rho_2(s)\|_{Y_2}\Big]ds < \infty,
			\end{align*}
			for $\epsilon_3=\frac{1}{12C}$, there exists $\delta_3>0$ (by absolute continuity of Lebesgue integration, \Cref{abscts}) such that 
			\begin{equation}
				\sup_{t-s \leq \delta_3}\int_{s}^{t}\Big[\|B\|_{\mathscr{L}(H,\mathcal{L}_2 (Y_1,H))} \|\rho_1(s)\|_{Y_1}+ L_G \|\rho_2(s)\|_{Y_2}\Big]ds < \epsilon_3.
			\end{equation}	
			Now, we choose $\tilde{T}=T_0^2$, with $Q=\frac{p-1}{p-r}$, where
			\begin{align*}
				T_0^2=\min \Big\{\frac{1}{6 \beta(1+C)}, \Big(\frac{1}{6C(2M)^{\alpha-1}}\Big)^{Qp'}, \delta_3,T \Big\}.
			\end{align*}
			This implies,
			\begin{align}\label{T2}
				2C \int_{0}^{T_0^2}\Big[\|B\|_{\mathscr{L}(H,\mathcal{L}_2 (Y_1,H))} \|\rho_1(s)\|_{Y_1}+ L_G \|\rho_2(s)\|_{Y_2}\Big]ds< \frac{1}{6},
			\end{align}
			and
			\begin{align}\label{T21}
				C (T_0^2)^{\frac{p-r}{p}} (2M)^{\alpha -1} \leq \frac{1}{6}.
			\end{align}
			For $1< \alpha< \frac{4}{d}+1,$ we have $\frac{p-r}{p}\geq0.$ Hence, for any $u_1, u_2 \in V_{M,T_0^2}$, from \eqref{lipIo} using \eqref{T2} and \eqref{T21}, we conclude, 
			\begin{align}\label{T2m}
				\|	\mathfrak{T}(u_1^\rho, u_0)-	\mathfrak{T}(u_2^\rho, u_0)\|_{\mathbb{L}^{\infty, p}_{2, r}(T_0^2)}	\leq \frac{1}{2} \|u_1^\rho-u^\rho_2\|_{\mathbb{L}^{\infty, p}_{2, r}(T_0^2)}.
			\end{align}
			Now, choose $T_0= \{\min{T_0^1,T_0^2}\}$. From \eqref{T1m} and \eqref{T2m}, we have,
			for all $u \in V_{M,T_0}$,
			\begin{align}\label{bm}
				\|	\mathfrak{T}(u^\rho, u_0)(\cdot)\|_{\mathbb{L}^{\infty, p}_{2, r}(T_0)}
				&\leq  M,
			\end{align}
			and for any $u_1, u_2 \in V_{M,T_0}$,
			\begin{align}\label{lipm}
				\|	\mathfrak{T}(u_1^\rho, u_0)(\cdot)-	\mathfrak{T}(u_2^\rho, u_0)(\cdot)\|_{\mathbb{L}^{\infty, p}_{2, r}(T_0)}	\leq \frac{1}{2} \|u_1^\rho-u^\rho_2\|_{\mathbb{L}^{\infty, p}_{2, r}(T_0)}.
			\end{align}
			Therefore $\mathfrak{T}(\cdot, u_0)$ is a $\frac{1}{2}$-contraction on $V_{M,T_0}$, where $T_0$ depends on $\beta, \alpha, M, \|u_0\|_{H}$.
			Thus by the Banach fixed point theorem, $\mathfrak{T}(\cdot, u_0)$ has a unique fixed point $u^*\in V_{M,T_0}$. It is easy to check that $u^* \in C([0,T_0];H)$, and it is the unique solution to (\ref{Skeleton}) on $[0,T_0]$.\\
			\indent In the next step, we show that the global mild solution of \eqref{Skeleton} is \emph{unique if it exists}. We use the uniqueness of the local mild solution of \eqref{Skeleton} as the main tool. In the next step, we show the existence of global solution.
			\item\textbf{(Uniqueness of the global mild solution):}\label{stepbskl}
			Let $u_1, u_2 \in \mathbb{L}^{\infty, p}_{2, r}(T) \cap C([0,T];H)$ be any two solutions satisfying the skeleton equation \eqref{Skeleton} with the same initial data $u_0$. We define, 
			\begin{align*}
				T^*= \sup \big\{t\in [0,T]: u_1(s)=u_2(s) \text{ for all }  s\in[0,T^*]\big\}.
			\end{align*}
			If $T^*= T$, then the uniqueness is trivial. If not, then we have $\tilde{u}_1(\cdot)=u_1(T^*+ \cdot)$ and $\tilde{u}_2(\cdot)=u_2(T^*+ \cdot)$ are two solutions satisfying the mild form (\ref{mildformsk}) with initial condition $u_0$ replaced by $u_1(T^*)=u_2(T^*)$ on the interval $[0, T-T^*]$. Using similar arguments as above to get \eqref{localbound} and \eqref{lipIo}, there exists a small time $T_0^*$ such that $\tilde{u_1}(s)=\tilde{u_2}(s), s\in [0,T_0^*]$, i.e., $u_1(s)=u_2(s) \text{ for all }  s\in[0,T^*+T_0^*]$, which is a contradiction to the definition of $T^*.$
			This guarantees the uniqueness of the solution in $[0,T].$ 
			\item  \textbf{(Global existence of the solution):}\label{stepcskl} Here we prove the solution is global. To do so, we first introduce the Yosida approximation operator and its properties.
			We fix $ \mu > 0$ and define the Yosida approximation operator $ J_\mu : H^{-1}(\mathbb{R}^d) \to H^1(\mathbb{R}^d) $ by 
			\begin{align*}
				J_\mu = \mu (\mu I - \Delta)^{-1}.
			\end{align*}
			In other words, for every $f\in H^{-1}(\mathbb{R}^d)$, $u_{\mu}=J_\mu f \in H_0^1(\mathbb{R}^d)$ is the unique solution of 
			\begin{align*}
				u_{\mu} -\frac{1}{\mu} \Delta u_{\mu}=f.
			\end{align*}	
			We also define, for the nonlinear term,
			\begin{align*}
				\mathcal{N}_\mu(u):= J_\mu \mathcal{N}  (J_\mu u), \quad G_\mu(u):= J_\mu G(J_\mu u),\quad u\in H.
			\end{align*}
			Now, we define the Yosida approximated form of the equation \eqref{Skeleton},	
			\begin{equation}\label{approxg}
				\left\{
				\begin{aligned}
					du_\mu^\rho(t)&=-\big[i Au_\mu^\rho(t) +i \mathcal{N}_\mu(u_\mu^\rho(t))+ \beta u_\mu^\rho(t)\big]dt - i B (u_\mu^\rho(t))\rho_1(t) dt- i   G_\mu(u_\mu^\rho(t))\rho_2(t)dt , \quad t>0,	\\
					u_\mu^\rho(0)&=J_\mu u_0.
				\end{aligned}
				\right.
			\end{equation}
			\vskip 0.1cm\noindent
			\textbf{Uniform energy estimates (in time):}
			We take the $H^{-1}-H^1$ duality product of equation (\ref{approxg}) by $u_\mu^\rho$ and use Lions-Magenes lemma (\Cref{lionsmgnslem}) to deduce, 
			\begin{align*}
				\frac{1}{2}\frac{d}{dt}\|u_\mu^\rho(t)\|_H^2&= \operatorname{Re} {_{H^1}}\langle u_\mu^\rho(t),\partial_t u_\mu^\rho(t)\rangle{_{H^{-1}}}\\
				&= \operatorname{Re} _{H^1}\langle u_\mu^\rho(t),-i A u_\mu^\rho(t) -i  \mathcal{N}_\mu(u_\mu^\rho(t))-\beta u_\mu^\rho(t)- i B (u_\mu^\rho(t))\rho_1(t)- i   G_\mu(u_\mu^\rho(t))\rho_2(t) \rangle{_{H^{-1}}}.
			\end{align*}
			We can easily check,
			\begin{align}\label{energyA}
				\operatorname{Re} _{H^1}\langle u_\mu^\rho,-i A u_\mu^\rho \rangle{_{H^{-1}}}&= i	\operatorname{Re} \|\grad u_\mu^\rho\|^2_{H}=0,
			\end{align}
			and \begin{align*}
				\operatorname{Re} _{H^1}\langle v,-i  \mathcal{N}(v)\rangle{_{H^{-1}}}&=\operatorname{Re} _{H^1}\langle v,-i  |v|^{\alpha - 1} {v} \rangle{_{H^{-1}}} =-i \operatorname{Re} (|v|^{\alpha - 1} \|v\|^2_{H})=0.
			\end{align*}
			This gives,
			\begin{align}\label{energyF}
				\operatorname{Re} _{H^1}\langle u_\mu^\rho,-i  \mathcal{N}_\mu(u_\mu^\rho)\rangle{_{H^{-1}}}=\operatorname{Re} _{H^1}\langle u_\mu^\rho,-i  J_\mu \mathcal{N}  (J_\mu u_\mu^\rho) \rangle{_{H^{-1}}} &=\operatorname{Re} _{H^1}\langle J_\mu u_\mu^\rho,-i \mathcal{N}  (J_\mu u_\mu^\rho) \rangle{_{H^{-1}}}=0.
			\end{align}				
			For the term related to $B$, we get,
			\begin{align*}
				\operatorname{Re} _{H^1}\langle u_\mu^\rho,- i  Bu_\mu^\rho \rho_1 \rangle{_{H^{-1}}}=\operatorname{Re} \big ( u_\mu^\rho,- i  Bu_\mu^\rho \rho_1 \big )&=	-i \operatorname{Re}\big ( u_\mu^\rho, Bu_\mu^\rho\textstyle \sum_{m=1}^\infty c_m e^1_m \big )\\
				&=	-i \textstyle \sum_{m=1}^\infty c_m \operatorname{Re} \big ( u_\mu^\rho, B_m(u_\mu^\rho) \big ).
			\end{align*}
			Since, $B_m$ is self adjoint operator, we have,
			\begin{align}\label{energyB}
				\big (u_\mu^\rho, B_m(u_\mu^\rho) \big )=\overline{\big ( u_\mu^\rho, B_m(u_\mu^\rho) \big )}.
			\end{align}
			This implies,
			\begin{align*}
				\operatorname{Re} _{H^1}\langle u_\mu^\rho,- i  Bu_\mu^\rho \rho_1 \rangle{_{H^{-1}}}=0.
			\end{align*}
				For the term related to $G$, using \eqref{perticularg} and the decomposition of $\rho_2$ in $Y_2$ as $\rho_2(t)=\sum_{m\in \mathbb{N}}l_m(t) e^2_m$, we estimate
				\begin{align*}
					&\operatorname{Re} _{H^1}\langle u_\mu^\rho(t),- i J_\mu G(J_\mu u_\mu^\rho(t))\rho_2(t) \rangle{_{H^{-1}}}=\operatorname{Re} \big (  J_\mu u_\mu^\rho(t),- i G(J_\mu u_\mu^\rho(t))\rho_2 (t) \big )\nonumber\\
					&=  \operatorname{Re} \big ( J_\mu u_\mu^\rho(t),- i G(J_\mu u_\mu^\rho(t)) \rho_2(t)  \big ) =\operatorname{Re} \big ( J_\mu u_\mu^\rho(t),- i G(J_\mu u_\mu^\rho(t)) \sum_{m\in \mathbb{N}}l_m(t) e^2_m  \big ) \nonumber\\
					&  =\sum_{m\in \mathbb{N}}l_m (t)\operatorname{Re} \big (  J_\mu u_\mu^\rho(t),- i G(J_\mu u_\mu^\rho(t)) e^2_m  \big )=\sum_{m\in \mathbb{N}}l_m (t)\operatorname{Re} \big (  J_\mu u_\mu^\rho(t),- i \tilde{G}(\|J_\mu u_\mu^\rho(t)\|^2_{H})J_\mu u_\mu^\rho(t) e^2_m  \big )\\
					&=i\sum_{m\in \mathbb{N}}  l_m (t) \tilde{G}(\|J_\mu u_\mu^\rho(t)\|^2_{H})\operatorname{Im} \big (  J_\mu u_\mu^\rho(t), J_\mu u_\mu^\rho(t) e^2_m  \big )=0,
				\end{align*}
				Therefore 
				\begin{align}\label{energyG}
					\operatorname{Re} _{H^1}\langle u_\mu^\rho,- i G_\mu(u_\mu^\rho)\rho_2 \rangle{_{H^{-1}}}=0.
				\end{align}
				Combining \eqref{energyA}-\eqref{energyG}, for a.e. $t \in [0,T]$, we have
				\begin{align*}
					\frac{1}{2}\frac{d}{dt}\|u_\mu^\rho(t)\|_H^2=& -\beta \|u_\mu^\rho(t)\|^2_H -	\operatorname{Re} _{H^1}\langle u_\mu^\rho(t),- i G_\mu(u_\mu^\rho(t))\rho_2(t) \rangle{_{H^{-1}}}\leq0.
				\end{align*} 
				Integrating over time from $ 0$ to $t$, we infer
				\begin{align*}
					\|u_\mu^\rho(t)\|^2_H \leq \|u_0\|^2_H.
				\end{align*}
				Taking supremum over time, we get
				\begin{align*}
					\sup_{t \in [0, T]}\|u_\mu^\rho(t)\|^2_H\leq \|u_0\|^2_H .
				\end{align*}
				This implies, 
				\begin{equation}\label{energyl2}
					\sup_{\mu>0} \sup_{t \in [0, T]}\|u_\mu^\rho(t)\|^2_H\leq  \|u_0\|^2_H<\infty.
				\end{equation}
				Since the $L^2$-energy is decaying, we can extend the local solution to global in time.
				For $u\in V_{M,\tilde{T}}$, we have the results holds with $\mathfrak{T}$ replaced by $\mathfrak{T}_\mu$ such that,
				\begin{align*}
					\mathfrak{T}_\mu(u_\mu^\rho, u_0)(t)&=S(t)J_\mu u_0	- \int_{0}^{t} S(t-s)\big[i F_\mu(u_\mu^\rho(s))+\beta u_\mu^\rho(s)\big]ds \\
					& \quad- i\int_{0}^{t} S(t-s)\big[B(u_\mu^\rho(s))\rho_1(s)+ G_\mu(u_\mu^\rho(t))\rho_2(t)\big]ds, \quad t\geq0. 
				\end{align*}
				We can choose $T_0$ (similarly as before we choose to get \eqref{bm}, \eqref{lipm}) for which we have the existence of unique solution $u_\mu^1 \in V_{M,T_0}$ to equation (\ref{approxg}) and
				\begin{equation}\label{energyl2mu1}
					\sup_{t \in [0, T_0]}\|u_\mu^1(t)\|^2_H\leq  \|u_0\|^2_H<\infty.
				\end{equation} 		 
				In particular, 
				\begin{equation*}
					\|u_\mu^1(T_0)\|^2_H\leq  \|u_0\|^2_H.
				\end{equation*} 		 
				Now, we consider the equation,
				\begin{align}\label{mu2}
					\left\{
					\begin{aligned}
						du_\mu^2(t)&=-\big[i Au_\mu^2(t) +i \mathcal{N}_\mu(u_\mu^2(t))+ \beta u_\mu^2(t)\big]dt - i B( u_\mu^2(t))\rho_1(t) dt- i   G_\mu(u_\mu^2(t))\rho_2(t)dt , \quad t>0,	\\
						u_\mu^2(0)&=u_\mu^1(T_0).
					\end{aligned}
					\right.
				\end{align}
				Since the $L^2$-energy is decaying, following the similar approach as before to get $u_\mu^1$, there exists a unique solution $u_\mu^2 \in V_{M,T_0} \cap C([0,T_0];H) $ to \eqref{mu2} and
				\begin{equation}\label{energyl2mu2}
					\sup_{t \in [0, T_0]}\|u_\mu^2(t)\|^2_H\leq  \|u_\mu^1(T_0)\|^2_H\leq  \|u_0\|^2_H<\infty.
				\end{equation}
 Infact, we get the local existence of $u_\mu^2$ upto a time $\tilde{T_0}$, the lifespan of local existence of \eqref{mu2}. We assert that $\tilde{T_0} \geq T_0$. 
From \eqref{T1} and \eqref{T21}, one can easily observe that  $\tilde{T_0}$ depends only on $\tilde{M}=5C\|u_\mu^2(0) \|_{H}$. Also, as the $L^2$-energy is decaying, that is, 
$$\|u_\mu^2(0) \|_{H}  =  \|u_\mu^1(T_0) \|_{H} \leq  \|u_0\|_H.$$
This essentially yields $\tilde{M}\leq M$. Furthermore, $T_0$ increases as $M$ decreases, 
which proves our assertion. 
 For the ease of calculation, we restrict our attention to the existence of the solution on the interval $[0,T_0]$ in each iterative step.  We refer the reader \cite[Theorem 4.6.1 (iv)]{MR2002047} for the $L^2$-conservative case.
 
				\noindent Repeating the same argument, we can see that, there exists a unique solution $u_\mu^k \in V_{M,T_0} \cap C([0,T_0];H) $ to the following equation,
				\begin{align}\label{muk}
					\left\{
					\begin{aligned}
						du_\mu^k(t)&=-\big[i Au_\mu^k(t) +i \mathcal{N}_\mu(u_\mu^k(t))+ \beta u_\mu^k(t)\big]dt - i B( u_\mu^k(t))\rho_1(t) dt- i   G_\mu(u_\mu^k(t))\rho_2(t)dt , \quad t>0,	\\
						u_\mu^k(0)&=u_\mu^{k-1}(T_0).
					\end{aligned}
					\right.
				\end{align}
				We define,
				\begin{align}\label{glueumu}
					u^\rho_\mu(t)=u_\mu^k(t-(k-1)T_0), \quad t\in [(k-1)T_0, kT_0].
				\end{align}
				Since at each iteration step, we obtain $u_\mu^k \in V_{M,T_0}$, from the definition of the set in \eqref{localset}, it follows that $\|u_\mu^k \|_{L^p(0,T_0;L^r(\mathbb{R}^d))} \leq M$. Therefore, we have \(	u^\rho_\mu=	(u^\rho_\mu(t))_{t \in [0,T]} \in \mathbb{L}^{\infty, p}_{2, r}(T) \cap C([0,T];H)  \) as the unique solution of $\mu$-approximated equation (\ref{approxg}) on $[0,T]$, and 
				\begin{align}\label{boundforumuglobal}
					\sup_{\rho \in \mathbb{D}^N}\sup_{\mu>0} 	\|u_\mu^\rho\|_{\mathbb{L}^{\infty, p}_{2, r}(T)} \leq \Big(\Big[\frac{T}{T_0}\Big]+1\Big)M.
			\end{align}
			We already have the existence of unique solution $u^*\in C([0,T_0];H)$ of (\ref{Skeleton}). Using the similar arguments (gluing technique as in \eqref{glueumu}), we define a function $u^\rho=(u^\rho(t))_{t\in [0,T]}$ which is the unique solution to \eqref{Skeleton} on $[0,T]$.
			Our aim to show, 
			\begin{align}\label{mutoor}
				\limsup_{\mu \to\infty} \|u_\mu^\rho-u^\rho\|_{\mathbb{L}^{\infty, p}_{2, r}(T)}=0.
			\end{align}
			To prove this, we take $\tilde{T} \in [0,T]$ and apply the Strichartz estimates (\Cref{Strichartz}) to get,
			\begin{align}\label{mua1}
				&\bigg\| 	\int_{0}^{\cdot} S(\cdot-s) ( \mathcal{N}(u^\rho(s))- \mathcal{N}_\mu(u_\mu^\rho(s)))ds \bigg\|_{\mathbb{L}^{\infty, p}_{2, r}(\tilde{T})}\nonumber\\
				&\leq C \| \mathcal{N}(u^\rho)- J_\mu \mathcal{N}(J_\mu u_\mu^\rho)\|_{L^{p'}(0, \tilde{T}; L^{r'}(\mathbb{R}^d))}\nonumber \\
				&\leq C \| J_\mu \mathcal{N}(J_\mu u_\mu^\rho)-J_\mu \mathcal{N}( u^\rho)\|_{L^{p'}(0, \tilde{T}; L^{r'}(\mathbb{R}^d))}+  C \| J_\mu \mathcal{N}( u^\rho)-\mathcal{N}(u^\rho)\|_{L^{p'}(0, \tilde{T}; L^{r'}(\mathbb{R}^d))}.
			\end{align}
			Using the properties of $J_\mu$ mentioned in \Cref{yosidaoperator}, and previous estimate (\ref{lipF}), we deduce
			\begin{align}\label{mua2}
				&\| J_\mu \mathcal{N}(J_\mu u_\mu^\rho)-J_\mu \mathcal{N}( u^\rho)\|_{L^{p'}(0, \tilde{T}; L^{r'}(\mathbb{R}^d))}\nonumber\\
				&\leq \| \mathcal{N}(J_\mu u_\mu^\rho)-\mathcal{N}( u^\rho)\|_{L^{p'}(0, \tilde{T}; L^{r'}(\mathbb{R}^d))}\nonumber\\
				&\leq C	\Big(\|J_\mu u_\mu^\rho\|_{L^{p}(0, \tilde{T}; L^{r}(\mathbb{R}^d))}+ \|u^\rho\|_{L^{p}(0, \tilde{T}; L^{r}(\mathbb{R}^d))}\Big)^{\alpha-1} \| J_\mu u_\mu^\rho - u^\rho \|_{L^{p}(0, \tilde{T}; L^{r}(\mathbb{R}^d))}\textstyle(\tilde{T})^{\frac{p-r}{p}}\nonumber\\
				&\leq C	 \textstyle(\tilde{T})^{\frac{p-r}{p}} \Big(\|u_\mu^\rho\|_{L^{p}(0, \tilde{T}; L^{r}(\mathbb{R}^d))}+ \|u^\rho\|_{L^{p}(0, \tilde{T}; L^{r}(\mathbb{R}^d))}\Big)^{\alpha-1} \big\{\|J_\mu u_\mu^\rho -J_\mu u^\rho \|_{L^{p}(0, \tilde{T}; L^{r}(\mathbb{R}^d))}\nonumber\\
				&\qquad+ \|J_\mu u^\rho - u^\rho \|_{L^{p}(0, \tilde{T}; L^{r}(\mathbb{R}^d))}\big\}\nonumber\\
				&\leq C	 \textstyle(\tilde{T})^{\frac{p-r}{p}} \Big(\|u_\mu^\rho\|_{L^{p}(0, \tilde{T}; L^{r}(\mathbb{R}^d))}+ \|u^\rho\|_{L^{p}(0, \tilde{T}; L^{r}(\mathbb{R}^d))}\Big)^{\alpha-1} \big\{\| u_\mu^\rho -u^\rho \|_{L^{p}(0, \tilde{T}; L^{r}(\mathbb{R}^d))}\nonumber\\
				&\qquad+ \|J_\mu u^\rho - u^\rho \|_{L^{p}(0, \tilde{T}; L^{r}(\mathbb{R}^d))}\big\}.
			\end{align}	
			We apply the Strichartz estimates (\Cref{Strichartz}) for $(p,r)=(\infty,2)$ to estimate,
			\begin{align}\label{mua3}
				&\bigg\| 	\int_{0}^{\cdot} S(\cdot-s) \big(  B( u_\mu^\rho(s))\rho_1(s)-  B( u^\rho(s))\rho_1(s)\big)ds \bigg\|_{\mathbb{L}^{\infty, p}_{2, r}(\tilde{T})}\nonumber\\
				&\leq2 C \| B( u_\mu^\rho)\rho_1-  B( u^\rho)\rho_1\|_{L^{p'}(0, \tilde{T}; L^{r'}(\mathbb{R}^d))}\nonumber \\
				&\leq2 C \| B\big( u_\mu^\rho-u^\rho\big)\rho_1\|_{L^{1}(0, \tilde{T}; L^{2}(\mathbb{R}^d))}\nonumber \\
				&	\leq 2C \int_{0}^{\tilde{T}}\big\{ \|B\|_{\mathscr{L}(H,\mathcal{L}_2 (Y_1,H))}\|u_\mu^\rho(s)-u^\rho(s)\|_H \|\rho_1(s)\|_{Y_1} \big\}ds\nonumber\\
				& 	\leq 2C \int_{0}^{\tilde{T}}\big\{ \|B\|_{\mathscr{L}(H,\mathcal{L}_2 (Y_1,H))}\|u_\mu^\rho-u^\rho\|_{\mathbb{L}^{\infty, p}_{2, r}(s)} \|\rho_1(s)\|_{Y_1} \big\}ds.
			\end{align}
			Again, we use the Strichartz estimates (\Cref{Strichartz}) for $(p,r)=(\infty,2)$ and derive
			\begin{align}\label{mua4}
				&\bigg\| 	\int_{0}^{\cdot} S(\cdot-s) \big(  G_\mu(u_\mu^\rho(s))\rho_2(s)-  G( u^\rho(s))\rho_2(s)\big)ds \bigg\|_{\mathbb{L}^{\infty, p}_{2, r}(\tilde{T})}\nonumber\\
				&\leq  \bigg\| 	\int_{0}^{\cdot} S(\cdot-s) \big\{ J_\mu G(J_\mu u_\mu^\rho(s))-  G( u^\rho(s))\big\} \rho_2(s)ds \bigg\|_{\mathbb{L}^{\infty, p}_{2, r}(\tilde{T})}\nonumber\\
				&\leq  C \big\| \big\{ J_\mu G(J_\mu u_\mu^\rho)-  G( u^\rho)\big\} \rho_2\big\|_{L^{p'}(0, \tilde{T}; L^{r'}(\mathbb{R}^d))}\nonumber\\
				&\leq  C \int_{0}^{\tilde{T}}\| J_\mu G(J_\mu u_\mu^\rho(s))-  G( u^\rho(s))\|_{\mathcal{L}_2 (Y_2,H)}  \|\rho_2(s)\|_{Y_2}ds.
			\end{align}
			Now, using the properties of $J_\mu$ from \Cref{yosidaoperator}, Lipschitz continuity of G \eqref{LipG}, we obtain
			\begin{align}\label{mua5}
				&\| J_\mu G(J_\mu u_\mu^\rho(s))-  G( u^\rho(s))\|_{\mathcal{L}_2 (Y_2,H)}\nonumber \\
				&\leq \| J_\mu G(J_\mu u_\mu^\rho(s))-  J_\mu G( u^\rho(s))\|_{\mathcal{L}_2 (Y_2,H)} + \| J_\mu G( u^\rho(s))-G( u^\rho(s))\|_{\mathcal{L}_2 (Y_2,H)}\nonumber \\
				&\leq  C\| G(J_\mu u_\mu^\rho(s))- G( u^\rho(s))\|_{\mathcal{L}_2 (Y_2,H)}+ \| (J_\mu - I)G( u^\rho(s))\|_{\mathcal{L}_2 (Y_2,H)}\nonumber \\
				&\leq  C L_G\| J_\mu u_\mu^\rho(s)-  u^\rho(s)\|_H+ \| (J_\mu - I)G( u^\rho(s))\|_{\mathcal{L}_2 (Y_2,H)}\nonumber \\
				&\leq  C L_G\big\{\| J_\mu u_\mu^\rho(s)- J_\mu u^\rho(s)\|_H+\|  J_\mu u^\rho(s)-u^\rho(s)\|_H\big\}+ \| (J_\mu - I)G( u^\rho(s))\|_{\mathcal{L}_2 (Y_2,H)}\nonumber \\
				&\leq  C L_G\big\{\| u_\mu^\rho(s)-  u^\rho(s)\|_H+\| ( J_\mu -I)u^\rho(s)\|_H\big\}+ \| (J_\mu - I)G( u^\rho(s))\|_{\mathcal{L}_2 (Y_2,H)}.
			\end{align}
			Therefore, the estimates \eqref{mua1}-\eqref{mua5} together imply
			\begin{align}\label{mulip}
				&\|u_\mu^\rho-u^\rho\|_{\mathbb{L}^{\infty, p}_{2, r}(\tilde{T})}\nonumber\\
				&\leq  \|S(\cdot)J_\mu u_0-S(\cdot)u_0\|_{\mathbb{L}^{\infty, p}_{2, r}(\tilde{T})} +\bigg\| \int_{0}^\cdot S(\cdot-s) \Big[ \mathcal{N}(u^\rho(s))- \mathcal{N}_\mu(u_\mu^\rho(s))\Big]ds \bigg\|_{\mathbb{L}^{\infty, p}_{2, r}(\tilde{T})}\nonumber\\
				&\quad+ \bigg\| \int_{0}^\cdot S(\cdot-s) \Big[  \beta u^\rho(s)- \beta u_\mu^\rho(s)\Big]ds \bigg\|_{\mathbb{L}^{\infty, p}_{2, r}(\tilde{T})}+ \bigg\| 	\int_{0}^\cdot S(\cdot-s) \Big[ B( u_\mu^\rho(s))\rho_1(s)-  B( u^\rho(s))\rho_1(s)\Big]ds \bigg\|_{\mathbb{L}^{\infty, p}_{2, r}(\tilde{T})}\nonumber\\
				&\quad+ \bigg\| 	\int_{0}^\cdot S(\cdot-s) \Big[ G_\mu( u_\mu^\rho(s))\rho_2(s)-  G( u^\rho(s))\rho_2(s)\Big]ds \bigg\|_{\mathbb{L}^{\infty, p}_{2, r}(\tilde{T})}\nonumber\\
				&\leq C \|J_\mu u_0-u_0\|_H + C	 \textstyle(\tilde{T})^{\frac{p-r}{p}}\Big(\|u_\mu^\rho\|_{L^{p}(0, \tilde{T}; L^{r}(\mathbb{R}^d))}+ \|u^\rho\|_{L^{p}(0, \tilde{T}; L^{r}(\mathbb{R}^d))}\Big)^{\alpha-1}\nonumber\\
				&\qquad  \left\{\| u_\mu^\rho -u^\rho \|_{L^{p}(0, \tilde{T}; L^{r}(\mathbb{R}^d))}+\|J_\mu u^\rho - u^\rho \|_{L^{p}(0, \tilde{T}; L^{r}(\mathbb{R}^d))} \right\}+C \| J_\mu \mathcal{N}( u^\rho)-\mathcal{N}(u^\rho)\|_{L^{p'}(0, \tilde{T}; L^{r'}(\mathbb{R}^d))}\nonumber\\
				&\quad+\beta\int_{0}^{\tilde{T}} \|u_\mu^\rho-u^\rho\|_{\mathbb{L}^{\infty, p}_{2, r}(s)}ds+  2C \int_{0}^{\tilde{T}}\big\{ \|B\|_{\mathscr{L}(H,\mathcal{L}_2 (Y_1,H))}\|u_\mu^\rho-u^\rho\|_{\mathbb{L}^{\infty, p}_{2, r}(s)} \|\rho_1(s)\|_{Y_1} \big\}ds\nonumber\\
				&\quad+C \int_{0}^{\tilde{T}}\Big[C L_G\{\| u_\mu^\rho(s)-  u^\rho(s)\|_{H}+\| ( J_\mu -I)u^\rho(s)\|_H\}+ \| (J_\mu - I)G( u^\rho(s))\|_{\mathcal{L}_2 (Y_2,H)}\Big]\|\rho_2(s)\|_{Y_2}ds\nonumber\\
				&\leq C \|J_\mu u_0-u_0\|_H + C	 \textstyle(\tilde{T})^{\frac{p-r}{p}}\Big(\displaystyle\sup_\mu\|u_\mu^\rho\|_{L^{p}(0, \tilde{T}; L^{r}(\mathbb{R}^d))}+ \|u^\rho\|_{L^{p}(0, \tilde{T}; L^{r}(\mathbb{R}^d))}\Big)^{\alpha-1}  \nonumber\\
				&\qquad\left\{\| u_\mu^\rho -u^\rho \|_{L^{p}(0, \tilde{T}; L^{r}(\mathbb{R}^d))}+\|J_\mu u^\rho - u^\rho \|_{L^{p}(0, \tilde{T}; L^{r}(\mathbb{R}^d))} \right\}+C \| J_\mu \mathcal{N}( u^\rho)-\mathcal{N}(u^\rho)\|_{L^{p'}(0, \tilde{T}; L^{r'}(\mathbb{R}^d))}\nonumber\\
				&\quad+ \beta\int_{0}^{\tilde{T}} \|u_\mu^\rho-u^\rho\|_{\mathbb{L}^{\infty, p}_{2, r}(s)}ds+  2C \int_{0}^{\tilde{T}}\big\{ \|B\|_{\mathscr{L}(H,\mathcal{L}_2 (Y_1,H))}\|u_\mu^\rho-u^\rho\|_{\mathbb{L}^{\infty, p}_{2, r}(s)} \|\rho_1(s)\|_{Y_1} \big\}ds\nonumber\\
				&\quad+C \int_{0}^{\tilde{T}}\Big[C L_G\{\| ( J_\mu -I)u^\rho(s)\|_H\}+ \| (J_\mu - I)G( u^\rho(s))\|_{\mathcal{L}_2 (Y_2,H)}\Big]\|\rho_2(s)\|_{Y_2}ds\nonumber\\
				&\quad+C \int_{0}^{\tilde{T}}C L_G\| u_\mu^\rho-  u^\rho\|_{\mathbb{L}^{\infty, p}_{2, r}(s)}\|\rho_2(s)\|_{Y_2}ds.
			\end{align}
			Taking $\limsup \mu\to \infty$ in \eqref{mulip}, using Fatou's lemma and the convergence of $J_\mu$ to $I$ (\Cref{yosidaoperator}), we conclude
			\begin{align}\label{mulip1}
				&\limsup_{\mu \to\infty} \|u_\mu^\rho-u^\rho\|_{\mathbb{L}^{\infty, p}_{2, r}(\tilde{T})}\nonumber\\
				&\leq  C	 \textstyle(\tilde{T})^{\frac{p-r}{p}}\Big(\displaystyle\sup_\mu\|u_\mu^\rho\|_{L^{p}(0, \tilde{T}; L^{r}(\mathbb{R}^d))}+ \|u^\rho\|_{L^{p}(0, \tilde{T}; L^{r}(\mathbb{R}^d))}\Big)^{\alpha-1} \limsup_{\mu \to\infty} \|u_\mu^\rho-u^\rho\|_{\mathbb{L}^{\infty, p}_{2, r}(\tilde{T})} \nonumber\\
				&\quad+ 2C \Big(\int_{0}^{\tilde{T}} \limsup_{\mu \to\infty} \|u_\mu^\rho-u^\rho\|_{\mathbb{L}^{\infty, p}_{2, r}(s)} \|B\|_{\mathscr{L}(H,\mathcal{L}_2 (Y_1,H))} \|\rho_1(s)\|_{Y_1} ds\Big) \nonumber\\
				&\quad+\beta\int_{0}^{\tilde{T}}\limsup_{\mu \to\infty} \|u_\mu^\rho-u^\rho\|_{\mathbb{L}^{\infty, p}_{2, r}(s)}ds+CL_G\int_{0}^{\tilde{T}}\limsup_{\mu \to\infty} \|u_\mu^\rho-u^\rho\|_{\mathbb{L}^{\infty, p}_{2, r}(s)}\|\rho_2(s)\|_{Y_2}ds.
			\end{align}
			The estimate \eqref{boundforumuglobal} assures the existence of a small time $\tilde{T}=T_1$, such that, 
			\begin{align*}
				C	 \textstyle(T_1)^{\frac{p-r}{p}}\Big(\displaystyle\sup_\mu\|u_\mu^\rho\|_{L^{p}(0, T_1; L^{r}(\mathbb{R}^d))}+ \|u^\rho\|_{L^{p}(0,T_1; L^{r}(\mathbb{R}^d))}\Big)^{\alpha-1}\leq \frac{1}{2}.
			\end{align*}
			Therefore, from\eqref{mulip1}, we get
			\begin{align}\label{mulip2}
				&\limsup_{\mu \to\infty} \|u_\mu^\rho-u^\rho\|_{\mathbb{L}^{\infty, p}_{2, r}(T_1)}\nonumber\\
				& \leq 2 \int_{0}^{\tilde{T}}\limsup_{\mu \to\infty} \|u_\mu^\rho-u^\rho\|_{\mathbb{L}^{\infty, p}_{2, r}(s)}\big(\beta+2C\|B\|_{\mathscr{L}(H,\mathcal{L}_2 (Y_1,H))} \|\rho_1(s)\|_{Y_1}+CL_G \|\rho_2(s)\|_{Y_2}\big)ds.
			\end{align}
			Using Gr\"onwall's inequality in \eqref{mulip2}, we conclude
			\begin{align}\label{mulip3}
				\limsup_{\mu \to\infty} \|u_\mu^\rho-u^\rho\|_{\mathbb{L}^{\infty, p}_{2, r}(T_1)}=0.
			\end{align}
			Following similar approach we did above to get \eqref{mulip3}, we can extend the time domain and get, 
			\begin{align*}
				\limsup_{\mu \to\infty} \|u_\mu^\rho-u^\rho\|_{\mathbb{L}^{\infty, p}_{2, r}(T_1, 2T_1)}=0.
			\end{align*}
			Continuing this procedure, we can prove, 
			\begin{align}\label{muconv}
				\limsup_{\mu \to\infty} \|u_\mu^\rho-u^\rho\|_{\mathbb{L}^{\infty, p}_{2, r}(T)}=0.
			\end{align}
			Therefore, using the weak lower semicontinuity of norm, the solution  to \eqref{Skeleton}, $u^\rho=(u^\rho(t))_{t\in [0,T]} \in  \mathbb{L}^{\infty, p}_{2, r}(T)\cap C([0,T];H)$.
			Moreover, 
			\begin{align}\label{nindependentbound}
				\sup_{\rho \in \mathbb{D}^N}	\|u^\rho\|_{\mathbb{L}^{\infty, p}_{2, r}(T)} \leq \Big(\Big[\frac{T}{T_0}\Big]+1\Big)M.
			\end{align}
			Since the bound in \eqref{nindependentbound} is independent of $N$, for any $\rho \in \mathbb{D}$, \eqref{nindependentbound} holds true.
			Hence, we have the existence of global solution to \eqref{Skeleton}. This completes the proof of \Cref{wellskl}.
		\end{steps}
	\end{proof}
	\subsection{Conservation law in special case}\label{sub4conservation}
	In this subsection, we consider the particular case  $\beta=0, G\equiv0$ and show the conservation law holds.
	To show that, first we recall the Yosida approximated form \eqref{approxg} of the equation,	
	\begin{equation*}
		\left\{
		\begin{aligned}
			du_\mu^\rho(t)&=-\big[i Au_\mu^\rho(t) +i \mathcal{N}_\mu(u_\mu^\rho(t))+ \beta u_\mu^\rho(t)\big]dt - i B( u_\mu^\rho(t))\rho_1(t) dt- i   G_\mu(u_\mu^\rho(t))\rho_2(t)dt , \quad t>0,	\\
			u_\mu^\rho(0)&=J_\mu u_0.
		\end{aligned}
		\right.
	\end{equation*}
	We choose $\beta=0, G\equiv0$ and get the reduced equation,
	\begin{align}\label{approxreduced}
		\left\{
		\begin{aligned}
			du_\mu^\rho(t)&=-\big[i Au_\mu^\rho(t) +i \mathcal{N}_\mu(u_\mu^\rho(t))\big] - i B( u_\mu^\rho(t))\rho_1(t) dt , \quad t>0,	\\
			u_\mu^\rho(0)&=J_\mu u_0.
		\end{aligned}
		\right.
	\end{align}  
	We can choose $T_0$ (similarly as before we choose to get \eqref{bm}, \eqref{lipm}) such that there exists a unique solution $u_\mu^1 \in V_{M,T_0}$ of equation (\ref{approxreduced}).	
	Now, taking the $H^{-1}-H^1$ duality product of equation (\ref{approxreduced}) by $u_\mu^1$, we deduce, 
	\begin{align*}
		\frac{1}{2}\frac{d}{dt}\|u_\mu^1(t)\|_H^2&= \operatorname{Re} {_{H^1}}\langle u_\mu^1(t),\partial_t u_\mu^1(t)\rangle{_{H^{-1}}}\\
		&= \operatorname{Re} _{H^1}\langle u_\mu^1(t),-i A u_\mu^1(t) -i  \mathcal{N}_\mu(u_\mu^1(t))- i B(u_\mu^1(t))\rho_1(t)  \rangle{_{H^{-1}}}.
	\end{align*}
	Repeating the estimates \eqref{energyA}-\eqref{energyB} for $u_\mu^1$, we get
	\begin{align*}
		\frac{d}{dt}\|u_\mu^1(t)\|_H^2=0, \quad \text{for a.e. } t\in [0,T].
	\end{align*}
	Integrating over time from $0$ to $t$, we conclude
	\begin{align}\label{conservation}
		\|u_\mu^1(t)\|_H^2=\|u_\mu^1(0)\|_H^2=\|J_\mu u_0\|_H^2, \, t\in [0,T_0].
	\end{align}
	Now, we consider the equation,
	\begin{align}\label{mu2c}
		\left\{
		\begin{aligned}
			du_\mu^2(t)&=-\big[i Au_\mu^2(t) +i \mathcal{N}_\mu(u_\mu^2(t))\big]- i B( u_\mu^2(t))\rho_1(t) dt , \quad t>0,	\\
			u_\mu^2(0)&=u_\mu^1(T_0).
		\end{aligned}
		\right.
	\end{align}
	Following the similar approach as before, there exists a unique solution $u_\mu^2 \in V_{M,T_0} \cap C([0,T_0];H) $ to \eqref{mu2c} and the following is satisfied: 
	\begin{align*}
		\|u_\mu^2(t)\|_H^2=\|u_\mu^2(0)\|_H^2=\|u_\mu^1(T_0)\|_H^2=\|J_\mu u_0\|_H^2, \, t\in [0,T_0].
	\end{align*}
	Repeating the same arguments, we can see that, there exists a unique solution $u_\mu^k \in V_{M,T_0} \cap C([0,T_0];H) $ to the following equation:
	\begin{align}\label{mukc}
		\left\{
		\begin{aligned}
			du_\mu^k(t)&=-\big[i Au_\mu^k(t) +i \mathcal{N}_\mu(u_\mu^k(t))\big] - i B( u_\mu^k(t))\rho_1(t) dt , \quad t>0,	\\
			u_\mu^k(0)&=u_\mu^{k-1}(T_0),
		\end{aligned}
		\right.
	\end{align}
	and 
	\begin{align*}
		\|u_\mu^k(t)\|_H^2=\|u_\mu^k(0)\|_H^2=\|u_\mu^{k-1}(T_0)\|_H^2=\|J_\mu u_0\|_H^2, \, t\in [0,T_0].
	\end{align*}
	Now, we define
	\begin{align*}
		u^\rho_\mu(t)=u_\mu^k(t-(k-1)T_0), \quad t\in [(k-1)T_0, kT_0].
	\end{align*}
	Therefore, we have 	\(	u^\rho_\mu=	(u^\rho_\mu(t))_{t \in [0,T]} \in \mathbb{L}^{\infty, p}_{2, r}(T) \cap C([0,T];H)  \) as the unique solution of $\mu$- approximated equation (\ref{approxreduced}) on $[0,T]$, and 
	\begin{align*}
		\|u_\mu^\rho(t)\|_H^2=\|J_\mu u_0\|_H^2, \, t\in [0,T].
	\end{align*}
	Moreover, we get
	\begin{align}\label{nindepdent}
		\sup_{\rho \in \mathbb{D}^N}\sup_{\mu>0} 	\|u_\mu^\rho\|_{\mathbb{L}^{\infty, p}_{2, r}(T)} \leq \Big(\Big[\frac{T}{T_0}\Big]+1\Big)M.
	\end{align}
	Assume \(	u^\rho=	(u^\rho(t))_{t \in [0,T]} \in \mathbb{L}^{\infty, p}_{2, r}(T) \cap C([0,T];H)  \) is the unique solution to the mild form on $[0,T]$. Repeating the calculations in the derivation of \eqref{muconv}, with $\beta=0, G\equiv0$, we conclude
	\begin{align*}
		\lim_{\mu \to \infty} \sup_{t \in [0,{T}]} 	\|u_\mu^\rho(t)-u^\rho(t)\|_H=0.
	\end{align*}
	We already have the existence of unique solution $u^\rho_1\in C([0,T_0];H)$ of (\ref{approxreduced}) from the well-posedness of the skeleton equation. By $L^2$- conservation law, we have $\|u^\rho_1\|_{H}$ is constant for all $t\in [0,T_0]$. Using the similar arguments $u^\rho(t)=u^k(t-(k-1)T_0), \quad t\in [(k-1)T_0, kT_0]$ (and gluing technique), we define a function $u^\rho=(u^\rho(t))_{t\in [0,T]} \in  \mathbb{L}^{\infty, p}_{2, r}(T)\cap C([0,T];H)$ which is the unique solution to (\ref{approxreduced}) on $[0,T]$ and 
	\begin{align*}
		\|u^\rho(t)\|_{H}^2=\|u_0(t)\|_{H}^2, \  t\in [0,T].
	\end{align*}
	Moreover, since the bound \eqref{nindepdent} is independent of $N$, we have
	\begin{align*}
		\sup_{\rho \in \mathbb{D}} 	\|u^\rho\|_{\mathbb{L}^{\infty, p}_{2, r}(T)} \leq \Big(\Big[\frac{T}{T_0}\Big]+1\Big)M.
	\end{align*}
	\section{Well-posedness of stochastic controlled equation}\label{sectionsce} 
	In this section, we will prove the existence and uniqueness of the global mild solution of the following stochastic controlled equation \eqref{Sce}. First, we introduce the stochastic controlled equation  on a filtered probability space \((\Omega, \mathcal{F}, \mathbb{F}, \mathbb{P})\) as 
	\begin{align}\label{Sce}
		\left\{
		\begin{aligned}
			du^\rho(t)&=-\big[i Au^\rho(t) +i \mathcal{N}(u^\rho(t))+ \beta u^\rho(t) + \varepsilon b(u^\rho(t))\big]dt - i B(u^\rho(t))\rho_1(t)dt\\
			&\quad- i  G(u^\rho(t))\rho_2(t)dt - i\sqrt{\varepsilon}  B(u^\rho(t)) d\mathcal{W}_1(t)- i\sqrt{\varepsilon}  G(u^\rho(t)) d\mathcal{W}_2(t) , \quad t>0, \\
			u^\rho(0)&=u_0.
		\end{aligned}
		\right.
	\end{align}
	Here, we define the notion of a mild solution for \eqref{Sce}:
	\begin{defi}[Mild Solution]\label{mildsce}
		Fix any $q\geq 2$ and let $(p,r)$ be admissible pair with $r=\alpha+1$. Given any $u_0 \in H$, a mild solution to the stochastic controlled equation (\ref{Sce}) is an $\{\mathcal{F}_t\}_{t\geq0}$-adapted stochastic process $u^\rho = \{u^\rho(t): t\in [0,T]\} \in L^q(\Omega; C([0, T]; H)\cap L^p(0, T; L^r(\mathbb{R}^d)))$ satisfying 
		\begin{align}\label{mildformsce}
			u^\rho(t)&=S(t)u_0	- \int_{0}^{t} S(t-s) \big[i \mathcal{N}(u^\rho(s))+ \beta u^\rho(s) + \varepsilon b(u^\rho(t))\big]ds - i\int_{0}^{t} S(t-s)B(u^\rho(s))\rho_1(s)ds\nonumber\\ 
			& \quad- i\int_{0}^{t} S(t-s)G(u^\rho(s))\rho_2(s)ds- i \sqrt{\varepsilon}\int_{0}^{t} S(t-s)\big[B(u^\rho(s))d\mathcal{W}_1(s)+G(u^\rho(s))d\mathcal{W}_2(s) \big],
		\end{align}
		$\mathbb{P}$\,-a.s., for all $t\in[0,T]$. 
	\end{defi}
	\begin{thm}\label{wellsce}
		Let $q\geq 2, r=\alpha+1$ and $1< \alpha< \frac{4}{d}+1$, choose a number p such that (p,r) is an admissible pair and $u_0\in H$.Under the assumptions taken in Subsection \ref{Ass}, for any $\rho\in \mathbb{S}$, there exists a unique global mild solution $u^\rho \in L^q(\Omega; C([0, T]; H)  \cap L^p(0, T; L^r(\mathbb{R}^d)))$ to the stochastic controlled equation \eqref{Sce} satisfying the mild form \eqref{mildformsce}.
	\end{thm}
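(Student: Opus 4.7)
The plan is to mirror the three-step strategy used for the skeleton equation in \Cref{wellskl}, now adapted to the stochastic setting. First, I will establish local existence of a mild solution to a truncated version of \eqref{Sce} via a stochastic Banach fixed point argument in $L^q(\Omega; \mathbb{L}^{\infty,p}_{2,r}(T_0))$. Second, the local truncated solution will be extended to a global one on $[0,T]$ by an induction that glues together solutions on successive intervals, exactly as in \eqref{glueumu}. Third, I will remove the truncation via a stopping time argument and then pass to the limit in a Yosida regularization of \eqref{Sce}, in the spirit of \eqref{approxg}--\eqref{muconv}, to produce the desired global mild solution.

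For the truncation step, fix $N > 0$ and a smooth cutoff $\theta_N : [0,\infty) \to [0,1]$ with $\theta_N \equiv 1$ on $[0,N]$ and $\theta_N \equiv 0$ on $[2N,\infty)$. Replacing $\mathcal{N}(u)$ by $\theta_N(\|u\|_{\mathbb{L}^{\infty,p}_{2,r}(t)})\mathcal{N}(u)$ in \eqref{mildformsce} defines an operator $\mathfrak{T}_N$ on $L^q(\Omega; \mathbb{L}^{\infty,p}_{2,r}(T_0))$. The deterministic drift, damping, and controlled integrals are estimated verbatim as in \eqref{boundF}--\eqref{diffI2b}, using \eqref{LipG} and the linearity of $B$. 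The two It\^o integrals are controlled by a stochastic Strichartz estimate of Brze\'zniak--Millet type combined with the Burkholder--Davis--Gundy inequality and the summability \eqref{BmH}, which yields for $q \geq 2$ a bound of the form
\begin{align*}
\mathbb{E}\, \Big\| \int_0^\cdot S(\cdot - s)\, B(u(s)) \, d\mathcal{W}_1(s) \Big\|^q_{\mathbb{L}^{\infty,p}_{2,r}(T_0)} \,\leq\, C\, T_0^{q/2}\, \mathbb{E}\, \|u\|^q_{\mathbb{L}^{\infty,p}_{2,r}(T_0)},
\end{align*}
together with its analogue for differences (and similarly for the $G$-integral, using \eqref{LipG}). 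Since $\theta_N$ renders the nonlinearity globally Lipschitz in the relevant norm, choosing $T_0 = T_0(N, q, \beta, \|u_0\|_H)$ sufficiently small makes $\mathfrak{T}_N$ a strict contraction; Banach's fixed point theorem then yields a unique truncated local mild solution, and the gluing procedure from \eqref{glueumu} extends it to a unique truncated global mild solution on $[0,T]$.

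To remove the truncation, introduce the stopping time $\tau_N := \inf\{ t \in [0,T] : \|u_N\|_{\mathbb{L}^{\infty,p}_{2,r}(t)} \geq N \} \wedge T$. On $[0,\tau_N]$ the cutoff is inactive, so $u_N$ is a local mild solution of \eqref{Sce}. Proving $\tau_N \uparrow T$ in probability requires a uniform $L^q(\Omega; \mathbb{L}^{\infty,p}_{2,r}(T))$-bound on the true solution, which I obtain via the Yosida regularization analogous to \eqref{approxg} with the stochastic terms retained. Applying It\^o's formula to $\|u_\mu^\rho(t)\|_H^2$, the dispersive and nonlinear terms vanish by \eqref{energyA}--\eqref{energyF}, and the self-adjointness of $B_m$ noted in \eqref{energyB} makes the It\^o correction of $-i\sqrt{\varepsilon}\, B(u)\, d\mathcal{W}_1$ cancel exactly against the Stratonovich drift $\varepsilon\, b(u)$, leaving only damping and $G$-contributions. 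The Burkholder--Davis--Gundy inequality, the linear growth \eqref{LgG}, and Gr\"onwall's lemma then deliver $\sup_{\mu > 0} \mathbb{E}\big[\sup_{t \in [0,T]} \|u_\mu^\rho(t)\|_H^q\big] < \infty$; feeding this into the stochastic Strichartz estimate applied to the mild form upgrades it to a uniform bound in $L^q(\Omega; \mathbb{L}^{\infty,p}_{2,r}(T))$. The passage $\mu \to \infty$ is then carried out by repeating the difference argument \eqref{mulip}--\eqref{muconv} after taking expectations and applying BDG to the stochastic integrals. Uniqueness follows by a pathwise Gr\"onwall argument along the $\tau_N$, as in the uniqueness step of \Cref{wellskl}.

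The main obstacle is twofold. The principal novelty over the deterministic case is that pathwise Strichartz estimates cannot be applied to the It\^o integrals, so the stochastic Strichartz estimate must be invoked, and its validity rests precisely on \eqref{BmH}. The second obstacle is the absence of mass conservation: since $\beta > 0$ and $G \not\equiv 0$, the clean $L^2$-preservation that simplified \cite{zhu2023large, MR4249116} is unavailable, and only the Stratonovich--It\^o cancellation described above allows the uniform $H$-estimate to close in the presence of these terms.
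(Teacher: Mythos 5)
Your overall scaffolding matches the paper's: truncate the nonlinearity by a cutoff of $\|u\|_{\mathbb{L}^{\infty,p}_{2,r}(t)}$, prove local existence by a stochastic Banach fixed point in $\mathbb{X} = L^q(\Omega; L^\infty(0,T;H)\cap L^p(0,T;L^r))$, glue to get a global truncated solution, introduce stopping times $\tau_k$ to pass to the original equation, and use the Yosida approximation with the It\^o formula to obtain the uniform $H$-energy estimate (\Cref{lemuklinfbound}), exploiting the exact cancellation between the Stratonovich correction $\varepsilon b(u)$ and the It\^o quadratic variation of the $B$-noise. One mild technical divergence: the paper does not invoke a Brze\'zniak--Millet stochastic Strichartz estimate; instead it handles the stochastic convolutions $K_B$ and $K_G$ by freezing the upper integration limit $t_0$ (\Cref{lemkbt0}, \Cref{lemkgt0}), applying BDG at fixed $t$, and then using H\"older's inequality in time to pass to the $L^p(0,T;L^r)$ norm. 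Both devices serve the same purpose, though yours outsources the work to an existing result rather than proving the bound directly.

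The genuine gap is in how you close the uniform bound in $L^q(\Omega; L^p(0,T;L^r))$, which is the crux of showing $\tau_\infty = T$ $\mathbb{P}$-a.s. You assert that ``feeding [the uniform $H$-bound] into the stochastic Strichartz estimate applied to the mild form upgrades it to a uniform bound in $L^q(\Omega;\mathbb{L}^{\infty,p}_{2,r}(T))$,'' but this cannot work as stated because the nonlinear term is superlinear: applying Strichartz to the mild form gives an inequality of the schematic form
\begin{align*}
\|u^k\|_{L^p(0,T;L^r)} \;\leq\; M_k(\omega) \;+\; C_k\, T^{(p-r)/p}\, \|u^k\|_{L^p(0,T;L^r)}^{\alpha}
\end{align*}
with $\alpha > 1$, which does not close by itself. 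The paper must therefore run a delicate iteration: it chooses a small random time $T_k(\omega)$ so that the coefficient of the $\alpha$-th power is tiny relative to $M_k$, analyzes the auxiliary polynomial $g(x) = M + \tfrac{1}{4(2M)^{\alpha-1}} x^{\alpha} - x$ via convexity and the intermediate value theorem to deduce $\|u^k\|_{L^p(0,\sigma_k;L^r)}^q \leq 2M_k^{\sigma_k}$, and then iterates over a sequence of stopping times $\sigma_k^j$ to cover $[0,T]$, summing the contributions and controlling the number of steps $N' = [T/T_k]$ through its dependence on $M_k$. The Yosida-based energy estimate you describe enters only to control the moments $\mathbb{E}\{M_k\}$ and $\mathbb{E}\{(M_k)^\lambda\}$ appearing after this iteration; by itself it does not yield the $L^p(0,T;L^r)$ bound. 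You should also note that the uniform bound is needed for the \emph{truncated} solutions $u^k$ uniformly in $k$, not for the ``true solution,'' since the Chebyshev estimate in the final step is applied to $u^k$ before one knows $\tau_\infty = T$. Without this bootstrapping argument, the bound on $\tau_k$ cannot be made uniform and the stopping time step collapses.
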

	To prove this theorem, we first consider an equation where the nonlinear term is multiplied by a suitable truncating function, and establish the existence of a unique solution to the truncated equation. Then, we show that, for sufficiently large support of the truncating function, the solutions of the truncated equation are the solutions of the original equation. Since for any $\rho\in \mathbb{S}$, there exists $N \in \mathbb{N}$ such that $\rho\in \mathbb{S}_N$, without loss of generality, we work with $\rho\in \mathbb{S}_N$.
	\subsection{A truncated equation}\label{subtruncated }
	Let $\theta \in C_0^\infty(\mathbb{R})$ with $\mathrm{supp} (\theta) \in (-2,2)$ such that 
	\begin{align*}
		\theta(x):=\begin{cases*}
			1, \qquad \qquad\,\, x \in [-1,1],\\
			\in [0,1], \qquad x \in [-2,2],\\
			0, \qquad\qquad\,\,\,\text{otherwise}.
		\end{cases*}
	\end{align*}
	Take $R>0$ and define $\theta_R(x)=\theta(x/R)$. Therefore $\theta_R(x)= 1$ for $x \in [-R,R]$.
	We consider the following truncated equation:
	\begin{align}\label{truncated}
		\left\{
		\begin{aligned}
			du^\rho(t)&=-\big[i Au^\rho(t) +i \theta_R(	\|u^\rho\|_{\mathbb{L}^{\infty, p}_{2, r}(t)})\mathcal{N}(u^\rho(t))+ \beta u^\rho(t) + \varepsilon b(u^\rho(t))\big]dt - i B(u^\rho(t))\rho_1(t)dt\\
			&\quad -i  G(u^\rho(t))\rho_2(t)dt - i\sqrt{\varepsilon}  B(u^\rho(t)) d\mathcal{W}_1(t)- i\sqrt{\varepsilon}  G(u^\rho(t)) d\mathcal{W}_2(t) , \quad t>0, \\
			u^\rho(0)&=u_0,
		\end{aligned}
		\right.
	\end{align} 
	and the corresponding mild form, 
	\begin{align}
		u^\rho(t)&=S(t)u_0	- i\int_{0}^{t} S(t-s)\theta_R(	\|u^\rho\|_{\mathbb{L}^{\infty, p}_{2, r}(s)}) \mathcal{N}(u^\rho(s))ds- \int_{0}^{t} S(t-s) \big[\beta u^\rho(s) + \varepsilon b(u^\rho(t))\big]ds\nonumber\\ 
		&\quad - i\int_{0}^{t} S(t-s)\big[B(u^\rho(s))\rho_1(s)+G(u^\rho(s))\rho_2(s) \big]ds\nonumber\\
		&\quad- i \sqrt{\varepsilon}\int_{0}^{t} S(t-s)\big[B(u^\rho(s))d\mathcal{W}_1(s)+G(u^\rho(s))d\mathcal{W}_2(s) \big].
	\end{align}
	To establish the well-posedness of the truncated equation, we need the following theorem:
	\begin{thm}\label{liptrunc}
		Assume $1< \alpha< \frac{4}{d}+1, r=\alpha+1$. Denote 
		\begin{align*}
			I_R(u^\rho(t)):=\int_{0}^{t} S(t-s)\theta_R(	\|u^\rho(s)\|_{\mathbb{L}^{\infty, p}_{2, r}(s)}) \mathcal{N}(u^\rho(s))ds.
		\end{align*}
		Then for every $T>0$, the function $I_R$ maps $\mathbb{L}^{\infty, p}_{2, r}(T)$ to itself and for $u_1, u_2 \in \mathbb{L}^{\infty, p}_{2, r}(T)$, we have 
		\begin{align*}
			\|I_R(u_1)-I_R(u_2)\|_{\mathbb{L}^{\infty, p}_{2, r}(T)} \leq C T^{\frac{p-r}{p}}\|u_1-u_2\|_{\mathbb{L}^{\infty, p}_{2, r}(T)}.
		\end{align*}
	\end{thm}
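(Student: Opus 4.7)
The argument parallels the Lipschitz/self-mapping estimates \eqref{boundF} and \eqref{liptypeF} from the skeleton analysis, with an extra decomposition to accommodate the smooth truncation $\theta_R$. Throughout I write $\|v\|_s$ as shorthand for $\|v\|_{\mathbb{L}^{\infty,p}_{2,r}(s)}$ and crucially exploit the monotonicity of $s \mapsto \|v\|_s$. The driving tool remains the Strichartz inequality \eqref{Strichartzinf}--\eqref{Strichartzp} applied with the admissible pair $(\gamma,\sigma) = (p,r)$.

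For the self-mapping property, the plan is to apply Strichartz to reduce the task to estimating $\|\theta_R(\|u\|_s)\mathcal{N}(u)\|_{L^{p'}(0,T;L^{r'}(\mathbb{R}^d))}$. Introducing the exit time $\tau_R := \inf\{s \in [0,T] : \|u\|_s > 2R\}$, the support condition $\mathrm{supp}(\theta_R) \subset [-2R,2R]$ together with the monotonicity of $s \mapsto \|u\|_s$ forces the integrand to vanish for $s > \tau_R$. On $[0, \tau_R \wedge T]$ one has $\|u\|_{L^p(0,\tau_R;L^r(\mathbb{R}^d))} \leq 2R$, so replaying the H\"older computation of \eqref{boundF} delivers a finite bound depending on $R$ and $T$ but otherwise uniform in $u$.

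For the Lipschitz estimate I would decompose
\begin{equation*}
\theta_R(\|u_1\|_s)\mathcal{N}(u_1(s)) - \theta_R(\|u_2\|_s)\mathcal{N}(u_2(s)) = \theta_R(\|u_1\|_s)\bigl[\mathcal{N}(u_1(s)) - \mathcal{N}(u_2(s))\bigr] + \bigl[\theta_R(\|u_1\|_s) - \theta_R(\|u_2\|_s)\bigr]\mathcal{N}(u_2(s)).
\end{equation*}
For the first piece, $\theta_R \in [0,1]$ is nonzero only when $\|u_1\|_s \leq 2R$; the H\"older chain of \eqref{liptypeF} then applies with $\|u_1\|_{L^p(0,\tau_R;L^r)} \leq 2R$, while $\|u_2\|_{L^p L^r}$ is controlled via the triangle inequality $\|u_2\|_s \leq 2R + \|u_1 - u_2\|_s$, yielding a contribution of order $C_R T^{(p-r)/p} \|u_1 - u_2\|_{\mathbb{L}^{\infty,p}_{2,r}(T)}$. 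For the second piece, the smoothness of $\theta$ gives the Lipschitz bound $|\theta_R(a) - \theta_R(b)| \leq (\|\theta'\|_\infty / R)|a-b|$, while the reverse triangle inequality gives $\bigl|\|u_1\|_s - \|u_2\|_s\bigr| \leq \|u_1 - u_2\|_s$; the factor $\mathcal{N}(u_2)$ is again controllable since the difference vanishes unless $\min(\|u_1\|_s,\|u_2\|_s) \leq 2R$, producing a second contribution of the same form $C_R T^{(p-r)/p}\|u_1-u_2\|_{\mathbb{L}^{\infty,p}_{2,r}(T)}$.

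The principal technical obstacle is the mixed-support regime where one of $\theta_R(\|u_i\|_s)$ vanishes while the other does not; the chosen decomposition is tailored precisely so that every nonvanishing summand carries an explicit $2R$ bound on at least one of $\|u_1\|_s, \|u_2\|_s$, which together with the triangle inequality closes the argument uniformly for $u_1, u_2 \in \mathbb{L}^{\infty,p}_{2,r}(T)$. The remaining bookkeeping of H\"older exponents --- namely $1/Q + (\alpha-1)/(p-1) + 1/(p-1) = 1$ with $Q = (p-1)/(p-r)$ --- exactly reproduces the time factor $T^{(p-r)/p}$ as in \eqref{liptypeF}, completing the plan.
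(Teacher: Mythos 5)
Your self-mapping argument and your choice of decomposition are both sound, but the Lipschitz estimate as you close it has a genuine gap: the triangle inequality route does not produce a Lipschitz bound. In your first piece $\theta_R(\|u_1\|_s)\bigl[\mathcal{N}(u_1)-\mathcal{N}(u_2)\bigr]$ you restrict to the region $\|u_1\|_s\le 2R$ and then estimate $\|u_2\|_s\le 2R+\|u_1-u_2\|_s$; feeding this into the H\"older chain of \eqref{liptypeF} yields a factor $\bigl(4R+\|u_1-u_2\|_{\mathbb{L}^{\infty,p}_{2,r}(T)}\bigr)^{\alpha-1}\|u_1-u_2\|_{\mathbb{L}^{\infty,p}_{2,r}(T)}$, which is superlinear (of order $\|u_1-u_2\|^{\alpha}$) when the difference is large, not the advertised $C_R T^{(p-r)/p}\|u_1-u_2\|$. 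The same problem reappears in your second piece: the condition ``$\min(\|u_1\|_s,\|u_2\|_s)\le 2R$'' is the wrong one to invoke — it can hold while $\|u_2\|_s$ is arbitrarily large, in which case $\mathcal{N}(u_2)$ is uncontrolled even though the $\theta_R$-difference is genuinely Lipschitz in $\|u_1-u_2\|$. In short, nothing in your plan uniformly bounds the $L^pL^r$-size of the function that sits inside $\mathcal{N}(\cdot)$, and that bound is exactly what turns the $\alpha$-homogeneous nonlinearity into a Lipschitz contribution with an $R$-dependent constant.

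The paper closes this gap by introducing \emph{both} exit times $\tau_i:=\inf\{t:\|u_i\|_{\mathbb{L}^{\infty,p}_{2,r}(t)}>2R\}\wedge T$, assuming WLOG $\tau_1\le\tau_2$ (permissible because the target quantity is symmetric in $u_1,u_2$), and splitting $[0,T]=[0,\tau_1]\cup[\tau_1,\tau_2]\cup[\tau_2,T]$. On $[0,\tau_1]$ both $\|u_1\|$ and $\|u_2\|$ are at most $2R$; on $[\tau_1,\tau_2]$ the term $\theta_R(\|u_1\|)$ vanishes and $\|u_2\|\le 2R$; on $[\tau_2,T]$ everything vanishes. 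This guarantees that whenever a piece survives, the argument of $\mathcal{N}$ has norm $\le 2R$ on the relevant sub-interval, and the $\theta_R$-difference is handled via the mean-value-theorem lemma (the paper's \Cref{lemI1}). Your decomposition is equally valid once combined with this two-stopping-time partition — the fix is to replace the triangle-inequality and $\min$-condition reasoning with the WLOG ordering of $\tau_1,\tau_2$ and the three-interval split, at which point the remaining bookkeeping you describe does close the argument.
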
		
	\begin{proof}
		Let us fix $T>0$ and take $u\in \mathbb{L}^{\infty, p}_{2, r}(T)$.  Let us define the stopping time,
		$$\tau:=\inf  \{t\geq 0: 	\|u\|_{\mathbb{L}^{\infty, p}_{2, r}(t)}> 2R\} \land T.$$
		We have $\theta_R(	\|u\|_{\mathbb{L}^{\infty, p}_{2, r}(t)})=0$ for  $\|u\|_{\mathbb{L}^{\infty, p}_{2, r}(t)}>2R$. Since the map, $t \mapsto \|u\|_{\mathbb{L}^{\infty, p}_{2, r}(t)}$ is non-decreasing on $[0,T],$ we have $\theta_R(	\|u\|_{\mathbb{L}^{\infty, p}_{2, r}(t)})=0$ for  $t\geq \tau$.	Now, by applying the Strichartz estimates (\Cref{Strichartz}), we have
		\begin{align*}
			&\|I_R(u)\|_{L^\infty(0,T; H)}\leq C \|\theta_R(	\|u\|_{\mathbb{L}^{\infty, p}_{2, r}(\cdot)}) |u|^{\alpha-1}u\|_{L^{p'}(0,T; L^{r'}(\mathbb{R}^d))}, \\
			&	\|I_R(u)\|_{L^p(0,T; L^r(\mathbb{R}^d))}\leq C \|\theta_R(	\|u\|_{\mathbb{L}^{\infty, p}_{2, r}(\cdot)}) |u|^{\alpha-1}u\|_{L^{p'}(0,T; L^{r'}(\mathbb{R}^d))}.
		\end{align*}
		Therefore, using H\"older's inequality with conjugate exponents $s=\frac{r'p}{p'r}, t=\frac{p-1}{p-r}$ and the boundedness of the truncating function, we get
		\begin{align}\label{boundftrunc}
			\|I_R(u)\|_{\mathbb{L}^{\infty, p}_{2, r}(t)}&\leq  C \|\theta_R(	\|u\|_{\mathbb{L}^{\infty, p}_{2, r}(\cdot)}) |u|^{\alpha-1}u\|_{L^{p'}(0,T; L^{r'}(\mathbb{R}^d))}\nonumber\\
			&\leq  C \|\theta_R(	\|u\|_{\mathbb{L}^{\infty, p}_{2, r}(\cdot)})\|_{L^\infty(0,\tau)}  \||u|^{\alpha-1}u\|_{L^{p'}(0,\tau; L^{r'}(\mathbb{R}^d))}\nonumber\\
			&\leq  C \tau^{\frac{p-r}{p}} \|u\|^\alpha_{L^{p}(0,\tau; L^{r}(\mathbb{R}^d))}\leq  C_R T^{\frac{p-r}{p}}\|u\|^\alpha_{\mathbb{L}^{\infty, p}_{2, r}(T)}.
		\end{align}
		We take $u_1, u_2 \in \mathbb{L}^{\infty, p}_{2, r}(T)$ and define the stopping times, 
		$$\tau_i:=\inf \big\{t\geq 0: 	\|u_i\|_{\mathbb{L}^{\infty, p}_{2, r}(t)}> 2R\big\} \land T \quad \text{ for } i=1,2.$$ 
		Without loss of generality, we can assume $\tau_1 \leq \tau_2.$ So, we can partition the interval as, $[0,T]=[0,\tau_1] \cup [\tau_1, \tau_2] \cup [\tau_2, T].$
		Using the Strichartz estimates (\Cref{Strichartz}) again, we have
		\begin{align}
			&\|I_R(u_1)-I_R(u_2)\|_{L^\infty(0,T; H)}\leq C \|\theta_R(	\|u_1\|_{\mathbb{L}^{\infty, p}_{2, r}(\cdot)}) |u_1|^{\alpha-1}u_1-\theta_R(\|u_2\|_{\mathbb{L}^{\infty, p}_{2, r}(\cdot)}) |u_2|^{\alpha-1}u_2\|_{L^{p'}(0,T; L^{r'}(\mathbb{R}^d))}, \label{l1}\\
			&\|I_R(u_1)-I_R(u_2)\|_{L^p(0,T; L^r(\mathbb{R}^d))}\nonumber\\
			&\leq C \|\theta_R(	\|u_1\|_{\mathbb{L}^{\infty, p}_{2, r}(\cdot)}) |u_1|^{\alpha-1}u_1-\theta_R(\|u_2\|_{\mathbb{L}^{\infty, p}_{2, r}(\cdot)}) |u_2|^{\alpha-1}u_2\|_{L^{p'}(0,T; L^{r'}(\mathbb{R}^d))}.\label{l2}
		\end{align}
		Therefore, using \eqref{l1} and \eqref{l2}, we have
		\begin{align}
			&\|I_R(u_1)-I_R(u_2)\|_{\mathbb{L}^{\infty, p}_{2, r}(T)}\nonumber\\
			&\leq  2C \|\theta_R(	\|u_1\|_{\mathbb{L}^{\infty, p}_{2, r}(\cdot)}) |u_1|^{\alpha-1}u_1-\theta_R(\|u_2\|_{\mathbb{L}^{\infty, p}_{2, r}(\cdot)}) |u_2|^{\alpha-1}u_2\|_{L^{p'}(0,T; L^{r'}(\mathbb{R}^d))}\nonumber\\
			&\leq   2C \|\theta_R(	\|u_1\|_{\mathbb{L}^{\infty, p}_{2, r}(\cdot)}) |u_1|^{\alpha-1}u_1-\theta_R(\|u_2\|_{\mathbb{L}^{\infty, p}_{2, r}(\cdot)}) |u_1|^{\alpha-1}u_1\|_{L^{p'}(0,\tau_1; L^{r'}(\mathbb{R}^d))}\nonumber\\
			&\quad+ 2C \|\theta_R(\|u_2\|_{\mathbb{L}^{\infty, p}_{2, r}(\cdot)}) \{|u_1|^{\alpha-1}u_1- |u_2|^{\alpha-1}u_2\}\|_{L^{p'}(0,\tau_1; L^{r'}(\mathbb{R}^d))}\nonumber\\
			&\quad+ 2C \|\theta_R(\|u_2\|_{\mathbb{L}^{\infty, p}_{2, r}(\cdot)}) |u_2|^{\alpha-1}u_2\|_{L^{p'}(\tau_1,\tau_2; L^{r'}(\mathbb{R}^d))}\nonumber\\
			&= I^R_1+I^R_2+I^R_3. \label{l}
		\end{align}
		To estimate $I^R_1$ in \Cref{liptrunc}, we establish the following lemma first.
		\begin{lem}\label{lemI1}
			Let  $u_1, u_2 \in \mathbb{L}^{\infty, p}_{2, r}(T)$ and $u_3 \in L^{p'}(0,T; L^{r'}(\mathbb{R}^d))$, then we have
			\begin{align*}
				A &\equiv \,\|\{\theta_R(	\|u_1\|_{\mathbb{L}^{\infty, p}_{2, r}(\cdot)})-\theta_R(\|u_2\|_{\mathbb{L}^{\infty, p}_{2, r}(\cdot)})\} u_3\|_{L^{p'}(0,T; L^{r'}(\mathbb{R}^d))}\\
				&\leq  C_R \|u_1-u_2\|_{\mathbb{L}^{\infty, p}_{2, r}(T)}\|u_3\|_{L^{p'}(0,T; L^{r'}(\mathbb{R}^d))}.
			\end{align*}
			\begin{proof} For the sake of simplifying the calculations, we consider $A^{p'}$ and use Lagrange's mean value theorem on $\theta_R$ to estimate,
				\begin{align*}
					A^{p'}&=\|\{\theta_R(	\|u_1\|_{\mathbb{L}^{\infty, p}_{2, r}(\cdot)})-\theta_R(\|u_2\|_{\mathbb{L}^{\infty, p}_{2, r}(\cdot)})\} u_3\|^{p'}_{L^{p'}(0,T; L^{r'}(\mathbb{R}^d))}\\
					&= \int_{0}^{T}\|\{\theta_R(	\|u_1\|_{\mathbb{L}^{\infty, p}_{2, r}(t)})-\theta_R(\|u_2\|_{\mathbb{L}^{\infty, p}_{2, r}(t)})\} u_3(t)\|^{p'}_{ L^{r'}(\mathbb{R}^d))}dt\\
					&= \int_{0}^{T}\|\theta'_R(\xi)\{	\|u_1\|_{\mathbb{L}^{\infty, p}_{2, r}(t)}-\|u_2\|_{\mathbb{L}^{\infty, p}_{2, r}(t)}\} u_3\|^{p'}_{ L^{r'}(\mathbb{R}^d)}dt\\
					&\leq  \|\theta'_R(\xi)\|^{p'}_{L^\infty(0,T)} \int_{0}^{T}\|u_1-u_2\|^{p'}_{\mathbb{L}^{\infty, p}_{2, r}(t)}\| u_3(t)\|^{p'}_{ L^{r'}(\mathbb{R}^d)}dt\\
					&\leq  \|\theta'_R(\xi)\|^{p'}_{L^\infty(0,T)} \|u_1-u_2\|^{p'}_{\mathbb{L}^{\infty, p}_{2, r}(T)}\int_{0}^{T}\| u_3(t)\|^{p'}_{ L^{r'}(\mathbb{R}^d)}dt\\
					&\leq  \|\theta'_R(\xi)\|^{p'}_{L^\infty(0,T)} \|u_1-u_2\|^{p'}_{\mathbb{L}^{\infty, p}_{2, r}(T)}\|u_3\|^{p'}_{L^{p'}(0,T; L^{r'}(\mathbb{R}^d))}\\
					&\leq  C_R \|u_1-u_2\|^{p'}_{\mathbb{L}^{\infty, p}_{2, r}(T)}\|u_3\|^{p'}_{L^{p'}(0,T; L^{r'}(\mathbb{R}^d))}.
				\end{align*}
				Hence, \Cref{lemI1} is straight forward.
			\end{proof}
		\end{lem}
		We use \Cref{lemI1} with $T=\tau_1$ and H\"older's inequality with conjugate exponents $s=\frac{r'p}{p'r}, t=\frac{p-1}{p-r}$, to estimate $I^R_1$ as 
		\begin{align}
			I^R_1&=2C \|\theta_R(	\|u_1\|_{\mathbb{L}^{\infty, p}_{2, r}(\cdot)}) |u_1|^{\alpha-1}u_1-\theta_R(\|u_2\|_{\mathbb{L}^{\infty, p}_{2, r}(\cdot)}) |u_1|^{\alpha-1}u_1\|_{L^{p'}(0,\tau_1; L^{r'}(\mathbb{R}^d))}\nonumber\\
			&\leq  2C C_R \|u_1-u_2\|_{\mathbb{L}^{\infty, p}_{2, r}(\tau_1)}\||u_1|^{\alpha-1}u_1\|_{L^{p'}(0,\tau_1; L^{r'}(\mathbb{R}^d))}\nonumber\\
			&\leq  2C C_R \|u_1-u_2\|_{\mathbb{L}^{\infty, p}_{2, r}(\tau_1)}\||u_1|^{\alpha}\|_{L^{p'}(0,\tau_1; L^{r'}(\mathbb{R}^d))}\nonumber\\
			&\leq C_R \tau_1^{\frac{p-r}{p}} \|u_1-u_2\|_{\mathbb{L}^{\infty, p}_{2, r}(\tau_1)}\|u_1\|^{\alpha}_{L^{p}(0,\tau_1; L^{r}(\mathbb{R}^d))}\nonumber\\
			&\leq C_R T^{\frac{p-r}{p}}R^\alpha \|u_1-u_2\|_{\mathbb{L}^{\infty, p}_{2, r}(T)}.\label{b1}
		\end{align}
		The boundedness of the truncating function and \eqref{liptypeF} give the estimate of $I^R_2$ as
		\begin{align}
			I^R_2&= 2C \|\theta_R(\|u_2\|_{\mathbb{L}^{\infty, p}_{2, r}(\cdot)}) \{|u_1|^{\alpha-1}u_1- |u_2|^{\alpha-1}u_2\}\|_{L^{p'}(0,\tau_1; L^{r'}(\mathbb{R}^d))}\nonumber\\
			&\leq  2C C' \| |u_1|^{\alpha-1}u_1- |u_2|^{\alpha-1}u_2\|_{L^{p'}(0,\tau_1; L^{r'}(\mathbb{R}^d))}\nonumber\\
			&\leq  2C C' \tau_1^{\frac{p-r}{p}} (\|u_1\|_{\mathbb{L}^{\infty, p}_{2, r}(\tau_1)}+\|u_2\|_{\mathbb{L}^{\infty, p}_{2, r}(\tau_1)})^{\alpha-1}\|u_1-u_2\|_{\mathbb{L}^{\infty, p}_{2, r}(\tau_1)}\nonumber\\
			&\leq  2C C' T^{\frac{p-r}{p}} (4R)^{\alpha-1} \|u_1-u_2\|_{\mathbb{L}^{\infty, p}_{2, r}(\tau_1)}\nonumber\\
			&\leq  C_R T^{\frac{p-r}{p}} R^{\alpha-1} \|u_1-u_2\|_{\mathbb{L}^{\infty, p}_{2, r}(T)}.\label{b2}
		\end{align}
		Since  $\theta_R(	\|u_1\|_{\mathbb{L}^{\infty, p}_{2, r}(t)})=0$ for  $t\in (\tau_1, \tau_2)$, we can use the \Cref{lemI1} again to get, 
		\begin{align}
			I^R_3&=2C \|\theta_R(\|u_2\|_{\mathbb{L}^{\infty, p}_{2, r}(\cdot)}) |u_2|^{\alpha-1}u_2\|_{L^{p'}(\tau_1,\tau_2; L^{r'}(\mathbb{R}^d))}\nonumber\\
			&=2C \|\{\theta_R(\|u_1\|_{\mathbb{L}^{\infty, p}_{2, r}(\cdot)})-\theta_R(\|u_2\|_{\mathbb{L}^{\infty, p}_{2, r}(\cdot)})\} |u_2|^{\alpha-1}u_2\|_{L^{p'}(\tau_1,\tau_2; L^{r'}(\mathbb{R}^d))}\nonumber\\
			&\leq  C_R T^{\frac{p-r}{p}}\|u_1-u_2\|_{\mathbb{L}^{\infty, p}_{2, r}(T)}\|u_2\|^{\alpha}_{L^{p}(\tau_1, \tau_2; L^{r}(\mathbb{R}^d))}\nonumber\\
			&\leq  C_R T^{\frac{p-r}{p}}R^\alpha\|u_1-u_2\|_{\mathbb{L}^{\infty, p}_{2, r}(T)}.\label{b3}
		\end{align}
		Therefore, using the estimates \eqref{b1}-\eqref{b3} in \eqref{l}, we conclude
		\begin{align*}
			\|I_R(u_1)-I_R(u_2)\|_{\mathbb{L}^{\infty, p}_{2, r}(T)} &\leq  I^R_1+I^R_2+I^R_3\leq  C_R T^{\frac{p-r}{p}}\|u_1-u_2\|_{\mathbb{L}^{\infty, p}_{2, r}(T)}.
		\end{align*}
		This completes the proof of \Cref{liptrunc}.
	\end{proof}
	Let us now prove the existence of a unique solution of the truncated equation \eqref{truncated} using \Cref{liptrunc}. 
	\vspace{-.3cm}
	\begin{thm}\label{wellposetrunc}
		Let $r=\alpha+1$ and $1< \alpha< \frac{4}{d}+1$, choose p such that (p,r) is an admissible pair and $u_0\in H$.Under the assumptions taken in Subsection \ref{Ass}, there exists a unique golobal mild solution $u^\rho \in L^q(\Omega; C([0, T]; H) \cap L^p(0, T; L^r(\mathbb{R}^d)))$ of the truncated equation satisfying the mild form
		\begin{align}\label{mildtrunc}
			u^\rho(t)&=S(t)u_0	- i\int_{0}^{t} S(t-s)\theta_R(	\|u^\rho\|_{\mathbb{L}^{\infty, p}_{2, r}(s)}) F(u^\rho(s))ds- \int_{0}^{t} S(t-s) \big[\beta u^\rho(s) + \varepsilon b(u^\rho(t))\big]ds\nonumber\\
			& \quad- i\int_{0}^{t} S(t-s)\big[B(u^\rho(s))\rho_1(s)+G(u^\rho(s))\rho_2(s) \big]ds \nonumber\\
			&\quad- i \sqrt{\varepsilon}\int_{0}^{t} S(t-s)\big[B(u^\rho(s))d\mathcal{W}_1(s)+G(u^\rho(s))d\mathcal{W}_2(s) \big], \quad\mathbb{P}\,\text{-a.s.}
		\end{align}
		for all $t\in[0,T]$.
	\end{thm}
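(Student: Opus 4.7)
The plan is to establish local existence on a sufficiently short interval $[0,T_0]$ by a stochastic fixed-point argument, and then extend to the full interval $[0,T]$ by concatenation, exploiting the fact that the truncation renders the nonlinear term globally Lipschitz with a constant that carries a favorable power of time. Concretely, I introduce, for each $T_0\in(0,T]$, the Banach space
\begin{equation*}
\mathcal{X}_{T_0}^q := L^q\!\bigl(\Omega; \mathbb{L}^{\infty,p}_{2,r}(T_0)\bigr),
\end{equation*}
restricted to $\{\mathcal{F}_t\}$-adapted processes, and define the solution map $\Psi:\mathcal{X}_{T_0}^q\to\mathcal{X}_{T_0}^q$ by the right-hand side of \eqref{mildtrunc}. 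The objective is to show that for $T_0$ small enough (depending on $R$, $\beta$, $\varepsilon$, $N$, and the noise coefficients, but \emph{not} on $u_0$ once we are inside $\mathcal{X}_{T_0}^q$) the map $\Psi$ is a strict contraction on $\mathcal{X}_{T_0}^q$.

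For the self-mapping and contraction estimates, I would combine four ingredients: (i) the deterministic Strichartz estimates from \Cref{Strichartz}, applied to $S(t)u_0$ with the pair $(p,r)$; (ii) \Cref{liptrunc}, which directly gives
$\|I_R(u_1)-I_R(u_2)\|_{\mathbb{L}^{\infty,p}_{2,r}(T_0)}\leq C_R T_0^{\frac{p-r}{p}}\|u_1-u_2\|_{\mathbb{L}^{\infty,p}_{2,r}(T_0)}$,
so that the truncated nonlinearity contributes a Lipschitz constant vanishing with $T_0$; (iii) the control of the linear terms $\beta u$, $\varepsilon b(u)=\tfrac{\varepsilon}{2}\sum_m B_m^2 u$ and the controlled drifts $B(u)\rho_1$, $G(u)\rho_2$ via the Strichartz pair $(\infty,2)$, exactly as in \eqref{boundbeta}--\eqref{boundI2b} in the skeleton-equation proof, using \eqref{BmH}, the boundedness of $B$, the Lipschitz/linear growth of $G$, and $\rho\in\mathbb{S}_N$ (which gives $\int_0^{T_0}\|\rho(s)\|_{Y_1\times Y_2}\,ds\leq \sqrt{N T_0}$); (iv) the Burkholder-Davis-Gundy inequality together with the stochastic Strichartz estimate (see e.g.\ the tools used in \cite{MR3215081, MR4249116}) to handle the two It\^o convolutions in both components of the $\mathbb{L}^{\infty,p}_{2,r}$-norm. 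Schematically, BDG yields
\begin{equation*}
\mathbb{E}\Bigl\|\sqrt{\varepsilon}\!\int_0^{\cdot}\! S(\cdot-s)B(u)\,d\mathcal{W}_1\Bigr\|_{\mathcal{X}_{T_0}^q}^q
\leq C_q\,\varepsilon^{q/2}\,\mathbb{E}\Bigl(\int_0^{T_0}\!\|B(u(s))\|^2_{\mathcal{L}_2(Y_1,H)}\,ds\Bigr)^{q/2},
\end{equation*}
and likewise for $G$. Because $B$ is linear and $G$ is Lipschitz with linear growth, each of these terms produces a Lipschitz constant of order $\sqrt{T_0}$ (times $\|B\|$ or $L_G$) between two candidate processes $u_1,u_2$. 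Choosing $T_0$ small enough makes the sum of all Lipschitz contributions strictly less than $1/2$ and keeps $\Psi$ mapping a ball of $\mathcal{X}_{T_0}^q$ into itself, so the Banach fixed-point theorem yields a unique $u^\rho\in\mathcal{X}_{T_0}^q$ solving \eqref{mildtrunc} on $[0,T_0]$; standard continuity of stochastic convolutions then places $u^\rho$ in $L^q(\Omega;C([0,T_0];H))$.

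To extend to $[0,T]$, I run the same fixed-point scheme on the shifted interval $[T_0,2T_0]$ with (random) initial datum $u^\rho(T_0)\in L^q(\Omega;H)$, using that the crucial smallness constants $C_R T_0^{\frac{p-r}{p}}$, $\beta T_0$, $\sqrt{N T_0}$, and the BDG constants depend only on $R,\beta,\varepsilon,N$ and on the noise structural constants, not on the size of $u^\rho(T_0)$. A straightforward induction, concatenating solutions on $[kT_0,(k+1)T_0]$ and invoking the $\{\mathcal{F}_t\}$-adaptedness of the iterates, produces a unique global mild solution on $[0,T]$ in $L^q(\Omega;\mathbb{L}^{\infty,p}_{2,r}(T)\cap C([0,T];H))$. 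Uniqueness is obtained by a standard argument: if $u^\rho_1,u^\rho_2$ both satisfy \eqref{mildtrunc}, their difference satisfies an integral inequality whose constants are controlled by the same Lipschitz data as above, so Gr\"onwall on $\mathbb{E}\|u^\rho_1-u^\rho_2\|_{\mathbb{L}^{\infty,p}_{2,r}(t)}^q$ forces $u^\rho_1\equiv u^\rho_2$.

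The main obstacle I expect is the bookkeeping for the stochastic convolutions in the $L^p(0,T;L^r)$-component of the $\mathbb{L}^{\infty,p}_{2,r}$ norm: unlike the $L^\infty(0,T;H)$-component, this requires the stochastic Strichartz estimate (a combination of BDG with the dual Strichartz estimate \eqref{Strichartzp}), and one must check that the resulting bound is compatible with the $L^q(\Omega)$ framework for the chosen admissible pair $(p,r)$ with $r=\alpha+1$ and $1<\alpha<\tfrac{4}{d}+1$. Once that estimate is in place with a constant of order $\sqrt{T_0}$ times the noise coefficient norms, the rest of the argument reduces to the same contraction-and-gluing mechanism used in \Cref{wellskl} for the skeleton equation.
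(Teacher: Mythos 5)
Your local-existence argument is in the same spirit as the paper's, but the global extension as written has a genuine gap: the truncation $\theta_R(\|u^\rho\|_{\mathbb{L}^{\infty,p}_{2,r}(s)})$ is \emph{path-history dependent}, not a pointwise function of $u^\rho(s)$. When you restart the fixed-point scheme on $[T_0,2T_0]$ with initial datum $u^\rho(T_0)$, the coefficient in front of $\mathcal{N}(u^\rho(s))$ for $s\in[T_0,2T_0]$ is $\theta_R(\|u^\rho\|_{\mathbb{L}^{\infty,p}_{2,r}(s)})$, where the norm is taken over the \emph{whole} interval $[0,s]$. So the equation on $[T_0,2T_0]$ is not simply the time-shifted version of \eqref{truncated}; the nonlinearity at time $s$ remembers the $L^p(0,T_0;L^r)$ and $L^\infty(0,T_0;H)$ size of the already-constructed piece. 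Your claim that "a straightforward induction, concatenating solutions on $[kT_0,(k+1)T_0]$" works quietly assumes the truncation resets, which it does not. The paper handles this by introducing the auxiliary cutoff $\Theta^R_k(u)(t)=\theta_R(\phi(u)(t))$, where $\phi$ in \eqref{definedphi} explicitly incorporates the norms of the already-constructed $u^R_k$ on $[0,kT_0]$, proving that $\phi$ is Lipschitz (\Cref{philip}) so that the contraction estimate of \Cref{liptrunc} transfers to $\mathfrak{L}_k^R$ (\Cref{lipjkr}), and then checking by hand that $\Theta^R_k(\overline{u}^R_{k+1})(s)=\theta_R(\|u^R_{k+1}\|_{\mathbb{L}^{\infty,p}_{2,r}(kT_0+s)})$ so that the concatenated process is a genuine fixed point of the original $\mathfrak{L}$ on $[0,(k+1)T_0]$. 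Without this device, the concatenated process would solve a different equation on each subinterval, and the induction would not close.

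On a secondary point: your treatment of the stochastic convolution in the $L^p(0,T;L^r)$-component appeals to a generic "stochastic Strichartz estimate," whereas the paper uses a more elementary freezing trick: it fixes the upper limit of integration $t_0$, estimates $K_B(t_0,t)=\int_0^{t_0}S(t-s)B(u(s))\,d\mathcal{W}_1(s)$ uniformly in $t_0$ via BDG (\Cref{lemkbt0}, \Cref{lemkgt0}), and then recovers the $L^p(0,T;L^r)$ norm of $K_B(t)=K_B(t,t)$ by H\"older in time (estimates \eqref{kbx2}, \eqref{kgx2}). Your route is not wrong in principle, and would likely produce a sharper power of $T_0$, but you correctly flag it as the delicate step and leave it unresolved; the paper's trick sidesteps the need for a genuine stochastic Strichartz inequality at the cost of a slightly worse time exponent, which is harmless since only smallness in $T_0$ is needed.
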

	\begin{proof}
		First we use the fixed point argument in the Banach space $\mathbb{X}=L^q(\Omega; L^\infty(0,T; H) \cap L^p(0, T; L^r(\mathbb{R}^d)))$ for some sufficiently small time $T_0$ depending on $\beta, \alpha, M, R,  \|u_0\|_{H}$ to prove the local existence of a solution. Then using mathematical induction, we prove the global existence of a unique mild solution.
		\subsubsection{Local existence of the solution of truncated equation:}\label{loctrun}
		To prove the local existence of a solution, consider the level set
		\begin{equation*}
			V^{\mathbb{X}}_{M,\tilde{T}} :=
			\Big\{
			u \in \mathbb{X} : \|u\|_{\mathbb{X}}
			\leq M
			\Big\}, \quad \tilde{T} \in (0,T] \text{ and } M>0.
		\end{equation*}
		Define the mapping, $\mathfrak{L}: V^{\mathbb{X}}_{M,\tilde{T}} \to  V^{\mathbb{X}}_{M,\tilde{T}}$ by,
		\begin{align}\label{Iomildtrunc}
			\mathfrak{L}(u, u_0, \rho)(t)&:=S(t)u_0	- i\int_{0}^{t} S(t-s)\theta_R(	\|u^\rho\|_{\mathbb{L}^{\infty, p}_{2, r}(s)}) \mathcal{N}(u^\rho(s))ds- \int_{0}^{t} S(t-s) \big[\beta u^\rho(s) + \varepsilon b(u^\rho(t))\big]ds\nonumber\\ 
			& \quad- i\int_{0}^{t} S(t-s)\big[B(u^\rho(s))\rho_1(s)+G(u^\rho(s))\rho_2(s) \big]ds\nonumber\\ 
			&\quad- i \sqrt{\varepsilon}\int_{0}^{t} S(t-s)\big[B(u^\rho(s))d\mathcal{W}_1(s)+G(u^\rho(s))d\mathcal{W}_2(s) \big],
		\end{align}
		for all $t\in[0,T]$.
		First, we will prove the energy estimate. 
		To do this, we need some estimates for the following stochastic integrals
		\begin{align}\label{kbg}
			K_B(t)=K_B(u)(t):=\int_{0}^{t} S(t-s)B(u^\rho(s))d\mathcal{W}_1(s) \,\text{ and }	K_G(t)=K_G(u)(t):=\int_{0}^{t} S(t-s)G(u^\rho(s))d\mathcal{W}_2(s).
		\end{align}
		Consider $\mathbb{X}_1=L^q(\Omega; L^\infty(0,T; H))$ and $\mathbb{X}_2=L^q(\Omega;L^p(0, T; L^r(\mathbb{R}^d)))$, then  $\mathbb{X}=\mathbb{X}_1 \cap\mathbb{X}_2$.
		\vskip 0.1cm\noindent
		\textbf{Estimates for $K_B(t)$:}
		First we estimate $	K_B(\cdot)$ in $\mathbb{X}_1$. To do so we use the Burkholder-Davis-Gundy inequality (\Cref{BDG}), the boundedness of semigroup $\{S(t)\}_{t\in\mathbb{R}}$ to get,
		\begin{align}\label{kbx1}
			&\|	K_B(\cdot)\|^q_{\mathbb{X}_1}=	\bigg\|\int_{0}^{\cdot} S(\cdot-s)B(u^\rho(s))d\mathcal{W}_1(s)\bigg\|^q_{\mathbb{X}_1}=\mathbb{E}\Big\{\Big(\Big\|\int_{0}^{\cdot} S(\cdot-s)B(u^\rho(s))d\mathcal{W}_1(s)\Big\|^q_{L^\infty(0,T; H)}   \Big)\Bigr\} \nonumber\\
			&=\mathbb{E}\Big\{\Big(  \sup_{t \in [0,T]}	 \Big\|\int_{0}^{t} S(t-s)B(u^\rho(s))d\mathcal{W}_1(s) \Big\|^q_{H}   \Big)\Bigr\}\leq C \mathbb{E}\Big\{\Big( \int_{0}^{t}\| B(u^\rho(s))\|^2_{\mathcal{L}_2 (Y_1,H)}ds   \Big)^{q/2}\Big\}\nonumber\\
			&\leq C \mathbb{E}\Big\{\Big( \int_{0}^{t} \|B\|^2_{\mathscr{L}(H,\mathcal{L}_2 (Y_1,H))}\|u^\rho(s)\|^2_Hds   \Big)^{q/2}\Big\}\leq C \|B\|^q_{\mathscr{L}(H,\mathcal{L}_2 (Y_1,H))}\mathbb{E}\Big\{\Big( \int_{0}^{t} \|u^\rho(s)\|^2_Hds   \Big)^{q/2}\Big\}\nonumber\\
			&\leq C \|B\|^q_{\mathscr{L}(H,\mathcal{L}_2 (Y_1,H))}\mathbb{E}\Big\{\Big( T \|u^\rho(s)\|^2_{L^\infty(0,T; H)}  \Big)^{q/2}\Big\}\leq C \|B\|^q_{\mathscr{L}(H,\mathcal{L}_2 (Y_1,H))} T^{q/2}	\|u^\rho\|^q_{\mathbb{X}_1}.
		\end{align}
		To estimate $K_B(\cdot)$ in $\mathbb{X}_2$, we fix $t_0$ and consider
		\begin{align}\label{kbt0}
			K_B(t_0,t)=\int_{0}^{t_0} S(t-s)B(u^\rho(s))d\mathcal{W}_1(s),
		\end{align}
		and establish the following lemma for $K_B(t_0,t)$\,:
		\begin{lem}\label{lemkbt0}
			For $u \in L^q(\Omega;L^\infty(0, T; L^r(\mathbb{R}^d)))$ and $K_B(t_0,t)$ defined as above in \eqref{kbt0}, for any $t\in [0,T]$, we have the estimate
			\begin{align*}
				\mathbb{E}\Big\{  \sup_{t_0 \in [0,T]}	\|K_B(t_0,t)\|^q_{L^r(\mathbb{R}^d)}\Big\}
				\leq C \|B\|^q_{\mathscr{L}(H,\mathcal{L}_2 (Y_1,H))} T^{q/2}	\|u^\rho\|^q_{\mathbb{X}_1}.
			\end{align*}
		\end{lem}
		\begin{proof}
			We have, \begin{align*}
				&\mathbb{E}\Big\{ \sup_{t_0 \in [0,T]} \|K_B(t_0,t)\|^q_{L^r(\mathbb{R}^d)}\Big\}=\mathbb{E}\Big\{  \sup_{t \in [0,T]}	\bigg\|\int_{0}^{t_0} S(t-s)B(u^\rho(s))d\mathcal{W}_1(s)\bigg\|^q_{L^r(\mathbb{R}^d)}\Big\}.
			\end{align*}
			Using the Burkholder-Davis-Gundy inequality (\Cref{BDG}) and boundedness of the semigroup, we deduce
			\begin{align*}
				\mathbb{E}\Big\{  \sup_{t_0 \in [0,T]}	\|K_B(t_0,t)\|^q_{L^r(\mathbb{R}^d)}\Big\}	&\leq C \mathbb{E}\Big\{\Big( \int_{0}^{t_0}\| B(u^\rho(s))\|^2_{\mathcal{L}_2 (Y_1,H)}ds   \Big)^{q/2}\Big\}\\
				&\leq C \|B\|^q_{\mathscr{L}(H,\mathcal{L}_2 (Y_1,H))} T^{q/2}	\|u^\rho\|^q_{\mathbb{X}_1}.
			\end{align*}
			This completes the proof of \Cref{lemkbt0}.
		\end{proof}
		For the estimation of $K_B(\cdot)$ in $\mathbb{X}_2$, we use $K_B(t)=K_B(t,t)$ and obtain
		\begin{align*}
			\|	K_B(\cdot)\|^q_{\mathbb{X}_2}&=	\|K_B(\cdot,\cdot)\|^q_{\mathbb{X}_2}=\mathbb{E}\Big\{\|K_B(\cdot,\cdot)\|^q_{L^p(0, T; L^r(\mathbb{R}^d))}\Big\} \leq\mathbb{E}\Big\{\Big( \int_{0}^{T} 	\|K_B(t,t)\|^p_{L^r(\mathbb{R}^d)}dt   \Big)^{q/p}\Big\}.
		\end{align*}
		Using H\"older's inequality in time, we get
		\begin{align}\label{kbx2}
			\|	K_B(\cdot)\|^q_{\mathbb{X}_2}&\leq \mathbb{E}\Big\{\Big( (T)^{\frac{q-p}{q}}\Big(\int_{0}^{T} 	\|K_B(t,t)\|^{p\cdot\frac{q}{p}}_{L^r(\mathbb{R}^d)}dt\Big) ^{p/q}  \Big)^{q/p}\Big\}\leq (T)^{\frac{q-p}{p}}\mathbb{E}\Big\{\int_{0}^{T} 	\|K_B(t,t)\|^{q}_{L^r(\mathbb{R}^d)}dt\Big\}\nonumber\\
			&\leq (T)^{\frac{q-p}{p}}\int_{0}^{T} \mathbb{E} \Big\{	\sup_{t_0 \in [0,T]}\|K_B(t_0,t)\|^{q}_{L^r(\mathbb{R}^d)}\Big\}dt\nonumber\\
			& \leq (T)^{\frac{q-p}{p}}\int_{0}^{T}\big\{ C \|B\|^q_{\mathscr{L}(H,\mathcal{L}_2 (Y_1,H))} T^{q/2}	\|u^\rho\|^q_{\mathbb{X}_1}\big\}dt \nonumber\\
			&\leq T^{\frac{q}{p}+\frac{q}{2}}\big\{ C \|B\|^q_{\mathscr{L}(H,\mathcal{L}_2 (Y_1,H))}	\|u^\rho\|^q_{\mathbb{X}_1}\big\}.
		\end{align}
		\vskip 0.1cm\noindent 
		\textbf{Estimate for $	K_G(\cdot)$:}
		We use Burkholder-Davis-Gundy inequality (\Cref{BDG}), boundedness of semigroup and estimate $K_G(\cdot)$ in $\mathbb{X}_1$ as
		\begin{align}\label{kgx1}
			\|	K_G(\cdot)\|^q_{\mathbb{X}_1}&=	\bigg\|\int_{0}^{\cdot} S(\cdot-s)G(u^\rho(s))d\mathcal{W}_2(s)\bigg\|^q_{\mathbb{X}_1}=\mathbb{E}\bigg\{ \bigg\|\int_{0}^{\cdot} S(\cdot-s)G(u^\rho(s))d\mathcal{W}_2(s)\bigg\|^q_{L^\infty(0,T; H)}   \bigg\}\nonumber\\
			&=\mathbb{E}\bigg\{  \sup_{t \in [0,T]}	\bigg\|\int_{0}^{t} S(t-s)G(u^\rho(s))d\mathcal{W}_2(s)\bigg\|^q_{H}   \bigg\}
			\leq C \mathbb{E}\Big\{\Big( \int_{0}^{T}\| G(u^\rho(s))\|^2_{\mathcal{L}_2 (Y_2,H)}ds   \Big)^{q/2}\Big\}\nonumber\\
			&\leq C \mathbb{E}\Big\{\Big( \int_{0}^{T} ( C_1+C_2\|u^\rho(s)\|_H)^2ds   \Big)^{q/2}\Big\}\leq C \mathbb{E}\Big\{\Big( \int_{0}^{T} ( 1+\|u^\rho(s)\|_H^2)ds   \Big)^{q/2}\Big\}\nonumber\\
			&\leq CT^{q/2}+C \mathbb{E}\Big\{\Big( \int_{0}^{T} \|u^\rho(s)\|_H^2ds   \Big)^{q/2}\Big\}\leq CT^{q/2} +CT^{q/2}	\|	u^\rho\|^q_{\mathbb{X}_1}.
		\end{align}
		To estimate $	K_G(\cdot)$ in $\mathbb{X}_2$, we fix $t_0$ and consider, 
		\begin{align}\label{kgt0}
			K_G(t_0,t)=\int_{0}^{t_0} S(t-s)G(u^\rho(s))d\mathcal{W}_2(s).
		\end{align}
		\begin{lem}\label{lemkgt0}
			For $u \in L^q(\Omega;L^\infty(0, T; L^r(\mathbb{R}^d)))$ and $K_G(t_0,t)$ defined as above in \eqref{kgt0}, for any $t\in [0,T]$, we have the estimate
			\begin{align*}
				\mathbb{E}\Big\{ \sup_{t_0 \in [0,T]}	\|K_G(t_0,t)\|^q_{L^r(\mathbb{R}^d)}\Big\} \leq CT^{q/2} +CT^{q/2}	\|	u^\rho\|^q_{\mathbb{X}_1}.
			\end{align*}
		\end{lem}
		\begin{proof}
			Approaching the same way as above in \Cref{lemkbt0} and \eqref{kgx1}, using the  Burkholder-Davis-Gundy inequality (\Cref{BDG}) and boundedness of the semigroup, we easily get
			\begin{align*}
				\mathbb{E}\Big\{  \sup_{t_0 \in [0,T]}	\|K_G(t_0,t)\|^q_{L^r(\mathbb{R}^d)}\Big\}	&\leq C \mathbb{E}\Big\{\Big( \int_{0}^{t_0}\| G(u^\rho(s))\|^2_{\mathcal{L}_2 (Y_2,H)}ds   \Big)^{q/2}\Big\}\\
				&\leq CT^{q/2} +CT^{q/2}	\|	u^\rho\|^q_{\mathbb{X}_1}.
			\end{align*}
			This completes the proof of \Cref{lemkgt0}.
		\end{proof}
		\noindent Now, to estimate $	K_G(\cdot)$ in $\mathbb{X}_2$, we use $	K_G(t)=K_G(t,t)$ and find 
		\begin{align*}
			\|	K_G(\cdot)\|^q_{\mathbb{X}_2}&=	\|K_G(\cdot,\cdot)\|^q_{\mathbb{X}_2}=\mathbb{E}\Big\{	\|K_G(\cdot,\cdot)\|^q_{L^p(0, T; L^r(\mathbb{R}^d))}   \Big\}\leq\mathbb{E}\Big\{\Big( \int_{0}^{T} 	\|K_G(t,t)\|^p_{L^r(\mathbb{R}^d)}dt   \Big)^{q/p}\Big\}.
		\end{align*}
		H\"older's inequality in time and \Cref{lemkgt0} together imply,
		\begin{align}\label{kgx2}
			\|	K_G(\cdot)\|^q_{\mathbb{X}_2}&\leq \mathbb{E}\Big\{\Big( (T)^{\frac{q-p}{q}}\Big(\int_{0}^{T} 	\|K_G(t,t)\|^{p\cdot\frac{q}{p}}_{L^r(\mathbb{R}^d)}dt\Big) ^{p/q}  \Big)^{q/p}\Big\}\leq (T)^{\frac{q-p}{p}}\mathbb{E} \Big\{\int_{0}^{T} 	\|K_G(t,t)\|^{q}_{L^r(\mathbb{R}^d)}dt\Big\}\nonumber\\
			&\leq (T)^{\frac{q-p}{p}}\int_{0}^{T} \mathbb{E} \Big\{\sup_{t_0 \in [0,T]}\|K_G(t_0,t)\|^{q}_{L^r(\mathbb{R}^d)}\Big\}dt\leq (T)^{\frac{q-p}{p}}\int_{0}^{T}\big\{ CT^{q/2} +CT^{q/2}	\|	u^\rho\|^q_{\mathbb{X}_1}\big\}dt \nonumber \\
			&\leq C (T)^{\frac{q}{p}+\frac{q}{2}}\big\{ 1 +	\|	u^\rho\|^q_{\mathbb{X}_1}\big\}.
		\end{align}
		Using the bounds \eqref{kbx1}, \eqref{kbx2}, \eqref{kgx1}, \eqref{kgx2}, we deduce the following estimates:
		\begin{align}
			\|	K_B(u_1)(\cdot)-K_B(u_2)(\cdot)\|_{\mathbb{X}_1}&=	\bigg\|\int_{0}^{\cdot} S(\cdot-s)B(u_1^\rho(s))d\mathcal{W}_1(s)-\int_{0}^{\cdot} S(\cdot-s)B(u_2^\rho(s))d\mathcal{W}_1(s)\bigg\|_{\mathbb{X}_1}\nonumber\\
			&\leq C  T^{\frac{1}{2}}\|B\|_{\mathscr{L}(H,\mathcal{L}_2 (Y_1,H))}	\|u_1^\rho-u_2^\rho\|_{\mathbb{X}_1},\label{dif1}\\
			\|	K_B(u_1)(\cdot)-K_B(u_2)(\cdot)\|_{\mathbb{X}_2}&=	\bigg\|\int_{0}^{\cdot} S(\cdot-s)B(u_1^\rho(s))d\mathcal{W}_1(s)-\int_{0}^{\cdot} S(\cdot-s)B(u_2^\rho(s))d\mathcal{W}_1(s)\bigg\|_{\mathbb{X}_2}\nonumber\\
			&\leq C  T^{\frac{1}{2}+\frac{1}{p}}\|B\|_{\mathscr{L}(H,\mathcal{L}_2 (Y_1,H))}	\|u_1^\rho-u_2^\rho\|_{\mathbb{X}_1},\label{dif2}\\
			\|	K_G(u_1)(\cdot)-K_G(u_2)(\cdot)\|_{\mathbb{X}_1}&=	\bigg\|\int_{0}^{\cdot} S(\cdot-s)G(u_1^\rho(s))d\mathcal{W}_2(s)-\int_{0}^{\cdot} S(\cdot-s)G(u_2^\rho(s))d\mathcal{W}_2(s)\bigg\|_{\mathbb{X}_1}\nonumber\\
			&\leq C  T^{\frac{1}{2}}L_G	\|u_1^\rho-u_2^\rho\|_{\mathbb{X}_1},\label{dif3}
		\end{align}
		and
		\begin{align}\label{dif4}
			\bigg\|	K_G(u_1)(\cdot)-K_G(u_2)(\cdot)\|_{\mathbb{X}_2}&=	\bigg\|\int_{0}^{\cdot} S(\cdot-s)G(u_1^\rho(s))d\mathcal{W}_2(s)-\int_{0}^{\cdot} S(\cdot-s)G(u_2^\rho(s))d\mathcal{W}_2(s)\bigg\|_{\mathbb{X}_2}\nonumber\\
			&\leq C  T^{\frac{1}{2}+\frac{1}{p}}L_G	\|u_1^\rho-u_2^\rho\|_{\mathbb{X}_1}.
		\end{align}
		Now, we estimate the remaining terms (the term involving $b$ and the control terms corresponding to $B, G$) of \eqref{Iomildtrunc} here.
		We have the following estimates for the term related to $b $:
		\begin{align}\label{blinf}
			&\bigg\| \int_{0}^{\cdot} S(\cdot-s) \, b(u^\rho(s)) \, ds\bigg \|_{L^\infty(0,T; H)} =\bigg \| \int_{0}^{\cdot} S(\cdot-s)\frac{1}{2} \sum_{m=1}^{\infty} B_m^2 (u^\rho(s)) \, ds \bigg\|_{L^\infty(0,T; H)}\nonumber\\
			&\leq \sup_{0 \leq t \leq T}\bigg\| \int_{0}^{t} S(t-s)\frac{1}{2} \sum_{m=1}^{\infty}  B_m^2 (u^\rho(s)) \bigg\|_H \, ds\leq \sup_{0 \leq t \leq T} \int_{0}^{t}\Big\| S(t-s) \frac{1}{2}\sum_{m=1}^{\infty} B_m^2 (u^\rho(s)) \Big\|_H \, ds\nonumber\\
			&	\leq  \sup_{0 \leq t \leq T} \sum_{m=1}^{\infty} C\| B_m^2 \|_{\mathscr{L}(H)} \int_{0}^{t} \| u^\rho(s) \|_{H} \leq  \sup_{0 \leq t \leq T} \sum_{m=1}^{\infty} CT\| B_m\|_{\mathscr{L}(H)}^2 \| u^\rho \|_{L^\infty(0,T; H)} \nonumber\\
			&\leq CT \| u^\rho \|_{L^\infty(0,T; H)}.
		\end{align}
		The boundedness of the semigroup and \eqref{blinf} gives,
		\begin{align}\label{bx1}
			&\bigg\| \int_{0}^{\cdot} S(\cdot-s)\, b(u^\rho(s))\, ds \bigg\|^q_{\mathbb{X}_1}=	\bigg\| \int_{0}^{\cdot} S(\cdot-s)\, b(u^\rho(s))\,ds \bigg\|^q_{L^q(\Omega;L^\infty(0,T; H))}\nonumber\\
			& \leq \mathbb{E} \bigg\{	\bigg\| \int_{0}^{\cdot} S(\cdot-s) \,b(u^\rho(s))\,ds \bigg\|^q_{L^\infty(0,T; H)}  \bigg\}\leq \mathbb{E}\Big\{	CT^q \| u^\rho\|^q_{L^\infty(0,T; H)} \Big\}\leq CT^q \|  u^\rho \|^q_{\mathbb{X}_1}.
		\end{align}
		Applying  the Strichartz estimates (\Cref{Strichartz}) with $(p, r)=(\infty,2)$, we get 
		\begin{align*}
			&\bigg\| \int_{0}^{\cdot} S(\cdot-s) \, b(u^\rho(s)) \, ds\bigg \|_{L^p(0,T;L^r(\mathbb{R}^d))} 
			= \bigg\| \int_{0}^{\cdot} S(\cdot-s)\frac{1}{2} \sum_{m=1}^{\infty} B_m^2 (u^\rho(s)) \, ds \bigg\|_{L^p(0,T;L^r(\mathbb{R}^d))}\\
			&\leq C\Big\| \frac{1}{2}\sum_{m=1}^{\infty} B_m^2 (u^\rho)\Big \|_{L^1(0,T;H)} \leq C \int_{0}^{T}\Big\|\frac{1}{2} \sum_{m=1}^{\infty} B_m^2 (u^\rho(s)) \Big\|_H \, ds\\
			&	\leq   \sum_{m=1}^{\infty} C\|B_m^2\|_{\mathscr{L}(H)} \int_{0}^{T} \| u^\rho(s) \|_{H}ds \leq   \sum_{m=1}^{\infty}  CT\| B_m \|_{\mathscr{L}(H)}^2 \| u^\rho \|_{L^\infty(0,T; H)} \\
			&\leq CT \| u^\rho \|_{L^\infty(0,T; H)}.
		\end{align*}
		This implies,
		\begin{align}\label{bx2}
			&\bigg\| \int_{0}^{\cdot} S(\cdot-s) \, b(u^\rho(s)) \, ds \bigg\|^q_{\mathbb{X}_2}\leq \frac{1}{2}CT^q \|  u^\rho \|^q_{\mathbb{X}_1}.
		\end{align}
		Presently, we estimate the control terms involving $B$ and $G$ as	
		\begin{align}\label{boundbgx}
			&\bigg\|	\int_{0}^{\cdot} S(\cdot-s)\big[B(u^\rho(s))\rho_1(s)+ G(u^\rho(s))\rho_2(s)  \big]ds\bigg\|_{\mathbb{X}}\nonumber\\
			& \leq  C \|u^\rho\|_{\mathbb{X}} \int_{0}^{T} \big\{\|B\|_{\mathscr{L}(H,\mathcal{L}_2 (Y_1,H))} \|\rho_1(s)\|_{Y_1}+ C_2 \|\rho_2(s)\|_{Y_2} \big\}ds+ C C_1 \int_{0}^{T}\|\rho_2(s)\|_{Y_2}ds.
		\end{align}
		Combining all the estimates \eqref{kbx1}, \eqref{kbx2}, \eqref{kgx1}, \eqref{kgx2}-\eqref{dif4}, \eqref{bx1}-\eqref{boundbgx} and \Cref{liptrunc}, from \eqref{Iomildtrunc}, we conclude ($\varepsilon<1$ is taken for our convenience),
		\begin{align}\label{Ioxb}
			&\|	\mathfrak{L}(u, u_0, \rho)(t)\|_{\mathbb{X}}\nonumber\\
			&\leq  C\|u_0\|_{H} + (1+C)\beta T \|  u^\rho\|_{\mathbb{X}}+C_R (2R)^\alpha T^{\frac{p-r}{p}} + C C_1 \int_{0}^{T}\|\rho_2(s)\|_{Y_2}ds\nonumber\\
			&\quad+CT \|  u^\rho\|_{\mathbb{X}}+ C \|u^\rho\|_{\mathbb{X}} \int_{0}^{T} \big\{\|B\|_{\mathscr{L}(H,\mathcal{L}_2 (Y_1,H))} \|\rho_1(s)\|_{Y_1}+ C_2 \|\rho_2(s)\|_{Y_2} \big\}ds\nonumber\\ 
			&\quad+(T)^{\frac{1}{p}+\frac{1}{2}}\{ C \|B\|_{\mathscr{L}(H,\mathcal{L}_2 (Y_1,H))}	\|u^\rho\|_{\mathbb{X}}\}  +C T^{\frac{1}{p}+\frac{1}{2}}\{ 1 +	\|	u^\rho\|_{\mathbb{X}}\},
		\end{align}
		and 
		\begin{align}\label{Iodifx}
			&\|	\mathfrak{L}(u_1, u_0, \rho)-	\mathfrak{L}(u_2, u_0, \rho)\|_{\mathbb{X}}\nonumber\\
			&\leq   \{(1+C)\beta T + C (T)^{\frac{p-r}{p}} (2M)^{\alpha -1}\}\|u_1^\rho-u^\rho_2\|_{\mathbb{X}}+ CT \|u_1^\rho-u^\rho_2\|_{\mathbb{X}} \nonumber\\
			&\quad + C \|u_1^\rho-u^\rho_2\|_{\mathbb{X}}\int_{0}^{T}\Big[\|B\|_{\mathscr{L}(H,\mathcal{L}_2 (Y_1,H))} \|\rho_1(s)\|_{Y_1}+ L_G \|\rho_2(s)\|_{Y_2}\Big]ds \nonumber \\
			&\quad+C ( T^{\frac{1}{2}+\frac{1}{p}}+ T^\frac{1}{2})\|B\|_{\mathscr{L}(H,\mathcal{L}_2 (Y_1,H))}	\|u_1^\rho-u_2^\rho\|_{\mathbb{X}}+  C  (T^{\frac{1}{2}+\frac{1}{p}}+ T^\frac{1}{2})L_G	\|u_1^\rho-u_2^\rho\|_{\mathbb{X}}.
		\end{align}
		Now, by the absolute continuity of Lebesgue integration (\Cref{abscts}) and similar arguments we used in case of the skeleton equation to get \eqref{bm} and \eqref{lipm}, we have the existence of a small time $T_0$ such that for all $u \in \mathbb{X}_0=L^q(\Omega; L^\infty(0,T_0; H) \cap L^p(0, T_0; L^r(\mathbb{R}^d)))$,
		\begin{align*}
			\|	\mathfrak{L}(u, u_0, \rho)(t)\|_{\mathbb{X}_0}
			&\leq  M,
		\end{align*}
		and for any $u_1, u_2 \in \mathbb{X}_0$,
		\begin{align}\label{contractionmathfracI}
			\|	\mathfrak{L}(u_1, u_0, \rho)-	\mathfrak{L}(u_2, u_0, \rho)\|_{\mathbb{X}_0}	\leq \frac{1}{2} \|u_1^\rho-u^\rho_2\|_{\mathbb{X}_0}.
		\end{align}
		Therefore $\mathfrak{L}(\cdot, u_0, \rho)$ is an $\frac{1}{2}$-contraction on $\mathbb{X}_0$, where $T_0$ depends on $\alpha, \beta, M, \|u_0\|_{H}$.
		Thus by the Banach fixed point theorem, $\mathfrak{L}(\cdot, u_0, \rho)$ has a unique fixed point $\overline{u}\in \mathbb{X}_0$. So $\overline{u}$ is the unique mild solution to the stochastic controlled equation in $[0,T_0]$.
		\subsubsection{Global existence of the solution of truncated equation:}\label{truncatedglobalexistence}
		Here, we extend the local existence of solution to global solution in $[0,T]$ by mathematical induction.\\
		Choose $j=[\frac{T}{T_0}]+1$ and assume for some $k \in \{1,2,\ldots,j\}$ there exists $u_k^R \in \mathbb{X}_{kT_0}$ such that
		\begin{align*}
			u^R_k=	\mathfrak{L}(u^R_k) \quad\text{on}\quad [0,kT_0].
		\end{align*}
		We define a new cut off function by,
		$$\Theta^R_k(u)(t):=\theta_R(\phi(u)(t)),$$
		where, 
		\begin{align}\label{definedphi}
			\phi(u)(t)&:= \Big(\|u^R_k\|^p_{L^p(0,kT_0;L^r(\mathbb{R}^d))}+\|u\|^p_{L^p(0,t;L^r(\mathbb{R}^d))}\Big)^{\frac{1}{p}}\nonumber\\
			&\quad + \max\Big\{\sup_{0 \leq t \leq kT_0}\|u^R_k(t)\|_{H}, \sup_{0 \leq s \leq t}\|u(s)\|_{H}\Big\}.
		\end{align}
		Before proceeding, we establish that $\phi$ satisfies a Lipschitz condition, which will be used later.
		\begin{prop}\label{philip}
			For $u_1, u_2 \in \mathbb{L}^{\infty, p}_{2, r}(T)$ we have $|\phi(u_1)(t)-\phi(u_2)(t)| \leq C \|u_1 - u_2\|_{\mathbb{L}^{\infty, p}_{2, r}(T)},$ for all $t \in [0,T].$
		\end{prop}
		\begin{proof}
			Consider $\phi(u)$ as a combination of $\phi_1(u)$ and $\phi_2(u)$ as follows:
			\begin{align}\label{defphi}
				\phi(u)(t)= \phi_1(u)(t)+\phi_2(u)(t), \quad\text{ for all } t \in [0,T],
			\end{align}
			where
			\begin{align*}
				\phi_1(u)(t)&:= \Big(\|u^R_k\|^p_{L^p(0,kT_0;L^r(\mathbb{R}^d))}+\|u\|^p_{L^p(0,t;L^r(\mathbb{R}^d))}\Big)^{\frac{1}{p}},\nonumber\\
				\phi_2(u)(t)&:= \max\Big\{\sup_{0 \leq t \leq kT_0}\|(u)^R_k(t)\|_{H}, \sup_{0 \leq s \leq t}\|(u)(s)\|_{H}\Big\}.
			\end{align*}
			Since, any norm on a normed vector space is uniformly Lipschitz continuous and $u_k^R$ is the unique solution of \eqref{truncated} in $\mathbb{X}_{kT_0}$, we can easily check that $\phi_1(u)$ is Lipschitz in $L^p(0,T;L^r(\mathbb{R}^d))$ and $\phi_2(u)$ is Lipschitz in $C([0,T];H)$.
			Therefore, construction of $\phi$ in \eqref{defphi} and definition of norm in $\mathbb{L}^{\infty, p}_{2, r}(T)$ \eqref{normassum} lead to the conclusion that $\phi$ is Lipschitz in $\mathbb{L}^{\infty, p}_{2, r}(T)$ that is, 
			\begin{align*}
				|\phi(u_1)(t)-\phi(u_2)(t)| \leq C \|u_1 - u_2\|_{\mathbb{L}^{\infty, p}_{2, r}(T)},\quad\text{ for all } t \in [0,T].
			\end{align*}
		\end{proof}
		\noindent Now, we define the following operator $	\mathfrak{L}_k$ for $t \in[0,T_0]$ and $u \in \mathbb{X}_{kT_0,(k+1)T_0}$ as, 
		\begin{align*}
			\mathfrak{L}_k(u)(t)&:=S(t)u^R_k(kT_0)	- i\int_{0}^{t} S(t-s)	\Theta^R_k(u)(s) \mathcal{N}(u^\rho(s))ds\nonumber\\
			&\quad- \int_{0}^{t} S(t-s) \big[\beta u^\rho(s) + \varepsilon b(u^\rho(t))\big]ds\nonumber\\ 
			& \quad- i\int_{0}^{t} S(t-s)\big[B(u^\rho(s))\rho_1(s)+G(u^\rho(s))\rho_2(s)\big ]ds\nonumber\\ 
			&\quad- i \sqrt{\varepsilon}\int_{0}^{t} S(t-s)\big[B(u^\rho(s))d\mathcal{W}_1(s)+G(u^\rho(s))d\mathcal{W}_2(s) \big].
		\end{align*}
		Our aim is to prove that, $\mathfrak{L}_k$ is a contraction map and has a unique fixed point. Before proving that, we establish the following proposition for the term involving the nonlinearity which will be used later.
		\begin{prop}\label{lipjkr}
			Assume $1< \alpha< \frac{4}{d}+1, r=\alpha+1$. Denote 
			$$\mathfrak{L}_k^R(u^\rho(t)):=i\int_{0}^{t} S(t-s)\Theta^R_k(u)(s) \mathcal{N}(u^\rho(s))ds,$$
			Then, for every $T>0$, the function $\mathfrak{L}_k^R$ maps $\mathbb{L}^{\infty, p}_{2, r}(T)$ to itself and for $u_1, u_2 \in \mathbb{L}^{\infty, p}_{2, r}(T)$, we have 
			\begin{align*}
				\|\mathfrak{L}_k^R(u_1)-\mathfrak{L}_k^R(u_2)\|_{\mathbb{L}^{\infty, p}_{2, r}(T)} \leq C T^{\frac{p-r}{p}}\|u_1-u_2\|_{\mathbb{L}^{\infty, p}_{2, r}(T)}.
			\end{align*}
		\end{prop}
		\begin{proof}
			The proof is straightforward consequence of \Cref{liptrunc} and Lipschitz continuity of $\phi(u)(\cdot)$, \Cref{philip}. 
		\end{proof}
		Repeating all the arguments, we employed to get \eqref{contractionmathfracI} and using \Cref{lipjkr}, we have
		\begin{align}\label{contractionmathfracIkto}
			\|	\mathfrak{L}(u_1, u_0, \rho)-	\mathfrak{L}(u_2, u_0, \rho)\|_{\mathbb{X}_{kT_0,(k+1)T_0}}	\leq \frac{1}{2} \|u_1^\rho-u^\rho_2\|_{\mathbb{X}_{kT_0,(k+1)T_0}}.
		\end{align}
		Therefore, $\mathfrak{L}_k$ is a $\frac{1}{2}$- contraction in the space $\mathbb{X}_{kT_0,(k+1)T_0}$. Let ${\overline{u}}^R_{k+1}$ be the unique fixed point satisfying ${\overline{u}}^R_{k+1}=\mathfrak{L}_k({\overline{u}}^R_{k+1} )$. Then we construct a solution as follows:
		\begin{align*}
			u^R_{k+1}(t)=\left\{
			\begin{aligned}
				&u^R_k(t), \,\quad &&\text{for}\,t\in [0,kT_0],\\
				& {\overline{u}}^R_{k+1}(t-kT_0) , \quad &&\text{for}\,t \in [kT_0,(k+1)T_0].
			\end{aligned}
			\right. 
		\end{align*}
		It is easy to check, that, $	u^R_{k+1} \in \mathbb{X}_{(k+1)T_0}$.
		Now, we will show that $	u^R_{k+1}$ is a fixed point of $\mathfrak{L}_k$ in $\mathbb{X}_{(k+1)T_0}$.
		Let $t\in [kT_0,(k+1)T_0]$ and define $\hat t=t-kT_0$. Then, we have
		\begin{align*}
			u^R_{k+1}(t)&= {\overline{u}}^R_{k+1}(\hat t)=\mathfrak{L}_k({\overline{u}}^R_{k+1})(\hat t)\\
			&= S(\hat t)u^R_k(kT_0)	- i\int_{0}^{\hat t} S(\hat t-s)	\Theta^R_k({\overline{u}}^R_{k+1})(s) \mathcal{N}({\overline{u}}^R_{k+1}(s))ds\nonumber\\
			&\quad- \int_{0}^{\hat t} S(\hat t-s) \Big[\beta {\overline{u}}^R_{k+1}(s) + \varepsilon b({\overline{u}}^R_{k+1}(s))\Big]ds - i\int_{0}^{\hat t} S(\hat t-s)\Big[B({\overline{u}}^R_{k+1}(s))\rho_1(s)+G({\overline{u}}^R_{k+1}(s))\rho_2(s) \Big]ds\nonumber\\ 
			&\quad- i \sqrt{\varepsilon}\int_{0}^{\hat t} S(\hat t-s)\Big[B({\overline{u}}^R_{k+1}(s))d\mathcal{W}_1(s)+G({\overline{u}}^R_{k+1}(s))d\mathcal{W}_2(s) \Big]\\
			&= S(\hat t) \bigg\{S(kT_0)u_0	- i\int_{0}^{kT_0} S(kT_0-s)\theta_R(	\|u^R_k\|_{\mathbb{L}^{\infty, p}_{2, r}(s)}) \mathcal{N}(u^R_k(s))ds\nonumber\\
			&\quad- \int_{0}^{kT_0} S(kT_0-s)\Big[\beta u^R_k(s) + \varepsilon b(u^R_k(s))\Big]ds\nonumber\\ 
			&\quad - i\int_{0}^{kT_0} S(kT_0-s)\Big[B(u^R_k(s))\rho_1(s)+G(u^R_k(s))\rho_2(s) \Big]ds\nonumber\\ 
			&\quad- i \sqrt{\varepsilon}\int_{0}^{kT_0} S(kT_0-s)\Big[B(u^R_k(s))d\mathcal{W}_1(s)+G(u^R_k(s))d\mathcal{W}_2(s) \Big]\bigg\}\\
			&\quad- i\int_{0}^{\hat t} S(\hat t-s)	\Theta^R_k({\overline{u}}^R_{k+1})(s) \mathcal{N}({\overline{u}}^R_{k+1}(s))ds- \int_{0}^{\hat t} S(\hat t-s) \Big[\beta {\overline{u}}^R_{k+1}(s) + \varepsilon b({\overline{u}}^R_{k+1}(s))\Big]ds\nonumber\\ 
			& \quad- i\int_{0}^{\hat t} S(\hat t-s)\Big[B({\overline{u}}^R_{k+1}(s))\rho_1(s)+G({\overline{u}}^R_{k+1}(s))\rho_2(s) \Big]ds\nonumber\\ 
			&\quad- i \sqrt{\varepsilon}\int_{0}^{\hat t} S(\hat t-s)\Big[B({\overline{u}}^R_{k+1}(s))d\mathcal{W}_1(s)+G({\overline{u}}^R_{k+1}(s))d\mathcal{W}_2(s)\Big].
		\end{align*}
		We can easily check that $\theta_R(	\|u^R_k\|_{\mathbb{L}^{\infty, p}_{2, r}(s)})=\theta_R(	\|u^R_{k+1}\|_{\mathbb{L}^{\infty, p}_{2, r}(s)})$ for $s \in [0,kT_0]$ and \\ $\Theta^R_k({\overline{u}}^R_{k+1})(s)=\theta_R(	\|u^R_{k+1}\|_{\mathbb{L}^{\infty, p}_{2, r}(kT_0+s)})$ for $s \in [0,T_0]$. This implies that,
		\begin{align*}
			u^R_{k+1}(t)&= S(t)u_0	- i\int_{0}^{kT_0} S(t-s)\theta_R(	\|u^R_k\|_{\mathbb{L}^{\infty, p}_{2, r}(s)}) \mathcal{N}(u^R_k(s))ds\nonumber\\
			&\quad- \int_{0}^{kT_0} S(t-s) \Big[\beta u^R_k(s) + \varepsilon b(u^R_k(s))\Big]ds- i\int_{0}^{kT_0} S(t-s)\Big[B(u^R_k(s))\rho_1(s)+G(u^R_k(s))\rho_2(s) \Big]ds\nonumber\\ 
			&\quad- i \sqrt{\varepsilon}\int_{0}^{kT_0} S(t-s)\Big[B(u^R_k(s))d\mathcal{W}_1(s)+G(u^R_k(s))d\mathcal{W}_2(s) \Big]\\
			&\quad- i\int_{0}^{\hat t} S(\hat t-s)\theta_R(	\|u^R_{k+1}\|_{\mathbb{L}^{\infty, p}_{2, r}(kT_0+s)}) \mathcal{N}(u^R_{k+1}(kT_0+s))ds\nonumber\\
			&\quad- \int_{0}^{\hat t} S(\hat t-s) \Big[\beta u^R_{k+1}(kT_0+s) + \varepsilon b(u^R_{k+1}(kT_0+s))\Big]ds\nonumber\\ 
			&\quad - i\int_{0}^{\hat t} S(\hat t-s)\Big[Bu^R_{k+1}(kT_0+s)\rho_1(s)+G(u^R_{k+1}(kT_0+s))\rho_2(s) \Big]ds\nonumber\\ 
			&\quad- i \sqrt{\varepsilon}\int_{0}^{\hat t} S(\hat t-s)\Big[Bu^R_{k+1}(kT_0+s)d\mathcal{W}_1(s)+G(u^R_{k+1}(kT_0+s))d\mathcal{W}_2(s) \Big].
		\end{align*}
		We use the transformation $s=kT_0+s$ and observe that
		\begin{align*}
			u^R_{k+1}(t)&= S(t)u_0	- i\int_{0}^{kT_0} S(t-s)\theta_R(	\|u^R_k(s)\|_{\mathbb{L}^{\infty, p}_{2, r}(s)}) \mathcal{N}(u^R_k(s))ds\nonumber\\
			&\quad- \int_{0}^{kT_0} S(t-s) \Big[\beta u^R_k(s) + \varepsilon b(u^R_k(s))\Big]ds\nonumber\\ 
			&\quad - i\int_{0}^{kT_0} S(t-s)\Big[B(u^R_k(s))\rho_1(s)+G(u^R_k(s))\rho_2(s) \Big]ds\nonumber\\ 
			&\quad- i \sqrt{\varepsilon}\int_{0}^{kT_0} S(t-s)\Big[B(u^R_k(s))d\mathcal{W}_1(s)+G(u^R_k(s))d\mathcal{W}_2(s) \Big]\\
			&\quad- i\int_{kT_0}^{t} S(t-s)\theta_R(	\|u^R_{k+1}\|_{\mathbb{L}^{\infty, p}_{2, r}(s)}) \mathcal{N}(u^R_{k+1}(s))ds\nonumber\\
			&\quad- \int_{kT_0}^{t} S(t-s) \Big[\beta u^R_{k+1}(s) + \varepsilon b(u^R_{k+1}(s))\Big]ds\nonumber\\ 
			&\quad - i\int_{kT_0}^{t}S(t-s)\Big[B(u^R_{k+1}(s))\rho_1(s)+G(u^R_{k+1}(s))\rho_2(s) \Big]ds\nonumber\\ 
			&\quad- i \sqrt{\varepsilon}\int_{kT_0}^{t} S(t-s)\Big[B(u^R_{k+1}(s))d\mathcal{W}_1(s)+G(u^R_{k+1}(s))d\mathcal{W}_2(s) \Big].
		\end{align*}
		Consequently, we obtain
		\begin{align*}
			u^R_{k+1}(t)&=S(t)u_0- i\int_{0}^{t} S(t-s)\theta_R(	\|u^R_{k+1}\|_{\mathbb{L}^{\infty, p}_{2, r}(s)}) \mathcal{N}(u^R_{k+1}(s))ds\nonumber\\
			&\quad- \int_{0}^{t} S(t-s) \Big[\beta u^R_{k+1}(s) + \varepsilon b(u^R_{k+1}(s))\Big]ds\nonumber\\ 
			&\quad - i\int_{0}^{t}S(t-s)\Big[B(u^R_{k+1}(s))\rho_1(s)+G(u^R_{k+1}(s))\rho_2(s) \Big]ds\nonumber\\ 
			&\quad- i \sqrt{\varepsilon}\int_{0}^{t} S(t-s)\Big[B(u^R_{k+1}(s))d\mathcal{W}_1(s)+G(u^R_{k+1}(s))d\mathcal{W}_2(s) \Big].
		\end{align*}
		which shows that $u^R_{k+1}(\cdot)$ is a fixed point of $\mathfrak{L}_k$ in $\mathbb{X}_{(k+1)T_0}$.
		Therefore $u^R=u_j^R$ is the unique solution to the truncated equation \eqref{truncated} on $[0,T]$.
	\end{proof}
	\subsection{Local solution for the original equation}\label{locoriginal}
	In this subsection, we shall prove the existence of the unique local solution for the original equation \eqref{Sce}. To do that, we use the existence of the unique global solution of the truncated equation established in \Cref{truncatedglobalexistence}.
	\begin{prop}\label{localoriginal}
		For each $k$, let $u_k \in \mathbb{X}_T$ be the solution of the truncated equation \eqref{truncated} with $R$ replaced by $k$. Define a stopping time $\tau_k$ by 
		\begin{align}\label{stoppingtime}
			\tau_k:=\inf \{t\geq 0: 	\|u_k\|_{\mathbb{L}^{\infty, p}_{2, r}(t)}> k\} \land T,
		\end{align}
		with the initial convention $\inf \varnothing=T.$ Then the following statements hold:
		\begin{itemize}
			\item[(1)] For $k\leq n$, we have $0\leq \tau_k \leq \tau_n, \mathbb{P}$- a.s. and $u_k(t)=u_n(t), \mathbb{P}$- a.s. for $t\in [0,\tau_k]$.
			\item[(2)]Define $u_k(t)=u(t), $ for $t\in [0,\tau_k]$ and $\tau_\infty= \lim_{k \to \infty} \tau_k$. Then $(u(t))_{t\in [0, \tau_{\infty})}$ is a maximal local mild solution of the original equation \eqref{Sce}.
			\item[(3)] The solution is unique.
		\end{itemize}
	\end{prop}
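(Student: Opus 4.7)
\textbf{Proof plan for \Cref{localoriginal}.}
The strategy is to use the already-established global well-posedness and uniqueness of the truncated equation (\Cref{wellposetrunc}) as a black box, and to patch the truncated solutions together using the consistency of the truncations $\theta_k$ for different values of $k$. The key observation is that on the interval where $\|u_k\|_{\mathbb{L}^{\infty,p}_{2,r}(t)} \leq k$, one has $\theta_k(\|u_k\|_{\mathbb{L}^{\infty,p}_{2,r}(t)})=1$, so $u_k$ actually solves the \emph{original} equation \eqref{Sce} there; also $\theta_n(\|u_k\|_{\mathbb{L}^{\infty,p}_{2,r}(t)})=1$ for every $n \geq k$. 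Note also that $t \mapsto \|u_k\|_{\mathbb{L}^{\infty,p}_{2,r}(t)}$ is non-decreasing and adapted (since both the $L^\infty(0,t;H)$ and $L^p(0,t;L^r(\mathbb{R}^d))$ norms are), so $\tau_k$ in \eqref{stoppingtime} is a genuine stopping time.

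For part (1), fix $k \leq n$ and work on the stochastic interval $[0,\tau_k]$. On this interval both $\theta_k(\|u_k\|_{\mathbb{L}^{\infty,p}_{2,r}(\cdot)})$ and $\theta_n(\|u_k\|_{\mathbb{L}^{\infty,p}_{2,r}(\cdot)})$ equal $1$, so the process $u_k \mathbf{1}_{[0,\tau_k]} + u_n \mathbf{1}_{(\tau_k,T]}$ trivially satisfies the mild form of the $n$-truncated equation up to time $\tau_k$ when compared with $u_n$. To make this rigorous, I would apply the fixed-point / contraction argument of \Cref{wellposetrunc} (more precisely, the estimate \eqref{contractionmathfracIkto}) to the difference $u_k-u_n$ stopped at $\tau_k$: on this stochastic interval both $u_k$ and $u_n$ satisfy the mild formulation with truncation $\theta_n$, so uniqueness of the $n$-truncated equation yields $u_k(t)=u_n(t)$ for $t \in [0,\tau_k]$, $\mathbb{P}$-a.s. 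Consequently $\|u_n\|_{\mathbb{L}^{\infty,p}_{2,r}(t)} = \|u_k\|_{\mathbb{L}^{\infty,p}_{2,r}(t)} \leq k \leq n$ on $[0,\tau_k)$, from which $\tau_k \leq \tau_n$ almost surely follows directly from the definition \eqref{stoppingtime}.

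For part (2), the consistency proved in part (1) makes $u(t):=u_k(t)$ for $t \in [0,\tau_k]$ well-defined on $\bigcup_k [0,\tau_k] = [0,\tau_\infty)$. Given $t < \tau_\infty$, we have $t < \tau_k$ for some $k$, and on $[0,\tau_k]$ the truncation factor equals $1$, so substituting $u=u_k$ into \eqref{mildtrunc} gives exactly the mild form \eqref{mildformsce} of the original equation evaluated at $t$. Adaptedness and integrability up to any $\tau_k$ follow from those of $u_k$. Maximality is built into the construction: if the solution could be extended past $\tau_\infty$ in some solution class, then by the local existence (on $[0,T_0]$) starting from $u(\tau_\infty)$, one could find some $k$ so that $\tau_k$ is strictly larger, contradicting $\tau_k \nearrow \tau_\infty$.

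For part (3), suppose $v$ is another local mild solution of \eqref{Sce} on $[0,\sigma)$ with the same initial data. Define $\sigma_k := \inf\{t \geq 0 : \|v\|_{\mathbb{L}^{\infty,p}_{2,r}(t)} > k\}\wedge T$; on $[0,\sigma_k]$ we have $\theta_k(\|v\|_{\mathbb{L}^{\infty,p}_{2,r}(\cdot)})=1$, so $v$ satisfies the $k$-truncated mild equation \eqref{mildtrunc} there. Applying again the contraction estimate \eqref{contractionmathfracIkto} (or its inductive extension) to $v-u_k$ on $[0,\sigma_k \wedge \tau_k]$ and using uniqueness of the $k$-truncated equation yields $v = u_k = u$ on $[0,\sigma_k \wedge \tau_k]$; hence $\sigma_k = \tau_k$ and letting $k \to \infty$ gives uniqueness on $[0,\tau_\infty)$. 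The main subtlety throughout is ensuring that the fixed-point argument underlying \Cref{wellposetrunc} may indeed be localized on a stochastic interval $[0,\tau_k]$; this is handled by noticing that the deterministic bound $\|u_k\|_{\mathbb{L}^{\infty,p}_{2,r}(t)} \leq k$ renders all truncated nonlinear estimates (such as \eqref{boundftrunc}) valid $\omega$-wise up to $\tau_k$, so the contraction constant from \eqref{contractionmathfracIkto} transfers without change.
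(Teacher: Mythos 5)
Your proposal is correct and follows essentially the same route as the paper: exploit that on the interval where $\|u_k\|_{\mathbb{L}^{\infty,p}_{2,r}}$ has not exceeded its threshold, the truncation factor is identically one, so the $k$- and $n$-truncated solutions both solve the same mild equation and coincide by local uniqueness, which in turn yields the ordering of the $\tau_k$'s and the patching to a maximal local solution. The only stylistic deviation is that you localize on $[0,\tau_k]$ directly (observing $\theta_n(\|u_k\|_{\mathbb{L}^{\infty,p}_{2,r}})=1$ there) whereas the paper works on $[0,\tau_{k,n})$ with $\tau_{k,n}=\tau_k\wedge\tau_n$; both are valid, and your maximality argument, phrased as a contradiction, is the contrapositive of the paper's explicit blow-up statement $\lim_{t\to\tau_\infty}\|u\|_{\mathbb{L}^{\infty,p}_{2,r}(t)}=\infty$ on $\{\tau_\infty<T\}$.
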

	\begin{proof}
		For any $n \in \mathbb{N}$ and $t \in [0,T]$, the global solution $u_n \in \mathbb{X}_T$ to \eqref{truncated}, satisfies
		\begin{align}\label{ntruncated}
			u_n(t)&=S(t)u_0	- i\int_{0}^{t} S(t-s)\theta_R(	\|u_n\|_{\mathbb{L}^{\infty, p}_{2, r}(s)}) \mathcal{N}(u_n(s))ds- \int_{0}^{t} S(t-s) \big[\beta u_n(s) + \varepsilon b(u_n(s))\big]ds\nonumber\\ 
			&\quad - i\int_{0}^{t} S(t-s)\big[B(u_n(s))\rho_1(s)+G(u_n(s))\rho_2(s) \big]ds\nonumber\\ 
			&\quad- i \sqrt{\varepsilon}\int_{0}^{t} S(t-s)\big[B(u_n(s))d\mathcal{W}_1(s)+G(u_n(s))d\mathcal{W}_2(s) \big], \quad \mathbb{P}\,\text{-a.s. for } t\in [0,T].
		\end{align}
		For $k\leq n$, denote $\tau_k \land \tau_n=\tau_{k,n}$. The definition of $\theta_n$ implies $$\theta_n(\|u_n\|_{\mathbb{L}^{\infty, p}_{2, r}(t)})=1 \text{ and } \theta_k(\|u_k\|_{\mathbb{L}^{\infty, p}_{2, r}(t)})=1 \text{ for } t\in [0,\tau_{k,n}).$$ Therefore, for $l=k,n$, we have
		\begin{align}\label{ltruncated}
			u_l(t)&=S(t)u_0	- i\int_{0}^{t} S(t-s)\theta_R(	\|u_l\|_{\mathbb{L}^{\infty, p}_{2, r}(s)}) \mathcal{N}(u_l(s))ds- \int_{0}^{t} S(t-s) \big[\beta u_l(s) + \varepsilon b(u_l(s))\big]ds\nonumber\\ 
			&\quad - i\int_{0}^{t} S(t-s)\big[B(u_l(s))\rho_1(s)+G(u_l(s))\rho_2(s) \big]ds\nonumber\\ 
			&\quad- i \sqrt{\varepsilon}\int_{0}^{t} S(t-s)\big[B(u_l(s))d\mathcal{W}_1(s)+G(u_l(s))d\mathcal{W}_2(s) \big], \quad \mathbb{P}\,\text{-a.s. for } t\in [0,\tau_{k,n}).
		\end{align}
		The uniqueness of the solution follows from the uniqueness of the solution of \eqref{truncated} i.e. $u_k(t)=u_n(t), \mathbb{P}$- a.s. for $t\in [0,\tau_{k,n})$. Denote the right hand side of \eqref{ltruncated} by $\Phi(u_l)$. Notice that the value of $\Phi(u_l)$ at $\tau_{k,n}$ depends only on the values of $u_l$ on $[0,\tau_{k,n}) $. Therefore, the process $u_l$ can be extended from  $[0,\tau_{k,n}) $ to  $[0,\tau_{k,n}] $ by taking,
		\begin{align}\label{taukn}
			u_l({\tau_{k,n}})&=S({\tau_{k,n}})u_0	- i\int_{0}^{{\tau_{k,n}}} S({\tau_{k,n}}-s)\theta_R(	\|u_l\|_{\mathbb{L}^{\infty, p}_{2, r}(s)}) \mathcal{N}(u_l(s))ds\nonumber\\
			&\quad- \int_{0}^{{\tau_{k,n}}} S({\tau_{k,n}}-s) \big[\beta u_l(s) + \varepsilon b(u_l(s))\big]ds\nonumber\\ 
			& \quad- i\int_{0}^{{\tau_{k,n}}} S({\tau_{k,n}}-s)\big[B(u_l(s))\rho_1(s)+G(u_l(s))\rho_2(s) \big]ds\nonumber\\ 
			&\quad- i \sqrt{\varepsilon}\int_{0}^{{\tau_{k,n}}} S({\tau_{k,n}}-s)\big[B(u_l(s))d\mathcal{W}_1(s)+G(u_l(s))d\mathcal{W}_2(s) \big],\quad \mathbb{P}\,\text{-a.s.}
		\end{align}
		By combining the above two equations \eqref{ltruncated} and \eqref{taukn}, we conclude the stopped process $u_l(t \land \tau_{n})$ satisfies the following equation,
		\begin{align}
			u_l({t \land \tau_{k,n}})&=S({t \land \tau_{k,n}})u_0	- i\int_{0}^{{t \land \tau_{k,n}}} S({t \land \tau_{k,n}}-s)\theta_R(	\|u_l\|_{\mathbb{L}^{\infty, p}_{2, r}(s)}) \mathcal{N}(u_l(s))ds\nonumber\\
			&\quad- \int_{0}^{{t \land \tau_{k,n}}} S({t \land \tau_{k,n}}-s) \big[\beta u_l(s) + \varepsilon b(u_l(s))\big]ds\nonumber\\ 
			& \quad- i\int_{0}^{{t \land \tau_{k,n}}} S({t \land \tau_{k,n}}-s)\big[B(u_l(s))\rho_1(s)+G(u_l(s))\rho_2(s) \big]ds\nonumber\\ 
			&\quad- i \sqrt{\varepsilon}\int_{0}^{{t \land \tau_{k,n}}} S({t \land \tau_{k,n}}-s)\big[B(u_l(s))d\mathcal{W}_1(s)+G(u_l(s))d\mathcal{W}_2(s) \big],\quad \mathbb{P}\,\text{-a.s. for } t\in [0,T].
		\end{align}
		This asserts that, 
		\begin{align*}
			u_k(t)=u_n(t) \quad \text{for } t\in [0,\tau_{k,n}].
		\end{align*}
		From the definition of stopping time \eqref{stoppingtime},
		we have $0\leq \tau_k \leq \tau_n, \mathbb{P} $\,-a.s. for $k\leq n$, and $u_k(t)=u_n(t), \mathbb{P}$\,- a.s. for $t\in [0,\tau_k].$ This proves the first part of the \Cref{localoriginal}.\\
		\indent 
		Since sequence $(\tau_n)_{n\in \mathbb{N}}$ is increasing and bounded by $T$, the limit $\lim_{n \to \infty} \tau_n:=\tau_{\infty}$ exists $\mathbb{P}$\,-a.s. Let us denote $\Omega_0=\{\omega: \lim_{n \to \infty}\tau_n = \tau_{\infty}\}$ and note that $\mathbb{P}(\Omega_0)=1$. Now, we define a local process $(u(t))_{0\leq t <\tau_{\infty}}$ as follows: if $\omega \notin \Omega_0$ set $u(t,\omega)=0$ for $0\leq t <\tau_{\infty}$. And for $\omega \in \Omega_0$ for every $ t <\tau_{\infty}$, there exists a natural number $n\in \mathbb{N}$ such that $t \leq\tau_n(\omega)$ and we set $u(t,\omega)=u_n(t,\omega)$. We already have, $(u(t))_{[0,\tau_n]} \in \mathbb{X}_{\tau_n}$ and it satisfies
		\begin{align*}
			u({t \land \tau_{n}})&=S({t \land \tau_{n}})u_0	- i\int_{0}^{{t \land \tau_{n}}} S({t \land \tau_{n}}-s)\theta_R(	\|u\|_{\mathbb{L}^{\infty, p}_{2, r}(s)}) \mathcal{N}(u(s))ds\nonumber\\
			&\quad- \int_{0}^{{t \land \tau_{n}}} S({t \land \tau_{n}}-s) \big[\beta u(s) + \varepsilon b(u(s))\big]ds\nonumber\\ 
			& \quad- i\int_{0}^{{t \land \tau_{n}}} S({t \land \tau_{n}}-s)\big[B(u(s))\rho_1(s)+G(u(s))\rho_2(s) \big]ds\nonumber\\ 
			&\quad- i \sqrt{\varepsilon}\int_{0}^{{t \land \tau_{n}}} S({t \land \tau_{n}}-s)\big[B(u(s))d\mathcal{W}_1(s)+G(u(s))d\mathcal{W}_2(s) \big],\quad \mathbb{P}\,\text{-a.s. for } t\in [0,T].
		\end{align*}
		Furthermore, by the definition of $(\tau_n)_{n\in \mathbb{N}}$, we deduce that, on the set $\{\tau_{\infty} < \infty\}$,
		\begin{align*}
			\lim_{t\to \tau_{\infty}} \|u\|_{\mathbb{L}^{\infty, p}_{2, r}(t)}= \lim_{n \to \infty}\|u\|_{\mathbb{L}^{\infty, p}_{2, r}(\tau_n)} \geq \lim_{n \to \infty} n =\infty,\quad \mathbb{P}\,\text{-a.s.}
		\end{align*}
		Therefore $(u(t))_{0\leq t <\tau_{\infty}}$ is a maximal local mild solution of the original equation \eqref{Sce}.\\
		\indent The construction of the solution and the uniqueness of the solution to \eqref{truncated} implies the uniqueness of this maximal local mild solution of \eqref{Sce}. This completes the proof of \Cref{localoriginal}.
	\end{proof}
	\subsection{Global existence of solution of the original equation}\label{globoriginal}
	In this subsection, we prove the global existence of a unique mild solution of the original equation \eqref{Sce}. Before moving to the proof of \Cref{wellsce}, we establish the following lemma to get the uniform bounds for the solutions of \eqref{truncated} in $L^q(\Omega; L^\infty(0,T; H))$.
	\begin{lem}\label{lemuklinfbound}
		Let $r=\alpha+1$ and $1< \alpha< \frac{4}{d}+1, r=\alpha+1$, choose a p such that (p,r) is an admissible pair and $u_0\in H$. Then the solutions of \eqref{truncated} with $R$ replaced by $k$ is uniformly (uniform in $k$) bounded in $L^q(\Omega; L^\infty(0,T; H))$, that is, there exists a constant (independent of $k$) such that
		$$	\|u^k\|_{L^q(\Omega; L^\infty(0,T; H))}\leq C.$$
	\end{lem}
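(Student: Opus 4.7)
The plan is to derive an energy-type estimate for $u^k$ that is completely insensitive to the cut-off $\theta_k$, and hence to $k$. Since $u^k$ is only a mild solution, one cannot directly apply It\^o's formula in $H$; I would therefore first work with the Yosida-approximated truncated equation $u^k_\mu$, which satisfies an analogue of \eqref{approxg} with the additional factor $\theta_k(\|u^k_\mu\|_{\mathbb{L}^{\infty,p}_{2,r}(t)})$ in front of $\mathcal{N}_\mu$ and the stochastic forcing terms $-i\sqrt{\varepsilon}B(u^k_\mu)\,d\mathcal{W}_1$ and $-i\sqrt{\varepsilon}G_\mu(u^k_\mu)\,d\mathcal{W}_2$. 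These approximate solutions are $H^1$-valued, so It\^o's formula in $H$ is justified (cf.\ \Cref{sub4conservation}).

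Applying the It\^o formula to $\|u^k_\mu(t)\|_H^2$ and taking real parts, the terms arising from $-iAu^k_\mu$ and from $-i\theta_k(\cdot)\mathcal{N}_\mu(u^k_\mu)$ vanish exactly as in \eqref{energyA}--\eqref{energyF}, because $\operatorname{Re}\langle v,-i|v|^{\alpha-1}v\rangle=0$ independently of the scalar factor $\theta_k\in[0,1]$. The linear Stratonovich term $-i\sqrt{\varepsilon}B(u^k_\mu)\,d\mathcal{W}_1$ contributes the It\^o integrand $-2\sqrt{\varepsilon}\sum_m \operatorname{Re}(u^k_\mu, iB_m u^k_\mu)\,d\beta^1_m$, which is zero by the self-adjointness of $B_m$ (cf.\ \eqref{energyB}), and the quadratic variation $\varepsilon\sum_m\|B_m u^k_\mu\|_H^2\,dt$ coming from the It\^o correction of this noise exactly cancels the drift contribution from the Stratonovich correction $-\varepsilon b(u^k_\mu) = -\frac{\varepsilon}{2}\sum_m B_m^2 u^k_\mu$ (this is precisely the content of Subsection~\ref{completeito}). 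Similarly, $-iB(u^k_\mu)\rho_1$ gives a vanishing contribution by self-adjointness of $B_m$. What remains after these cancellations is
\begin{align*}
d\|u^k_\mu(t)\|_H^2 &= -2\beta\|u^k_\mu(t)\|_H^2\,dt
+ 2\,\operatorname{Re}\langle u^k_\mu, -iG_\mu(u^k_\mu)\rho_2\rangle\,dt \\
&\quad + \varepsilon\|G_\mu(u^k_\mu)\|_{\mathcal{L}_2(Y_2,H)}^2\,dt
- 2\sqrt{\varepsilon}\,\operatorname{Re}\langle u^k_\mu, iG_\mu(u^k_\mu)\,d\mathcal{W}_2\rangle,
\end{align*}
which manifestly does not depend on $k$.

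Next I would raise to power $q/2$, take supremum in time over $[0,t]$, and apply the expectation. The deterministic drifts are estimated using Cauchy--Schwarz together with the linear growth \eqref{LgG} of $G$ (and hence of $G_\mu$, via \Cref{yosidaoperator}); the stochastic integral is controlled by the Burkholder--Davis--Gundy inequality (\Cref{BDG}), yielding
\[
\mathbb{E}\Bigl[\sup_{s\le t}\Bigl|\int_0^s\operatorname{Re}\langle u^k_\mu,iG_\mu(u^k_\mu)\,d\mathcal{W}_2\rangle\Bigr|^{q/2}\Bigr]
\le C\,\mathbb{E}\Bigl[\Bigl(\int_0^t\|u^k_\mu\|_H^2(C_1+C_2\|u^k_\mu\|_H)^2ds\Bigr)^{q/4}\Bigr],
\]
which by Young's inequality can be absorbed into $\varepsilon_0\,\mathbb{E}[\sup_{s\le t}\|u^k_\mu(s)\|_H^q]$ plus lower-order terms in $\int_0^t\mathbb{E}[\sup_{r\le s}\|u^k_\mu(r)\|_H^q]\,ds$. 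The $\rho_2$-control term is handled analogously using $\rho\in\mathbb{S}_N$, so that $\|\rho_2\|_{L^2(0,T;Y_2)}\le N^{1/2}$ a.s.\ and the resulting time-integrand is integrable. The classical Gr\"onwall lemma then delivers
\[
\mathbb{E}\Bigl[\sup_{t\in[0,T]}\|u^k_\mu(t)\|_H^q\Bigr]\le C\bigl(\|u_0\|_H, \beta, T, N, q, C_1, C_2\bigr),
\]
with $C$ independent of both $k$ and $\mu$.

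Finally, one passes $\mu\to\infty$ using the convergence $u^k_\mu\to u^k$ in $\mathbb{L}^{\infty,p}_{2,r}(T)$ established in the truncated analogue of \eqref{muconv} (together with Fatou's lemma) to transfer the bound to $u^k$ itself, giving the claim. The main technical point, which I would flag as the principal obstacle, is handling the $G$-noise contribution: because $G$ only satisfies linear growth rather than being bounded, the BDG step must be arranged carefully so that the term $\mathbb{E}[\sup_{s\le t}\|u^k_\mu(s)\|_H^q]$ appearing on the right is multiplied by a coefficient strictly less than $1$ (achieved by Young's inequality with a small parameter); once this absorption is done, everything else is routine Gr\"onwall, since neither the nonlinearity $\mathcal{N}$ nor the truncation $\theta_k$ ever enters the estimate.
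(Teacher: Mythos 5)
Your proposal follows essentially the same route as the paper: pass to the Yosida-approximated truncated equation (which is $H^1$-valued so that It\^o's formula in $H$ applies), exploit the fact that the $-iA$, $-i\theta_k\mathcal{N}_\mu$, $-iB\rho_1$ and $-i\sqrt{\varepsilon}B\,d\mathcal{W}_1$ terms contribute nothing to $d\|u^k_\mu\|_H^2$, estimate the remaining $G$-terms via linear growth, control the stochastic integral with Burkholder--Davis--Gundy plus Young's inequality to absorb the supremum, close with Gr\"onwall, and finally pass $\mu\to\infty$ using \eqref{limitpasslim} and lower semicontinuity of the norm. If anything, your bookkeeping of the It\^o quadratic variation from the $B$-noise cancelling the Stratonovich correction $-\varepsilon b$ is more explicit than the paper's treatment, which simply drops the correction term as a favorable sign; but the estimate and conclusion coincide.
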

	\begin{proof}
		To prove the lemma, we make use of the Yosida approximation operator. Consider the following Yosida approximated form of the truncated equation \eqref{truncated}:	
		\begin{equation}\label{approxgs}
			\left\{
			\begin{aligned}
				du_\mu^k(t)&=-\big[i Au_\mu^k(t) +i \mathcal{N}_\mu(u_\mu^k(t))+ \beta u_\mu^k(t)+\varepsilon b(u_\mu^k(t))\big]dt - i B( u_\mu^k(t))\rho_1(t) dt \\
				&\quad - i   G_\mu(u_\mu^k(t))\rho_2(t)dt- i\sqrt{\varepsilon}  B(u_\mu^k(t)) d\mathcal{W}_1(t)- i\sqrt{\varepsilon}  G_\mu(u_\mu^k(t)) d\mathcal{W}_2(t), \quad t>0,	\\
				u_\mu^k(0)&=J_\mu u_0,
			\end{aligned}
			\right.
		\end{equation}
		where $\mathcal{N}_\mu(u_\mu^k)=J_\mu \theta_k(\|u_\mu^k\|_{\mathbb{L}^{\infty, p}_{2, r}(t)})\mathcal{N}( u_\mu^k),\, G_\mu( u_\mu^k)= J_\mu G( u_\mu^k).$\\
		Since the truncated equation is globally well-posed, for $u\in \mathbb{X}$, if we replace $\mathfrak{L}$ by $\mathfrak{L}_\mu$ in \eqref{Iomildtrunc}, by the Yosida approximation method, we have the existence of unique global solution $u^k_\mu$ to \eqref{approxgs} satisfying
		\begin{align*}
			\mathfrak{L}_\mu(u, u_0, k)(t)&=S(t)J_\mu u_0	- \int_{0}^{t} S(t-s)\big[i \mathcal{N}_\mu(u_\mu^k(s))+\beta u_\mu^k(s)+\varepsilon b(u_\mu^k(t))\big]ds \\
			&\quad- i\int_{0}^{t} S(t-s)\big[B(u_\mu^k(s))\rho_1(s)+ G_\mu(u_\mu^k(t))\rho_2(t)\big]ds \\
			&\quad- i\sqrt{\varepsilon} \int_{0}^{t} S(t-s)\big[ B(u_\mu^k(s)) d\mathcal{W}_1(s)+G_\mu(u_\mu^k(s)) d\mathcal{W}_2(s)\big]\quad \text{ for } t\in[0,T].
		\end{align*}
		Assume \(	u^k=	(u^k(t))_{t \in [0,T]} \in \mathbb{X}  \) is the unique solution to the truncated stochastic controlled equation on $[0,T]$. Therefore, for all $t\in [0,T]$, it satisfies the mild form,
		\begin{align*}
			u^k(t)&=S(t)u_0	- i\int_{0}^{t} S(t-s)\theta_k(	\|u^k\|_{\mathbb{L}^{\infty, p}_{2, r}(s)}) \mathcal{N}(u^k(s))ds\nonumber\\
			&\quad- \int_{0}^{t} S(t-s) \big[\beta u^k(s) + \varepsilon b(u^k(t))\big]ds\nonumber\\ 
			&\quad - i\int_{0}^{t} S(t-s)\big[B(u^k(s))\rho_1(s)+G(u^k(s))\rho_2(s) \big]ds\nonumber\\ 
			&\quad- i \sqrt{\varepsilon}\int_{0}^{t} S(t-s)\big[B(u^k(s))d\mathcal{W}_1(s)+G(u^k(s))d\mathcal{W}_2(s) \big]. 
		\end{align*} 
		By the properties of the Yosida approximation, we can easily check that, 
		\begin{align}\label{limitpasslim}
			\lim_{\mu \to \infty} \mathbb{E} \Big\{\sup_{t \in [0,{T}]}\|u_\mu^k(t)-u^k(t)\|^2_{H}\Big\}=0.
		\end{align}
		Consider $\psi:H \to \mathbb{R}$ by $\psi(u)=\|u\|^2_{H}$. Since we have more regularity on spatial variable, we apply the infinite dimensional It\^o formula (Page-177, \cite{MR1706888}) to the function $\psi$ and $u^k_\mu$, to obtain
		\begin{align}\label{realpart}
			\frac{1}{2}\|u^k_\mu(t)\|^2_{H} &=\frac{1}{2}\|J_\mu u_0\|^2_{H} -\int_{0}^{t} \operatorname{Re}\big\langle i Au_\mu^k(s) +i \mathcal{N}_\mu(u_\mu^k(s))+ \beta u_\mu^k(s)+\varepsilon b(u_\mu^k(s)), u_\mu^k(s)\big\rangle ds \nonumber\\
			& \quad- i\int_{0}^{t} \operatorname{Re}\big\langle B(u_\mu^k(s))\rho_1(s)+ G_\mu(u_\mu^k(s))\rho_2(s), u_\mu^k(s)\big\rangle ds\nonumber\\
			& \quad - i\sqrt{\varepsilon}\int_{0}^{t} \operatorname{Re}\big\langle B(u_\mu^k(s))d\mathcal{W}_1(s)+ G_\mu(u_\mu^k(s))d\mathcal{W}_2(s), u_\mu^k(s)\big\rangle.
		\end{align}
		From the estimates \eqref{energyA}-\eqref{energyG}, we have 
		\begin{align*}
			&\operatorname{Re} \big\langle i Au_\mu^k(s), u_\mu^k(s)\big\rangle=0,\\
			&	\operatorname{Re} \big\langle i \mathcal{N}_\mu(u_\mu^k(s)), u_\mu^k(s)\big\rangle=0,\\
			&	\operatorname{Re} \big\langle \beta u_\mu^k(s), u_\mu^k(s)\big\rangle=\beta \|u_\mu^k\|^2_{H},\\
			&	\operatorname{Re} \big\langle \varepsilon b(u_\mu^k(s)), u_\mu^k(s)\big\rangle=\frac{\varepsilon}{2} \sum_{m=1}^{\infty}\|B_m(u_\mu^k)\|^2_{H},\\
			&	\operatorname{Re} \big\langle i  B(u_\mu^k(s))\rho_1(s), u_\mu^k(s)\big\rangle=0,\\
			&	\operatorname{Re} \big\langle i B(u_\mu^k(s))d\mathcal{W}_1(s), u_\mu^k(s)\big\rangle=0,\\
			&\operatorname{Re} \big\langle -i G_\mu(u_\mu^k(s))\rho_2(s), u_\mu^k(s)\big\rangle = 0.	
		\end{align*}
		From \eqref{realpart}, we infer
		\begin{align*}
			\frac{1}{2}\|u^k_\mu(t)\|^2_{H} &=\frac{1}{2}\|J_\mu u_0\|^2_{H} -\int_{0}^{t} \operatorname{Re}\big\langle i Au_\mu^k(s) +i \mathcal{N}_\mu(u_\mu^k(s))+ \beta u_\mu^k(s)+\varepsilon b(u_\mu^k(s)), u_\mu^k(s)\big\rangle ds \\
			&\quad - i\int_{0}^{t} \operatorname{Re}\big\langle B(u_\mu^k(s))\rho_1(s)+ G_\mu(u_\mu^k(s))\rho_2(s), u_\mu^k(s)\big\rangle ds\\
			& \quad - i\sqrt{\varepsilon}\int_{0}^{t} \operatorname{Re}\big\langle B(u_\mu^k(s))d\mathcal{W}_1(s)+ G_\mu(u_\mu^k(s))d\mathcal{W}_2(s), u_\mu^k(s)\big\rangle \\
			&\leq  \frac{1}{2}\|J_\mu u_0\|^2_{H} - \int_{0}^{t} \beta \|u^k_\mu(s)\|^2_{H} ds -\frac{\varepsilon}{2} \int_{0}^{t} \sum_{m=1}^{\infty}\|B_m(u_\mu^k(s))\|^2_{H} ds\\
			&\quad  +\sqrt{\varepsilon}\int_{0}^{t}\operatorname{Re} \big\langle -i G_\mu(u_\mu^k(s))d\mathcal{W}_2(s), u_\mu^k(s)\big\rangle\\
			&\leq  \frac{1}{2}\|J_\mu u_0\|^2_{H} +\int_{0}^{t}\operatorname{Re} \big\langle -i G_\mu(u_\mu^k(s))d\mathcal{W}_2(s), u_\mu^k(s)\big\rangle\\
			&\leq  \frac{1}{2}\| u_0\|^2_{H} +\sqrt{\varepsilon}\int_{0}^{t}\operatorname{Re} \big\langle -i G_\mu(u_\mu^k(s))d\mathcal{W}_2(s), u_\mu^k(s)\big\rangle.	
		\end{align*}
		Since we are studying the behaviour of solution as $\varepsilon\to 0$, we choose $\varepsilon<1$ for our convenience. This implies
		\begin{align*}
			\|u^k_\mu(t)\|^2_{H}&\leq  \|u_0\|^2_{H} +C_N(1+T)+2\Big|\int_{0}^{t}\operatorname{Re} \big\langle -i G_\mu(u_\mu^k(s))d\mathcal{W}_2(s), u_\mu^k(s)\big\rangle\Big|.		
		\end{align*}
		Now, taking power $\frac{q}{2}$ both sides and using Jensen's inequality, we derive
		\begin{align*}
			\|u^k_\mu(t)\|^q_{H}&\leq  C\| u_0\|^p_{H} +2^{\frac{q}{2}}\Big(\Big|\int_{0}^{t}\operatorname{Re} \big\langle -i G_\mu(u_\mu^k(s))d\mathcal{W}_2(s), u_\mu^k(s)\big\rangle\Big|\Big)^{\frac{q}{2}}.
		\end{align*}
		Taking the supremum over time, followed by the expectation, leads to the conclusion that
		\begin{align*}
			&\mathbb{E} \Big\{\sup_{0 \leq t \leq T}\|u^k_\mu(t)\|^q_{H}\Big\}\leq  	C\| u_0\|^p_{H} +2^{\frac{q}{2}}	\mathbb{E} \Big\{\sup_{0 \leq t \leq T}\Big(\Big|\int_{0}^{t}\operatorname{Re} \big\langle -i G_\mu(u_\mu^k(s))d\mathcal{W}_2(s), u_\mu^k(s)\big\rangle\Big|\Big)^{\frac{q}{2}}\Big\}.	
		\end{align*}
		For the term involving noise, we use the Burkholder-Davis-Gundy inequality (\Cref{BDG}) and then H\"older's inequality, Young's inequality and Jensen's inequality to get, 
		\begin{align*}
			&		\mathbb{E} \Big\{\sup_{0 \leq t \leq T}\Big|\int_{0}^{t}\operatorname{Re} \big\langle -i G_\mu(u_\mu^k(s))d\mathcal{W}_2(s), u_\mu^k(s)\big\rangle\Big|^{\frac{q}{2}}\Big\}\leq C \mathbb{E}\Big\{ \Big[\int_{0}^{T}\Big\{(C_1+C_2\|u_\mu^k(s)\|_{H})\|u_\mu^k(s)\|_{H}\Big\}^2 ds \Big]^\frac{q}{4}\Big\}\\
			& \leq  C \mathbb{E} \Big\{\Big[\int_{0}^{T}\Big\{1+\|u_\mu^k(s)\|^2_{H}\Big\}^2 ds \Big]^\frac{q}{4} \Big\}\leq  C \mathbb{E} \Big\{\|1+\|u_\mu^k(s)\|^2_{H}\|^{{\frac{q}{2}}}_{L^2(0,T)}\Big\}\\
			&\leq C(1+T)^\frac{q}{2} + 	\mathbb{E} \Big\{\Big(\int_{0}^{T}\|u_\mu^k(t)\|^4_{H} dt\Big)^{\frac{q}{4}} \Big\}\leq C(1+T)^\frac{q}{2} + 	\mathbb{E} \Big\{\sup_{0 \leq t \leq T}\|u_\mu^k(s)\|^{\frac{q}{2}}_{H}   \Big(\int_{0}^{T}\|u_\mu^k(t)\|^2_{H} dt\Big)^{\frac{q}{4}} \Big\}\\
			&\leq C(1+T)^\frac{q}{2} +\delta	\mathbb{E} \Big\{\sup_{0 \leq t \leq T}\|u^k_\mu(t)\|^q_{H}\Big\}+ \frac{1}{4 \delta} 	\mathbb{E} \Big\{\Big(\int_{0}^{T}\|u_\mu^k(t)\|^2_{H} dt\Big)^{\frac{q}{2}} \Big\}	\\
			&\leq C(1+T)^\frac{q}{2} +\delta	\mathbb{E} \Big\{\sup_{0 \leq t \leq T}\|u^k_\mu(t)\|^q_{H}\Big\}+ \frac{1}{4 \delta} (T)^{\frac{q}{2}-1} 	\mathbb{E} \Big\{\int_{0}^{T}\|u_\mu^k(t)\|^q_{H} dt \Big\}.
		\end{align*}
		Consequently, we obtain
		\begin{align*}
			\mathbb{E} \Big\{\sup_{0 \leq t \leq T}\|u^k_\mu(t)\|^q_{H}\Big\}&\leq  	C\| u_0\|^p_{H} +2^{\frac{q}{2}}	\Big[ C(1+T)^\frac{q}{2} +\delta	\mathbb{E} \Big\{\sup_{0 \leq t \leq T}\|u^k_\mu(t)\|^q_{H}\Big\}\\
			&\quad+ \frac{1}{4 \delta} (T)^{\frac{q}{2}-1} 	\mathbb{E} \Big\{\int_{0}^{T}\|u_\mu^k(t)\|^q_{H} dt \Big\}\Big].
		\end{align*}
		This leads us to conclude,
		\begin{align*}
			&\big(1- 2^{\frac{q}{2}}\delta\big)\mathbb{E} \Big\{\sup_{0 \leq t \leq T}\|u^k_\mu(t)\|^q_{H}\Big\}\\
			&\leq  	C\| u_0\|^p_{H} +2^{\frac{q}{2}}C(1+T)^\frac{q}{2}+ 2^{\frac{q}{2}}\frac{1}{4 \delta} (T)^{\frac{q}{2}-1} 	\mathbb{E} \Big\{\int_{0}^{T}\|u_\mu^k(s)\|^q_{H} ds\Big\},
		\end{align*}
		which implies,
		\begin{align}\label{forgron}
			C_\delta \mathbb{E} \Big\{\sup_{0 \leq t \leq T}\|u^k_\mu(t)\|^q_{H}\Big\}&\leq C_{\| u_0\|_{H}, N,T,q}+ C_{\delta,q,T,N} \mathbb{E} \Big\{\int_{0}^{T}\|u_\mu^k(s)\|^q_{H} ds\Big\}\nonumber\\	
			&\leq C_{\| u_0\|_{H},N, T,q}+ C_{\delta,q,T,N} \int_{0}^{T} \mathbb{E} \Big\{\sup_{0 \leq s \leq t}\|u_\mu^k(s)\|^q_{H} \Big\}dt.	
		\end{align}
		An application of Gr\"onwall's inequality in \eqref{forgron} yields,
		\begin{align*}
			\mathbb{E} \Big\{\sup_{0 \leq t \leq T}\|u^k_\mu(t)\|^q_{H}\Big\}\leq C_{\| u_0\|_{H}, \delta,q,T,N}.
		\end{align*}
		Taking $\mu\to \infty$, using \eqref{limitpasslim} and weak lower semicontinuity of norm, we conclude
		\begin{align}\label{uklinfbound}
			\mathbb{E} \Big\{\sup_{0 \leq t \leq T}\|u^k(t)\|^q_{H}\Big\}\leq C_{\| u_0\|_{H}, \delta,q,T,N}< \infty.
		\end{align}
		This completes the proof of \Cref{lemuklinfbound}.
	\end{proof}
	Finally we are ready to prove \Cref{wellsce}. To do that, first we prove the uniform boundedness of solution $u^k$ of the truncated equation in Step (A) and then we show that the stopping time $\tau_\infty$ is $\mathbb{P}$\,-a.s. equal to $T$ in Step (B).
	\begin{proof}[Proof of \Cref{wellsce}]
		Let $(u^k)_{k\in \mathbb{N}}$ be a sequence of solutions of \eqref{truncated} provided by the existence of unique global solution $u^R$ of the truncated equation with $R$ replaced by $k$.
		Then from the existence of local solution (with help of the stopping time $\tau_k$) of the original equation, we have $u(t)=u^k(t), t \in [0,\tau_k]$ is local mild solution of \eqref{Sce} upto time $\tau_\infty$, i.e., in $[0,\tau_\infty).$ The solution $u^k$ satisfies the following mild form:
		\begin{align}
			u^k(t)&=S(t)u_0	- i\int_{0}^{t} S(t-s)\theta_k(	\|u^k\|_{\mathbb{L}^{\infty, p}_{2, r}(s)}) \mathcal{N}(u^k(s))ds\nonumber\\
			&\quad- \int_{0}^{t} S(t-s) \big[\beta u^k(s) + \varepsilon b(u^k(t))\big]ds\nonumber\\ 
			&\quad - i\int_{0}^{t} S(t-s)\big[B(u^k(s))\rho_1(s)+G(u^k(s))\rho_2(s) \big]ds\nonumber\\ 
			&\quad- i \sqrt{\varepsilon}\int_{0}^{t} S(t-s)\big[B(u^k(s))d\mathcal{W}_1(s)+G(u^k(s))d\mathcal{W}_2(s) \big], \quad \text{for } t\in [0,T]. 
		\end{align}
		\begin{steps}
			\item\label{uniformukstep}\textbf{(Uniform bound for $u^k$ in $\mathbb{X}$):} First we will prove that $u^k$ is uniformly bounded in $\mathbb{X}_2=L^q(\Omega;L^p(0, T; L^r(\mathbb{R}^d)))$ i.e. we will show the existence of a constant $C>0$ such that 
			\begin{align*}
				\sup_{k} \mathbb{E} \big\{\|u^k\|^q_{L^p(0, T; L^r(\mathbb{R}^d))}\big \}\leq C.
			\end{align*}
			Note that, from \eqref{kbx2} and \eqref{kgx2}, using \Cref{lemuklinfbound}, we have the estimates for the noise terms, 
			\begin{align}\label{conseq417}
				\mathbb{E}\big\{ \|K_B(\cdot)\|^q_{L^p(0, T; L^r(\mathbb{R}^d))}\big\}&\leq (T)^{\frac{q}{p}+\frac{q}{2}}\big[ C \|B\|^q_{\mathscr{L}(H,\mathcal{L}_2 (Y_1,H))}	\mathbb{E} \big\{\|u^k\|^q_{L^\infty(0,T; H)}\big\}\big]\nonumber\\
				&\leq C_{\| u_0\|_{H}, \delta,q,T}.
			\end{align}
			and 
			\begin{align}\label{conseq420}
				\mathbb{E}\{ \|K_G(\cdot)\|^q_{L^p(0, T; L^r(\mathbb{R}^d))}\}&\leq  C (T)^{\frac{q}{p}+\frac{q}{2}}\big[ 1 +\mathbb{E} \{\|u^k\|^q_{L^\infty(0,T; H)}\}\big] \nonumber\\
				&\leq C_{\| u_0\|_{H}, \delta,q,T}.
			\end{align}
			%
			%
			%
			%
			%
			%
			%
			%
			Let us fix $\omega \in \Omega$ and take $T_k(\omega) \in (0,T]$ (which will be determined later). Then, using $\varepsilon<1$, we have the estimate
			\begin{align}\label{1uniformbound}
				&\|	u^k\|^q_{L^p(0, T_k; L^r(\mathbb{R}^d))}\nonumber\\
				&\leq  C\|u_0\|^q_{H} +C_k  T_k^{\frac{q(p-r)}{p}} \|u^k\|_{L^p(0, T_k; L^r(\mathbb{R}^d))}^{q\alpha}+ (\beta^q+ (C\varepsilon)^q )T_k^q \|  u^k\|^q_{L^\infty(0,T; H)}\nonumber\\
				&\quad+C\|  u^k\|^q_{L^\infty(0,T; H)}\Big\{\int_{0}^{T_k} \big\{\|B\|_{\mathscr{L}(H,\mathcal{L}_2 (Y_1,H))} \|\rho_1(s)\|_{Y_1}+ C_2 \|\rho_2(s)\|_{Y_2} \big\}ds\Big\}^q\nonumber \\ 
				&\quad+ \Big(C C_1 \int_{0}^{T_k}\|\rho_2(s)\|_{Y_2}ds\Big)^q +\|K_B(\cdot)\|^q_{L^p(0, T_k; L^r(\mathbb{R}^d))}+\|K_G(\cdot)\|^q_{L^p(0, T_k; L^r(\mathbb{R}^d))}\nonumber\\
				&\leq   C\|u_0\|^q_{H}+C_k  T_k^{\frac{q(p-r)}{p}} \|u^k(t)\|_{L^p(0, T_k; L^r(\mathbb{R}^d))}^{q\alpha}+ C(1+T_k^q)\|  u^k\|^q_{L^\infty(0,T_k; H)}\nonumber\\
				&\quad+\|K_B(\cdot)\|^q_{L^p(0, T_k; L^r(\mathbb{R}^d))}+\|K_G(\cdot)\|^q_{L^p(0, T_k; L^r(\mathbb{R}^d))}.
			\end{align}
			%
			%
			We consider,
			\begin{align}\label{defmk}
				M_k^{T_k}(\omega):= C\|u_0\|^q_{H}+ C(1+T_k^q)\|  u^k\|^q_{L^\infty(0,T_k; H)}+\|K_B(\cdot)\|^q_{L^p(0, T_k; L^r(\mathbb{R}^d))}+\|K_G(\cdot)\|^q_{L^p(0, T_k; L^r(\mathbb{R}^d))},
			\end{align}
			and denote, 
			\begin{align*}
				M_k(\omega):= C\|u_0\|^q_{H}+ C(1+T^q)\|  u^k\|^q_{L^\infty(0,T; H)}+\|K_B(\cdot)\|^q_{L^p(0, T; L^r(\mathbb{R}^d))}+\|K_G(\cdot)\|^q_{L^p(0, T; L^r(\mathbb{R}^d))}.
			\end{align*}
			Then, from \eqref{1uniformbound}, we have
			\begin{align}\label{boundbymk}
				\|	u^k\|^q_{L^p(0, T_k; L^r(\mathbb{R}^d))} \leq 	M_k^{T_k}(\omega)+C_k  T_k^{\frac{q(p-r)}{p}} \|u^k\|_{L^p(0, T_k; L^r(\mathbb{R}^d))}^{q\alpha}.
			\end{align}
			Consider a function $ g, $ defined by
			\begin{align*}
				g(x) := M + \frac{1}{4(2M)^{\alpha-1}} x^{\alpha} - x, \quad x \geq 0.
			\end{align*}	
			Here $M > 0 $. Since $ g''(x) \geq 0 $, for all $ x \in [0, \infty) $, it is convex on $[0, \infty) $. 
			
			Also note that
			\begin{align*}
				& g(0) = M > 0,\\
				& g(2M) = M + \frac{1}{4(2M)^{\alpha-1}} (2M)^{\alpha} - 2M = -\frac{M}{2} < 0,\\
				&g\Big( 4^{\frac{1}{\alpha-1}} \cdot 2M \Big) = M + \frac{1}{4(2M)^{\alpha-1}} \Big( 4^{\frac{1}{\alpha-1}} \cdot 2a \Big)^{\alpha} - 4^{\frac{1}{\alpha-1}} \cdot 2M = M > 0.
			\end{align*}	
			By the intermediate value theorem, there exists two points $ C_1, C_2 $ with $ 0 < C_1 < 2M < C_2 < 4^{\frac{1}{2\sigma}} \cdot 2M < \infty $ at which $ g(C_1) = g(C_2) = 0 $. Therefore, we can conclude that
			\begin{align*}
				g(x) \geq 0 \quad \text{if and only if} \quad 0 \leq x \leq C_1 \text{ or } x \geq C_2.
			\end{align*}
			Keeping the assumption $ \frac{p-r}{p} > 0 $ in mind, we choose the required value $ T_k(\omega)$, for $ \omega \in \Omega $, to be
			\begin{align}\label{tkomega}
				T_k(\omega) = T \wedge (4C  (2M_k(\omega))^{\alpha-1})^{-\frac{1}{\frac{q(p-r)}{p}}}.
			\end{align}
			This implies,
			\begin{align*}
				(T_k(\omega))^{\frac{q(p-r)}{p}}\leq  (4C  (2M_k(\omega))^{\alpha-1})^{-1},
			\end{align*}
			that is, $$C(M_k(\omega))^{\alpha-1}(T_k(\omega))^{\frac{q(p-r)}{p}}\leq  \frac{1}{4\cdot 2^{\alpha-1}}.$$ 
			Replacing $M_k$ by $M_k^{T_k}$, we have
			\begin{align*} 
				&T_k^{\frac{q(p-r)}{p}} \Big[C\|u_0\|^q_{H}+ C(1+T^q)\|  u^k\|^q_{L^\infty(0,T; H)}\\
				&+\|K_B(\cdot)\|^q_{L^p(0, T_k; L^r(\mathbb{R}^d))}+\|K_G(\cdot)\|^q_{L^p(0, T_k; L^r(\mathbb{R}^d))}\Big]^{\alpha-1}	\leq \frac{1}{4 \cdot 2^{\alpha-1}}. 
			\end{align*} 
			Define the sequence of stopping times,
			\begin{align*} 
				\sigma_k :&=\inf \Big\{ t \in [0,T] :  t^{\frac{q(p-r)}{p}} 
				\Big[C\|u_0\|^q_{H}+ C(1+t^q)\|  u^k\|^q_{L^\infty(0,T; H)}\\
				&\quad+\|K_B(\cdot)\|^q_{L^p(0,  t; L^r(\mathbb{R}^d))}+\|K_G(\cdot)\|^q_{L^p(0,  t; L^r(\mathbb{R}^d))}\Big]^{\alpha-1}	> \frac{1}{4 \cdot 2^{\alpha-1}} \Big\}. 
			\end{align*} 
			It is easy to check that $T_k \leq \sigma_k$ and \eqref{boundbymk} holds with $T_k$ replaced by $\sigma_k$, i.e., 
			\begin{align*}
				\|	u^k\|^q_{L^p(0, \sigma_k; L^r(\mathbb{R}^d))}&\leq  M_k^{\sigma_k}(\omega) +C_k  \sigma_k^{\frac{q(p-r)}{p}} \|u^k\|_{L^p(0,\sigma_k; L^r(\mathbb{R}^d))}^{q\alpha}\\
				&\leq  	M_k^{\sigma_k}(\omega) +\frac{ 1}{4(2M_k^{\sigma_k})^{\alpha-1}} \|u^k\|_{L^p(0,\sigma_k; L^r(\mathbb{R}^d))}^{q\alpha}.
			\end{align*}
			Since $\|u^k\|_{L^p(0,t; L^r(\mathbb{R}^d))}^{q}$ is continuous in $t\in [0,\infty)$, and is $0$ at $t=0$, we infer that 
			\begin{align*}
				\|u^k\|_{L^p(0,\sigma_k; L^r(\mathbb{R}^d))}^{q} \leq C_1\leq 2M_k^{\sigma_k}(\omega).
			\end{align*}
			Substituting the value of $M_k^{\sigma_k}(\omega)$ from \eqref{defmk}, we have
			\begin{align}\label{sigmakbound}
				&\|u^k\|_{L^p(0,\sigma_k; L^r(\mathbb{R}^d))}^{q}\nonumber\\
				&\leq 2 \Big(C\|u_0\|^q_{H}+ C(1+\sigma_k^q)\|  u^k(s)\|^q_{L^\infty(0, \sigma_k; H)}+\|K_B(\cdot)\|^q_{L^p(0, \sigma_k; L^r(\mathbb{R}^d))}+\|K_G(\cdot)\|^q_{L^p(0, \sigma_k; L^r(\mathbb{R}^d))}\Big).
			\end{align}
			We define a sequence $\sigma_k^j$ for $j=1,2,\ldots$ as follows. For $j = 1$, we put $\sigma_k^1 = \sigma_k$. For $j = 2$, we define $\sigma_k^2$ as the infimum of times after $\sigma_k^1$ such that the above condition holds, but the initial time 0 is replaced by the initial time $\sigma_k^1$. To be more precise, for $j = 1,2, \dots$, we define  
			\begin{align*} 
				\sigma_k^{j+1} := \inf \Big\{& t \in [\sigma_k^j, T] :   (t - \sigma_k^j)^{\frac{q(p-r)}{p}} \Big[C\|u_0\|^q_{H}+ C(1+(t - \sigma_k^j)^q)\|  u^k(s)\|^q_{L^\infty(0, \sigma_k; H)}\\
				&+ \|K_B(\cdot)\|^q_{L^p(\sigma_k^j,  t; L^r(\mathbb{R}^d))}+\|K_G(\cdot)\|^q_{L^p(\sigma_k^j,  t; L^r(\mathbb{R}^d))}\Big]^{\alpha-1}	> \frac{1}{4 \cdot 2^{\alpha-1}}\Big\}.  
			\end{align*}  
			By the definition of $\sigma_k^j$, we can easily check that $\sigma_k^{j+1}-\sigma_k^j\geq T_k$, for each $j$. Let $N'=[\frac{T}{T_k}]$. Then, we can see that $\sigma_k^j=T$ for $j=N+1, N+2,\ldots$.  Since $\sigma_k^j$ is a stopping time, for $t \geq\sigma_k^j$
			\begin{align*}
				u^k(t)&=S(t-\sigma_k^j)u^k(\sigma_k^j)	- i\int_{\sigma_k^j}^{t} S(t-s)\theta_k(	\|u^k\|_{\mathbb{L}^{\infty, p}_{2, r}(s)}) \mathcal{N}(u^k(s))ds\nonumber\\
				&\quad- \int_{\sigma_k^j}^{t} S(t-s) \big[\beta u^k(s) + \varepsilon b(u^k(t))\big]ds\nonumber\\ 
				&\quad - i\int_{\sigma_k^j}^{t} S(t-s)\big[B(u^k(s))\rho_1(s)+G(u^k(s))\rho_2(s) \big]ds\nonumber\\ 
				&\quad- i \sqrt{\varepsilon}\int_{\sigma_k^j}^{t} S(t-s)\big[B(u^k(s))d\mathcal{W}_1(s)+G(u^k(s))d\mathcal{W}_2(s) \big]. 
			\end{align*}
			By applying a similar argument as in \eqref{sigmakbound} inductively, and using the previous estimates, we obtain
			\begin{align*}
				\|u^k\|_{L^p(\sigma_k^j,\sigma_k^{j+1}; L^r(\mathbb{R}^d))}^{q}
				&\leq  2 \Big(C\|u_0\|^q_{H}+ C(1+(\sigma_k^{j+1}-\sigma_k^j)^q)\|  u^k\|^q_{L^\infty(\sigma_k^j,\sigma_k^{j+1}; H)}\\
				&\qquad+\|K_B(\cdot)\|^q_{L^p(\sigma_k^j,\sigma_k^{j+1}; L^r(\mathbb{R}^d))}+\|K_G(\cdot)\|^q_{L^p(\sigma_k^j,\sigma_k^{j+1}; L^r(\mathbb{R}^d))}\Big)\\
				&\leq  2 M_k(\omega).
			\end{align*}
			Using the fact $N'=[\frac{T}{T_k}]\leq\frac{T}{T_k}$ and \eqref{tkomega}, we have
			\begin{align}\label{ublp}
				\|u^k\|_{L^p(0,T; L^r(\mathbb{R}^d))}^{q} &\leq  \|u^k\|_{L^p(0,\sigma_k^{1}; L^r(\mathbb{R}^d))}^{q}+ \sum_{j=1}^{N'}\|u^k\|_{L^p(\sigma_k^j,\sigma_k^{j+1}; L^r(\mathbb{R}^d))}^{q}\nonumber\\
				&\leq 2M_k + 2N' M_k\nonumber\\
				&\leq  2M_k + 2T \Big(4C (2M_k)^{\alpha-1}\Big)^{\frac{1}{\frac{q(p-r)}{p}}}M_k\nonumber\\
				&\leq  2M_k + C_{d,q,\alpha,T} (M_k)^{1+ \frac{p(\alpha+1)}{q(p-r)}}.
			\end{align}
			Denote $\lambda=1+ \frac{p(\alpha+1)}{q(p-r)}$. Taking expectation on the both sides of \eqref{ublp}, we get
			\begin{align*}
				&\mathbb{E} \big\{\|u^k\|^q_{L^p(0, T; L^r(\mathbb{R}^d))}\big\}\leq 2\mathbb{E}\{M_k \}+ C_{d,q,\alpha,T} \mathbb{E}\{(M_k)^{\lambda}\}.
			\end{align*}
			Using the estimates \eqref{conseq417} and \eqref{conseq420} for noise terms with $q=q \lambda, q$, we conclude
			\begin{align*}
				\mathbb{E}\{M_k \}&= C\|u_0\|^q_{H}+ C(1+T^q) \mathbb{E}\Big\{\|  u^k\|^q_{L^\infty(0,T; H)}+\|K_B(\cdot)\|^q_{L^p(0, T; L^r(\mathbb{R}^d))}+\|K_G(\cdot)\|^q_{L^p(0, T; L^r(\mathbb{R}^d))}\Big\}\\
				&\leq   C\|u_0\|^q_{H}+ C_{\| u_0\|_{H}, \delta,q,T,N}+ C_{\| u_0\|_{H}, \delta,q,T}\\
				&\leq   C_{\| u_0\|_{H}, \delta,q,T,N}.		 	
			\end{align*}
			Similarly, we will have,
			\begin{align*}
				\mathbb{E}\{(M_k)^{\lambda} \}&\leq   C_{\| u_0\|_{H},\lambda, \delta,q,T,N}.		 	
			\end{align*}
			Therefore, we deduce
			\begin{align*}
				\mathbb{E} \big\{\|u^k\|^q_{L^p(0, T; L^r(\mathbb{R}^d))}\big\}&\leq  2 C_{\| u_0\|_{H}, \delta,q,T}+C_{d,q,\alpha,T,N} C_{\| u_0\|_{H},\lambda, \delta,q,T,N}\\
				&= C_{\| u_0\|_{H},\alpha,d,\lambda, \delta,q,T,N}=L.
			\end{align*}
			Here, the constant $L$ is independent of $k$. Hence we have the uniform estimate for $u^k$.
			\item \textbf{:} In this step, we show that $T_\infty=T, \mathbb{P}$\,-a.s. Recall from \eqref{stoppingtime}, 
			\begin{align*}
				\tau_k= \inf \,\big\{\,t\in [0,T]: \|u^k\|_{\mathbb{L}^{\infty, p}_{2, r}(t)}>k\,\big\}\land T.
			\end{align*}
			Now, using the definition of stopping time and Chebyshev inequality, we get
			\begin{align*}
				\mathbb{P}(\tau_k = T) &= \mathbb{P} \Big\{ \sup_{0 \leq t \leq T} \| u^k (t) \|_{L^2 (\mathbb{R}^d)} + \| u^k \|_{L^p (0,T; L^r (\mathbb{R}^d))} \leq k \Big\} \\
				&\geq 1-  \mathbb{P} \Big\{ \sup_{0 \leq t \leq T} \| u^k (t) \|_{L^2 (\mathbb{R}^d)} + \| u^k \|_{L^p (0,T; L^r (\mathbb{R}^d))} > k \Big\} \\
				&\geq 1 - \frac{ 2^q\Big(\mathbb{E}\big\{\sup_{0 \leq t \leq T} \| u^k (t) \|_{L^2 (\mathbb{R}^d)}^q \big\}+  \mathbb{E}\big\{ \| u^k \|_{L^p (0,T; L^r (\mathbb{R}^d))}^q\big\}\Big)}{k^p} \\
				&\geq 1 - \frac{2^q\big\{ \| u_0 \|_{L^2 (\mathbb{R}^d)}^q + L\big\}}{k^q}.
			\end{align*}
			Hence, we have
			\begin{align*}
				\mathbb{P} (\tau_{\infty} = T) \geq \mathbb{P} \Big( \bigcup_k \{ \tau_k = T \} \Big) = \lim_{k \to \infty} \mathbb{P} (\tau_k = T) = 1.
			\end{align*}
			Thus, we infer that, $\tau_{\infty} = T$ $\mathbb{P}$\,-a.s. Since $T$ is arbitrary, this shows $u^\rho(t):=u(t)_{t \in [0, \infty)}$ is a global mild solution of \eqref{Sce} in $ L^q (\Omega; L^\infty(0,T; H)) \cap L^p (0,T; L^r (\mathbb{R}^d)))$. Moreover, from \eqref{mildformsce}, we have $u^\rho(\cdot,\omega)\in C([0, T]; H)$, $\mathbb{P}$\,-a.s. This completes the proof of \Cref{wellsce}.
		\end{steps}
	\end{proof}
	\section{Large deviation principle}\label{sectionldp}
	In this section, we establish the LDP for the solutions of \eqref{perturbed}. Here, we employ the weak convergence approach in a suitable Polish space, developed by Budhiraja and Dupuis in \cite{MR1785237, MR2435853}, using the variational representation for non-negative functionals of Brownian motion.
	\subsection{Condition 1(Compactness criterion):}
	In this subsection, we prove the first equivalent condition of Laplace principle, known as compactness criteria.
	For an admissible pair $(p,r)$ and $u_0\in H$, existence and uniqueness of solutions to (\ref{Skeleton}) allows us to define the following Borel measurable map (see \cite{MR1785237} and  \cite[ Theorem 2.2]{MR1275582})
	\begin{align*}
		\Psi^0: H \times C([0,T]; Y_1)\times C([0,T]; Y_2) \to L^\infty(0,T; H) \cap L^p(0, T; L^r(\mathbb{R}^d)),
	\end{align*}
	such that $\Psi^0\big(u_0,\int_{0}^{\cdot}\rho_1(s)ds,\int_{0}^{\cdot}\rho_2(s)ds\big)=u^\rho,$ where $u^\rho$ is the unique solution of \eqref{Skeleton} satisfying the mild form (\ref{mildformsk}) on $[0,T]$. We denote $	\Psi^0(\rho):=\Psi^0\big(u_0,\int_{0}^{\cdot}\rho_1(s)ds,\int_{0}^{\cdot}\rho_2(s)ds\big).$
	\begin{thm}\label{compactness}
		For any given $N\in \mathbb{N}$, let $\rho^0, \rho^n \in \mathbb{D}_N$ be such that $\rho^n \to \rho^0$ weakly in $\mathbb{D}_N$ as $n \to \infty$. Then
		\begin{align*}
			\lim_{n \to \infty} \|	\Psi^0(\rho^n)-	\Psi^0(\rho^0)\|_{\mathbb{L}^{\infty, p}_{2, r}(T)}=0.
		\end{align*}
	\end{thm}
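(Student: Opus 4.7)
Write $u^n:=\Psi^0(\rho^n)$, $u^0:=\Psi^0(\rho^0)$, and set $v^n:=u^n-u^0$. By \Cref{wellskl} (in particular the $N$-uniform bound \eqref{nindependentbound}), there exists $K>0$, independent of $n$, with $\|u^n\|_{\mathbb{L}^{\infty, p}_{2, r}(T)}+\|u^0\|_{\mathbb{L}^{\infty, p}_{2, r}(T)}\leq K$. Subtracting the mild forms and splitting the control terms as
\[
B(u^n)\rho_1^n-B(u^0)\rho_1^0=B(v^n)\rho_1^n+B(u^0)(\rho_1^n-\rho_1^0),
\]
and analogously for $G$, we obtain $v^n=\mathcal{D}^n+\mathcal{R}^n$ where $\mathcal{D}^n$ collects the Lipschitz-type pieces (difference of the nonlinearities, the damping, and the $B(v^n)\rho_1^n$, $[G(u^n)-G(u^0)]\rho_2^n$ contributions), and
\[
\mathcal{R}^n(t):=-i\int_0^{t}S(t-s)B(u^0(s))(\rho_1^n-\rho_1^0)(s)\,ds-i\int_0^{t}S(t-s)[G(u^0(s))-G(u^0(s))]\cdots
\]
is the genuinely new control-difference contribution $\mathcal{R}^n=\mathcal{R}_B^n+\mathcal{R}_G^n$. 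The plan is to show $\|\mathcal{R}^n\|_{\mathbb{L}^{\infty, p}_{2, r}(T)}\to 0$ and then to close a Gronwall-type inequality for $v^n$.

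The Lipschitz block $\mathcal{D}^n$ is controlled exactly as in Step (C) of the proof of \Cref{wellskl} (the convergence of the Yosida scheme, see the derivation of \eqref{mulip}): Strichartz together with \eqref{liptypeF}, \eqref{liptypebeta}, and the linearity/Lipschitz bounds on $B,G$ give, on any sub-interval $[0,\tilde T]\subseteq[0,T]$,
\[
\|\mathcal{D}^n\|_{\mathbb{L}^{\infty, p}_{2, r}(\tilde T)}\leq C\Big\{\tilde T^{\frac{p-r}{p}}(2K)^{\alpha-1}+\beta\tilde T+\int_0^{\tilde T}\bigl(\|B\|\|\rho_1^n(s)\|_{Y_1}+L_G\|\rho_2^n(s)\|_{Y_2}\bigr)ds\Big\}\|v^n\|_{\mathbb{L}^{\infty, p}_{2, r}(\tilde T)}.
\]
Using $\rho^n\in\mathbb{D}_N$ and the absolute continuity of the Lebesgue integral (\Cref{abscts}), a time $T_1>0$ can be chosen so that the coefficient above is $\leq\tfrac{1}{2}$, uniformly in $n$; the gluing argument already employed to extend the local solution of the skeleton equation then reduces the whole convergence problem to showing $\|\mathcal{R}^n\|_{\mathbb{L}^{\infty, p}_{2, r}(T_1)}\to 0$ at each induction step.

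Convergence of $\mathcal{R}_B^n$ (the $\mathcal{R}_G^n$ case is identical, using the fact that $s\mapsto G(u^0(s))\in\mathcal{L}_2(Y_2,H)$ is bounded by \eqref{LgG}) is the main obstacle. For each fixed $t\in[0,T]$ define the linear operator $\mathcal{T}_t:L^2(0,T;Y_1)\to H$ by $\mathcal{T}_t\rho:=\int_0^{t}S(t-s)B(u^0(s))\rho(s)\,ds$. Since $S(\cdot)$ is unitary on $H$ and $B\in\mathscr{L}(H,\mathcal{L}_2(Y_1,H))$, its Hilbert-Schmidt norm satisfies
\[
\|\mathcal{T}_t\|_{\mathcal{L}_2}^2\leq\int_0^{t}\|B(u^0(s))\|_{\mathcal{L}_2(Y_1,H)}^2\,ds\leq\|B\|^{2}_{\mathscr{L}(H,\mathcal{L}_2(Y_1,H))}\,T\,\|u^0\|_{L^\infty(0,T;H)}^{2}<\infty,
\]
so $\mathcal{T}_t$ is compact. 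From $\rho_1^n\rightharpoonup\rho_1^0$ weakly in $L^2(0,T;Y_1)$ we deduce the pointwise convergence $\|\mathcal{R}_B^n(t)\|_H=\|\mathcal{T}_t(\rho_1^n-\rho_1^0)\|_H\to 0$ for every $t$. Equicontinuity in $t$ follows by splitting
\[
\mathcal{R}_B^n(t_2)-\mathcal{R}_B^n(t_1)=\int_{t_1}^{t_2}S(t_2-s)B(u^0(s))(\rho_1^n-\rho_1^0)(s)ds+\bigl(S(t_2-t_1)-I\bigr)\mathcal{R}_B^n(t_1);
\]
the first piece is bounded by $C|t_2-t_1|^{1/2}N^{1/2}\|u^0\|_{L^\infty(0,T;H)}$ uniformly in $n$, and the second tends to $0$ as $t_2-t_1\to0$ uniformly on the relatively compact set $\{\mathcal{R}_B^n(t_1):n\in\mathbb N\}$ (compact image of a bounded set under $\mathcal{T}_{t_1}$), by strong continuity of $S(\cdot)$. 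Arzelà--Ascoli upgrades the pointwise convergence to $\sup_{t\in[0,T]}\|\mathcal{R}_B^n(t)\|_H\to 0$. The $L^p(0,T;L^r)$ norm of $\mathcal{R}_B^n$ is handled by combining the Strichartz estimate \eqref{Strichartzp} with $(\gamma,\sigma)=(\infty,2)$ to bound $\|\mathcal{R}_B^n\|_{L^p(0,T;L^r)}$ by $\|B(u^0)(\rho_1^n-\rho_1^0)\|_{L^1(0,T;H)}$, and then applying an analogous compact-operator/Arzelà--Ascoli argument (or, equivalently, dominated convergence on the kernel $s\mapsto B(u^0(s))\cdot$ applied to $\rho_1^n-\rho_1^0$).

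Putting these pieces together, on $[0,T_1]$ we obtain $\|v^n\|_{\mathbb{L}^{\infty, p}_{2, r}(T_1)}\leq\varepsilon_n+\tfrac12\|v^n\|_{\mathbb{L}^{\infty, p}_{2, r}(T_1)}$ with $\varepsilon_n:=\|\mathcal{R}^n\|_{\mathbb{L}^{\infty, p}_{2, r}(T_1)}\to 0$, hence $\|v^n\|_{\mathbb{L}^{\infty, p}_{2, r}(T_1)}\to 0$. Iterating this on successive intervals $[kT_1,(k+1)T_1]\cap[0,T]$, exactly as in the gluing construction following \eqref{glueumu}, yields $\|u^n-u^0\|_{\mathbb{L}^{\infty, p}_{2, r}(T)}\to 0$. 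The main difficulty throughout is the uniform-in-$t$ upgrade of the weak-to-strong conversion for $\mathcal{R}^n$; the Hilbert-Schmidt structure of $\mathcal{T}_t$ (together with the finite-energy constraint $\rho^n\in\mathbb{D}_N$) is what makes this step possible.
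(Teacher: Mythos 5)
Your argument takes a genuinely different route from the paper's, and in fact supplies a step the paper's own proof glosses over. The paper closes a Gr\"onwall inequality to arrive at the bound \eqref{clim}, then attempts to take $\limsup_n$ via ``Fatou's lemma,'' and finally invokes weak convergence of $\rho^n$ to $\rho^0$ in $\mathbb{D}_N$ to force the remainder term to vanish. But weak convergence of $\rho_1^n\rightharpoonup\rho_1^0$ in $L^2(0,T;Y_1)$ does not yield $\|\rho_1^n(s)-\rho_1^0(s)\|_{Y_1}\to 0$ for a.e.\ $s$, nor does it make the scalar-valued integral $\int_0^{T_0}\|u^0(s)\|_H\|\rho_1^n(s)-\rho_1^0(s)\|_{Y_1}\,ds$ tend to zero (take $Y_1=\mathbb{R}$, $\rho_1^n(s)=\sin(ns)$, $\rho_1^0\equiv 0$: the integral converges to a positive constant). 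The structure that makes weak convergence usable is exactly what you identify: the control-difference contributions are the image of $\rho_1^n-\rho_1^0$ under a Hilbert--Schmidt, hence compact, integral operator, and compact operators turn weak convergence into strong convergence. Your split $v^n=\mathcal{D}^n+\mathcal{R}^n$, with the Lipschitz block $\mathcal{D}^n$ absorbed by Gr\"onwall and the block $\mathcal{R}^n$ killed by compactness, is the right partition, and your computation that $\mathcal{T}_t$ has finite Hilbert--Schmidt norm (using unitarity of $S$ and $B\in\mathscr{L}(H,\mathcal{L}_2(Y_1,H))$) is correct.

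The one place your write-up is not yet watertight is the equicontinuity used before Arzel\`a--Ascoli. Your bound on $(S(t_2-t_1)-I)\mathcal{R}_B^n(t_1)$ is uniform in $n$ for each fixed $t_1$ (relatively compact set, strong continuity of $S$), but the standard ``pointwise convergence plus equicontinuity implies uniform convergence'' argument needs a modulus of continuity uniform in $t_1$ as well, and you haven't established that the set $\{\mathcal{R}_B^n(t_1):t_1\in[0,T],\,n\in\mathbb{N}\}$ is relatively compact. This is repairable: write $\mathcal{R}_B^n(t)=S(t)V_t(\rho_1^n-\rho_1^0)$ with $V_t\rho:=\int_0^t S(-s)B(u^0(s))\rho(s)\,ds$, note $V_t\rho=V_T(\mathbf{1}_{[0,t]}\rho)$, so that $\{V_t(\rho_1^n-\rho_1^0):t\in[0,T],\;n\in\mathbb{N}\}$ lies in $V_T$ applied to a bounded ball of $L^2(0,T;Y_1)$, hence in a relatively compact subset of $H$; then the joint continuity of $(t,h)\mapsto S(t)h$ on $[0,T]\times H$ gives the desired compact enclosure of $\{\mathcal{R}_B^n(t)\}$ and hence the uniform modulus. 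Equivalently, one shows directly that the map $\rho\mapsto\big(t\mapsto\int_0^t S(t-s)B(u^0(s))\rho(s)\,ds\big)$ is compact from $L^2(0,T;Y_1)$ into $C([0,T];H)$. With this patch your proposal is a rigorous proof, and it makes explicit where the hypothesis of weak convergence actually enters, which the paper's proof does not.
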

	\begin{proof}
		For each $n=0,1,2,\ldots$ consider $\Psi^0(\rho^n)=u^{\rho^n}=u^n$ and $\Psi^0(\rho^0)=u^{\rho^0}=u^0$,
		where $\rho^n = (\rho_1^n, \rho_2^n)$ and $u^n$ is the unique solution of the (\ref{Skeleton}) with $\rho=\rho^n$.
		Then, from \eqref{mildformsk}, we have
		\begin{align}\label{c1}
			u^n(t)&=S(t)u^n_0	- \int_{0}^{t} S(t-s) \big[i \mathcal{N}(u^n(s))+ \beta u^n(s)\big]ds - i\int_{0}^{t}S(t-s)\big[B(u^n(s))\rho^n_1(s)+ G(u^n(s))\rho^n_2(s)  \big]ds, 
		\end{align}
		and 
		\begin{align}\label{c2}
			u^0(t)&=S(t)u_0	- \int_{0}^{t} S(t-s) \big[i \mathcal{N}(u^0(s))+ \beta u^0(s)\big]ds - i\int_{0}^{t}S(t-s)\big[B(u^0(s))\rho^0_1(s)+ G(u^0(s))\rho^0_2(s)  \big]ds,
		\end{align}
		where $u^n_0=u_0$. Our aim is to show that, 
		\begin{align}\label{ctarget}
			\lim_{n \to \infty}\|u^n-u^0\|_{\mathbb{L}^{\infty, p}_{2, r}(T)}=0.
		\end{align}
		For any time $t \in [0,T]$, \eqref{c1} and \eqref{c2} imply,
		\begin{align*}
			u^n(t)-	u^0(t)&=S(t)(u^n_0-u_0)	- \int_{0}^{t} S(t-s) \big[i\big\{ \mathcal{N}(u^n(s))-\mathcal{N}(u^0(s))\big\}+ \beta \big\{u^n(s)-u^0(s)\big\}\big]ds\\
			&\quad - i\int_{0}^{t}S(t-s)\big[\big\{B(u^n(s))\rho^n_1(s)-B(u^0(s))\rho^0_1(s)\big\}+\big\{ G(u^n(s))\rho^n_2(s)- G(u^0(s))\rho^0_2(s)\big\} \big]ds.
		\end{align*}
		Using similar arguments to establish the estimates \eqref{liptypeF}, \eqref{liptypebeta}, \eqref{diffI2a} and \eqref{diffI2b}, for any time $\tilde{T}\in [0,T]$ we have the existence of a constant $C$ independent of $n, \tilde{T}$ such that, the following estimates hold:
		\begin{align}
			&\bigg\|\int_{0}^{\cdot} S(\cdot-s)i\big\{ \mathcal{N}(u^n(s))-\mathcal{N}(u^0(s))\big\} ds\bigg\|_{\mathbb{L}^{\infty, p}_{2, r}(\tilde{T})} \leq C (\tilde{T})^{\frac{p-r}{p}} (2M)^{\alpha -1} \|	u^n-	u^0 \|_{\mathbb{L}^{\infty, p}_{2, r}(\tilde{T})},\label{cliptypeF}\\
			&\bigg\|\int_{0}^{\cdot} S(\cdot-s)\beta \big\{u^n(s)-u^0(s)\big\} ds\bigg\|_{\mathbb{L}^{\infty, p}_{2, r}(\tilde{T})}\leq C\beta \int_{0}^{\tilde{T}} \|u^n-u^0\|_{\mathbb{L}^{\infty, p}_{2, r}(s)}ds,\label{cliptypebeta}\\
			& \bigg\|- i\int_{0}^{\cdot}S(\cdot-s)\big\{B(u^n(s))\rho^n_1(s)-B(u^0(s))\rho^0_1(s)\big\}ds\bigg\|_{\mathbb{L}^{\infty, p}_{2, r}(\tilde{T})}\nonumber\\
			&= \bigg\|- i\int_{0}^{\cdot}S(\cdot-s)\big\{B(u^0(s))(\rho^n_1(s)-\rho^0_1(s))+(B(u^n(s))-B(u^0(s)))\rho^n_1(s)\big\}ds\bigg\|_{\mathbb{L}^{\infty, p}_{2, r}(\tilde{T})}\nonumber\\
			&\leq  2C \int_{0}^{\tilde{T}}\|B\|_{\mathscr{L}(H,\mathcal{L}_2 (Y_1,H))} \|\rho^n_1(s)\|_{Y_1} \|u^n-u^0\|_{\mathbb{L}^{\infty, p}_{2, r}(s)}ds \nonumber\\
			&\quad+ 2C \int_{0}^{\tilde{T}}\|B\|_{\mathscr{L}(H,\mathcal{L}_2 (Y_1,H))}\|u^0(s)\|_H \|\rho^n_1(s)-\rho^0_1(s)\|_{Y_1}ds,\label{cdiffB}
		\end{align}
		and
		\begin{align}\label{cdiffg}
			&  \bigg\|- i\int_{0}^{\cdot}S(\cdot-s)\big\{ G(u^n(s))\rho^n_2(s)- G(u^0(s))\rho^0_2(s)\big\} ds \bigg\|_{\mathbb{L}^{\infty, p}_{2, r}(\tilde{T})}\nonumber\\
			& =\bigg\|- i\int_{0}^{\cdot}S(\cdot-s)\big\{G(u^0(s))(\rho^n_2(s)-\rho^0_2(s))+(G(u^n(s))-G(u^0(s)))\rho^n_2(s)\big\}ds\bigg\|_{\mathbb{L}^{\infty, p}_{2, r}(\tilde{T})}\nonumber\\
			&\leq  2C \int_{0}^{\tilde{T}}L_G \|\rho^n_2(s)\|_{Y_2}\|u^n-u^0\|_{\mathbb{L}^{\infty, p}_{2, r}(s)}ds+ 2C \int_{0}^{\tilde{T}}\big(C_1+C_2\|u^0(s)\|_H \big) \|\rho^n_2(s)-\rho^0_2(s)\|_{Y_2}ds.
		\end{align}
		Considering the estimates \eqref{cliptypeF}-\eqref{cdiffg}, we deduce
		\begin{align}\label{caim1}
			&\|u^n-u^0\|_{\mathbb{L}^{\infty, p}_{2, r}(\tilde{T})}\nonumber\\
			&\leq  C\|u^n_0-u_0\|_{H}+ C (\tilde{T})^{\frac{p-r}{p}} (2M)^{\alpha -1} \|	u^n-	u^0 \|_{\mathbb{L}^{\infty, p}_{2, r}(\tilde{T})} +C\beta \int_{0}^{\tilde{T}} \|u^n-u^0\|_{\mathbb{L}^{\infty, p}_{2, r}(s)}ds\nonumber\\
			&\quad+2C \int_{0}^{\tilde{T}}\|B\|_{\mathscr{L}(H,\mathcal{L}_2 (Y_1,H))} \|\rho^n_1(s)\|_{Y_1}\|u^n-u^0\|_{\mathbb{L}^{\infty, p}_{2, r}(s)}ds\nonumber\\
			&\quad+ 2C \int_{0}^{\tilde{T}}[\|B\|_{\mathscr{L}(H,\mathcal{L}_2 (Y_1,H))}\|u^0(s)\|_H \|\rho^n_1(s)-\rho^0_1(s)\|_{Y_1}ds+ 2C \int_{0}^{\tilde{T}}L_G \|\rho^n_2(s)\|_{Y_2}\|u^n-u^0\|_{\mathbb{L}^{\infty, p}_{2, r}(s)}ds\nonumber\\
			&\quad+ 2C \int_{0}^{\tilde{T}}\big(C_1+C_2\|u^0(s)\|_H \big) \|\rho^n_2(s)-\rho^0_2(s)\|_{Y_2}ds.
		\end{align}
		Choose $T_0$ such that $C (T_0)^{\frac{p-r}{p}} (2M)^{\alpha -1}\leq \frac{1}{2}$. Since $u^n_0=u_0$, we have
		\begin{align}\label{caim}
			&\frac{1}{2}\|u^n-u^0\|_{\mathbb{L}^{\infty, p}_{2, r}(T_0)}\nonumber\\
			&\leq C\beta \int_{0}^{T_0} \|u^n-u^0\|_{\mathbb{L}^{\infty, p}_{2, r}(s)}ds+2C \int_{0}^{T_0}\|B\|_{\mathscr{L}(H,\mathcal{L}_2 (Y_1,H))} \big(\sup_{n}\|\rho^n_1(s)\|_{Y_1}\big)\|u^n-u^0\|_{\mathbb{L}^{\infty, p}_{2, r}(s)}ds\nonumber\\
			&\quad+ 2C \int_{0}^{T_0}[\|B\|_{\mathscr{L}(H,\mathcal{L}_2 (Y_1,H))}\|u^0(s)\|_H \|\rho^n_1(s)-\rho^0_1(s)\|_{Y_1}ds\nonumber\\
			&\quad+ 2C \int_{0}^{T_0}L_G \big(\sup_{n}\|\rho^n_2(s)\|_{Y_2}\big)\|u^n-u^0\|_{\mathbb{L}^{\infty, p}_{2, r}(s)}ds+ 2C \int_{0}^{T_0}\big(C_1+C_2\|u^0(s)\|_H \big) \|\rho^n_2(s)-\rho^0_2(s)\|_{Y_2}ds.
		\end{align}
		Since 
		\begin{align*}
			\int_{0}^{T_0}\Big\{ C\beta+\|B\|_{\mathscr{L}(H,\mathcal{L}_2 (Y_1,H))} \big(\sup_{n}\|\rho^n_1(s)\|_{Y_1}\big)+L_G \big(\sup_{n}\|\rho^n_2(s)\|_{Y_2}\big)\Big\}ds <C_{N},
		\end{align*}
		where $C_{N}$ is independent of $n$,
		using Gr\"onwall's inequality in \eqref{caim}, we assert that
		\begin{align}\label{clim}
			\|u^n-u^0\|_{\mathbb{L}^{\infty, p}_{2, r}(T_0)}&\leq \Big[4C \int_{0}^{T_0}[\|B\|_{\mathscr{L}(H,\mathcal{L}_2 (Y_1,H))}\|u^0(s)\|_H \|\rho^n_1(s)-\rho^0_1(s)\|_{Y_1}ds\nonumber\\
			& \quad\quad+4C \int_{0}^{T_0}\big(C_1+C_2\|u^0(s)\|_H \big) \|\rho^n_2(s)-\rho^0_2(s)\|_{Y_2}ds\Big]e^{C_N}.
		\end{align}
		Taking limit supremum ${n \to\infty}$ in \eqref{clim} and using Fatou's lemma, we infer
		\begin{align}
			\limsup_{n \to \infty}\|u^n-u^0\|_{\mathbb{L}^{\infty, p}_{2, r}(T_0)}&\leq \Big[4C \int_{0}^{T_0}[\|B\|_{\mathscr{L}(H,\mathcal{L}_2 (Y_1,H))}\|u^0(s)\|_H \limsup_{n \to \infty}\|\rho^n_1(s)-\rho^0_1(s)\|_{Y_1}ds\nonumber\\
			& \quad\quad+4C \int_{0}^{T_0}\big(C_1+C_2\|u^0(s)\|_H \big) \limsup_{n \to \infty}\|\rho^n_2(s)-\rho^0_2(s)\|_{Y_2}ds\Big]e^{C_N}.
		\end{align}
		Using the weak convergence of $\rho^n$ to $\rho$ in $\mathbb{D}_N$, we conclude
		\begin{align}\label{compactnessext}
			\limsup_{n \to \infty}\|u^n-u^0\|_{\mathbb{L}^{\infty, p}_{2, r}(T_0)}=0.
		\end{align}
		Repeating similar arguments we employed to get \eqref{muconv} from \eqref{mulip3}, we obtain the following consequence of \eqref{compactnessext} as
		\begin{align*}
			\limsup_{n \to \infty}\|u^n-u^0\|_{\mathbb{L}^{\infty, p}_{2, r}(T)}=0.
		\end{align*}
		This completes the proof of \Cref{compactness}.	
	\end{proof} 
	\subsection{Condition 2 (Weak convergence criterion for LDP):}
	Let $(p,r)$ be an admissible pair and assume $q\geq 2, r=\alpha+1$ and $1< \alpha< \frac{4}{d}+1$. Then for any $\varepsilon \in (0,1),\, u_0 \in H$, \Cref{wellsce} and Yamada-Watamabe theorem (\cite{rockner2008yamada}) allow us to define the Borel measurable map (see \cite{MR1785237} and  \cite[ Theorem 2.2]{MR1275582}),
	\begin{align*}
		\Psi^{\varepsilon}: H \times C([0,T]; Y_1)\times C([0,T]; Y_2) \to L^\infty(0,T; H) \cap L^p(0, T; L^r(\mathbb{R}^d))
	\end{align*}
	such that $\Psi^\varepsilon(u_0, \sqrt{\varepsilon}\mathcal{W}_1+\int_{0}^{\cdot}\rho_1(s)ds,\sqrt{\varepsilon}\mathcal{W}_2+\int_{0}^{\cdot}\rho_2(s)ds)=u^{\rho} $, where $ u^{\rho}$ is the unique solution of \eqref{Sce} satisfying the mild form (\ref{mildformsce}) on $[0,T]$. We denote $\Psi^\varepsilon(\rho):=\Psi^\varepsilon(u_0, \sqrt{\varepsilon}\mathcal{W}_1+\int_{0}^{\cdot}\rho_1(s)ds,\sqrt{\varepsilon}\mathcal{W}_2+\int_{0}^{\cdot}\rho_2(s)ds)$.
	\begin{thm}\label{weakconv}
		For any given $\varepsilon, \delta>0$ and $N\in \mathbb{N}$, let $\rho=(\rho_1,\rho_2), \rho^\varepsilon=(\rho_1^\varepsilon,\rho_2^\varepsilon) \in \mathbb{S}_N$ be such that $\rho^\varepsilon \to \rho$ in distribution in $\mathbb{S}_N$ as $\varepsilon \to 0$. Then
		\begin{align*}
			\lim_{\varepsilon \to 0}  \mathbb{P}	\Big(\|\Psi^\varepsilon(\rho^\varepsilon)-	\Psi^0(\rho)\|_{\mathbb{L}^{\infty, p}_{2, r}(T)} \geq \delta\Big)=0,
		\end{align*}
		where $\Psi^0(\rho):=u^\rho$ is the solution of the skeleton equation \eqref{Skeleton} with the control $\rho$.
	\end{thm}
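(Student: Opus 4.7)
The plan is to prove convergence in probability of $u^{\varepsilon,\rho^\varepsilon}:=\Psi^\varepsilon(\rho^\varepsilon)$ to $u^\rho:=\Psi^0(\rho)$ in $\mathbb{L}^{\infty,p}_{2,r}(T)$ by converting the distributional convergence of $\rho^\varepsilon$ to a pathwise statement via a Skorokhod representation and then running an estimate on the mild-form difference analogous to the one used in \Cref{compactness}, with the extra stochastic and Stratonovich-correction terms handled by Burkholder--Davis--Gundy and the uniform bounds already in hand. Concretely, I would invoke the Jakubowski--Skorokhod theorem on the Polish space $\mathbb{D}_N$ (with its weak $L^2$ topology) together with the driving Wiener process $\mathcal{W}=(\mathcal{W}_1,\mathcal{W}_2)$ to obtain, on a new probability space, random elements $\tilde\rho^\varepsilon\to\tilde\rho$ almost surely in $\mathbb{D}_N$ and Wiener processes $\tilde{\mathcal W}^\varepsilon$ having the joint law of $(\rho^\varepsilon,\mathcal{W})$; then, by Yamada--Watanabe and the uniqueness from \Cref{wellsce}, the corresponding mild solutions $\tilde u^{\varepsilon,\rho^\varepsilon}$ on the new space have the same law as $u^{\varepsilon,\rho^\varepsilon}$, so it suffices to show $\|\tilde u^{\varepsilon,\rho^\varepsilon}-\tilde u^\rho\|_{\mathbb{L}^{\infty,p}_{2,r}(T)}\to0$ in probability.

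Subtracting the two mild forms yields six terms: the nonlinear difference $\mathcal{N}(\tilde u^{\varepsilon,\rho^\varepsilon})-\mathcal{N}(\tilde u^\rho)$, the damping $\beta(\tilde u^{\varepsilon,\rho^\varepsilon}-\tilde u^\rho)$, the Stratonovich correction $\varepsilon\, b(\tilde u^{\varepsilon,\rho^\varepsilon})$, the two control increments and the two $\sqrt{\varepsilon}$-stochastic integrals. Each control increment I split as in \Cref{compactness}:
\[
B(\tilde u^{\varepsilon,\rho^\varepsilon})\tilde\rho_1^\varepsilon-B(\tilde u^\rho)\tilde\rho_1 = \bigl[B(\tilde u^{\varepsilon,\rho^\varepsilon})-B(\tilde u^\rho)\bigr]\tilde\rho_1^\varepsilon+B(\tilde u^\rho)\bigl[\tilde\rho_1^\varepsilon-\tilde\rho_1\bigr],
\]
the first factor being absorbed by Grönwall thanks to linearity/Lipschitz and the a.s.\ uniform $L^2_tY_1$ bound on $\tilde\rho_1^\varepsilon$ (and similarly for $G$), while the second factor converges to $0$ almost surely by the weak convergence of $\tilde\rho^\varepsilon$ together with the deterministic $L^\infty(0,T;H)$ regularity of $B(\tilde u^\rho),G(\tilde u^\rho)$ guaranteed by \Cref{wellskl}. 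The $\sqrt{\varepsilon}$ integrals and the $\varepsilon b$-term vanish in $L^q(\Omega;\mathbb{L}^{\infty,p}_{2,r}(T))$ by the same calculations that gave \eqref{kbx1}--\eqref{kgx2} and \eqref{bx1}--\eqref{bx2}, since \Cref{lemuklinfbound} and Step~(A) of \Cref{wellsce} give a bound on $\tilde u^{\varepsilon,\rho^\varepsilon}$ in $L^q(\Omega;\mathbb{L}^{\infty,p}_{2,r}(T))$ that is uniform in $\varepsilon\in(0,1)$ and in $\rho^\varepsilon\in\mathbb{S}_N$.

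For the nonlinearity I would localize by the stopping time $\sigma_M^\varepsilon:=\inf\{t:\|\tilde u^{\varepsilon,\rho^\varepsilon}\|_{\mathbb{L}^{\infty,p}_{2,r}(t)}>M\}\wedge T$; the uniform moment estimates just mentioned, via Chebyshev, give $\sup_\varepsilon\mathbb{P}(\sigma_M^\varepsilon<T)\to0$ as $M\to\infty$, so it is enough to control $\|\tilde u^{\varepsilon,\rho^\varepsilon}-\tilde u^\rho\|_{\mathbb{L}^{\infty,p}_{2,r}(t\wedge\sigma_M^\varepsilon)}$. On this event, the Strichartz estimate together with \eqref{lipF} yields
\[
\Bigl\|\int_0^{\cdot}S(\cdot-s)\bigl[\mathcal{N}(\tilde u^{\varepsilon,\rho^\varepsilon})-\mathcal{N}(\tilde u^\rho)\bigr]\,ds\Bigr\|_{\mathbb{L}^{\infty,p}_{2,r}(\tilde T)}\le C\,\tilde T^{\,(p-r)/p}(2M)^{\alpha-1}\,\|\tilde u^{\varepsilon,\rho^\varepsilon}-\tilde u^\rho\|_{\mathbb{L}^{\infty,p}_{2,r}(\tilde T)},
\]
so choosing $\tilde T=T_1$ with $C T_1^{(p-r)/p}(2M)^{\alpha-1}\le 1/2$ lets me absorb this term; taking expectations, using the previously listed bounds, and applying Grönwall on $[0,T_1]$, I obtain that $\mathbb{E}\|\tilde u^{\varepsilon,\rho^\varepsilon}-\tilde u^\rho\|_{\mathbb{L}^{\infty,p}_{2,r}(T_1)}^{q}\to0$; iterating on $[T_1,2T_1],\dots$, as in the gluing step of \Cref{wellskl}, reaches $T$. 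Letting $M\to\infty$ and invoking the vanishing of $\mathbb{P}(\sigma_M^\varepsilon<T)$ concludes the desired convergence in probability.

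The main obstacle is that the short-time contraction factor $\tilde T^{(p-r)/p}(2M)^{\alpha-1}$ needed for the nonlinearity forces a \emph{pathwise} (not only moment) bound on $\|\tilde u^{\varepsilon,\rho^\varepsilon}\|_{\mathbb{L}^{\infty,p}_{2,r}(t)}$ that is uniform in $\varepsilon$ and in $\rho^\varepsilon\in\mathbb{S}_N$; the absence of the conservation law (because of $\beta$ and the Itô noise) makes this delicate, and the localization by $\sigma_M^\varepsilon$ combined with the uniform moment estimates proved in Step~(A) of \Cref{wellsce} is what rescues the argument. A secondary subtlety is passing the a.s.\ weak convergence $\tilde\rho^\varepsilon\rightharpoonup\tilde\rho$ in $L^2(0,T;Y_1\times Y_2)$ through the integrals $\int_0^\cdot S(\cdot-s)B(\tilde u^\rho)[\tilde\rho_1^\varepsilon-\tilde\rho_1]\,ds$, which I would achieve by observing that the map $\eta\mapsto\int_0^\cdot S(\cdot-s)B(\tilde u^\rho(s))\eta(s)\,ds$ is linear and, by \Cref{Strichartz}, continuous from $L^2(0,T;Y_1)$ (weak) to $C([0,T];H)$ (strong), and analogously for the $G$-term via the growth bound \eqref{LgG}.
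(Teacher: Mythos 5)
Your proposal tracks the paper's actual proof very closely in its main architecture: both decompose the difference of the two mild forms, localize by a stopping time triggered by the $L^p(0,T;L^r)$-norm so that the nonlinear contraction factor $CT^{(p-r)/p}(2M)^{\alpha-1}$ becomes small, absorb the damping and control increments by Gr\"onwall on short subintervals and glue, control the $\sqrt{\varepsilon}$-stochastic integrals via the Burkholder--Davis--Gundy estimates \eqref{kbx1}--\eqref{kgx2} together with the uniform-in-$\varepsilon$ moment bound of \Cref{lemsupesplinf}, and finish with Markov's inequality and the tightness of the $L^p$-norm (the paper's \Cref{lemmn}, your $\sup_\varepsilon\mathbb P(\sigma_M^\varepsilon<T)\to0$).

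Where you deviate from the written proof is in how the random control convergence is handled, and both deviations are in your favor. First, you explicitly invoke a Skorokhod--Jakubowski representation to upgrade the convergence-in-distribution of $\rho^\varepsilon$ to almost-sure convergence on an auxiliary space before running the pathwise estimate; the paper subtracts the two mild solutions and takes expectations as if $\rho^\varepsilon$ and $\rho$ were already realized on a common space, so your step makes rigorous something the paper leaves implicit. Second, for the terms $\int_0^\cdot S(\cdot-s)B(u^\rho(s))(\rho_1^\varepsilon-\rho_1)(s)\,ds$ (and the analogous $G$-term), the paper applies Fatou's lemma and concludes from weak $L^2(0,T;Y_1)$-convergence that $\limsup_\varepsilon\int_0^T\|u^\rho(s)\|_H\|\rho_1^\varepsilon(s)-\rho_1(s)\|_{Y_1}\,ds=0$; this step is not justified, since weak convergence does not force $\|\rho_1^\varepsilon(\cdot)-\rho_1(\cdot)\|_{Y_1}\to0$ in $L^1(0,T)$ or pointwise. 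Your alternative — noting that the linear map $\eta\mapsto\int_0^\cdot S(\cdot-s)B(u^\rho(s))\eta(s)\,ds$ sends weak $L^2(0,T;Y_1)$-convergence to strong $C([0,T];H)$-convergence — is the correct mechanism, though you should state the reason more fully: linearity and the Cauchy--Schwarz/Strichartz bound alone do not give weak-to-strong continuity; one needs the compactness of $B(u^\rho(s))\in\mathcal L_2(Y_1,H)$ (Hilbert--Schmidt, hence compact) together with an Arzel\`a--Ascoli argument (equicontinuity in $t$ from Cauchy--Schwarz, pointwise precompactness in $H$ from the HS structure), which is the standard device in the weak-convergence LDP literature. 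With that caveat made precise, your route closes the two gaps that the paper's write-up leaves open while otherwise following the same proof.

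One small inaccuracy to flag: you claim $\tilde u^{\varepsilon,\rho^\varepsilon}$ is bounded in $L^q(\Omega;\mathbb L^{\infty,p}_{2,r}(T))$ uniformly in $\varepsilon$ and $\rho^\varepsilon\in\mathbb S_N$; the paper proves this for the $L^\infty(0,T;H)$-component (\Cref{lemsupesplinf}) but for the $L^p(0,T;L^r)$-component it establishes only a tightness estimate (\Cref{lemmn}), not a uniform moment bound. The moment bound you want does follow by combining Step~(A) of \Cref{wellsce} with Fatou over the stopping times $\tau_k$, but you should say this rather than cite it as already proved, since it is not a displayed conclusion of the referenced lemma.
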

	\begin{proof}
		Since $u^{\rho^\varepsilon}$ is the unique solution of (\ref{Sce}) on $[0,T]$, it satisfies
		\begin{align}\label{mildrhoepsilon}
			u^{\rho^\varepsilon}(t)&=S(t)u_0	- \int_{0}^{t} S(t-s) \big[i \mathcal{N}({u^{\rho^\varepsilon}}(s))+ \beta {u^{\rho^\varepsilon}}(s) + \varepsilon b({u^{\rho^\varepsilon}}(t))\big]ds\\ \nonumber
			&\quad - i\int_{0}^{t} S(t-s)\big[B{u^{\rho^\varepsilon}}(s)\rho^\varepsilon_1(s)+G({u^{\rho^\varepsilon}}(s))\rho^\varepsilon_2(s) \big]ds\\ \nonumber
			&\quad- i \sqrt{\varepsilon}\int_{0}^{t} S(t-s)\big[B{u^{\rho^\varepsilon}}(s)d\mathcal{W}_1(s)+G({u^{\rho^\varepsilon}}(s))d\mathcal{W}_2(s) \big], 
		\end{align}
		and $u^\rho$ is the unique solution of the skeleton equation \eqref{Skeleton} implies, 
		\begin{align}\label{mildrho}
			u^\rho(t)&=S(t)u_0	- \int_{0}^{t} S(t-s) \big[i \mathcal{N}(u^\rho(s))+ \beta u^\rho(s)\big]ds - i\int_{0}^{t} S(t-s)\big[B(u^\rho(s))\rho_1(s)+G(u^\rho(s))\rho_2(s) \big]ds. 
		\end{align}
		Our goal is to prove, for any $\delta>0$,
		\begin{align}\label{wcaim}
			\lim_{\varepsilon \to 0}  \mathbb{P}	\Big(\|{u^{\rho^\varepsilon}}-u^\rho\|_{\mathbb{L}^{\infty, p}_{2, r}(T)} \geq\delta\Big)=0.
		\end{align}
		Here, we prove the following lemma which will be used later to prove \Cref{weakconv}.
		\begin{lem}\label{lemsupesplinf}
			For $\varepsilon\in(0,1)$, the solutions $(u^{\rho^\varepsilon})_{\varepsilon\in(0,1)}$ of \eqref{Sce} with control $\rho^\varepsilon$ satisfy the following estimate:
			\begin{align*}
				\sup_{\varepsilon \in (0,1)} \mathbb{E} \big\{\|u^{\rho^\varepsilon}\|^q_{L^\infty(0,T; H)}\big\}<C_{\| u_0\|_{H},N, \delta,q,T}.
			\end{align*}
			\begin{proof}
				For a fixed $\varepsilon$, $T_\infty=T$ and \Cref{lemuklinfbound} together imply
				\begin{align*}
					\mathbb{E} \big\{\|u^{\rho^\varepsilon}\|^q_{L^\infty(0,T; H)}\big\}<C_{\| u_0\|_{H},N, \delta,q,T},
				\end{align*}
				where the bound is independent of $\varepsilon$.
				Taking supremum over $\varepsilon \in (0,1)$, we get
				\begin{align}\label{supepslinf}
					\sup_{\varepsilon \in (0,1)} \mathbb{E} \big\{\|u^{\rho^\varepsilon}\|^q_{L^\infty(0,T; H)}\big\}<C_{\| u_0\|_{H},N, \delta,q,T}.
				\end{align}
				This completes the proof of \Cref{lemsupesplinf}.
			\end{proof}
		\end{lem}
		\begin{lem}\label{lemmn}
			For any $\eta \in (0,1)$, there exists a constant $M_\eta >0$ such that,
			\begin{align*}
				\sup_{\varepsilon\in (0,1)} \mathbb{P} \big\{\|{u^{\rho^\varepsilon}}\|_{L^p(0,T;L^r(\mathbb{R}^d))}\geq M_\eta\big\} \leq \eta.
			\end{align*}
		\end{lem}
		\begin{proof}
			Fix $\varepsilon \in (0,1)$. Using \eqref{kbx2} and \eqref{kgx2} together with \Cref{lemsupesplinf}, we have the following estimates for the noise terms, 
			\begin{align}\label{wckbb}
				\mathbb{E} \Big\{\|K_B(\cdot)\|^q_{L^p(0, T; L^r(\mathbb{R}^d))}\Big\}&\leq (T)^{\frac{q}{p}+\frac{q}{2}}\big( C \|B\|^q_{\mathscr{L}(H,\mathcal{L}_2 (Y_1,H))}	\mathbb{E} \big\{\|u^{\rho^\varepsilon}\|^q_{L^\infty(0,T; H)}\big\}\big)\leq C_{\| u_0\|_{H},N, \delta,q,T},
			\end{align}
			and 
			\begin{align}\label{wckgb}
				\mathbb{E} \Big\{\|K_G(\cdot)\|^q_{L^p(0, T; L^r(\mathbb{R}^d))}\Big\}&\leq  C (T)^{\frac{q}{p}+\frac{q}{2}}\big( 1 +\mathbb{E} \big\{\|u^{\rho^\varepsilon}\|^q_{L^\infty(0,T; H)}\big\}\big)\leq C_{\| u_0\|_{H},N, \delta,q,T}.
			\end{align}
			Following the similar procedure in the derivation of \eqref{1uniformbound}, we obtain
			\begin{align}\label{wcub1}
				&\|	u^{\rho^\varepsilon}\|_{L^p(0, T; L^r(\mathbb{R}^d))}\nonumber\\
				&\leq  C\|u_0\|_{H} +C_k  T^{\frac{(p-r)}{p}} \|u^{\rho^\varepsilon}\|_{L^p(0, T; L^r(\mathbb{R}^d))}^{\alpha}+ (\beta+ (C\varepsilon) )T \|  u^{\rho^\varepsilon}\|_{L^\infty(0,T; H)}\nonumber\\
				&\quad+C\|  u^{\rho^\varepsilon}\|_{L^\infty(0,T; H)}\Bigl\{\int_{0}^{T} \big\{\|B\|_{\mathscr{L}(H,\mathcal{L}_2 (Y_1,H))} \|\rho^\varepsilon_1(s)\|_{Y_1}+ C_2 \|\rho^\varepsilon_2(s)\|_{Y_2} \big\}ds\Bigl\}\nonumber \\ 
				&\quad+ \Big(C C_1 \int_{0}^{T}\|\rho^\varepsilon_2(s)\|_{Y_2}ds\Big) +\sqrt{\varepsilon}\|K_B(\cdot)\|_{L^p(0, T; L^r(\mathbb{R}^d))}+\sqrt{\varepsilon}\|K_G(\cdot)\|_{L^p(0, T; L^r(\mathbb{R}^d))}.
			\end{align}
			For each $\varepsilon$, we have $\rho^{\varepsilon}\in \mathbb{S}^N$ and
			\begin{align*}
				\int_{0}^{T} \big\{\|B\|_{\mathscr{L}(H,\mathcal{L}_2 (Y_1,H))} \|\rho^\varepsilon_1(s)\|_{Y_1}+ C_2 \|\rho^\varepsilon_2(s)\|_{Y_2} \big\}ds < C_{\| u_0\|_{H}, N,T}.
			\end{align*}
			This impllies,
			\begin{align}\label{epsindb}
				\sup_{\varepsilon\in (0,1)} \int_{0}^{T} \big\{\|B\|_{\mathscr{L}(H,\mathcal{L}_2 (Y_1,H))} \|\rho^\varepsilon_1(s)\|_{Y_1}+ C_2 \|\rho^\varepsilon_2(s)\|_{Y_2} \big\}ds < C_{\| u_0\|_{H}, N,T}.
			\end{align}
			Using the similar argument as in \eqref{epsindb}, we get
			\begin{align}\label{epsindrho2}
				\sup_{\varepsilon\in (0,1)} \int_{0}^{T}\|\rho^\varepsilon_2(s)\|_{Y_2}ds < C_{\| u_0\|_{H}, N,T}.
			\end{align}
			Using the estimates \eqref{epsindb} and \eqref{epsindrho2}, from \eqref{wcub1}, we conclude
			\begin{align}\label{wcub}
				\|	u^{\rho^\varepsilon}\|_{L^p(0, T; L^r(\mathbb{R}^d))}
				&\leq   C\|u_0\|_{H}+C_k  T^{\frac{(p-r)}{p}} \|u^{\rho^\varepsilon}\|_{L^p(0, T; L^r(\mathbb{R}^d))}^{\alpha}+ C\big(1+\beta T+ \varepsilon T\big)\|  u^{\rho^\varepsilon}\|_{L^\infty(0,T; H)}\nonumber\\
				&\quad+\sqrt{\varepsilon}\|K_B(\cdot)\|_{L^p(0, T; L^r(\mathbb{R}^d))}+\sqrt{\varepsilon}\|K_G(\cdot)\|_{L^p(0, T; L^r(\mathbb{R}^d))}.
			\end{align}
			We consider,
			\begin{align}\label{wcmtw}
				M_\varepsilon^{T}(\omega)&:= C\|u_0\|_{H}+ C(1+\beta T+ \varepsilon T)\|  u^{\rho^\varepsilon}\|_{L^\infty(0,T; H)}\nonumber\\
				&\quad+\sqrt{\varepsilon}\|K_B(\cdot)\|_{L^p(0, T; L^r(\mathbb{R}^d))}+\sqrt{\varepsilon}\|K_G(\cdot)\|_{L^p(0, T; L^r(\mathbb{R}^d))}.
			\end{align}
			Taking expectation on the both sides of \eqref{wcmtw} and using $\varepsilon<1$, we get
			\begin{align}\label{epsindmt}
				\mathbb{E}\big\{	M_\varepsilon^{T}(\omega)\big\}&\leq C\|u_0\|_{H}+ C(1+\beta T+  T)\mathbb{E}\big\{\|  u^{\rho^\varepsilon}\|_{L^\infty(0,T; H)}\big\}\nonumber\\
				&\quad+\mathbb{E}\big\{\|K_B(\cdot)\|_{L^p(0, T; L^r(\mathbb{R}^d))}\big\}+\mathbb{E}\big\{\|K_G(\cdot)\|_{L^p(0, T; L^r(\mathbb{R}^d))}\big\}\nonumber\\
				& \leq C_{\| u_0\|_{H}, N,T}.
			\end{align}
			Thus, \eqref{wcub} and \eqref{wcmtw} together imply,
			\begin{align}\label{wcubmt}
				\|	u^{\rho^\varepsilon}\|_{L^p(0, T; L^r(\mathbb{R}^d))}\leq  	M_\varepsilon^{T}+C_k  T^{\frac{(p-r)}{p}} \|u^{\rho^\varepsilon}\|_{L^p(0, T; L^r(\mathbb{R}^d))}^{\alpha}.
			\end{align}
			For any $\eta>0$, there exists $\Theta_\eta>0$, such that
			\begin{align}\label{claim}
				\inf_{\varepsilon \in (0,1)} \mathbb{P} \big(M^T_\varepsilon\leq \Theta_\eta\big)\geq 1-\eta.
			\end{align}
			To establish \eqref{claim}, we use Markov's inequality to get,
			\begin{align*}
				\mathbb{P} \big(M^T_\varepsilon> \Theta_\eta\big)\leq \frac{1}{\Theta_\eta} \mathbb{E}\big\{M^T_\varepsilon\big\} \leq \frac{1}{\Theta_\eta}  C_{\| u_0\|_{H}, N,T}.
			\end{align*}
			This implies, 
			\begin{align*}
				\mathbb{P} \big(M^T_\varepsilon< \Theta_\eta\big) =1-\mathbb{P} \big(M^T_\varepsilon> \Theta_\eta\big)\geq 1- \frac{1}{\Theta_\eta}  C_{\| u_0\|_{H}, N,T}.
			\end{align*}
			Therefore, we can choose some $\Theta_\eta \geq \frac{1}{\eta} C_{\| u_0\|_{H}, N,T}$, such that
			\begin{align*}
				\inf_{\varepsilon \in (0,1)} \mathbb{P} \big(M^T_\varepsilon\leq \Theta_\eta\big)\geq 1-\eta.
			\end{align*}
			By repeating the steps in the derivation of \eqref{ublp}, we conclude
			\begin{align}\label{wcublp}
				\|	u^{\rho^\varepsilon}\|_{L^p(0,T; L^r(\mathbb{R}^d))} &\leq  2M^T_\varepsilon + C_{d,q,\alpha,T} (M_\varepsilon^T)^{1+ \frac{p(\alpha+1)}{(p-r)}}.
			\end{align}
			For $\omega \in \{M^T_\varepsilon\leq \Theta_\eta\},$ we have
			\begin{align*}
				\|	u^{\rho^\varepsilon}\|_{L^p(0,T; L^r(\mathbb{R}^d))} &\leq  2\Theta_\eta + C_{d,q,\alpha,T} (\Theta_\eta)^{1+ \frac{p(\alpha+1)}{(p-r)}}.
			\end{align*}
			Now, we choose 
			\begin{align}\label{wcmn}
				M_\eta=2\Theta_\eta + C_{d,q,\alpha,T} (\Theta_\eta)^{1+ \frac{p(\alpha+1)}{(p-r)}}. 
			\end{align}
			This implies,
			\begin{align*}
				\inf_{\varepsilon\in (0,1)} \mathbb{P} \Big\{\|{u^{\rho^\varepsilon}}\|_{L^p(0,T;L^r(\mathbb{R}^d))} \leq M_\eta\Big\} \geq \inf_{\varepsilon\in (0,1)}\mathbb{P} \big\{M^T_\varepsilon\leq \Theta_\eta\big\}\geq 1-\eta.
			\end{align*}
			Therefore, we have the existence of a constant $M_\eta$ such that 
			\begin{align*}
				\sup_{\varepsilon\in (0,1)} \mathbb{P} \Big\{\|{u^{\rho^\varepsilon}}\|_{L^p(0,T;L^r(\mathbb{R}^d))}\geq M_\eta\Big\} \leq \eta.
			\end{align*}
			This completes the proof of \Cref{lemmn}.
		\end{proof}
		From above two equations \eqref{mildrhoepsilon} and \eqref{mildrho}, for $t\in[0,T]$, we have
		\begin{align}\label{diffepsilon}
			{u^{\rho^\varepsilon}}(t)-u^\rho(t)&=-i\int_{0}^{t} S(t-s) \big[ \mathcal{N}({u^{\rho^\varepsilon}}(s))- \mathcal{N}(u^\rho(s))\big]ds-\beta \int_{0}^{t} S(t-s) \big[ {u^{\rho^\varepsilon}}(s)-u^\rho(s)\big]ds \nonumber\\ 
			& \quad- i\int_{0}^{t} S(t-s)\big[B({u^{\rho^\varepsilon}}(s))\rho^\varepsilon_1(s)-B(u^\rho(s))\rho_1(s) \big]ds\nonumber\\
			&\quad- i\int_{0}^{t} S(t-s)\big[G({u^{\rho^\varepsilon}}(s))\rho^\varepsilon_2(s)-G(u^\rho(s))\rho_2(s) \big]ds\nonumber\\
			& \quad-\varepsilon\int_{0}^{t} S(t-s) b({u^{\rho^\varepsilon}}(s))ds- i \sqrt{\varepsilon}\int_{0}^{t} S(t-s)\big[B{u^{\rho^\varepsilon}}(s)d\mathcal{W}_1(s)+G({u^{\rho^\varepsilon}}(s))d\mathcal{W}_2(s) \big].
		\end{align}
		For given $\eta \in (0,1)$, we consider $M_\eta$ as the constant in \eqref{wcmn} and define the following sequence of stopping times,
		\begin{align*}
			\tau_{M_\eta}^\varepsilon := \inf \Big\{t\geq0, \|u^{\rho^\varepsilon}\|_{L^p(0,t;L^r(\mathbb{R}^d))} \geq M_\eta\Big\}\wedge T.
		\end{align*}
		Now, we recall the following  estimates:
		From \eqref{cliptypeF}, we have the following estimate for the term involving nonlinearity,
		\begin{align}\label{epsliptypeF}
			&\bigg\|-i\int_{0}^{\cdot} S(\cdot-s) \big[ \mathcal{N}({u^{\rho^\varepsilon}}(s))- \mathcal{N}(u^\rho(s))\big]ds\bigg\|_{\mathbb{L}^{\infty, p}_{2, r}(T \wedge{\tau_{M_\eta}^\varepsilon})}\nonumber\\
			& \leq C (T)^{\frac{p-r}{p}}	\big(\|{u^{\rho^\varepsilon}}\|_{L^{p}(0, T \wedge{\tau_{M_\eta}^\varepsilon}; L^{r}(\mathbb{R}^d))}+ \|u^\rho\|_{L^{p}(0, T \wedge{\tau_{M_\eta}^\varepsilon}; L^{r}(\mathbb{R}^d))}\big)^{\alpha-1} \| {u^{\rho^\varepsilon}} - u^\rho \|_{\mathbb{L}^{\infty, p}_{2, r}(T \wedge{\tau_{M_\eta}^\varepsilon})}.
		\end{align}
		Estimate \eqref{cliptypebeta} implies,
		\begin{align}\label{epsliptypebeta}
			&\bigg\|-\beta \int_{0}^{\cdot} S(\cdot-s) \big[ {u^{\rho^\varepsilon}}(s)-u^\rho(s)\big]ds\bigg\|_{\mathbb{L}^{\infty, p}_{2, r}(T \wedge{\tau_{M_\eta}^\varepsilon})}
			\leq C\beta \int_{0}^{T \wedge{\tau_{M_\eta}^\varepsilon}} \|{u^{\rho^\varepsilon}} - u^\rho\|_{\mathbb{L}^{\infty, p}_{2, r}(s)}ds.
		\end{align}
		Using \eqref{cdiffB}, we estimate
		\begin{align}\label{epsdiffB}
			&\bigg\|- i\int_{0}^{\cdot} S(\cdot-s)\big[B({u^{\rho^\varepsilon}}(s))\rho^\varepsilon_1(s)-B(u^\rho(s))\rho_1(s) \big]ds\bigg\|_{\mathbb{L}^{\infty, p}_{2, r}(T \wedge{\tau_{M_\eta}^\varepsilon})}\nonumber\\
			&\leq \int_{0}^{T} \|B\|_{\mathscr{L}(H,\mathcal{L}_2 (Y_1,H))} \, \|\rho_1^\varepsilon(s)\|_{Y_1} \,\| {u^{\rho^\varepsilon}}(s) - u^\rho (s)\|_{H}ds\nonumber\\
			&+\int_{0}^{T} \|B\|_{\mathscr{L}(H,\mathcal{L}_2 (Y_1,H))} \,\|u^\rho(s)\|_{H} \, \|\rho_1^\varepsilon(s)-\rho_1(s)\|_{Y_1}ds.
		\end{align}
		From the estimate \eqref{cdiffg}, we deduce
		\begin{align}\label{epsdiffg}
			&\bigg\|- i\int_{0}^{\cdot} S(\cdot-s)\big[G({u^{\rho^\varepsilon}}(s))\rho^\varepsilon_2(s)-G(u^\rho(s))\rho_2(s) \big]ds\bigg\|_{\mathbb{L}^{\infty, p}_{2, r}(T \wedge{\tau_{M_\eta}^\varepsilon})}\nonumber\\
			&\leq L_G \int_{0}^{T} \|\rho_2^\varepsilon(s)\|_{Y_2} \,\| {u^{\rho^\varepsilon}}(s) - u^\rho (s)\|_{H}ds+\int_{0}^{T} \big\{C_1+C_2 \|u^\rho(s)\|_{H}\big\}\, \|\rho_2^\varepsilon(s)-\rho_2(s)\|_{Y_2}ds.
		\end{align}
		We use \eqref{bx1} and \eqref{bx2} to estimate,
		\begin{align}\label{epsbdiff}
			&\bigg\|-\varepsilon\int_{0}^{\cdot} S(\cdot-s) b({u^{\rho^\varepsilon}}(s))ds\bigg\|_{\mathbb{L}^{\infty, p}_{2, r}(T \wedge{\tau_{M_\eta}^\varepsilon})}\leq \frac{\varepsilon CT}{2} \sum_{m=1}^{\infty} \|B_m^2\|_{\mathscr{L}(H)} \, \|{u^{\rho^\varepsilon}}\|_{L^\infty(0,T; H)}.
		\end{align}
		Keep the noise involving term as it is and denote it as,
		\begin{align*}
			\tilde{N}(u^{\rho^\varepsilon}):=- i \sqrt{\varepsilon}\int_{0}^{t} S(t-s)\big[B{u^{\rho^\varepsilon}}(s)d\mathcal{W}_1(s)+G({u^{\rho^\varepsilon}}(s))d\mathcal{W}_2(s) \big].
		\end{align*}
		For the above term, using \eqref{wckbb}, \eqref{wckgb} and \Cref{lemuklinfbound}, we estimate 
		\begin{align}\label{expnoise}
			\Big[\mathbb{E} \big\{\|\tilde{N}(u^{\rho^\varepsilon})\|_{\mathbb{L}^{\infty, p}_{2, r}(T)}\big\}\Big]^q &\leq  \mathbb{E} \big\{(\|\tilde{N}(u^{\rho^\varepsilon})\|_{\mathbb{L}^{\infty, p}_{2, r}(T)})^q\big\}\nonumber\\
			&\leq\mathbb{E} \big\{(\sqrt{\varepsilon})^q \big\{\|K_B(\cdot)\|_{\mathbb{L}^{\infty, p}_{2, r}(T)} +\|K_G(\cdot)\|_{\mathbb{L}^{\infty, p}_{2, r}(T)}   \big\}^q\big\}\nonumber\\
			&\leq C(\sqrt{\varepsilon})^q  \mathbb{E} \big\{\|K_B(\cdot)\|^q_{\mathbb{L}^{\infty, p}_{2, r}(T)} +\|K_G(\cdot)\|^q_{\mathbb{L}^{\infty, p}_{2, r}(T)}   \big\}\nonumber\\
			&\leq  (\sqrt{\varepsilon})^q C_{\|u_0\|,q,T,N}.
		\end{align}
		From \eqref{diffepsilon}, using the estimates \eqref{epsliptypeF}-\eqref{epsbdiff}, we deduce
		\begin{align*}
			&\|{u^{\rho^\varepsilon}}-u^\rho\|_{\mathbb{L}^{\infty, p}_{2, r}(T \wedge{\tau_{M_\eta}^\varepsilon})}\\
			&\leq  C (T)^{\frac{p-r}{p}}	\big(\|{u^{\rho^\varepsilon}}\|_{L^{p}(0, T \wedge{\tau_{M_\eta}^\varepsilon}; L^{r}(\mathbb{R}^d))}+ \|u^\rho\|_{L^{p}(0, T \wedge{\tau_{M_\eta}^\varepsilon}; L^{r}(\mathbb{R}^d))}\big)^{\alpha-1} \| {u^{\rho^\varepsilon}} - u^\rho \|_{\mathbb{L}^{\infty, p}_{2, r}(T \wedge{\tau_{M_\eta}^\varepsilon})}\\
			&\quad +C\beta \int_{0}^{T} \|{u^{\rho^\varepsilon}} - u^\rho\|_{\mathbb{L}^{\infty, p}_{2, r}(s)}ds+\int_{0}^{T} \|B\|_{\mathscr{L}(H,\mathcal{L}_2 (Y_1,H))} \, \|\rho_1^\varepsilon(s)\|_{Y_1}  \,\| {u^{\rho^\varepsilon}}(s) - u^\rho (s)\|_{H}ds\\
			&\quad+\int_{0}^{T} \|B\|_{\mathscr{L}(H,\mathcal{L}_2 (Y_1,H))} \,\|u^\rho(s)\|_{H} \, \|\rho_1^\varepsilon(s)-\rho_1(s)\|_{Y_1} ds+\int_{0}^{T} \big\{C_1+C_2 \|u^\rho(s)\|_{H}\big\}\, \|\rho_2^\varepsilon(s)-\rho_2(s)\|_{Y_2}ds\\
			&\quad+L_G \int_{0}^{T} \|\rho_2^\varepsilon(s)\|_{Y_2} \,\| {u^{\rho^\varepsilon}}(s) - u^\rho (s)\|_{H}ds+\frac{\varepsilon CT}{2} \sum_{m=1}^{\infty} \|B_m^2\|_{\mathscr{L}(H)} \, \|{u^{\rho^\varepsilon}}\|_{L^\infty(0,T; H)}+\|\tilde{N}(u^{\rho^\varepsilon})\|_{\mathbb{L}^{\infty, p}_{2, r}(T)}.
		\end{align*}
		Since, we have uniform bound (independent of $\varepsilon$) of the solution $u^{\rho^\varepsilon}$ in $L^{p}(0, T; L^{r}(\mathbb{R}^d))$ and $u^\rho$ is the solution of the skeleton equation having bound $C_{N,M}$, from the above estimate, we infer
		\begin{align*}
			&\|{u^{\rho^\varepsilon}}-u^\rho\|_{\mathbb{L}^{\infty, p}_{2, r}(T \wedge{\tau_{M_\eta}^\varepsilon})} \\
			&\leq  C(T)^{\frac{p-r}{p}} 	\big(M_\eta+ C_{N,M}\big)^{\alpha-1}	 \| {u^{\rho^\varepsilon}} - u^\rho \|_{\mathbb{L}^{\infty, p}_{2, r}(T \wedge{\tau_{M_\eta}^\varepsilon})}
			+C\beta \int_{0}^{T} \|{u^{\rho^\varepsilon}} - u^\rho\|_{\mathbb{L}^{\infty, p}_{2, r}(s)}ds\\
			&\quad+ \int_{0}^{T}  \|B\|_{\mathscr{L}(H,\mathcal{L}_2 (Y_1,H))} \, \|\rho_1^\varepsilon(s)\|_{Y_1} \| {u^{\rho^\varepsilon}}(s) - u^\rho (s)\|_{H}ds\\
			&\quad+\int_{0}^{T} \|B\|_{\mathscr{L}(H,\mathcal{L}_2 (Y_1,H))} \,\|u^\rho(s)\|_{H} \, \|\rho_1^\varepsilon(s)-\rho_1(s)\|_{Y_1}ds+\int_{0}^{T} \big\{C_1+C_2 \|u^\rho(s)\|_{H}\big\}\, \|\rho_2^\varepsilon(s)-\rho_2(s)\|_{Y_2} ds \\
			&\quad+L_G \int_{0}^{T} \|\rho_2^\varepsilon(s)\|_{Y_2} \,\| {u^{\rho^\varepsilon}}(s) - u^\rho (s)\|_{H}ds+\frac{\varepsilon CT}{2} \sum_{m=1}^{\infty} \|B_m^2\|_{\mathscr{L}(H)} \, \|{u^{\rho^\varepsilon}}\|_{L^\infty(0,T; H)} +\|\tilde{N}(u^{\rho^\varepsilon})\|_{\mathbb{L}^{\infty, p}_{2, r}(T)}.
		\end{align*}
		Choosing $T_\eta \in (0,T]$ small enough such that $ C(T)^{\frac{p-r}{p}} 	\big(M_\eta+ C_{N,M}\big)^{\alpha-1}	< \frac{1}{2}$, we deduce from the above inequality that
		\begin{align*}
			& \frac{1}{2}\|{u^{\rho^\varepsilon}}-u^\rho\|_{\mathbb{L}^{\infty, p}_{2, r}(T_\eta \wedge{\tau_{M_\eta}^\varepsilon})}\\
			&\leq C\beta \int_{0}^{T} \|{u^{\rho^\varepsilon}} - u^\rho\|_{\mathbb{L}^{\infty, p}_{2, r}(s)}ds+ \int_{0}^{T}  \,\|B\|_{\mathscr{L}(H,\mathcal{L}_2 (Y_1,H))} \, \|\rho_1^\varepsilon(s)\|_{Y_1} \| {u^{\rho^\varepsilon}}(s) - u^\rho (s)\|_{H}ds\\
			&\quad +\int_{0}^{T} \|B\|_{\mathscr{L}(H,\mathcal{L}_2 (Y_1,H))} \,\|u^\rho(s)\|_{H} \, \|\rho_1^\varepsilon(s)-\rho_1(s)\|_{Y_1}ds+\int_{0}^{T} \big\{C_1+C_2 \|u^\rho(s)\|_{H}\big\}\, \|\rho_2^\varepsilon(s)-\rho_2(s)\|_{Y_2} ds\\
			&\quad +L_G \int_{0}^{T} \|\rho_2^\varepsilon(s)\|_{Y_2} \,\| {u^{\rho^\varepsilon}}(s) - u^\rho (s)\|_{H}ds+\frac{\varepsilon CT}{2} \sum_{m=1}^{\infty} \|B_m^2\|_{\mathscr{L}(H)} \, \|{u^{\rho^\varepsilon}}\|_{L^\infty(0,T; H)} +\|\tilde{N}(u^{\rho^\varepsilon})\|_{\mathbb{L}^{\infty, p}_{2, r}(T)}.
		\end{align*}
		This implies,
		\begin{align*}
			&\|{u^{\rho^\varepsilon}}-u^\rho\|_{\mathbb{L}^{\infty, p}_{2, r}(T_\eta \wedge{\tau_{M_\eta}^\varepsilon})} \\
			&\leq  C\beta \int_{0}^{T} \|{u^{\rho^\varepsilon}} - u^\rho\|_{\mathbb{L}^{\infty, p}_{2, r}(s)}ds+  \int_{0}^{T}  \, 2\|B\|_{\mathscr{L}(H,\mathcal{L}_2 (Y_1,H))} \, \|\rho_1^\varepsilon(s)\|_{Y_1}\| {u^{\rho^\varepsilon}}(s) - u^\rho (s)\|_{H}ds\\
			&\quad+2\int_{0}^{T} \|B\|_{\mathscr{L}(H,\mathcal{L}_2 (Y_1,H))} \,\|u^\rho(s)\|_{H} \, \|\rho_1^\varepsilon(s)-\rho_1(s)\|_{Y_1}ds+2\int_{0}^{T} \big\{C_1+C_2 \|u^\rho(s)\|_{H}\big\}\, \|\rho_2^\varepsilon(s)-\rho_2(s)\|_{Y_2}ds \\
			&\quad +2L_G \int_{0}^{T} \|\rho_2^\varepsilon(s)\|_{Y_2} \,\| {u^{\rho^\varepsilon}}(s) - u^\rho (s)\|_{H}ds+{\varepsilon CT} \sum_{m=1}^{\infty} \|B_m^2\|_{\mathscr{L}(H)} \, \|{u^{\rho^\varepsilon}}\|_{L^\infty(0,T; H)} +2\|\tilde{N}(u^{\rho^\varepsilon})\|_{\mathbb{L}^{\infty, p}_{2, r}(T)}.
		\end{align*}
		Iterating similar estimates for $1 \leq k\leq [\frac{T}{T_\eta}]$, there exists a constant $C_{k,N,M}$ such that
		\begin{align*}
			&\|{u^{\rho^\varepsilon}}-u^\rho\|_{\mathbb{L}^{\infty, p}_{2, r}(kT_\eta \wedge{\tau_{M_\eta}^\varepsilon},(k+1)T_\eta \wedge{\tau_{M_\eta}^\varepsilon})} \\
			&\leq  C\beta \int_{0}^{T} \|{u^{\rho^\varepsilon}} - u^\rho\|_{\mathbb{L}^{\infty, p}_{2, r}(s)}ds+ \int_{0}^{T}  2\|B\|_{\mathscr{L}(H,\mathcal{L}_2 (Y_1,H))} \, \|\rho_1^\varepsilon(s)\|_{Y_1} \| {u^{\rho^\varepsilon}}(s) - u^\rho (s)\|_{H}ds\\
			&\quad+2\int_{0}^{T} \|B\|_{\mathscr{L}(H,\mathcal{L}_2 (Y_1,H))} \,\|u^\rho(s)\|_{H} \, \|\rho_1^\varepsilon(s)-\rho_1(s)\|_{Y_1}ds+2\int_{0}^{T} \big\{C_1+C_2 \|u^\rho(s)\|_{H}\big\}\, \|\rho_2^\varepsilon(s)-\rho_2(s)\|_{Y_2} ds\\
			&\quad+2L_G \int_{0}^{T} \|\rho_2^\varepsilon(s)\|_{Y_2} \,\| {u^{\rho^\varepsilon}}(s) - u^\rho (s)\|_{H}ds +{\varepsilon CT} \sum_{m=1}^{\infty} \|B_m^2\|_{\mathscr{L}(H)} \, \|{u^{\rho^\varepsilon}}\|_{L^\infty(0,T; H)} +2\|\tilde{N}(u^{\rho^\varepsilon})\|_{\mathbb{L}^{\infty, p}_{2, r}(T)}.
		\end{align*}
		Using \eqref{BmH}, we have
		\begin{align}\label{forlim}
			&\|{u^{\rho^\varepsilon}}-u^\rho\|_{\mathbb{L}^{\infty, p}_{2, r}(T \wedge{\tau_{M_\eta}^\varepsilon})}\nonumber\\
			&\leq   \Big[\frac{T}{T_\eta}+1\Big]\Big[ C\beta \int_{0}^{T} \|{u^{\rho^\varepsilon}} - u^\rho\|_{\mathbb{L}^{\infty, p}_{2, r}(s)}ds+2 \|u^\rho(s)\|_{L^\infty(0,T;H)}\int_{0}^{T} \|B\|_{\mathscr{L}(H,\mathcal{L}_2 (Y_1,H))} \, \|\rho_1^\varepsilon(s)-\rho_1(s)\|_{Y_1}ds\nonumber\\
			&\quad\quad+\int_{0}^{T}  2\|B\|_{\mathscr{L}(H,\mathcal{L}_2 (Y_1,H))} \, \Big(\sup_{\varepsilon\in (0,1)}\|\rho_1^\varepsilon(s)\|_{Y_1}\Big)\| {u^{\rho^\varepsilon}}(s) - u^\rho (s)\|_{H}ds\nonumber\\
			&\quad \quad+2\big\{C_1+C_2 \|u^\rho(s)\|_{L^\infty(0,T;H)}\big\}\,\int_{0}^{T}  \|\rho_2^\varepsilon(s)-\rho_2(s)\|_{Y_2} ds\nonumber\\
			&\quad\quad +2L_G \int_{0}^{T}\Big(\sup_{\varepsilon\in (0,1)} \|\rho_2^\varepsilon(s)\|_{Y_2}\Big) \,\| {u^{\rho^\varepsilon}}(s) - u^\rho (s)\|_{H}ds+{\varepsilon CT}\, \|{u^{\rho^\varepsilon}}\|_{L^\infty(0,T; H)} +2\|\tilde{N}(u^{\rho^\varepsilon})\|_{\mathbb{L}^{\infty, p}_{2, r}(T)}\Big].
		\end{align}
		From \eqref{epsindb} and \eqref{epsindrho2}, we have
		\begin{align*}
			\int_{0}^{T} \Big\{2\|B\|_{\mathscr{L}(H,\mathcal{L}_2 (Y_1,H))}\Big(\sup_{\varepsilon\in (0,1)}\|\rho_1^\varepsilon(s)\|_{Y_1}\Big)+C\beta+2 L_G\Big(\sup_{\varepsilon\in (0,1)} \|\rho_2^\varepsilon(s)\|_{Y_2}\Big)\Big\}ds < C_{\| u_0\|_{H}, N,T}.
		\end{align*}
		We use Gr\"onwall's inequality in \eqref{forlim} to get,
		\begin{align}\label{forexp}
			&\|{u^{\rho^\varepsilon}}-u^\rho\|_{\mathbb{L}^{\infty, p}_{2, r}(T \wedge{\tau_{M_\eta}^\varepsilon})} \nonumber\\
			&\leq   \Bigl[\frac{T}{T_\eta}+1\Bigl] \Big\{2\|u^\rho(s)\|_{L^\infty(0,T;H)}\int_{0}^{T} \|B\|_{\mathscr{L}(H,\mathcal{L}_2 (Y_1,H))} \, \|\rho_1^\varepsilon(s)-\rho_1(s)\|_{Y_1}ds\nonumber\\
			&\quad\quad+2\big\{C_1+C_2 \|u^\rho(s)\|_{L^\infty(0,T;H)}\big\}\int_{0}^{T} \|\rho_2^\varepsilon(s)-\rho_2(s)\|_{Y_2} ds+{\varepsilon CT} \|{u^{\rho^\varepsilon}}\|_{L^\infty(0,T; H)} +2\|\tilde{N}(u^{\rho^\varepsilon})\|_{\mathbb{L}^{\infty, p}_{2, r}(T)}\Big\}\nonumber\\
			&\quad\quad e^{\int_{0}^{T} \big\{2\|B\|_{\mathscr{L}(H,\mathcal{L}_2 (Y_1,H))}\big(\sup_{\varepsilon\in (0,1)}\|\rho_1^\varepsilon(s)\|_{Y_1}\big)+C\beta+2 L_G\big(\sup_{\varepsilon\in (0,1)} \|\rho_2^\varepsilon(s)\|_{Y_2}\big)\big\}ds}.
		\end{align}
		Taking integration over $\Omega$ in \eqref{forexp}, we obtain
		\begin{align}\label{forlimsup}
			&\mathbb{E}\big\{\|{u^{\rho^\varepsilon}}-u^\rho\|_{\mathbb{L}^{\infty, p}_{2, r}(T \wedge{\tau_{M_\eta}^\varepsilon})}\big\} \nonumber\\
			&\leq   \Bigl[\frac{T}{T_\eta}+1\Bigl] \Big[2\mathbb{E}\big\{\|u^\rho(s)\|_{L^\infty(0,T;H)}\big\}\int_{0}^{T} \|B\|_{\mathscr{L}(H,\mathcal{L}_2 (Y_1,H))} \, \|\rho_1^\varepsilon(s)-\rho_1(s)\|_{Y_1}ds\nonumber\\
			&\quad\quad+2\Big(C_1+C_2 \mathbb{E}\big\{\|u^\rho(s)\|_{L^\infty(0,T;H)}\big\}\Big)\int_{0}^{T} \|\rho_2^\varepsilon(s)-\rho_2(s)\|_{Y_2} ds+{\varepsilon CT}\, \mathbb{E}\big\{\|{u^{\rho^\varepsilon}}\|_{L^\infty(0,T; H)}\big\} \nonumber\\
			&\quad\quad +2 \mathbb{E}\big\{\|\tilde{N}(u^{\rho^\varepsilon})\|_{\mathbb{L}^{\infty, p}_{2, r}(T)}\big\}\Big] e^{C_{\| u_0\|_{H}, N,T}}.
		\end{align}
		Taking limit supremum $\varepsilon \to 0$ both side of \eqref{forlimsup} and using Fatou's lemma, we see that
		\begin{align*}
			&\limsup_{\varepsilon \to 0}\mathbb{E}\big\{\|{u^{\rho^\varepsilon}}-u^\rho\|_{\mathbb{L}^{\infty, p}_{2, r}(T \wedge{\tau_{M_\eta}^\varepsilon})}\big\} \nonumber\\
			&\leq   \Bigl[\frac{T}{T_\eta}+1\Bigl] \Big[2\mathbb{E}\big\{\|u^\rho(s)\|_{L^\infty(0,T;H)}\big\}\int_{0}^{T} \|B\|_{\mathscr{L}(H,\mathcal{L}_2 (Y_1,H))} \, \limsup_{\varepsilon \to 0}\|\rho_1^\varepsilon(s)-\rho_1(s)\|_{Y_1}ds\nonumber\\
			&\quad\quad+2\Big(C_1+C_2 \mathbb{E}\big\{\|u^\rho(s)\|_{L^\infty(0,T;H)}\big\}\Big)\int_{0}^{T} \limsup_{\varepsilon \to 0}\|\rho_2^\varepsilon(s)-\rho_2(s)\|_{Y_2} ds \nonumber\\
			&\quad\quad +\limsup_{\varepsilon \to 0}{\varepsilon CT}\, \mathbb{E}\big\{\|{u^{\rho^\varepsilon}}\|_{L^\infty(0,T; H)}\big\}+2 \limsup_{\varepsilon \to 0}\mathbb{E}\big\{\|\tilde{N}(u^{\rho^\varepsilon})\|_{\mathbb{L}^{\infty, p}_{2, r}(T)}\big\}\Big] e^{C_{\| u_0\|_{H}, N,T}}.
		\end{align*}
		Using the convergence of $\rho^{\varepsilon}$ to $\rho$ in $\mathbb{S}_N$, we assert that
		\begin{align*}
			&\limsup_{\varepsilon \to 0} \mathbb{E}\big\{\|{u^{\rho^\varepsilon}}-u^\rho\|_{\mathbb{L}^{\infty, p}_{2, r}(T \wedge{\tau_{M_\eta}^\varepsilon})} \big\}\\
			&\leq   \Bigl[\frac{T}{T_\eta}+1\Bigl]\Big( \limsup_{\varepsilon \to 0} {\varepsilon CT}\mathbb{E} \big\{\|{u^{\rho^\varepsilon}}\|_{L^\infty(0,T; H)}\big\}+2\limsup_{\varepsilon \to 0} \mathbb{E}\big\{\|\tilde{N}(u^{\rho^\varepsilon})\|_{\mathbb{L}^{\infty, p}_{2, r}(T)}\big\}\Big)e^{ C_{\| u_0\|_{H}, N,T}}\\
			&\leq   \Bigl[\frac{T}{T_\eta}+1\Bigl]\Big( \limsup_{\varepsilon \to 0}{\varepsilon CT} C_{\|u_0\|,N,M}+2\limsup_{\varepsilon \to 0}  \sqrt{\varepsilon}C_{\|u_0\|,q,T,N}\Big)e^{ C_{\| u_0\|_{H}, N,T}}\\
			&= 0.
		\end{align*}
		Now, for any $\delta >0, \eta>0$ and $M_\eta$, 
		\begin{align*}
			&\limsup_{\varepsilon \to 0} \mathbb{P} \Big( 
			\left\| {u^{\rho^\varepsilon}}-u^\rho \right\|_{\mathbb{L}^{\infty, p}_{2, r}(T)} > \delta 
			\Big) \\
			&\leq \limsup_{\varepsilon \to 0} \mathbb{P} \Big( 
			\left\| {u^{\rho^\varepsilon}}-u^\rho \right\|_{\mathbb{L}^{\infty, p}_{2, r}(T \wedge{\tau_{M_\eta}^\varepsilon})} > \delta, \; {\tau_{M_\eta}^\varepsilon} = T 
			\Big) + \limsup_{\varepsilon \to 0} \mathbb{P} \Big( {\tau_{M_\eta}^\varepsilon} < T \Big) \\
			&\leq \limsup_{\varepsilon \to 0} \mathbb{P} \Big( 
			\left\| {u^{\rho^\varepsilon}}-u^\rho \right\|_{\mathbb{L}^{\infty, p}_{2, r}(T \wedge{\tau_{M_\eta}^\varepsilon})} > \delta, \; {\tau_{M_\eta}^\varepsilon} = T 
			\Big)  + \limsup_{\varepsilon \to 0} \mathbb{P} \Big( 
			\left\| {u^{\rho^\varepsilon}} \right\|_{L^p(0, T; L^r(\mathbb{R}^d))} > M_\eta 
			\Big) \\
			&\leq \limsup_{\varepsilon \to 0} \frac{1}{\delta} \mathbb{E} \Big\{
			\left\| {u^{\rho^\varepsilon}}-u^\rho \right\|_{\mathbb{L}^{\infty, p}_{2, r}(T \wedge{\tau_{M_\eta}^\varepsilon})} 
			\Big\}+ \eta \\
			&\leq \eta.
		\end{align*}
		Since $\eta>0$ is arbitrary, we conclude \eqref{wcaim}. This completes the proof of \Cref{weakconv}.
	\end{proof}
	\begin{appendix}
		\section{Appendix}\label{sectionappendix}
		In this section, we provide some standard results and references, which we used in the proof of \Cref{wellskl}, \Cref{wellsce}.
		\begin{prop}[Absolute continuity of Lebesgue integration, {\cite[Proposition 23]{MR1013117}}]\label{abscts}
			If f is Lebesgue integrable then for every $\epsilon>0$ there exists a $\delta>0$ such that $\int_{D}|f(x)|d \mu < \epsilon$ for every measurable set $D$ with $\mu{(D)}<\delta.$	
		\end{prop}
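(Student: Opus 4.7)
The plan is to prove this classical fact by a truncation argument that reduces the general integrable case to the bounded case, where the conclusion is trivial. First, I would fix $\epsilon>0$ and, for $n\in\mathbb{N}$, introduce the truncations $f_n := |f| \wedge n$. Since $f$ is integrable, the sequence $(f_n)$ satisfies $0\leq f_n \leq |f|$ and $f_n \uparrow |f|$ pointwise, so that $|f|-f_n \downarrow 0$ with integrable dominant $|f|$.

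By the monotone (or dominated) convergence theorem, $\int (|f|-f_n)\, d\mu \to 0$ as $n\to\infty$. Hence I can fix an integer $N$ large enough so that $\int (|f|-f_N)\, d\mu < \epsilon/2$. The point of the truncation is that $f_N$ is now a bounded function, satisfying $0\leq f_N\leq N$ everywhere.

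The final step is the splitting: for any measurable set $D$,
\[
\int_D |f|\, d\mu \;=\; \int_D (|f|-f_N)\, d\mu \;+\; \int_D f_N\, d\mu \;\leq\; \frac{\epsilon}{2} \;+\; N\,\mu(D).
\]
Choosing $\delta := \epsilon/(2N)$, whenever $\mu(D)<\delta$ the right-hand side is bounded by $\epsilon/2 + \epsilon/2 = \epsilon$, which is the desired conclusion.

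There is no real analytic obstacle here, since the result is elementary; the only thing to be careful about is the order of quantifiers, namely that $N$ (and hence $\delta$) is chosen only after $\epsilon$ is fixed, depending on $\epsilon$ and on $f$. An alternative, even shorter route would be to observe that the set function $\nu(D):=\int_D |f|\,d\mu$ is a finite measure absolutely continuous with respect to $\mu$ in the $\sigma$-additive sense, and then derive the $\epsilon$-$\delta$ formulation as a standard consequence; I would, however, prefer the direct truncation argument above since it avoids invoking additional abstract machinery.
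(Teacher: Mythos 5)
Your proof is correct, and in fact the truncation argument you give is essentially verbatim the proof found in the cited reference (Royden, Proposition 23); the paper itself states this as a classical background fact in the appendix and defers to that reference rather than proving it. There is nothing to fix.
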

		\begin{prop}\label{yosidaoperator}
			If \(\mathbb{X} \) is either of the spaces \( H^1(\mathbb{R}^d) \), \( H^{-1}(\mathbb{R}^d) \), or \( L^p(\mathbb{R}^d) \) for \( p \in (1, \infty) \), then
			\begin{itemize}
				\item[(i)] \(\| J_\mu g \|_\mathbb{X} \leq \| g \|_\mathbb{X}, \quad \forall g \in\mathbb{X},\)
				\item[(ii)] \(	_\mathbb{X}\langle J_\mu g, h \rangle_{\mathbb{X}^*} =   {_\mathbb{X}}\langle g, J_\mu h \rangle_{\mathbb{X}^*}, \quad \forall g \in\mathbb{X}, \, h \in\mathbb{X}^*,\)
				\item[(iii)]\(	\lim_{\mu \to \infty} \| J_\mu g - g \|_\mathbb{X} = 0, \quad \forall g \in\mathbb{X}.\)
			\end{itemize}
		\end{prop}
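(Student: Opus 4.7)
The plan is to reduce everything to two classical ingredients: (a) functional calculus for the self-adjoint operator $-\Delta$ on $L^2(\mathbb{R}^d)$, and (b) the heat semigroup representation $(\mu I - \Delta)^{-1} = \int_0^\infty e^{-\mu t} e^{t\Delta}\,dt$, which lets us transport properties from $L^2$ to $L^p$ using the fact that $\{e^{t\Delta}\}_{t\geq 0}$ is a contraction semigroup on every $L^p(\mathbb{R}^d)$, $1\le p\le \infty$. I will treat the three spaces in the order $L^p$, then $H^1$, then $H^{-1}$ by duality.

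For part (i), on $L^2$ the bound $\|J_\mu g\|_{L^2}\le\|g\|_{L^2}$ is immediate from spectral calculus because $-\Delta$ is nonnegative self-adjoint and the symbol $\mu/(\mu+\lambda)\in[0,1]$ for $\lambda\ge 0$. For general $L^p$ with $p\in(1,\infty)$, I use the representation
\begin{equation*}
J_\mu g \;=\; \int_0^\infty \mu\, e^{-\mu t}\, e^{t\Delta}g\,dt,
\end{equation*}
together with $\|e^{t\Delta}g\|_{L^p}\le\|g\|_{L^p}$ and $\int_0^\infty \mu e^{-\mu t}\,dt=1$, which yields $\|J_\mu g\|_{L^p}\le\|g\|_{L^p}$ by Minkowski's integral inequality. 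For $H^1$ I use that $J_\mu$ commutes with $\partial_j$ on the Schwartz class (and hence on $H^1$ by density), so $\|J_\mu g\|_{H^1}^2 = \|J_\mu g\|_{L^2}^2 + \sum_j\|J_\mu \partial_j g\|_{L^2}^2\le\|g\|_{H^1}^2$. For $H^{-1}$ I argue by duality: $J_\mu$ is symmetric on $L^2$ (since it is a function of the self-adjoint $-\Delta$), so its $H^{-1}$-$H^1$ extension satisfies $\|J_\mu g\|_{H^{-1}}=\sup_{\|h\|_{H^1}\le 1}|\langle J_\mu g,h\rangle|=\sup|\langle g,J_\mu h\rangle|\le\|g\|_{H^{-1}}$ by the $H^1$-bound already proved.

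For part (ii), self-adjointness of $-\Delta$ on $L^2$ implies via the Borel functional calculus that $J_\mu$ is self-adjoint on $L^2$. Hence for any Schwartz functions $g,h$, $\langle J_\mu g,h\rangle_{L^2}=\langle g,J_\mu h\rangle_{L^2}$. By density of Schwartz in each of the relevant spaces, and by the just-proved boundedness of $J_\mu$ on $\mathbb{X}$ and on $\mathbb{X}^*$, both sides of (ii) extend continuously to the duality pairing ${_\mathbb{X}\langle}\cdot,\cdot\rangle_{\mathbb{X}^*}$, giving the identity.

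For part (iii), the standard Yosida trick gives $J_\mu g - g = \mu^{-1}\Delta J_\mu g = J_\mu(\mu^{-1}\Delta g)$ whenever $g$ lies in the domain (in $L^p$: $g\in W^{2,p}$; in $H^1$: $g\in H^3$; in $H^{-1}$: $g\in H^1$). In each case $\|J_\mu g - g\|_{\mathbb{X}}\le\mu^{-1}\|\Delta g\|_{\mathbb{X}}\to 0$ since $J_\mu$ is a contraction. Density of the relevant domain in $\mathbb{X}$ combined with the uniform bound $\|J_\mu\|_{\mathbb{X}\to\mathbb{X}}\le 1$ from (i) gives, by a standard $3\varepsilon$-argument, $\|J_\mu g-g\|_{\mathbb{X}}\to 0$ for every $g\in\mathbb{X}$. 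The main subtlety I expect is the $H^{-1}$ case, where one must verify density of a suitable regular class (e.g.\ $H^1\hookrightarrow H^{-1}$ densely) and justify the identity $J_\mu g-g=\mu^{-1}\Delta J_\mu g$ interpreted in $H^{-1}$; this is handled by testing against $H^1$ functions and invoking (ii).
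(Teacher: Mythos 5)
The paper does not actually prove this proposition; it simply cites \cite[Proposition 1.5.2]{MR2002047}, so there is no internal argument to compare against. Your proof is a correct, self-contained account of the standard facts and is essentially the argument found in that reference. For part (i), your resolvent-of-the-generator representation $J_\mu g=\int_0^\infty\mu e^{-\mu t}e^{t\Delta}g\,dt$ combined with Minkowski's integral inequality is one clean route to the $L^p$ contraction; a common alternative is to note that $J_\mu$ is convolution with a nonnegative kernel of total mass one, or simply that the Fourier multiplier $\mu/(\mu+4\pi^2|\xi|^2)$ lies in $[0,1]$, which gives the $H^1$ and $H^{-1}$ contractions in one stroke without invoking commutation with $\partial_j$. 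Part (ii) is correct: the $L^2$ symmetry of $J_\mu$ passes to the $\mathbb{X}$--$\mathbb{X}^*$ pairing by density and the boundedness you established in (i); since the multiplier is real, $J_\mu$ commutes with complex conjugation, so the argument is insensitive to whether that pairing is taken bilinear or sesquilinear. Part (iii) is the standard Yosida identity $J_\mu-I=\mu^{-1}\Delta J_\mu$ together with a $3\varepsilon$ density argument; the commutation $\Delta J_\mu=J_\mu\Delta$ that you use to write $J_\mu g-g=J_\mu(\mu^{-1}\Delta g)$ is immediate on the Fourier side, and your observation that the $H^{-1}$ case requires testing against $H^1$ and invoking (ii) is exactly the right caution. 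No gaps.
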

		\begin{proof}
			For the detailed proof, we refer to \cite[Proposition 1.5.2]{MR2002047}.
		\end{proof}
		\begin{prop}
			For any \( s \in \mathbb{R} \), \( J_\mu \) is a contraction of \( H^s(\mathbb{R}^d) \) and \( J_\mu \in \mathscr{L}(H^s(\mathbb{R}^d), H^{s+2}(\mathbb{R}^d)) \) with \(\|J_\mu\|_{\mathscr{L}(H^s(\mathbb{R}^d), H^{s+2}(\mathbb{R}^d))} \leq \max\{1,\frac{\mu}{4 \pi^2}\}\).
		\end{prop}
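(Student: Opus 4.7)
The plan is to diagonalize $J_\mu$ via the Fourier transform and reduce both assertions to pointwise estimates on the associated multiplier. Since $J_\mu = \mu(\mu I - \Delta)^{-1}$ and $\widehat{-\Delta f}(\xi) = 4\pi^2|\xi|^2 \hat f(\xi)$, the operator $J_\mu$ corresponds to multiplication by
\[
 m_\mu(\xi) := \frac{\mu}{\mu + 4\pi^2|\xi|^2}
\]
on the Fourier side. Combined with the standard Fourier characterization $\|f\|_{H^s(\mathbb{R}^d)}^2 = \int_{\mathbb{R}^d} (1+|\xi|^2)^s |\hat f(\xi)|^2\, d\xi$, both claims reduce to uniform bounds on scalar multipliers built from $m_\mu$.

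For the contraction property on $H^s$, the pointwise bound $0 < m_\mu(\xi) \leq 1$ is immediate and yields $\|J_\mu f\|_{H^s} \leq \|f\|_{H^s}$ directly by Plancherel. For the smoothing estimate $J_\mu \in \mathscr{L}(H^s(\mathbb{R}^d), H^{s+2}(\mathbb{R}^d))$, the matter reduces to the pointwise bound
\[
 \sup_{\xi \in \mathbb{R}^d}\, (1+|\xi|^2)\, m_\mu(\xi) = \max\Bigl\{1,\, \tfrac{\mu}{4\pi^2}\Bigr\},
\]
since extracting the factor $(1+|\xi|^2)^2$ from the $H^{s+2}$-norm and invoking Plancherel will then give the claimed operator-norm estimate. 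Setting $y = |\xi|^2 \geq 0$ and studying $\varphi(y) := \mu(1+y)/(\mu + 4\pi^2 y)$, a single derivative computation yields $\varphi'(y) = \mu(\mu - 4\pi^2)/(\mu + 4\pi^2 y)^2$, so $\varphi$ is monotone with sign determined by whether $\mu$ is above or below $4\pi^2$. Consequently the supremum is attained either at $y=0$ (giving $\varphi(0)=1$) or in the limit $y \to \infty$ (giving $\mu/(4\pi^2)$), and taking the larger of the two reproduces the stated constant.

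The argument is essentially a routine Fourier multiplier computation once the symbol $m_\mu$ has been identified. The only mildly delicate step is the monotonicity analysis in the second part: one must handle both regimes $\mu < 4\pi^2$ and $\mu \geq 4\pi^2$ in order to verify that the asserted bound is sharp and uniform in all admissible $\mu > 0$. No further structural obstacles arise because the analysis is posed on the whole space $\mathbb{R}^d$, so no boundary contributions or cutoffs enter, and the Fourier transform furnishes an isometry on the relevant Bessel-potential spaces.
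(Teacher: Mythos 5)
Your proof is correct and is the standard Fourier-multiplier argument; the paper itself gives no proof but defers to Cazenave (Proposition 1.5.3), where the same diagonalization via the symbol $m_\mu(\xi)=\mu/(\mu+4\pi^2|\xi|^2)$ is used. The derivative computation $\varphi'(y)=\mu(\mu-4\pi^2)/(\mu+4\pi^2 y)^2$ and the resulting dichotomy between $\mu<4\pi^2$ and $\mu\geq 4\pi^2$ are exactly right, and the identification of the $4\pi^2$ as coming from the Fourier convention $\widehat{-\Delta f}(\xi)=4\pi^2|\xi|^2\hat f(\xi)$ paired with $\|f\|_{H^s}^2=\int(1+|\xi|^2)^s|\hat f(\xi)|^2\,d\xi$ correctly explains the stated constant.
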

		\begin{proof}
			For the detailed proof, we refer to \cite[Proposition 1.5.3]{MR2002047}.
		\end{proof}
		\begin{lem}[Lions-Magenes lemma, {\cite[Lemma 1.2]{MR609732}}]\label{lionsmgnslem}
			Let $V, H, V'$ be three Hilbert spaces, each space included in the following one as $V\subset H\equiv H'\subset V'$, where $V'$ is the dual of $V$. 
			If a function $u$ belongs to $L^2(0,T;V)$ and its derivative $u'$ belongs to $L^2(0,T;V')$, 
			then $u$ is almost everywhere equal to a \textit{function continuous from $[0,T]$ into $H$} and 
			we have the following equality, which holds in the scalar distribution sense on $(0,T)$:
			\begin{equation*}
				\frac{d}{dt}\|u\|^2 = 2\langle u', u \rangle.
			\end{equation*}
		\end{lem}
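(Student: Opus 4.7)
The plan is to follow the classical Lions--Magenes route, in which one regularizes $u$ in the time variable, derives the identity for the smooth approximants where it is a routine consequence of the chain rule, and then passes to the limit using that the embedding $W \hookrightarrow C([0,T];H)$ is continuous, where $W := \{v \in L^2(0,T;V) : v' \in L^2(0,T;V')\}$. First I would extend $u$ to a function $\tilde u$ defined on all of $\mathbb{R}$ so that $\tilde u \in L^2(\mathbb{R};V)$ and $\tilde u' \in L^2(\mathbb{R};V')$; this can be done by reflection across $0$ and $T$ followed by truncation with a smooth cutoff, and the Gelfand triple structure $V \subset H \equiv H' \subset V'$ ensures the derivative is preserved in the right space.

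Next I would introduce a standard Friedrichs mollifier $\rho_\eta(t)$ and set $u_\eta := \tilde u * \rho_\eta$. Then $u_\eta \in C^\infty(\mathbb{R}; V)$, so for smooth vector-valued functions the classical identity
\begin{equation*}
\tfrac{d}{dt}\|u_\eta(t)\|_H^2 \;=\; 2\,{}_{V'}\langle u_\eta'(t), u_\eta(t)\rangle_V
\end{equation*}
holds pointwise, and by integration from $s$ to $t$ one deduces
\begin{equation*}
\|u_\eta(t)\|_H^2 - \|u_\eta(s)\|_H^2 \;=\; 2\int_s^t {}_{V'}\langle u_\eta'(\sigma), u_\eta(\sigma)\rangle_V \, d\sigma.
\end{equation*}
The key analytic step is to upgrade this into the bound
\begin{equation*}
\sup_{t\in[0,T]}\|u_\eta(t) - u_\mu(t)\|_H^2 \;\leq\; C\Bigl(\|u_\eta - u_\mu\|_{L^2(0,T;V)}^2 + \|u_\eta' - u_\mu'\|_{L^2(0,T;V')}^2\Bigr),
\end{equation*}
which follows by averaging the integrated identity applied to $u_\eta - u_\mu$ over the initial time in an interval on which $\|u_\eta - u_\mu\|_H$ is suitably small (here one uses that $u_\eta - u_\mu \in L^2(0,T;H)$ has integral average controlled by its $L^2$-norm).

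The standard mollifier convergences $u_\eta \to u$ in $L^2(0,T;V)$ and $u_\eta' \to u'$ in $L^2(0,T;V')$ then make $(u_\eta)$ a Cauchy sequence in $C([0,T];H)$; its limit $\bar u$ is continuous into $H$, coincides with $u$ almost everywhere (since $u_\eta \to u$ in $L^2(0,T;H)$ as well), and therefore gives the desired continuous representative. Finally, I would pass to the limit $\eta \to 0$ in the integrated identity above; the left-hand side converges in $C([0,T])$ to $\|u(t)\|_H^2 - \|u(s)\|_H^2$, while the right-hand side converges by the duality pairing since $u_\eta \to u$ in $L^2(0,T;V)$ and $u_\eta' \to u'$ in $L^2(0,T;V')$. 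This yields the integrated form of the identity, which is exactly the statement $\frac{d}{dt}\|u\|_H^2 = 2\langle u', u\rangle$ in the sense of scalar distributions on $(0,T)$.

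The main obstacle I anticipate is not the formal manipulation with mollifiers but the uniform-in-$\eta$ control that converts $L^2$-convergence into $C([0,T];H)$-convergence; the trick of averaging the integrated energy identity over the initial time (thereby turning a single-point evaluation into an $L^2$-integral against a localizing weight) is the nontrivial step, and care must be taken with boundary effects near $t=0$ and $t=T$, which is why the preliminary reflection extension to $\mathbb{R}$ is essential. Everything else is routine provided this estimate is carried out cleanly.
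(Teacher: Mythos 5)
Your proof is correct and reproduces the classical Lions–Temam argument (mollification, the averaging-over-initial-time trick to upgrade to a uniform $C([0,T];H)$ estimate, and passage to the limit), which is precisely the route taken in the reference [MR609732, Lemma 1.2] that the paper cites; the paper itself states the lemma without proof. One small point of precision: when you average the integrated identity over $s$, the term $\|u_\eta(s)-u_\mu(s)\|_H^2$ averages to $\tfrac{1}{T}\|u_\eta-u_\mu\|_{L^2(0,T;H)}^2$, which you should note is controlled by $\|u_\eta-u_\mu\|_{L^2(0,T;V)}^2$ via the continuous embedding $V\hookrightarrow H$, so the displayed Cauchy bound indeed holds as stated.
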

		\begin{lem}[Burkholder-Davis-Gundy inequality, {\cite[Theorem 1]{MR3463679}}]\label{BDG}
			Let $M$ be an $H$-valued continuous $\mathbb{F}$-local martingale with $M_0=0$. 
			For any $p \in ]0,\infty[$ and any $\mathbb{F}$-stopping time $\tau$, one has
			\begin{equation}
				\mathbb{E}\sup_{t \leq \tau} \|M_t\|^p \;\approx_p\; 
				\mathbb{E}[M,M]_\tau^{p/2} \;=\; \mathbb{E}\langle M,M\rangle_\tau^{p/2},
			\end{equation}
			where $[M, M]$ and $\langle M, M \rangle$ are the quadratic variation and Meyer process of M, respectively.
		\end{lem}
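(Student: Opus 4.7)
The plan is to establish the two-sided equivalence $\mathbb{E}\sup_{t\leq \tau}\|M_t\|^p \approx_p \mathbb{E}[M,M]_\tau^{p/2}$ by a standard combination of localization, It\^o's formula, and a good-$\lambda$ inequality. Since $M$ is a continuous local martingale, the quadratic variation and the Meyer process coincide, $[M,M] = \langle M,M\rangle$, so I focus on a single bracket. First I would reduce to a bounded continuous martingale by introducing the localizing stopping times $\tau_n := \inf\{t \geq 0 : \|M_t\|_H \vee [M,M]_t \geq n\} \wedge \tau$, proving the inequality for $M^{\tau_n}$, and passing to the limit $n \to \infty$ via monotone convergence on both sides.

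For the regime $p \geq 2$, I would apply the infinite-dimensional It\^o formula to the $C^2$ functional $\varphi_\varepsilon(x) = (\varepsilon + \|x\|_H^2)^{p/2}$, with $\varepsilon > 0$, obtaining a decomposition
\begin{equation*}
    \varphi_\varepsilon(M_t) = \varphi_\varepsilon(0) + N^\varepsilon_t + R^\varepsilon_t,
\end{equation*}
where $N^\varepsilon$ is a local martingale coming from $\langle \varphi_\varepsilon'(M_s), dM_s\rangle$ and $R^\varepsilon$ is a non-negative drift bounded by $C_p (\varepsilon + \|M_s\|_H^2)^{(p-2)/2} \, d[M,M]_s$. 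Combining this with Doob's maximal inequality on $\sup_{t \leq \tau}\|M_t\|_H^p$ and then sending $\varepsilon \downarrow 0$ by dominated convergence yields the upper bound $\mathbb{E}\sup_{t\leq \tau}\|M_t\|_H^p \leq C_p \, \mathbb{E}[M,M]_\tau^{p/2}$; isolating $R^\varepsilon$ and applying the same procedure in reverse gives $\mathbb{E}[M,M]_\tau^{p/2} \leq C_p\, \mathbb{E}\sup_{t\leq \tau}\|M_t\|_H^p$.

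The main obstacle is the subquadratic regime $0 < p < 2$, where $\varphi_\varepsilon$ no longer provides direct control because $(p-2)/2 < 0$ creates singular terms as $\varepsilon\downarrow 0$. Here I would invoke Burkholder's good-$\lambda$ inequality: for every $\lambda > 0$ and every $\beta > 1$, $\delta > 0$,
\begin{equation*}
    \mathbb{P}\Big(\sup_{t\leq \tau}\|M_t\|_H > \beta \lambda,\; [M,M]_\tau^{1/2} \leq \delta \lambda\Big) \leq C \delta^2 \, \mathbb{P}\Big(\sup_{t\leq \tau}\|M_t\|_H > \lambda\Big),
\end{equation*}
which passes from the scalar to the $H$-valued continuous case essentially unchanged once one observes that the stopped process $\|M_{\cdot \wedge \sigma_\lambda}\|_H$, with $\sigma_\lambda := \inf\{t : [M,M]_t^{1/2} > \delta\lambda\}$, is a continuous submartingale and one applies Doob's inequality on the event in question. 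Integrating against $p \lambda^{p-1}\, d\lambda$ and choosing $\beta, \delta$ so that $C\delta^2\beta^{-p} < 1$ absorbs the $\sup\|M\|^p$ term and closes the equivalence for every $p \in (0,\infty)$.

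The Hilbert-valued reduction itself is essentially painless because the scalar process $\|M_t\|_H^2 - [M,M]_t$ is already a real-valued local martingale by definition of the bracket, so the relevant It\^o and submartingale inequalities are applied to real-valued processes; no Gelfand-triple structure is needed. I expect the only delicate bookkeeping to be tracking the $p$-dependence of the constants through the It\^o estimate and the good-$\lambda$ integration, which is routine and recorded in detail in the cited reference.
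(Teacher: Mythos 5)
The paper does not prove this lemma; it is stated in the appendix as an imported tool and attributed directly to \cite{MR3463679}, so there is no in-paper argument to compare against. Your sketch is a correct and standard route to the Burkholder--Davis--Gundy inequality in the Hilbert-valued continuous setting: localization to a bounded martingale, the It\^o/Doob argument on $\varphi_\varepsilon(x)=(\varepsilon+\|x\|_H^2)^{p/2}$ for $p\geq 2$, and the good-$\lambda$ absorption for the subquadratic range, together with the key observation that $\|M\|_H^2-[M,M]$ is a real-valued local martingale and $\|M\|_H$ a continuous nonnegative submartingale, so the infinite-dimensional content reduces to scalar tools. One gap worth flagging: the good-$\lambda$ inequality you display only gives the direction $\mathbb{E}\sup_{t\leq\tau}\|M_t\|_H^p\lesssim_p\mathbb{E}[M,M]_\tau^{p/2}$; to close the two-sided equivalence for $p\in(0,2)$ you also need the symmetric estimate
\begin{equation*}
\mathbb{P}\Big([M,M]_\tau^{1/2}>\beta\lambda,\ \sup_{t\leq\tau}\|M_t\|_H\leq\delta\lambda\Big)\leq C\delta^2\,\mathbb{P}\Big([M,M]_\tau^{1/2}>\lambda\Big),
\end{equation*}
obtained by stopping at the first time $\|M\|_H$ crosses $\delta\lambda$, followed by the same $p\lambda^{p-1}\,d\lambda$ integration. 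The omission is cosmetic since the argument is structurally identical, but as written the sketch only proves half of the claimed equivalence in the range $p<2$. As an aside, the cited reference also records a proof via Lenglart's domination inequality that handles all $p\in(0,\infty)$ uniformly and avoids the case split; either route is perfectly adequate here.
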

	\end{appendix}
	
	\noindent \textbf{Data availibility:} Data sharing not applicable to this article as no datasets were generated or analysed during the current study.\\
	
	\noindent \textbf{Funding:}  Not applicable.\\
	
	\noindent \textbf{Author contributions:} All authors contributed to the study conception, investigation and methodology. The first draft of the manuscript was written by Sandip Roy and all authors commented on previous versions of the manuscript. All authors read and approved the final manuscript.\\
	
	\noindent \textbf{\large Declarations}\\
	
	\noindent\textbf{Conflict of interest:} The author has no competing interests to declare that are relevant to the content of this article.
	\bibliography{LDP_Mathscinet.bib}
	\bibliographystyle{abbrv}
\end{document}